\newcommand{\Z}{\mathbb{Z}}
\newcommand{\C}{\mathbb{C}}
\newcommand{\bFp}{\bar{\mathbb{F}}_p}
\newcommand{\Qlb}{\bar{\mathbb{Q}}_{\ell}}
\newcommand{\bk}{\Bbbk}
\newcommand{\cO}{\mathcal{O}}
\newcommand{\Gm}{\mathbb{G}_{\mathrm{m}}}
\newcommand{\Ga}{\mathbb{G}_{\mathrm{a}}}
\newcommand{\SL}{\mathrm{SL}}
\newcommand{\bA}{\mathbb{A}}
\newcommand{\Gv}{\check G}
\newcommand{\Bv}{\check B}
\newcommand{\Uv}{\check U}
\newcommand{\Fl}{\mathcal{F}l}
\newcommand{\Gr}{\mathcal{G}r}
\newcommand{\bK}{{\mathbf{K}}}
\newcommand{\bO}{{\mathbf{O}}}
\newcommand{\IW}{{\mathrm{IW}}}
\newcommand{\cX}{\mathcal{X}}
\newcommand{\Perv}{\mathrm{Perv}}
\newcommand{\Parity}{\mathrm{Parity}}
\newcommand{\ext}{{\mathrm{ext}}}
\newcommand{\mix}{{\mathrm{mix}}}
\newcommand{\Db}{D^{\mathrm{b}}}
\newcommand{\Coh}{\mathrm{Coh}}
\newcommand{\Tilt}{\mathrm{Tilt}}
\newcommand{\ExCoh}{\mathrm{ExCoh}}
\newcommand{\PCoh}{\mathrm{PCoh}}
\newcommand{\fE}{\mathfrak{E}}
\newcommand{\fIC}{\mathfrak{IC}}
\newcommand{\hfT}{\hat{\mathfrak{T}}}
\newcommand{\bDelta}{\bar\Delta}
\newcommand{\bnabla}{\bar\nabla}
\newcommand{\hDelta}{\hat\Delta}
\newcommand{\hnabla}{\hat\nabla}
\newcommand{\cE}{\mathcal{E}}
\newcommand{\cF}{\mathcal{F}}
\newcommand{\cH}{\mathcal{H}}
\newcommand{\cK}{\mathcal{K}}
\newcommand{\cN}{\mathcal{N}}
\newcommand{\tcN}{{\widetilde{\mathcal{N}}}}
\newcommand{\reg}{{\mathrm{reg}}}
\newcommand{\fu}{\mathfrak{u}}
\newcommand{\fg}{\mathfrak{g}}
\newcommand{\bX}{\mathbf{X}}
\newcommand{\bXp}{\mathbf{X}^+}
\newcommand{\hrho}{\hat{\rho}}
\newcommand{\bXmin}{\mathbf{X}_{\mathrm{min}}}
\newcommand{\dom}{\mathsf{dom}}
\newcommand{\smm}{\bar{\mathsf{m}}}
\newcommand{\smp}{\accentset{+}{\mathsf{m}}}
\newcommand{\sA}{\mathscr{A}}
\newcommand{\la}{\langle}
\newcommand{\ra}{\rangle}
\newcommand{\Rep}{\mathrm{Rep}}
\newcommand{\Hom}{\mathrm{Hom}}
\newcommand{\Ext}{\mathrm{Ext}}
\newcommand{\uHom}{\underline{\mathrm{Hom}}}
\newcommand{\uEnd}{\underline{\mathrm{End}}}
\newcommand{\uExt}{\underline{\mathrm{Ext}}}
\newcommand{\RHom}{R\Hom}
\newcommand{\cRHom}{R\mathcal{H}\mathit{om}}
\DeclareMathOperator{\res}{res}
\DeclareMathOperator{\ind}{ind}
\DeclareMathOperator{\im}{im}
\DeclareMathOperator{\codim}{codim}
\DeclareMathOperator{\supp}{supp}
\DeclareMathOperator{\ch}{ch}
\newcommand{\SGD}{\mathbb{D}}
\newcommand{\pt}{\mathrm{pt}}
\newcommand{\lmod}{\textup{-}\mathrm{mod}}
\newcommand{\tl}{\triangleleft}
\newcommand{\simto}{\overset{\sim}{\to}}
\numberwithin{equation}{section}
\newtheorem{thm}{Theorem}[section]
\newtheorem{lem}[thm]{Lemma}
\newtheorem{prop}[thm]{Proposition}
\newtheorem{cor}[thm]{Corollary}
\newtheorem{conj}[thm]{Conjecture}
\newtheorem{ques}[thm]{Question}
\theoremstyle{definition}
\theoremstyle{remark}
\newtheorem{rmk}[thm]{Remark}
\newenvironment{proofsk}{\begin{proof}[Proof sketch]%
  }%
  {\end{proof}}
\newcommand{\nocontentsline}[3]{}
\newcommand{\tocless}[2]{\bgroup\let\addcontentsline=\nocontentsline#1{#2}\egroup}
\newcounter{exercise}
\title{Notes on exotic and perverse-coherent sheaves}
\author{Pramod N. Achar}
\address{Department of Mathematics\\
  Louisiana State University\\
  Baton Rouge, LA 70803\\
  U.S.A.}
\email{pramod@math.lsu.edu}
\dedicatory{Dedicated to David Vogan on his 60th birthday}
\subjclass[2010]{Primary 20G05; Secondary 14F05, 17B08}
\thanks{This work was supported by NSF grant~DMS-1001594 and NSA grant~H98230-14-1-0117.}
\begin{document}

\begin{abstract}
Exotic sheaves are certain complexes of coherent sheaves on the cotangent bundle of the flag variety of a reductive group.  They are closely related to perverse-coherent sheaves on the nilpotent cone.  This expository article includes the definitions of these two categories, applications, and some structure theory, as well as detailed calculations for $\SL_2$.
\end{abstract}

\maketitle

Let $G$ be a reductive algebraic group.  Let $\cN$ be its nilpotent cone, and let $\tcN$ be the Springer resolution.  This article is concerned with two closely related categories of complexes of coherent sheaves: \emph{exotic sheaves} on $\tcN$, and \emph{perverse-coherent sheaves} on $\cN$.  These categories play key roles in Bezrukavnikov's proof of the Lusztig--Vogan bijection~\cite{bez:qes} and his study of quantum group cohomology~\cite{bez:ctm}; in the Bezrukavnikov--Mirkovi\'c proof of Lusztig's conjectures on modular representations of Lie algebras~\cite{bm:rssla}; in the proof of the Mirkovi\'c--Vilonen conjecture on torsion on the affine Grassmannian~\cite{ar:psag}; and in various derived equivalences related to local geometric Langlands duality, both in characteristic zero~\cite{ab:psaf, bez:psaf} and, more recently, in positive characteristic~\cite{ar:agsr, mr:etsps}.

In this expository article, we will introduce these categories via a relatively elementary approach.  But we will also see that they each admit numerous alternative descriptions, some of which play crucial roles in various applications.  The paper also touches on selected aspects of their structure theory, and it concludes with detailed calculations for $G = \SL_2$.  Some proofs are sketched, but many are omitted.  There are no new results in this paper.

My interest in these notions began when David Vogan pointed out to me, in 2001, the connection between the papers~\cite{bez:pc, bez:qes} and my own Ph.D. thesis~\cite{a:phd}.  It is a pleasure to thank David for suggesting a topic that has kept me interested and busy for the past decade and a half!

\setcounter{tocdepth}{2}
\tableofcontents

\section{Definitions and preliminaries}
\label{sect:defn}

\subsection{General notation and conventions}
\label{ss:notation}

Let $\bk$ be an algebraically closed field, and let $G$ be a connected reductive group over $\bk$.  We assume throughout that the following conditions hold:
\begin{itemize}
\item The characteristic of $\bk$ is zero or a JMW prime for $G$.
\item The derived subgroup of $G$ is simply connected.
\item The Lie algebra $\fg$ of $G$ admits a nondegenerate $G$-invariant bilinear form.
\end{itemize}
Here, a \emph{JMW prime} is a prime number that is good for $G$ and such that the main result of~\cite{jmw:pstm} holds for $G$.  That result, which is concerned with tilting $G$-modules under the geometric Satake equivalence, holds for quasisimple $G$ at least when $p$ satisfies the following bounds:
\[
\begin{array}{c|c|c|c}
\text{$A_n$, $B_n$, $D_n$, $E_6$, $F_4$, $G_2$} & C_n & E_7 & E_8 \\
\hline
\text{$p$ good for $G$} & p > n & p > 19 & p > 31
\end{array}
\]
The condition that $\fg$ admit a nondegenerate $G$-invariant bilinear form is satisfied for $\mathrm{GL}(n)$ in all characteristics, and for quasisimple, simply connected groups not of type $A$ in all good characteristics.

Fix a Borel subgroup $B \subset G$ and a maximal torus $T \subset B$.  Let $\bX$ be the character lattice of $T$, and let $\bXp \subset \bX$ be the set of dominant weights corresponding to the positive system \emph{opposite} to the roots of $B$.  (In other words, we think of $B$ as a ``negative'' Borel subgroup.)  Let $\fu \subset \fg$ be the Lie algebra of the unipotent radical of $B$.  Let $W$ be the Weyl group, and let $w_0 \in W$ be the longest element.  For $\lambda \in \bX$, let
\[
\delta_\lambda = \min \{ \ell(w) \mid w\lambda \in \bXp \}
\qquad\text{and}\qquad
\delta^*_\lambda = \delta_{w_0\lambda}.
\]
For any weight $\lambda \in \bX$, let $\dom(\lambda)$ be the unique dominant weight in its Weyl group orbit.  Two partial orders on $\bX$ will appear in this paper.  For $\lambda, \mu \in \bX$, we write
\begin{center}
\begin{tabular}{ll}
$\lambda \preceq \mu$ & if $\mu - \lambda$ is a sum of positive roots; \\
$\lambda \leq \mu$ & if $\dom(\lambda) \prec \dom (\mu)$, or else if $\dom(\lambda) = \dom(\mu)$ and $\lambda \preceq \mu$.
\end{tabular}
\end{center}

Let $\bXmin \subset \bXp$ be the set of minuscule weights.  Elements of $-\bXmin$ are called \emph{antiminuscule} weights.  For any $\lambda \in \bX$, there is a unique minuscule, resp.~antiminuscule, weight that differs from $\lambda$ by an element of the root lattice, denoted 
\[
\smp(\lambda),
\qquad\text{resp.}\qquad
\smm(\lambda).
\]
Note that $\smm(\lambda) = w_0\smp(\lambda)$.

Let $\Rep(G)$ and $\Rep(B)$ denote the categories of finite-dimensional algebraic $G$- and $B$-representations, respectively.  Let $\ind_B^G: \Rep(B) \to \Rep(G)$ and $\res^G_B: \Rep(G) \to \Rep(B)$ denote the induction and restriction functors.  For any $\lambda \in \bX$, let $\bk_\lambda \in \Rep(B)$ be the $1$-dimensional $B$-module of weight $\lambda$, and let
\[
H^i(\lambda) = R^i\ind_B^G \bk_\lambda.
\]
If $\lambda \in \bXp$, then $H^0(\lambda)$ is the dual Weyl module of highest weight $\lambda$, and the other $H^i(\lambda)$ vanish.  On the other hand, we put
\[
V(\lambda) = H^{\dim G/B}(w_0\lambda - 2\rho) \cong H^0(-w_0\lambda)^*.
\]
This is the Weyl module of highest weight $\lambda$.

Let $\cN \subset \fg$ be the nilpotent cone of $G$, and let $\tcN = G \times^B \fu$.  Any weight $\lambda \in \bX$ gives rise to a line bundle $\cO_\tcN(\lambda)$ on $\tcN$.  The Springer resolution is denoted by
\[
\pi: \tcN \to \cN.
\]

Let the multiplicative group $\Gm$ act on $\fg$ by $z \cdot x = z^{-2}x$, where $z \in \Gm$ and $x \in \fg$.  This action commutes with the adjoint action of $G$.  It restricts to an action on $\cN$ or on $\fu$; the latter gives rise to an induced action on $\tcN$.  We write $\Coh^{G \times \Gm}(\cN)$ for the category of $(G \times \Gm)$-equivariant coherent sheaves on $\cN$, and likewise for the other varieties. Recall that there is an ``induction equivalence''
\begin{equation}\label{eqn:ind-equiv}
\Coh^{B \times \Gm}(\fu) \cong \Coh^{G \times \Gm}(\tcN).
\end{equation}

The $\Gm$-action on $\fg$ corresponds to equipping the coordinate ring $\bk[\fg]$ with a grading in which the space of linear functions $\fg^* \subset \bk[\fg]$ is precisely the space of homogeneous elements of degree~$2$.  Thus, a $(G \times \Gm)$-equivariant coherent sheaf on $\fg$ is the same as a finitely-generated graded $\bk[\fg]$-module equipped with a compatible $G$-action.  Similar remarks apply to $\Coh^{G \times \Gm}(\cN)$ and $\Coh^{B \times \Gm}(\fu)$.  If $V = \bigoplus_{n \in \Z} V_n$ is a such a graded module (or more generally, a graded vector space), we define $V\la n\ra$ to be the graded module given by $(V\la m\ra)_n = V_{m+n}$. Given two graded vector modules $V, W$, we define a new graded vector space
\[
\uHom(V,W) = \bigoplus_{n \in \Z} \uHom(V,W)_n
\qquad\text{where}\qquad
\uHom(V,W)_n = \Hom(V,W\la n\ra).
\]
We extend the notation $\cF \mapsto \cF\la n\ra$ to $\Coh^{G \times \Gm}(\tcN)$ via the equivalence~\eqref{eqn:ind-equiv}.

\begin{rmk}\label{rmk:mr}
In a forthcoming paper~\cite{mr:etsps}, Mautner and Riche prove that all good primes are JMW primes.  That paper will also give new proofs of the foundational results in Sections~\ref{ss:exotic} and \ref{ss:mu-exact} below, placing them in the context of the affine braid group action on $\Db\Coh^{G \times \Gm}(\tcN)$ introduced in~\cite{br:abga}.  The main result of~\cite{mr:etsps} appears in this paper as Theorem~\ref{thm:excoh-gr}.
\end{rmk}

\subsection{Exotic sheaves}
\label{ss:exotic}

This subsection and the following one introduce the two $t$-structures we will study.  On a first reading, the descriptions given here are, unfortunately, rather opaque.  We will see some other approaches in Section~\ref{sect:apps}; for explicit examples, see Section~\ref{sect:sl2}.

For $\lambda \in \bX$, let $\Db\Coh^{G \times \Gm}(\tcN)_{\le \lambda}$ be the full triangulated subcategory of $\Db\Coh^{G \times \Gm}(\tcN)$ generated by line bundles $\cO_\tcN(\mu)\la m\ra$ with $\mu \le \lambda$ and $m \in \Z$.  The category $\Db\Coh^{G \times \Gm}(\tcN)_{< \lambda}$ is defined similarly.

\begin{prop}\label{prop:mut}
For any $\lambda \in \bX$, there are objects $\hDelta_\lambda, \hnabla_\lambda \in \Db\Coh^{G \times \Gm}(\tcN)$ that are uniquely determined by the following properties:  there are canonical distinguished triangles
\begin{gather*}
\hDelta_\lambda \to \cO_\tcN(\lambda)\la \delta_\lambda\ra \to \cK_\lambda \to
\qquad\text{and}\qquad
\cK'_\lambda \to \cO_\tcN(\lambda)\la \delta_\lambda\ra \to \hnabla_\lambda \to
\end{gather*}
such that $\cK_\lambda \in \Db\Coh^{G \times \Gm}(\tcN)_{< \lambda}$, $\cK'_\lambda \in \Db\Coh^{G \times \Gm}(\tcN)_{< w_0\dom(\lambda)}$, and
\[
\uHom^\bullet(\hDelta_\lambda,\cF) = \uHom^\bullet(\cF, \hnabla_\lambda) = 0 \qquad\text{for all $\cF \in \Db\Coh^{G \times \Gm}(\tcN)_{<\lambda}$.}
\]
\end{prop}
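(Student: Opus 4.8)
The plan is to produce $\hDelta_\lambda$ and $\hnabla_\lambda$ as, respectively, a left and a right ``mutation'' of $\cO_\tcN(\lambda)\la\delta_\lambda\ra$ across the subcategory $\mathcal{C} := \Db\Coh^{G \times \Gm}(\tcN)_{<\lambda}$, i.e.\ to run the recollement/truncation formalism for the pair $\mathcal{C} \subset \Db\Coh^{G \times \Gm}(\tcN)$; this is the situation of Bezrukavnikov's quasi-exceptional sets~\cite{bez:qes}. Uniqueness is the soft part and I would dispose of it first. If $\hDelta_\lambda$ and $\hDelta'_\lambda$ both satisfy the stated conditions, apply $\uHom^\bullet(\hDelta'_\lambda, -)$ to the triangle $\hDelta_\lambda \to \cO_\tcN(\lambda)\la\delta_\lambda\ra \to \cK_\lambda$: since $\cK_\lambda \in \mathcal{C}$ and $\hDelta'_\lambda$ kills everything in $\mathcal{C}$, the structure map of $\hDelta'_\lambda$ lifts uniquely through $\hDelta_\lambda$, and symmetrically in the other direction; the two lifts are mutually inverse, so $\hDelta_\lambda$ is unique up to canonical isomorphism, and the triangle is canonical too. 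For $\hnabla_\lambda$ the argument is dual; it uses the inclusion $\Db\Coh^{G \times \Gm}(\tcN)_{<w_0\dom(\lambda)} \subseteq \mathcal{C}$ --- immediate from the definitions of $\preceq$ and $\le$ and the fact that $w_0\dom(\lambda)$ is $\preceq$-minimal in its Weyl orbit --- so that $\cK'_\lambda$ also lies in $\mathcal{C}$, where the orthogonality $\uHom^\bullet(\cF, \hnabla_\lambda) = 0$ for $\cF \in \mathcal{C}$ can be applied.

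For existence, two facts about $\mathcal{C}$ are needed. First, $\{\mu \in \bX : \mu < \lambda\}$ is finite --- the dominant weights $\preceq \dom(\lambda)$ form a finite set and each Weyl orbit is finite --- so $\mathcal{C}$ is generated, as a triangulated category stable under $\la 1 \ra$, by the finitely many line bundles $\cO_\tcN(\mu)$ with $\mu < \lambda$ together with their twists. Second, $\uRHom(\cO_\tcN(\mu), \cO_\tcN(\nu))$ is \emph{finite-dimensional}, vanishes unless $\nu \preceq \mu$, and equals $\bk$ when $\mu = \nu$: via the induction equivalence~\eqref{eqn:ind-equiv} it is the $G$-invariant part of $R\Gamma(\tcN, \cO_\tcN(\nu - \mu))$, which the bundle projection $\tcN \to G/B$ writes as a sum over $d \ge 0$, with internal degree $2d$, of the cohomology of $G/B$ with coefficients in $\mathrm{Sym}^d\fu^*$ twisted by the line bundle attached to $\nu - \mu$; the $B$-filtration of $\mathrm{Sym}^d\fu^*$ by one-dimensional modules whose weights are sums of $d$ positive roots then yields both the finite-dimensionality (bounded cohomological amplitude, and only finitely many $d$ contribute) and the stated vanishing. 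In particular $\Db\Coh^{G \times \Gm}(\tcN)$ is Hom-finite between line bundles, hence between all objects of $\mathcal{C}$.

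Granting this, $\cK_\lambda$ is constructed from $\cO_\tcN(\lambda)\la\delta_\lambda\ra$ as a \emph{finite} iterated cone of twists of the line bundles $\cO_\tcN(\mu)$, $\mu < \lambda$: one processes the finitely many such $\mu$, at each stage using the finite bigraded space $\uHom^\bullet(\cO_\tcN(\mu)\la\ast\ra, -)$, as a module over the finite-dimensional graded endomorphism algebra of the generators, to kill the morphisms out of $\mathcal{C}$ from the already-processed part without resurrecting earlier ones; then $\hDelta_\lambda$ is defined by the resulting triangle and is automatically left-orthogonal to $\mathcal{C}$. The object $\hnabla_\lambda$ is built dually, killing morphisms \emph{into} $\cO_\tcN(\lambda)\la\delta_\lambda\ra$. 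For the costandard triangle one also observes that only line bundles $\cO_\tcN(\mu)$ with $\mu < w_0\dom(\lambda)$ can occur in $\cK'_\lambda$: if $\mu < \lambda$ but $\mu \not< w_0\dom(\lambda)$ then, examining the two cases of the definition of $\le$ and using sharpness of the positive root cone, one finds $\mu \not\succeq \lambda$, so the vanishing criterion above gives $\uHom^\bullet(\cO_\tcN(\mu)\la m\ra, \cO_\tcN(\lambda)) = 0$; this puts $\cK'_\lambda$ in $\Db\Coh^{G \times \Gm}(\tcN)_{<w_0\dom(\lambda)}$.

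The step I expect to be the main obstacle --- and the real content of the proposition --- is showing that the iterated-cone (equivalently, mutation, equivalently quasi-exceptional-set) process \emph{terminates}, so that $\hDelta_\lambda$ and $\hnabla_\lambda$ are bounded complexes rather than pro-objects. This forces one to organize the combinatorics carefully: to see that the order $\le$, \emph{after} the normalization by $\la\delta_\lambda\ra$, together with the internal grading, controls the relevant $\uRHom$-spaces in a sufficiently triangular way, and that using the \emph{strict} subcategory $\mathcal{C} = \Db\Coh^{G \times \Gm}(\tcN)_{<\lambda}$ is precisely what makes the recursion well-founded. A secondary delicate point is the $\hDelta$/$\hnabla$ asymmetry recorded above --- the intervention of $w_0\dom(\lambda)$ in the costandard triangle --- which is a genuinely rank-$\ge 2$ phenomenon and needs the precise morphism/support computation.
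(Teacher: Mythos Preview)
Your overall plan---constructing $\hDelta_\lambda$ and $\hnabla_\lambda$ by mutation across $\mathcal{C} = \Db\Coh^{G\times\Gm}(\tcN)_{<\lambda}$---is the paper's approach, and your uniqueness argument and the Hom-computation for line bundles are correct. Your worry about termination is misplaced: once you know the generating set $\{\mu : \mu < \lambda\}$ is finite and the $\uRHom$'s between line bundles are finite-dimensional, the general mutation machinery of~\cite[\S2.1]{bez:ctm} guarantees that $\hDelta_\lambda$ and $\hnabla_\lambda$ exist as bounded complexes; this is not where the content lies.

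The genuine gap is in your argument for the stronger constraint $\cK'_\lambda \in \Db\Coh^{G\times\Gm}(\tcN)_{<w_0\dom(\lambda)}$. You correctly show that for $\mu < \lambda$ with $\dom(\mu) = \dom(\lambda)$ one has $\mu \not\succeq \lambda$, hence $\uHom^\bullet(\cO_\tcN(\mu), \cO_\tcN(\lambda)) = 0$. But this only controls the \emph{first} stage of the right mutation. At later stages one is computing $\uHom^\bullet(\cO_\tcN(\mu), -)$ into a cone built from $\cO_\tcN(\lambda)$ together with pieces already lying in $\mathcal{D} := \Db\Coh^{G\times\Gm}(\tcN)_{<w_0\dom(\lambda)}$, and $\uHom^\bullet(\cO_\tcN(\mu), \mathcal{D})$ is typically nonzero: for any simple root $\alpha$, taking $\nu = \mu - \alpha$ usually gives $\dom(\nu) \prec \dom(\lambda)$ with $\uHom^\bullet(\cO_\tcN(\mu), \cO_\tcN(\nu)) \ne 0$. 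So the ``bad'' weights can re-enter, and the inference ``this puts $\cK'_\lambda$ in $\mathcal{D}$'' is not justified. The paper flags exactly this point: the general mutation framework yields only $\cK'_\lambda \in \mathcal{C}$, and the refinement is obtained not by a direct combinatorial argument but by combining the vanishing~\eqref{eqn:line-excep} with an inductive study of the $\hnabla_\lambda$ based on the reflection functors $\Psi_\alpha$ of Proposition~\ref{prop:braid}, which relate costandard objects for weights in the same Weyl orbit and reduce the claim to the dominant case~\eqref{eqn:excoh-dom}, where $\cK'_\lambda = 0$.
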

\begin{proofsk}
This is mostly a consequence of the machinery of ``mutation of exceptional sets,'' discussed in~\cite[\S2.1 and~\S2.3]{bez:ctm}.  The general results of~\cite[\S2.1.4]{bez:ctm} might at first suggest that $\cK_\lambda$ and $\cK'_\lambda$ both lie just in $\Db\Coh^{G \times \Gm}(\tcN)_{< \lambda}$.  To obtain the stronger constraint on $\cK'_\lambda$, one first shows that line bundles already have a strong $\Hom$-vanishing property with respect to $\preceq$:
\begin{equation}\label{eqn:line-excep}
\uHom^\bullet(\cO_\tcN(\mu), \cO_\tcN(\lambda)) = 0 \qquad\text{if $\mu \not\succeq \lambda$.}
\end{equation}
The result is obtained by combining this with a study of the $\hnabla_\lambda$ based on Proposition~\ref{prop:braid} below.
\end{proofsk}

The preceding proposition says, in part, that the $\hnabla_\lambda$ constitute a ``graded exceptional set'' in the sense of~\cite[\S2.3]{bez:ctm}.  In the extreme cases of dominant or antidominant weights, equation~\eqref{eqn:line-excep} actually implies that
\begin{equation}\label{eqn:excoh-dom}
\hnabla_\lambda \cong \cO_\tcN(\lambda)
\qquad\text{and}\qquad
\hDelta_{w_0\lambda} \cong \cO_\tcN(w_0\lambda)\la \delta_{w_0\lambda}\ra
\qquad\text{for $\lambda \in \bXp$.}
\end{equation}

The proposition also implies that the composition $\hDelta_\lambda \to \cO_\tcN(\lambda)\la \delta_\lambda\ra \to \hnabla_\lambda$ is nonzero for all $\lambda \in \bX$.  This is the morphism $\hDelta_\lambda \to \hnabla_\lambda$ appearing in the following statement.

\begin{thm}\label{thm:excoh-defn}
There is a unique $t$-structure on $\Db\Coh^{G \times \Gm}(\tcN)$ whose heart
\[
\ExCoh^{G \times \Gm}(\tcN) \subset \Db\Coh^{G \times \Gm}(\tcN)
\]
is stable under $\la 1\ra$ and contains $\hDelta_\lambda$ and $\hnabla_\lambda$ for all $\lambda \in \bX$. In this category, every object has finite length.  The objects
\[
\fE_\lambda\la n\ra := \im(\hDelta_\lambda\la n\ra \to \hnabla_\lambda\la n\ra)
\]
are simple and pairwise nonisomorphic, and every simple object is isomorphic to one of these.
\end{thm}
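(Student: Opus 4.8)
The plan is to construct the $t$-structure by the standard recollement/exceptional-set recipe and then read off the simples from the images $\fE_\lambda$. I would proceed as follows.

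\textbf{Step 1: Build the $t$-structure from the graded exceptional set.} By Proposition~\ref{prop:mut}, the family $\{\hnabla_\lambda\la n\ra\}_{\lambda \in \bX, n \in \Z}$ is a graded exceptional set, and dually so is $\{\hDelta_\lambda\la n\ra\}$. Fix an enumeration of $\bX$ refining the order $\le$ (using that $\le$ is a partial order in which each weight has only finitely many predecessors up to the relevant finiteness, or rather using that the subcategories $\Db\Coh^{G\times\Gm}(\tcN)_{\le\lambda}$ are an exhaustive filtration by triangulated subcategories each admitting a finite exceptional generating set modulo $\la 1\ra$). The machinery of \cite[\S2.3]{bez:ctm} associates to such a graded exceptional set a bounded $t$-structure whose heart is the extension-closure of the $\hnabla_\lambda\la n\ra$ (equivalently of the $\hDelta_\lambda\la n\ra$), characterized by: $\uHom^{<0}(\hDelta_\lambda, \mathcal{A}) = 0$ and $\uHom^{<0}(\mathcal{A}, \hnabla_\lambda) = 0$ for all $\lambda$ and all $\mathcal{A}$ in the heart. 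This gives existence; uniqueness follows because any $t$-structure with heart stable under $\la 1\ra$ and containing all $\hDelta_\lambda$, $\hnabla_\lambda$ must contain their extension-closure, and a bounded $t$-structure is determined by a generating extension-closed subcategory that is the heart.

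\textbf{Step 2: Finite length.} Because the exceptional set is indexed by $\bX \times \Z$ with the filtration by the $\Db\Coh^{G\times\Gm}(\tcN)_{\le\lambda}$ being exhaustive and each graded piece ``finite'' (finitely many $\lambda$'s with $\dom(\lambda)$ fixed, $\la 1\ra$ accounting for the $\Z$-grading), every object of the heart lies in some $\Db\Coh^{G\times\Gm}(\tcN)_{\le\lambda}$ truncated to finitely many twists, hence is built from finitely many $\hnabla_\mu\la m\ra$ by extensions; a standard argument (the $\hnabla$'s have finite-dimensional graded $\Hom$'s into any object of the heart, giving a bound on Jordan--Hölder length) shows every object has finite length.

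\textbf{Step 3: Identify the simples.} For each $\lambda$, consider $\fE_\lambda = \im(\hDelta_\lambda \to \hnabla_\lambda)$, the image taken in the abelian heart; this makes sense once one knows $\hDelta_\lambda, \hnabla_\lambda$ lie in the heart (immediate from the heart's definition) and that the composite $\hDelta_\lambda \to \hnabla_\lambda$ is nonzero (noted in the excerpt just before the theorem). The $\Hom$-vanishing properties of Proposition~\ref{prop:mut} force the kernel of $\hDelta_\lambda \to \fE_\lambda$ and the cokernel of $\fE_\lambda \to \hnabla_\lambda$ to lie in the subcategory generated by $\hnabla_\mu\la m\ra$ with $\mu < \lambda$; standard exceptional-set formalism then shows $\fE_\lambda$ has simple head and simple socle, both equal to a single simple object $L_\lambda$ not occurring in $\Db\Coh^{G\times\Gm}(\tcN)_{<\lambda}$, and in fact $\fE_\lambda = L_\lambda$ by a minimality argument on Jordan--Hölder support with respect to $\le$. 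Pairwise non-isomorphism follows from the grading/support bookkeeping: $\fE_\lambda\la n\ra \cong \fE_\mu\la m\ra$ would force $(\lambda,n) = (\mu,m)$ by comparing the minimal $\mu$ in the support. Finally, exhaustivity of the filtration plus finite length shows every simple occurs: a simple $L$ lies in some $\Db\Coh^{G\times\Gm}(\tcN)_{\le\lambda}$; taking $\lambda$ minimal, $L$ must be $\fE_\lambda\la n\ra$ for suitable $n$ by comparing with the exceptional generator at the top of that filtration step.

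\textbf{Main obstacle.} The genuinely delicate point is Step~1 together with the precise statement of Step~3's ``simple head/socle'' claim: one must verify that the general mutation-of-exceptional-sets machinery of \cite{bez:ctm} applies in this infinite (but locally finite) graded setting — i.e.\ that the filtration $\Db\Coh^{G\times\Gm}(\tcN)_{\le\lambda}$ is well-behaved enough (each step finitely generated modulo $\la 1\ra$, the whole exhaustive) that the $t$-structure glued from the finite pieces is bounded and its heart is exactly the extension-closure claimed. Everything else is formal once this foundational compatibility is in place; I would spend the bulk of the proof carefully checking the hypotheses of \cite[\S2.3]{bez:ctm} against the geometry of $\tcN$ and citing \eqref{eqn:line-excep} to control the $<\lambda$ versus $< w_0\dom(\lambda)$ subtlety in the $\hDelta$ versus $\hnabla$ triangles.
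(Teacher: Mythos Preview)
Your outline follows the paper's strategy in Step~1 (invoke the exceptional-set machinery of \cite[\S2.3]{bez:ctm} to build the $t$-structure), but there is a genuine gap at the point you call ``immediate.'' In Step~1 you assert that the heart produced by the mutation machinery is the extension-closure of the $\hnabla_\lambda\la n\ra$, and in Step~3 you say that $\hDelta_\lambda, \hnabla_\lambda$ lying in the heart is ``immediate from the heart's definition.'' This is exactly the step the paper singles out as \emph{not} automatic: the general construction of \cite[Proposition~4]{bez:ctm} produces a $t$-structure from the exceptional set, but, as the paper's proof sketch states, ``that general mechanism does not guarantee that the $\hDelta_\lambda$ and $\hnabla_\lambda$ lie in the heart.'' Until you know this, you cannot form $\im(\hDelta_\lambda\to\hnabla_\lambda)$ inside the heart, and the entire simple-object argument of Step~3 is unavailable.

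The paper fills this gap by appealing to external input: the derived equivalences of Section~\ref{ss:whittaker} (for $\bk=\Qlb$) or Section~\ref{ss:aff-grass} (in general), which identify $\ExCoh$ with a category of perverse sheaves where the analogues of $\hDelta_\lambda, \hnabla_\lambda$ are visibly in the heart. Your ``Main obstacle'' paragraph misdiagnoses the difficulty: the issue is not whether the filtration is locally finite or whether the hypotheses of \cite[\S2.3]{bez:ctm} are met, but rather that membership of the mutated objects in the heart is a genuine positivity statement requiring a separate argument. If you want a self-contained proof along your lines, you would need to supply a direct argument (e.g.\ via the affine braid group action as in \cite{mr:etsps}) that $\hDelta_\lambda$ and $\hnabla_\lambda$ satisfy the relevant $\uHom^{<0}$- and $\uHom^{>0}$-vanishing against each other; this is not formal.
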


This $t$-structure is called the \emph{exotic $t$-structure} on $\Db\Coh^{G \times \Gm}(\tcN)$.  The $\hDelta_\lambda$ and $\hnabla_\lambda$ are called \emph{standard} and \emph{costandard} objects, respectively.  Further properties of these objects will be discussed in Section~\ref{ss:prop-strat}.

\begin{proofsk}
The existence of the $t$-structure follows from~\cite[Proposition~4]{bez:ctm}, which describes a general mechanism for constructing $t$-structures from exceptional sets.  However, that general mechanism does not guarantee that the $\hDelta_\lambda$ and $\hnabla_\lambda$ lie in the heart.  One approach to showing that they do lie in the heart (see~\cite{ar:agsr}) is to deduce it from the derived equivalences that we will see in Sections~\ref{ss:whittaker} (for $\bk = \C$) or~\ref{ss:aff-grass} (in general).
\end{proofsk}

\subsection{Perverse-coherent sheaves}
\label{ss:pcoh}

For any $\lambda \in \bX$, let
\[
A_\lambda = \pi_*\cO_\tcN(\lambda).
\]
For $\lambda \in \bXp$, we define $\Db\Coh^{G \times \Gm}(\cN)_{\le \lambda}$ to be the full triangulated subcategory of $\Db\Coh^{G \times \Gm}(\cN)$ generated by the objects $A_\mu\la m\ra$ with $\mu \in \bXp$, $\mu \le \lambda$, and $m \in \Z$.  The category $\Db\Coh^{G \times \Gm}(\cN)_{< \lambda}$ is defined similarly.  Note that these categories are defined only for dominant weights, unlike in the exotic case.

One basic property of the $A_\lambda$, often called \emph{Andersen--Jantzen sheaves}, is that if $s_\alpha \in W$ is a simple reflection such that $s_\alpha\lambda \prec \lambda$, then there is a natural map
\begin{equation}\label{eqn:aj-refl}
A_{s_\alpha\lambda} \to A_\lambda\la -2\ra
\end{equation}
whose cone lies in $\Db\Coh^{G \times \Gm}(\cN)_{< \dom(\lambda)}$~\cite[Lemma~5.3]{a:pcsnc}.

For a dominant weight $\lambda \in \bXp$, we introduce the following additional notation:
\begin{equation}\label{eqn:bnabla-defn}
\bDelta_\lambda = A_{w_0\lambda}\la \delta^*_\lambda\ra,
\qquad
\bnabla_\lambda = A_\lambda\la -\delta^*_\lambda\ra.
\end{equation}
It follows from~\eqref{eqn:aj-refl} that there is a natural nonzero map
\[
\bDelta_\lambda \to \bnabla_\lambda
\]
whose cone lies in $\Db\Coh^{G \times \Gm}(\cN)_{< \lambda}$.  This morphism appears in the following statement.

\begin{thm}\label{thm:pcoh-defn}
There is a unique $t$-structure on $\Db\Coh^{G \times \Gm}(\cN)$ whose heart
\[
\PCoh^{G \times \Gm}(\cN) \subset \Db\Coh^{G \times \Gm}(\cN)
\]
is stable under $\la 1\ra$ and contains $\bDelta_\lambda$ and $\bnabla_\lambda$ for all $\lambda \in \bXp$.  In this category, every object has finite length.  The objects
\[
\fIC_\lambda\la n\ra := \im(\bDelta_\lambda\la n\ra \to \bnabla_\lambda\la n\ra)
\]
are simple and pairwise nonisomorphic, and every simple object is isomorphic to one of these.
\end{thm}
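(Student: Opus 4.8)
The plan is to produce this $t$-structure exactly as in Theorem~\ref{thm:excoh-defn}: exhibit $\{\bnabla_\lambda\la n\ra : \lambda\in\bXp,\ n\in\Z\}$ as a graded quasi-exceptional set in $\Db\Coh^{G \times \Gm}(\cN)$ in the sense of \cite[\S2]{bez:ctm} (cf.~\cite{bez:pc}), indexed by the poset $\bXp$ with its order $\le$ (which on $\bXp$ is just $\preceq$), augmented by the grading shifts, with dual collection $\{\bDelta_\lambda\la n\ra\}$ and comparison maps $\bDelta_\lambda\la n\ra\to\bnabla_\lambda\la n\ra$ as above.  Given this, \cite[Proposition~4]{bez:ctm} produces a $t$-structure on $\Db\Coh^{G \times \Gm}(\cN)$, stable under $\la 1\ra$, whose heart contains every $\bDelta_\lambda$ and $\bnabla_\lambda$; and, just as for Theorem~\ref{thm:excoh-defn}, uniqueness is then immediate, since the aisle of any such $t$-structure contains every nonnegative shift of every $\bDelta_\lambda\la n\ra$ and $\bnabla_\lambda\la n\ra$ and is closed under extensions and direct summands --- which already forces it to contain, hence to equal, the aisle of the $t$-structure just produced.

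Checking that $\{\bnabla_\lambda\la n\ra\}$ is a quasi-exceptional set amounts to three things.  \emph{Generation:} $\Db\Coh^{G \times \Gm}(\cN)$ is generated as a triangulated category by the $A_\mu\la m\ra$ with $\mu\in\bXp$.  Since $\pi$ is proper and birational with $R\pi_*\cO_\tcN=\cO_\cN$, the projection formula gives $\cF\cong R\pi_* L\pi^*\cF$ for every $\cF$, and resolving $L\pi^*\cF$ by the line bundles $\cO_\tcN(\nu)\la m\ra$ that generate $\Db\Coh^{G \times \Gm}(\tcN)$ places $\cF$ in the triangulated span of the $R\pi_*\cO_\tcN(\nu)\la m\ra$, $\nu\in\bX$; one then rewrites these in terms of the dominant $A_\mu\la m\ra$ using \eqref{eqn:aj-refl} and line-bundle cohomology on $\tcN$, by induction on $\delta_\nu$.  \emph{Cone condition:} that the cone of $\bDelta_\lambda\to\bnabla_\lambda$ lies in $\Db\Coh^{G \times \Gm}(\cN)_{<\lambda}$ is precisely what was recorded before the statement, and follows from \eqref{eqn:aj-refl}.  \emph{Orthogonality:} one must show $\uHom^\bullet(\bDelta_\lambda,\bnabla_\mu)$ --- which up to a shift is $\uHom^\bullet(A_{w_0\lambda},A_\mu)$ --- vanishes unless $\mu\le\lambda$, is one-dimensional and concentrated in degree $0$ when $\lambda=\mu$, and behaves correctly under $\la1\ra$; and likewise for the $\Hom$'s among the $\bDelta$'s and among the $\bnabla$'s separately.

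This orthogonality is the substance of the proof, and the one place where the standing hypotheses on $G$ and $\ch(\bk)$ are genuinely used: the morphism computations between the Andersen--Jantzen sheaves $A_\lambda$ rest on vanishing theorems for the cohomology of line bundles on $\tcN$ (results of Broer and relatives).  These are carried out in \cite[\S\S4--5]{a:pcsnc}; see also \cite{bez:pc, bez:qes}.  The remaining assertions are formal consequences of the quasi-exceptional formalism.  The heart is finite-length because the poset $\bXp$ is locally finite --- for each $\lambda$ there are finitely many $\mu\in\bXp$ with $\mu\le\lambda$ --- so each $\bDelta_\lambda,\bnabla_\lambda$ has finite length and the $A_\mu\la m\ra$ generate.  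The nonzero map $\bDelta_\lambda\la n\ra\to\bnabla_\lambda\la n\ra$, with cone in $\Db\Coh^{G \times \Gm}(\cN)_{<\lambda}$, has image a simple object $\fIC_\lambda\la n\ra$ that is at once the head of $\bDelta_\lambda\la n\ra$ and the socle of $\bnabla_\lambda\la n\ra$; distinct $(\lambda,n)$ give non-isomorphic $\fIC_\lambda\la n\ra$ by orthogonality, and any simple object receives a nonzero map from some $\bDelta_\lambda\la n\ra$, hence is one of these.

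An alternative organisation, closer to Bezrukavnikov's original approach, is to invoke first the existence of the middle-perversity perverse-coherent $t$-structure on $\Db\Coh^{G \times \Gm}(\cN)$ --- from the Bezrukavnikov--Deligne theory applied to the finite poset of $G$-orbits on $\cN$ --- and then to identify it with the one above by checking that $\bDelta_\lambda$ and $\bnabla_\lambda$ lie in its heart; this again reduces to the line-bundle cohomology vanishing.  Either way, the $\Hom$-computations among the $A_\lambda$ are the main obstacle, and everything else is soft.
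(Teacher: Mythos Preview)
Your approach is essentially the same as the paper's: show that the $\bnabla_\lambda$ constitute a graded quasi-exceptional set with dual set $\{\bDelta_\lambda\}$, then invoke the general machinery to produce the $t$-structure.  You also correctly identify the $\uHom^\bullet$-computations among the Andersen--Jantzen sheaves as the substantive input, and your alternative organisation via the middle-perversity $t$-structure matches the route taken in~\cite{bez:qes}.

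There is, however, one genuine gap.  You assert that \cite[Proposition~4]{bez:ctm} ``produces a $t$-structure \ldots\ whose heart contains every $\bDelta_\lambda$ and $\bnabla_\lambda$.''  It does not: as the paper itself emphasises in the proof sketch of Theorem~\ref{thm:excoh-defn}, that general mechanism gives the $t$-structure but \emph{does not guarantee that the (quasi-)exceptional objects lie in the heart}.  In the exotic case this step is genuinely difficult and is handled via the derived equivalences of \S\ref{ss:whittaker}--\ref{ss:aff-grass}.  In the perverse-coherent case it is easier---one can check directly that each $A_\lambda$ satisfies the support/cosupport bounds of the perverse-coherent heart, as in \cite[Lemma~9]{bez:qes} or \cite[Lemma~5.2]{a:pcsnc}---but it is still a separate computation that must be done, not a formal consequence of the quasi-exceptional formalism.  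Your uniqueness argument and your description of the simple objects both tacitly rely on this heart containment, so the omission is not merely cosmetic.
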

This $t$-structure is called the \emph{perverse-coherent $t$-structure} on $\Db\Coh^{G \times \Gm}(\cN)$.  The justification for this terminology will be discussed in Section~\ref{ss:local}. 
\begin{proofsk}
The first step is to show that the $\bDelta_\lambda$ and $\bnabla_\lambda$ satisfy $\Ext$-vanishing properties similar to (but somewhat weaker than) those in Proposition~\ref{prop:mut}.  (To be precise, the $\bnabla_\lambda$ constitute a ``graded quasiexceptional set,'' but not an exceptional set.  See~\cite[\S2.2 and Remark~8]{bez:qes}.)  The existence of the $t$-structure follows from a general mechanism, as in Theorem~\ref{thm:excoh-defn}.  In this case, the fact that the $\bDelta_\lambda$ and the $\bnabla_\lambda$ lie in the heart can be checked by direct computation; see~\cite[Lemma~9]{bez:qes} or~\cite[Lemma~5.2]{a:pcsnc}.
\end{proofsk}

\subsection{The relationship between exotic and perverse-coherent sheaves}
\label{ss:mu-exact}

In this subsection, we briefly outline a proof of the $t$-exactness of $\pi_*$, following~\cite[\S\S2.3--2.4]{bez:ctm}.  Let $\alpha$ be a simple root, and let $P_\alpha \supset B$ be the corresponding parabolic subgroup.  Let $\fu_\alpha \subset \fu$ be the Lie algebra of the unipotent radical of $P_\alpha$.  The cotangent bundle $T^*(G/P_\alpha)$ can be identified with $G \times^{P_\alpha} \fu_\alpha$.  On the other hand, let $\tcN_\alpha = G \times^B \fu_\alpha$.  There are natural maps
\[
T^*(G/P_\alpha) \xleftarrow{\pi_\alpha} \tcN_\alpha \xrightarrow{i_\alpha} \tcN,
\]
where $i_\alpha$ is an inclusion of a smooth subvariety of codimension~$1$, and $\pi_\alpha$ is a smooth, proper map whose fibers are isomorphic to $P_\alpha/B \cong \mathbb{P}^1$.  

Let $\rho = \frac{1}{2} \sum_{\alpha \in \Phi^+} \alpha$, where $\Phi^+$ is the set of positive roots.  In general, $\rho$ need not lie in $\bX$, but because $G$ has simply-connected derived group, there exists a weight $\hrho \in \bX$ such that $(\beta^\vee,\hrho) = 1$ for all simple coroots $\beta^\vee$.

Let $\Psi_\alpha: \Db\Coh^{G \times \Gm}(\tcN) \to \Db\Coh^{G \times \Gm}(\tcN)$ be the functor
\[
\Psi_\alpha(\cF) = i_{\alpha*}\pi_\alpha^*\pi_{\alpha*}i_\alpha^*(\cF \otimes \cO_\tcN(\hrho-\alpha)) \otimes \cO_\tcN(-\hrho)\la 1\ra.
\]
This coincides with the functor denoted $F_\alpha\la-1\ra[1]$ or $F'_\alpha\la 1\ra[-1]$ in~\cite[\S2.3]{bez:ctm}. (Note that the statement of~\cite[Lemma~6(b)]{bez:ctm} contains a misprint.)

\begin{prop}\label{prop:braid}
\begin{enumerate}
\item The functor $\Psi_\alpha$ is self-adjoint.\label{it:braid-adjoint}
\item If $s_\alpha\lambda = \lambda$, then $\Psi_\alpha(\hDelta_\lambda) = \Psi_\alpha(\hnabla_\lambda) = 0$.\label{it:braid-kill}
\item If $s_\alpha\lambda \prec \lambda$, then\label{it:braid-short}
\[
\Psi_\alpha(\hDelta_{s_\alpha\lambda}) \cong \Psi_\alpha(\hDelta_\lambda)\la -1\ra[1]
\qquad\text{and}\qquad
\Psi_\alpha(\hnabla_{s_\alpha\lambda}) \cong \Psi_\alpha(\hnabla_\lambda)\la 1\ra[-1].
\]
\item There are natural distinguished triangles\label{it:braid-ses}
\[
\hDelta_{s_\alpha\lambda} \to \hDelta_\lambda\la 1\ra \to \Psi_\alpha(\hDelta_\lambda)[1] \to, \qquad
\Psi_\alpha(\hnabla_\lambda)[-1] \to \hnabla_\lambda\la -1\ra \to \hnabla_{s_\alpha\lambda} \to.
\]
\end{enumerate}
\end{prop}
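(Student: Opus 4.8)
The four assertions all flow from adjunction, together with a handful of computations along the $\mathbb{P}^1$-fibres of $\pi_\alpha\colon\tcN_\alpha\to T^*(G/P_\alpha)$, so the plan is to pin down the adjoint functors first and then propagate everything from line bundles. I would begin with~\eqref{it:braid-adjoint}. Write $\Psi_\alpha$ as the composite of $(-\otimes\cO_\tcN(\hrho-\alpha))$, $i_\alpha^*$, $\pi_{\alpha*}$, $\pi_\alpha^*$, $i_{\alpha*}$, $(-\otimes\cO_\tcN(-\hrho)\la 1\ra)$; its right adjoint is the composite in reverse order of the respective right adjoints, using $i_{\alpha*}\dashv i_\alpha^!$, $\pi_\alpha^*\dashv\pi_{\alpha*}\dashv\pi_\alpha^!$, and the fact that tensoring by a line bundle is self-inverse. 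Since $i_\alpha$ is a regular embedding of a smooth divisor and $\pi_\alpha$ is smooth and proper of relative dimension $1$, Grothendieck--Serre duality gives $i_\alpha^!\cong i_\alpha^*(-)\otimes\cO_{\tcN_\alpha}(\tcN_\alpha)[-1]$ and $\pi_\alpha^!\cong\pi_\alpha^*(-)\otimes\omega_{\pi_\alpha}[1]$, and pushing these twists through with the projection formula reduces~\eqref{it:braid-adjoint} to a computation of the normal bundle of $\tcN_\alpha$ and of the relative dualizing sheaf. I would check that $\omega_{\pi_\alpha}\cong i_\alpha^*\cO_\tcN(-\alpha)$ and that $\cO_{\tcN_\alpha}(\tcN_\alpha)\cong i_\alpha^*\cO_\tcN(-\alpha)\la 2\ra$, the grading shift $\la 2\ra$ recording the $z^{-2}$-action on the normal direction $\fu/\fu_\alpha\subset\fg$; the twists by $\cO_\tcN(\hrho-\alpha)$ and $\cO_\tcN(-\hrho)$ and the shift $\la 1\ra$ are precisely what is needed for all of these contributions to cancel, so that $\Psi_\alpha^R\cong\Psi_\alpha$. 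This is the computation of~\cite[Lemma~6]{bez:ctm}.

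For~\eqref{it:braid-kill}--\eqref{it:braid-ses} I would work on the $\mathbb{P}^1$-fibres. The restriction of $\cO_\tcN(\mu)$ to a fibre $P_\alpha/B\cong\mathbb{P}^1$ of $\pi_\alpha$ is the line bundle of degree $\langle\alpha^\vee,\mu\rangle$, and the inner twist by $\cO_\tcN(\hrho-\alpha)$ lowers this degree by $\langle\alpha^\vee,\hrho\rangle-2=-1$. Hence if $s_\alpha\mu=\mu$ the twisted restriction is $\cO_{\mathbb{P}^1}(-1)$ on every fibre, which has no cohomology; more precisely $\mu$ then extends to a character of $P_\alpha$, so the restriction is a $\pi_\alpha$-pullback tensored with a relative $\cO(-1)$, and $\Psi_\alpha(\cO_\tcN(\mu))=0$ by the projection formula. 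When $\langle\alpha^\vee,\mu\rangle\ne 0$ I would instead feed the twisted restriction into the standard triangle of the $\mathbb{P}^1$-bundle $\pi_\alpha$ — a relative form of Beilinson's resolution of the diagonal, relating $\cG$, $\pi_\alpha^*\pi_{\alpha*}\cG$, and a term built from $\pi_\alpha^*$, $\pi_{\alpha*}$ and relative $\cO(-1)$-twists of $\cG$ — to extract explicit distinguished triangles. In the extreme cases these base objects are literally line bundles by~\eqref{eqn:excoh-dom}: $\hnabla_\mu\cong\cO_\tcN(\mu)$ for $\mu\in\bXp$ and $\hDelta_\mu\cong\cO_\tcN(\mu)\la\delta_\mu\ra$ for $\mu\in w_0\bXp$; so~\eqref{it:braid-short} and~\eqref{it:braid-ses} hold there. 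I would then run an induction on $\delta_\lambda$ in which~\eqref{it:braid-short} and~\eqref{it:braid-ses} feed each other: given $\hDelta_{s_\alpha\lambda}$ with $s_\alpha\lambda\prec\lambda$, assertion~\eqref{it:braid-short} produces $\Psi_\alpha(\hDelta_\lambda)$, and then~\eqref{it:braid-ses} — whose structural map $\hDelta_\lambda\la 1\ra\to\Psi_\alpha(\hDelta_\lambda)[1]$ comes from the $(\pi_\alpha^*,\pi_{\alpha*})$-adjunction sandwiched between $i_\alpha^*$ and $i_{\alpha*}$ — recovers $\hDelta_\lambda$ itself, and dually for $\hnabla$. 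At each step one checks, using~\eqref{eqn:line-excep}, that the object so produced has the $\Hom$-vanishing against $\Db\Coh^{G\times\Gm}(\tcN)_{<\lambda}$ that characterizes $\hDelta_{s_\alpha\lambda}$ and $\hnabla_{s_\alpha\lambda}$, so that uniqueness (Proposition~\ref{prop:mut}) pins down the identifications; and~\eqref{it:braid-kill} extends from line bundles along the same induction. The $\hDelta$- and $\hnabla$-halves of each statement are matched using Grothendieck--Serre duality on $\tcN$ together with~\eqref{it:braid-adjoint}.

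The genuine obstacle, I expect, is~\eqref{it:braid-adjoint} and, more broadly, controlling the interaction of $\Psi_\alpha$ with $i_\alpha$ and $\pi_\alpha$ at the level of the \emph{standard and costandard objects} rather than merely for line bundles. The difficulty is the bookkeeping: one must juggle three independent devices — the cohomological shift $[n]$, the grading shift $\la m\ra$, and tensoring by powers of $\cO_\tcN(\alpha)$ and $\cO_\tcN(\hrho)$ — and keep them synchronized through every adjunction, every projection formula, and every use of the $\mathbb{P}^1$-bundle triangle, it being exactly the twists by $\cO_\tcN(\hrho-\alpha)$, $\cO_\tcN(-\hrho)$ and the shift $\la 1\ra$ hard-wired into $\Psi_\alpha$ that make the identifications close up. For the complete bookkeeping — and for the braid relations among the $\Psi_\alpha$ that make the reduced-word induction well defined — I would follow~\cite[\S\S2.3--2.4]{bez:ctm}.
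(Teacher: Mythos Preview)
Your proposal is correct and follows essentially the same route as the paper, which simply cites~\cite[Lemma~6 and Proposition~7]{bez:ctm} for parts~\eqref{it:braid-adjoint}, \eqref{it:braid-short}, \eqref{it:braid-ses} and observes that~\eqref{it:braid-kill} falls out of the same arguments. Your sketch is in fact a reasonable expansion of what those references do: the self-adjointness via duality and the normal/relative-dualizing bundle computation, and the inductive bootstrap from the dominant/antidominant line-bundle cases using the $\mathbb{P}^1$-bundle triangles.
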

\begin{proofsk}
For costandard objects, parts~\eqref{it:braid-adjoint}, \eqref{it:braid-short}, and~\eqref{it:braid-ses} appear in~\cite[Lemma~6 and Proposition~7]{bez:ctm}. In fact, the proofs of those statements also establish part~\eqref{it:braid-kill}.  Similar arguments apply in the case of standard objects.
\end{proofsk}

It is likely that the distinguished triangles in part~\eqref{it:braid-ses} above are actually short exact sequences in $\ExCoh^{G \times \Gm}(\tcN)$; see Section~\ref{ss:socles}.

\begin{prop}\label{prop:mu-exact}
The functor $\pi_*: \Db\Coh^{G \times \Gm}(\tcN) \to \Db\Coh^{G \times \Gm}(\cN)$ restricts to an exact functor $\pi_*: \ExCoh^{G \times \Gm}(\tcN) \to \PCoh^{G \times \Gm}(\cN)$. It satisfies
\[
\begin{aligned}
\pi_*\hDelta_\lambda &\cong \bDelta_{\dom(\lambda)}\la -\delta^*_\lambda\ra, \\
\pi_*\hnabla_\lambda &\cong \bnabla_{\dom(\lambda)}\la \delta^*_\lambda\ra,
\end{aligned}
\qquad\qquad
\pi_*\fE_\lambda \cong
\begin{cases}
\fIC_{w_0\lambda} &\text{if $\lambda \in -\bXp$,} \\
0 & \text{otherwise.}
\end{cases}
\]
\end{prop}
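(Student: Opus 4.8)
The plan is to follow \cite[\S\S2.3--2.4]{bez:ctm}: first compute $\pi_*$ on standard and costandard objects, then deduce exactness by a formal argument, and finally determine $\pi_*\fE_\lambda$ from the presentation $\fE_\lambda = \im(\hDelta_\lambda \to \hnabla_\lambda)$. The geometric input that makes everything run is the vanishing $\pi_* \circ \Psi_\alpha = 0$ for every simple root $\alpha$. To see it, note that $\pi \circ i_\alpha$ factors as $\tcN_\alpha \xrightarrow{\pi_\alpha} T^*(G/P_\alpha) \xrightarrow{\pi'_\alpha} \cN$, where $\pi'_\alpha$ sends $(g,x)$ to the nilpotent $g \cdot x$. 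Feeding this factorization and the projection formula along $\pi_\alpha$ into the definition of $\Psi_\alpha$, one rewrites $\pi_*\Psi_\alpha(\cF)$ with a tensor factor $\pi_{\alpha*}\bigl(\cO_\tcN(-\hrho)|_{\tcN_\alpha}\bigr)$; since $(\alpha^\vee,\hrho)=1$, the line bundle $\cO_\tcN(-\hrho)$ restricts to $\cO(-1)$ on each $\mathbb{P}^1$-fibre $P_\alpha/B$ of $\pi_\alpha$, so this (derived) pushforward is zero.

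Granting this, I would prove $\pi_*\hnabla_\lambda \cong \bnabla_{\dom(\lambda)}\la\delta^*_\lambda\ra$ by induction on $\delta_\lambda$. For $\lambda \in \bXp$ one has $\hnabla_\lambda \cong \cO_\tcN(\lambda)$ by~\eqref{eqn:excoh-dom}, so $\pi_*\hnabla_\lambda = A_\lambda = \bnabla_\lambda\la\delta^*_\lambda\ra$. If $\delta_\nu > 0$, choose a simple $\alpha$ with $s_\alpha\nu \succ \nu$ and $\delta_{s_\alpha\nu} = \delta_\nu - 1$, and put $\lambda = s_\alpha\nu$, so that $\nu = s_\alpha\lambda \prec \lambda$; applying $\pi_*$ to the second triangle of Proposition~\ref{prop:braid}\eqref{it:braid-ses} and using $\pi_*\Psi_\alpha(\hnabla_\lambda) = 0$ gives $\pi_*\hnabla_\nu \cong (\pi_*\hnabla_\lambda)\la -1\ra$, while a short computation with lengths gives $\delta^*_\nu = \delta^*_\lambda - 1$. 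The formula $\pi_*\hDelta_\lambda \cong \bDelta_{\dom(\lambda)}\la -\delta^*_\lambda\ra$ is obtained symmetrically: induction on $\delta^*_\lambda$, with base case the antidominant weights (where $\hDelta_\lambda \cong \cO_\tcN(\lambda)\la\delta_\lambda\ra$ by~\eqref{eqn:excoh-dom}) and inductive step via the first triangle of Proposition~\ref{prop:braid}\eqref{it:braid-ses}.

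Exactness is then formal: $\pi_*$ sends every shifted standard $\hDelta_\lambda\la n\ra$ and every shifted costandard $\hnabla_\lambda\la n\ra$ into $\PCoh^{G \times \Gm}(\cN)$, hence into cohomological degree $0$, and since the $\hDelta_\lambda\la n\ra[m]$ with $m \ge 0$ generate the nonpositive part of the exotic $t$-structure under extensions (dually for the $\hnabla$'s), $\pi_*$ is right and left $t$-exact, so it restricts to an exact functor $\ExCoh^{G \times \Gm}(\tcN) \to \PCoh^{G \times \Gm}(\cN)$. Applying this exact functor to $\hDelta_\lambda \twoheadrightarrow \fE_\lambda$ and $\fE_\lambda \hookrightarrow \hnabla_\lambda$ shows $\pi_*\fE_\lambda = \im(\pi_*\hDelta_\lambda \to \pi_*\hnabla_\lambda)$ is at once a quotient of $\bDelta_{\dom(\lambda)}\la -\delta^*_\lambda\ra$ and a subobject of $\bnabla_{\dom(\lambda)}\la\delta^*_\lambda\ra$. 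Since $\bDelta_\mu$ (resp.\ $\bnabla_\mu$) has simple head (resp.\ socle) $\fIC_\mu$, appearing with multiplicity one and distinct from its other composition factors, a nonzero $\pi_*\fE_\lambda$ would have head $\fIC_{\dom(\lambda)}\la -\delta^*_\lambda\ra$ and socle $\fIC_{\dom(\lambda)}\la\delta^*_\lambda\ra$; by multiplicity one these coincide, forcing $\delta^*_\lambda = 0$ (equivalently $\lambda \in -\bXp$, in which case $\dom(\lambda) = w_0\lambda$), and then $\pi_*\fE_\lambda = \fIC_{w_0\lambda}$.

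It remains to check $\pi_*\fE_{w_0\mu} \ne 0$ for $\mu \in \bXp$. Here one uses that $\cN$ has rational singularities, so $R\pi_*\cO_\tcN \cong \cO_\cN$; the projection formula then gives $R\pi_* \circ L\pi^* \cong \mathrm{id}$, so $\pi_*$ is essentially surjective on derived categories, hence (being $t$-exact) essentially surjective $\ExCoh^{G \times \Gm}(\tcN) \to \PCoh^{G \times \Gm}(\cN)$. Thus every simple $\fIC_\mu\la n\ra$ occurs as a composition factor of some $\pi_*\fE_\nu\la n'\ra$, which by the previous paragraph is possible only for $\nu = w_0\mu$; therefore $\pi_*\fE_{w_0\mu} = \fIC_\mu \ne 0$. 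I expect the main obstacle to be Step~1 — establishing $\pi_* \circ \Psi_\alpha = 0$ with the $\Gm$-equivariant grading shifts handled correctly, and keeping the $\delta_\lambda$/$\delta^*_\lambda$ bookkeeping consistent through the two inductions; everything after that is formal.
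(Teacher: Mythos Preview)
Your proposal is correct and follows the paper's strategy through the computation of $\pi_*\hDelta_\lambda$, $\pi_*\hnabla_\lambda$ (via $\pi_*\circ\Psi_\alpha=0$ and induction using Proposition~\ref{prop:braid}) and the deduction of $t$-exactness. Your vanishing argument for $\pi_*\fE_\lambda$ with $\lambda\notin-\bXp$ is a mild repackaging of the paper's: the paper simply invokes the quasiexceptional property $\Hom(\bDelta_\mu\la n\ra,\bnabla_\mu\la m\ra)=0$ for $n\ne m$ to force $\pi_*h=0$, which is a one-line version of your head/socle multiplicity argument.

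The genuine divergence is in the nonvanishing step. The paper argues directly: for $\lambda\in-\bXp$ one has $\hDelta_\lambda\cong\cO_\tcN(\lambda)\la\delta_\lambda\ra$, so the canonical map $h:\hDelta_\lambda\to\hnabla_\lambda$ sits in the triangle $\cK'_\lambda\to\hDelta_\lambda\xrightarrow{h}\hnabla_\lambda\to$ of Proposition~\ref{prop:mut}. Since $\cK'_\lambda\in\Db\Coh^{G\times\Gm}(\tcN)_{<\lambda}$ (here $\lambda=w_0\dom(\lambda)$), the already-established formulas give $\pi_*\cK'_\lambda\in\Db\Coh^{G\times\Gm}(\cN)_{<\dom(\lambda)}$; but $\pi_*h=0$ would force $\pi_*\cK'_\lambda\cong\bDelta_{\dom(\lambda)}\oplus\bnabla_{\dom(\lambda)}[-1]$, a contradiction. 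Your route via $R\pi_*\cO_\tcN\cong\cO_\cN$ and essential surjectivity is equally valid and has the virtue of being independent of the particular presentation of $h$; the paper's argument, on the other hand, avoids importing the rational-singularities statement and stays entirely within the exceptional-set framework already set up.
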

\begin{proofsk}
One first shows that $\pi_* \circ \Psi_\alpha = 0$ for all simple roots $\alpha$.  The formula for $\pi_*\hDelta_\lambda$ (resp.~$\pi_*\hnabla_\lambda$) is clear when $\lambda$ is dominant (resp.~antidominant), so it follows for general $\lambda$ from Proposition~\ref{prop:braid}.  The $t$-exactness of $\pi_*$ follows immediately from its behavior on standard and costandard objects.

Since $\fE_\lambda$ is the image of a nonzero map $h: \hDelta_\lambda \to \hnabla_\lambda$, $\pi_*\fE_\lambda$ is the image of $\pi_*h: \bDelta_{\dom(\lambda)}\la -\delta^*_\lambda \ra \to \bnabla_{\dom(\lambda)}\la \delta^*_\lambda\ra$.  But it follows from the definition of a graded quasiexceptional set (see Theorem~\ref{thm:pcoh-defn}) that $\Hom(\bDelta_{\dom(\lambda)}\la n \ra, \bnabla_{\dom(\lambda)}\la m\ra) = 0$ unless $n = m$.  Thus, $\pi_* h = 0$ unless $\delta^*_\lambda = 0$.  In other words, $\pi_* \fE_\lambda = 0$ if $\lambda \notin -\bXp$.

Assume now that $\lambda \in -\bXp$.  To show that $\pi_*\fE_\lambda \cong \fIC_{\dom(\lambda)}$, it suffices to show that $\pi_*h$ is nonzero.  Recall that $\hDelta_\lambda \cong \cO_\tcN(\lambda)\la \delta_\lambda\ra$.  We may assume that $h$ is the map appearing in the second distinguished triangle from Proposition~\ref{prop:mut}: $\cK_\lambda' \to \hDelta_\lambda \overset{h}{\to} \hnabla_\lambda \to$.  Since $\cK'_\lambda$ lies in $\Db\Coh^{G \times \Gm}(\tcN)_{\tl \lambda}$, our computation of $\pi_*$ on standard and costandard objects implies that
\begin{equation}\label{eqn:mu-exact}
\pi_*\cK'_\lambda \in \Db\Coh^{G \times \Gm}(\cN)_{< \dom(\lambda)}.
\end{equation}
If $\pi_*h = 0$, then $\pi_*\cK_\lambda' \cong \bDelta_{\dom(\lambda)} \oplus \bnabla_{\dom(\lambda)}[-1]$, contradicting~\eqref{eqn:mu-exact}.  Thus, $\pi_*h \ne 0$, as desired.
\end{proofsk}

\subsection{Remarks on grading choices}
\label{ss:ungraded}

Equation~\eqref{eqn:bnabla-defn} involves a choice of normalization in the grading shifts.  The choice we have made here (which is consistent with~\cite{a:pcsnc, min:mfpcs, ar:psag}) has the desirable property that it behaves well under Serre--Grothendieck duality (see Section~\ref{ss:local}).  

Proposition~\ref{prop:mut} also involves such a choice.  Our choice agrees with~\cite{ar:agsr} but differs from~\cite{bez:ctm}.  The choice we have made here has two advantages: (i)~it is compatible with the choice in~\eqref{eqn:bnabla-defn}, in the sense that the formula for $\pi_*\fE_\lambda$ involves no grading shift; and (ii)~it behaves well under Verdier duality via the derived equivalence of Section~\ref{ss:aff-grass}.  The reader should bear this in mind when comparing statements in this paper to~\cite{bez:ctm}.

One can also drop the $\Gm$-equivariance entirely and carry out all the constructions in the preceding sections in $\Db\Coh^G(\tcN)$ or $\Db\Coh^G(\cN)$.  Simple objects in $\ExCoh^G(\tcN)$ are parametrized by $\bX$ rather than by $\bX \times \Z$, and likewise for $\PCoh^G(\cN)$.  Almost all results in the paper have analogues in this setting, obtained just by omitting grading shifts $\la n\ra$ and by replacing all occurrences of $\uHom$ by $\Hom$.  In most cases, we will not treat these analogues explicitly.

However, there are a handful of exceptions.  The proof of Theorem~\ref{thm:pcoh-dereq} is different in the graded and ungraded cases.  Theorem~\ref{thm:excoh-gr} does not (yet?) have an ungraded version in positive characteristic.  Although Theorems~\ref{thm:excoh-iw} and~\ref{thm:lv} both have graded analogues, the focus in the literature and in applications is on the ungraded case, and their statements here reflect that.

\section{Structure theory I}

\subsection{Properly stratified categories}
\label{ss:prop-strat}

In this subsection, we let $\bk$ be an arbitrary field.  Let $\sA$ be a $\bk$-linear abelian category that is equipped with an automorphism $\la 1 \ra: \sA \to \sA$, called the \emph{Tate twist}.  Assume that this category has the following properties:
\begin{enumerate}
\item Every object has finite length.\label{it:fin-length}
\item For every simple object $L \in \sA$, we have $\uEnd(L) \cong \bk$.
\end{enumerate}
Let $\Omega$ be the set of isomorphism classes of simple objects up to Tate twist.  For each $\gamma \in \Omega$, choose a representative $L_\gamma$.  Assume that $\Omega$ is equipped with a partial order $\le$.  For any finite order ideal $\Gamma \subset \Omega$, let $\sA_\Gamma \subset \sA$ be the full subcategory consisting of objects all of whose composition factors are isomorphic to some $L_\gamma\la n\ra$ with $\gamma \in \Gamma$.

Such a category is of particular interest when for each $\gamma \in \Gamma$, there exist four morphisms $\bDelta_\gamma \to L_\gamma$, $L_\gamma \to \bnabla_\gamma$, $\Delta_\gamma \to L_\gamma$, $L_\gamma \to \nabla_\gamma$ such that:
\begin{enumerate}\setcounter{enumi}{2}
\item If $\gamma$ is a maximal element in an order ideal $\Gamma \subset \Omega$, then for all $\xi \in \Gamma \smallsetminus \{\gamma\}$, 
$\uHom(\bDelta_\gamma, L_\xi) = \uExt^1(\bDelta_\gamma, L_\xi) = 0$
and
$\uHom(L_\xi, \bnabla_\gamma) = \uExt^1(L_\xi, \bnabla_\gamma) = 0$.
\item The kernel of $\bDelta_\gamma \to L_\gamma$ and the cokernel of $L_\gamma \to \bnabla_\gamma$ both lie in $\sA_{< \gamma}$.
\item If $\gamma$ is a maximal element in an order ideal $\Gamma \subset \Omega$, then in $\sA_\Gamma$, $\Delta_\gamma \to L_\gamma$ is projective cover of $L_\gamma$, and $L_\gamma \to \nabla_\gamma$ is an injective envelope. Moreover, $\Delta_\gamma$ has a filtration whose subquotients are of the form $\bDelta_\gamma\la n\ra$, and $\nabla_\gamma$ has a filtration whose subquotients are of the form $\bnabla_\gamma\la n\ra$. 
\item $\uExt^2(\Delta_\gamma, \bnabla_\xi) = 0$ and $\uExt^2(\bDelta_\gamma, \nabla_\xi) = 0$ for all $\gamma, \xi \in \Omega$.\label{it:ext2}
\end{enumerate}
A category satisfying~\eqref{it:fin-length}--\eqref{it:ext2} is called a \emph{graded properly stratified category}.  If it happens that $\Delta_\gamma \cong \bDelta_\gamma$ and $\nabla_\gamma = \bnabla_\gamma$ for all $\gamma \in \Gamma$, then we instead call it a \emph{graded quasihereditary} or a \emph{highest-weight category}.  

Objects of the form $\bDelta_\gamma\la n\ra$ (resp.~$\bnabla_\gamma\la n\ra$) are called \emph{proper standard} (resp.~\emph{proper costandard}) objects.  Those of the form $\Delta_\gamma\la n\ra$ (resp.~$\nabla_\gamma\la n\ra$) are called \emph{true standard} (resp.~\emph{true costandard}) objects. In the quasihereditary case, there is no distinction between proper standard objects and true standard objects; we simply call them \emph{standard}, and likewise for \emph{costandard}.

There are obvious ungraded analogues of these notions: we omit the Tate twist, and replace all occurences of $\uHom$ and $\uExt$ above by ordinary $\Hom$ and $\Ext$.

\begin{rmk}\label{rmk:qhered}
Some sources, such as~\cite{a:pcsnc, bez:qes}, use the term \emph{quasihereditary} to refer to a category that only satisfies properties~(1)--(4) above.
\end{rmk} 

\subsection{Quasihereditarity and derived equivalences}

Let us now return to the assumptions on $\bk$ from Section~\ref{ss:notation}.  Our next goal is to see that the exotic and perverse-coherent $t$-structures fit the framework introduced above.  Because of a subtlety in the perverse-coherent case, the statements in this subsection explicitly mention the $\Gm$-equivariant and non-$\Gm$-equivariant cases separately.

\begin{thm}
The category $\ExCoh^G(\tcN)$ is quasihereditary, and the category $\ExCoh^{G \times \Gm}(\tcN)$ is graded quasihereditary.  There are equivalences of categories
\[
\Db\ExCoh^{G}(\tcN) \simto \Db\Coh^{G}(\tcN),
\qquad
\Db\ExCoh^{G \times \Gm}(\tcN) \simto \Db\Coh^{G \times \Gm}(\tcN).
\]
\end{thm}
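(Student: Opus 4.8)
The plan is to establish the quasihereditarity first and then derive the derived equivalence from it, since for a quasihereditary category the derived equivalence with its ambient triangulated category is essentially formal once the standard and costandard objects are identified correctly.

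First I would verify that $\ExCoh^{G\times\Gm}(\tcN)$ satisfies the axioms of a graded quasihereditary category with respect to the poset $\bX\times\Z$ (weights with Tate twist), ordered using the order $\leq$ on $\bX$ introduced in Section~\ref{ss:notation}. Properties (1) and (2)—finite length and $\uEnd(\fE_\lambda)\cong\bk$—are part of Theorem~\ref{thm:excoh-defn}; the endomorphism statement follows because $\fE_\lambda$ is the image of $\hDelta_\lambda\to\hnabla_\lambda$ and $\uHom(\hDelta_\lambda,\hnabla_\lambda)$ is one-dimensional by the $\Hom$-vanishing in Proposition~\ref{prop:mut} (any endomorphism must factor through $\hnabla_\lambda$, and the kernel/cokernel terms contribute nothing). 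For property (4), I would use the defining triangles of Proposition~\ref{prop:mut}: the cone of $\hDelta_\lambda\to\cO_\tcN(\lambda)\la\delta_\lambda\ra$ lies in $\Db\Coh_{<\lambda}$, and one checks that $\hDelta_\lambda$ is genuinely in the heart (cited in the proof sketch of Theorem~\ref{thm:excoh-defn}), so the kernel of $\hDelta_\lambda\to\fE_\lambda$ has composition factors $\fE_\xi\la n\ra$ with $\xi<\lambda$; dually for $\fE_\lambda\to\hnabla_\lambda$. Property (3), the $\Hom$- and $\Ext^1$-vanishing for maximal $\gamma$ in an order ideal, follows from the stronger $\uHom^\bullet$-vanishing in Proposition~\ref{prop:mut} together with the observation that $\fE_\xi$ for $\xi$ in the ideal is built from line bundles $\cO_\tcN(\mu)\la m\ra$ with $\mu\leq\xi<\lambda$, hence lies in $\Db\Coh_{<\lambda}$. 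Since we are in the quasihereditary (not merely properly stratified) case, $\Delta_\gamma=\bDelta_\gamma=\hDelta_\lambda$ and $\nabla_\gamma=\bnabla_\gamma=\hnabla_\lambda$, so properties (5) and (6) are immediate once (3) and (4) hold—(6) in particular reduces to $\uExt^2(\hDelta_\gamma,\hnabla_\xi)=0$, which again comes from the $\uHom^\bullet$-vanishing of Proposition~\ref{prop:mut} (the only nonvanishing $\uHom^\bullet$ between a standard and a costandard is in degree $0$ when the weights coincide).

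For the derived equivalence, I would invoke the standard machinery for highest-weight categories: given a quasihereditary category $\sA$ that arises as the heart of a bounded $t$-structure on a triangulated category $\sD$, the realization functor $\Db\sA\to\sD$ is an equivalence provided that the standard and costandard objects generate $\sD$ as a triangulated category and satisfy $\Hom_\sD(\Delta_\lambda,\nabla_\mu[i])=0$ for $i\neq 0$ and appropriately for $i=0$ (the Beilinson–Bernstein–Deligne/Bezrukavnikov criterion, as in \cite[\S2.1]{bez:ctm} or the general results on exceptional collections). Here $\sD=\Db\Coh^{G\times\Gm}(\tcN)$ is generated by the line bundles $\cO_\tcN(\mu)\la m\ra$, and by the triangles of Proposition~\ref{prop:mut} each such line bundle lies in the triangulated subcategory generated by the $\hnabla_\nu\la m\ra$ (induct on $\leq$), so the $\hDelta$'s and $\hnabla$'s do generate. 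The required $\Hom$-vanishing $\uHom^\bullet(\hDelta_\lambda,\hnabla_\mu)$ concentrated in degree zero is exactly Proposition~\ref{prop:mut} again. The ungraded case is identical with $\uHom$ replaced by $\Hom$ and the poset being just $\bX$.

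The main obstacle, in my view, is not any single formal step but rather making precise and self-contained the claim that the $\hDelta_\lambda$ and $\hnabla_\lambda$ actually lie in the heart $\ExCoh^{G\times\Gm}(\tcN)$—the proof sketch of Theorem~\ref{thm:excoh-defn} explicitly flags that the general $t$-structure machinery does \emph{not} give this for free and that one currently deduces it from the derived equivalences of Section~\ref{ss:whittaker} or~\ref{ss:aff-grass}. If one takes that membership as known (as we may, since it is asserted in Theorem~\ref{thm:excoh-defn}), everything else is the routine highest-weight-category argument above; if one wants a genuinely independent proof, one must either run the affine-braid-group argument of \cite{mr:etsps}/\cite{br:abga} or the Whittaker-side computation, and that is where the real work lies. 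Since this is an expository article and Theorem~\ref{thm:excoh-defn} is already in hand, I would simply cite it and present the quasihereditarity plus realization-functor argument, noting the logical dependence.
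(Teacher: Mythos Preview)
Your proposal is correct and follows essentially the same route as the paper: quasihereditarity is read off from Theorem~\ref{thm:excoh-defn} together with the $\uHom^\bullet$-vanishing of Proposition~\ref{prop:mut}, and the derived equivalence is the standard realization-functor argument (the paper cites \cite[Corollary~3.3.2]{bgs:kdprt}), reducing to the comparison~\eqref{eqn:excoh-dereq} between $\Ext^k$ in the heart and $\Hom[k]$ in the ambient category, both of which vanish for $k>0$. Your explicit flagging of the dependence on $\hDelta_\lambda,\hnabla_\lambda$ lying in the heart is apt and matches the paper's own caveat in the proof sketch of Theorem~\ref{thm:excoh-defn}.
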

\begin{proofsk}
The fact that these categories are (graded) quasihereditary is an immediate consequence of Theorem~\ref{thm:excoh-defn} and basic properties of exceptional sets.  For the derived equivalences, one can imitate the argument of~\cite[Corollary~3.3.2]{bgs:kdprt}.  Specifically, both $\Db\ExCoh^{G \times \Gm}(\tcN)$ and $\Db\Coh^{G \times \Gm}(\tcN)$ are generated by both standard objects and the costandard objects.  To establish the derived equivalence, it suffices to show that the natural map
\begin{equation}\label{eqn:excoh-dereq}
\Ext^k_{\ExCoh^{G \times \Gm}(\tcN)}(\hDelta_\lambda, \hnabla_\mu\la n\ra) \to \Hom_{\Db\Coh^{G \times \Gm}(\tcN)}(\hDelta_\lambda, \hnabla_\mu\la n\ra[k])
\end{equation}
is an isomorphism for all $\lambda, \mu \in \bX$, $k \ge 0$, and $n\in \Z$. When $k = 0$, this map is an isomorphism by general properties of $t$-structures.  When $k > 0$, the left-hand side vanishes by general properties of quasihereditary categories, while the right-hand side vanishes by general properties of exceptional sets. Thus,~\eqref{eqn:excoh-dereq} is always an isomorphism, as desired.  The same argument applies to $\ExCoh^G(\tcN)$.
\end{proofsk}

\begin{thm}\label{thm:pcoh-dereq}
The category $\PCoh^G(\cN)$ is properly stratified, and the category $\PCoh^{G \times \Gm}(\cN)$ is graded properly stratified. There are equivalences of categories
\[
\Db\PCoh^{G \times \Gm}(\cN) \simto \Db\Coh^{G \times \Gm}(\cN), 
\qquad
\Db\PCoh^{G}(\cN) \simto \Db\Coh^{G}(\cN).
\]
\end{thm}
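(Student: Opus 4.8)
The plan is to establish the two assertions separately: first that $\PCoh^{G \times \Gm}(\cN)$ and $\PCoh^{G}(\cN)$ satisfy the axioms of a (graded) properly stratified category, and then that the realization functor is a derived equivalence. I will phrase things in the graded case; the ungraded one runs in parallel, with $\uHom$ and $\uExt$ replaced by $\Hom$ and $\Ext$, except at the one point flagged at the end. For the stratification structure, the proof sketch of Theorem~\ref{thm:pcoh-defn} already shows that the $\bnabla_\lambda$ form a graded quasiexceptional set, and taking $\bDelta_\lambda$, $\bnabla_\lambda$ as the \emph{proper} standard and costandard objects yields properties~(1)--(4) of Section~\ref{ss:prop-strat} directly. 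The real work is to produce the \emph{true} standard and costandard objects: fixing a finite order ideal $\Gamma \subset \bXp$ with maximal element $\lambda$, one constructs $\Delta_\lambda$ (roughly, as a projective cover of $\fIC_\lambda$ in the finite-length truncation $\PCoh^{G \times \Gm}(\cN)_\Gamma$) and $\nabla_\lambda$ dually. The proper-standard filtrations demanded in~(5) then follow from the quasiexceptional orthogonality, with multiplicities governed by $\uHom^\bullet(\Delta_\lambda, \bnabla_\lambda)$; the four morphisms $\bDelta_\lambda \to \fIC_\lambda \to \bnabla_\lambda$ and $\Delta_\lambda \to \fIC_\lambda \to \nabla_\lambda$ are built into these constructions. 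The one axiom with no counterpart in the exotic (quasihereditary) case is~\eqref{it:ext2}, the vanishing of $\uExt^2(\Delta_\gamma, \bnabla_\xi)$ and $\uExt^2(\bDelta_\gamma, \nabla_\xi)$; I would verify it by a direct computation with the Andersen--Jantzen sheaves $A_\mu = \pi_*\cO_\tcN(\mu)$, unwinding $\RHom$ through the Springer resolution and using the cohomology-vanishing facts behind~\eqref{eqn:aj-refl}. All of this is carried out in~\cite{bez:qes, a:pcsnc, min:mfpcs}.

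For the derived equivalence, the inclusion of the heart gives a $t$-exact realization functor $\Db\PCoh^{G \times \Gm}(\cN) \to \Db\Coh^{G \times \Gm}(\cN)$. Both the proper standards $\bDelta_\lambda\la n\ra$ and the proper costandards $\bnabla_\mu\la m\ra$ generate $\Db\PCoh^{G \times \Gm}(\cN)$ as a triangulated category --- each $\fIC_\lambda$ is a subquotient of $\bDelta_\lambda$ and of $\bnabla_\lambda$ modulo $\PCoh^{G \times \Gm}(\cN)_{<\lambda}$, so one induces on the order --- so by the standard device from~\cite[Corollary~3.3.2]{bgs:kdprt} it suffices to show that
\[
\Ext^k_{\PCoh^{G \times \Gm}(\cN)}(\bDelta_\lambda, \bnabla_\mu\la n\ra) \longrightarrow \Hom_{\Db\Coh^{G \times \Gm}(\cN)}(\bDelta_\lambda, \bnabla_\mu\la n\ra[k])
\]
is an isomorphism for all $\lambda, \mu \in \bXp$, $n \in \Z$, $k \ge 0$; since the $\bnabla_\mu\la m\ra$ (equivalently the $A_\mu\la m\ra$ with $\mu$ dominant) also generate $\Db\Coh^{G \times \Gm}(\cN)$, full faithfulness then yields essential surjectivity. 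The case $k = 0$ is automatic, and the right-hand side vanishes for $k > 0$ because the $\bnabla_\mu\la m\ra$ form a graded quasiexceptional set. For the left-hand side I would run a d\'evissage: $\bDelta_\lambda$ is a quotient of $\Delta_\lambda$ with kernel filtered by proper standards, so --- inducting on $\Gamma$ --- the vanishing reduces to $\uExt^{>0}(\Delta_\gamma, \bnabla_\xi) = 0$, which is the familiar homological consequence of properties~(3)--(6), with~\eqref{it:ext2} playing exactly the role that $\uExt^{>0}(\text{standard}, \text{costandard}) = 0$ plays in the quasihereditary argument.

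The step I expect to be the genuine obstacle --- and the reason the theorem must be stated for the two cases separately --- is this $\Ext$-comparison in the \emph{ungraded} setting. In the graded category the Tate twist keeps the induction honest: the proper standards $\bDelta_\lambda\la n\ra$ occurring in $\ker(\Delta_\lambda \to \bDelta_\lambda)$ all carry nonzero twists, so their contributions are pinned down by the grading and cannot collide with $\bnabla_\mu\la 0\ra$ in the wrong homological degree --- precisely the feature distinguishing a quasiexceptional set from a genuine exceptional one. Without the grading that bookkeeping is gone, and one must instead control the relevant $\Hom$-spaces directly inside $\Db\Coh^{G}(\cN)$, which is the content of~\cite[\S2.2]{bez:qes}; it is not a formal consequence of the graded statement, nor conversely. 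The root of the difficulty throughout is that $\PCoh(\cN)$ is only properly stratified, not quasihereditary: unlike for $\ExCoh(\tcN)$, one cannot simply invoke $\Ext$-orthogonality of standards against costandards, and axiom~\eqref{it:ext2} is precisely the surrogate that forces the realization functor to be an equivalence.
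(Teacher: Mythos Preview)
Your sketch has a genuine gap at the heart of the derived-equivalence argument. You assert that the right-hand side
\[
\Hom_{\Db\Coh^{G \times \Gm}(\cN)}(\bDelta_\lambda, \bnabla_\mu\la n\ra[k])
\]
vanishes for $k>0$ ``because the $\bnabla_\mu\la m\ra$ form a graded quasiexceptional set.'' This is exactly what fails: a \emph{quasi}exceptional set, unlike an exceptional one, does not force $\uHom^{>0}(\bnabla_\lambda,\bnabla_\lambda)=0$, and consequently there can be nontrivial higher $\Ext$-groups between proper standards and proper costandards in the ambient derived category. (Concretely, for $\SL_2$ one has $\uExt^1(\bnabla_n,\bnabla_n)\ne 0$ for $n>0$; see the exercises.) So neither side of your comparison map is zero in general, and the clean BGS-style argument that worked in the exotic/quasihereditary case simply does not go through. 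Your d\'evissage for the left-hand side has the same problem: the kernel of $\Delta_\lambda \twoheadrightarrow \bDelta_\lambda$ is filtered by $\bDelta_\lambda\la n\ra$ with the \emph{same} $\lambda$, so no induction on the order gets off the ground. The paper acknowledges this explicitly; the graded derived equivalence is established in~\cite{a:pcsnc} by an argument that genuinely uses the $\Gm$-action, not by matching vanishing against vanishing.

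You also have the relationship between the graded and ungraded cases backwards. The ungraded equivalence \emph{is} a formal consequence of the graded one: one invokes~\cite[Lemma~A.7.1]{bei:dcps} to see that forgetting the $\Gm$-equivariance gives a commutative square of functors, checks that the vertical forgetful functors are degrading in the sense of~\cite[\S4.3]{bgs:kdprt}, and then reads off that the ungraded realization map on $\Ext$-groups is an isomorphism because the graded one is. It is the \emph{graded} case that requires the real work, not the ungraded one. (A smaller point: the paper's construction of the true standards and costandards, due to Minn-Thu-Aye, is not the abstract projective-cover argument you outline but an explicit recipe $\Delta_\lambda \to V(\lambda)\otimes\cO_\cN\la\delta^*_\lambda\ra \to \cK_\lambda \to$ parallel to Proposition~\ref{prop:mut}.)
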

\begin{proofsk}
It was shown in~\cite{bez:qes,a:pcsnc} that $\PCoh^{G \times \Gm}(\cN)$ satisfies axioms~(1)--(4) of Section~\ref{ss:prop-strat} (cf.~Remark~\ref{rmk:qhered}), with the objects of~\eqref{eqn:bnabla-defn} playing the roles of the proper standard and proper costandard objects.

The proof of axioms~(5) and~(6) is due to Minn-Thu-Aye~\cite[Theorem~4.3]{min:mfpcs}.  His argument includes a recipe for constructing the true standard and true costandard objects.  This recipe is reminiscent of Proposition~\ref{prop:mut}: specifically, by~\cite[Definition~4.2]{min:mfpcs}, 
there are canonical distinguished triangles
\[
\Delta_\lambda \to V(\lambda) \otimes \cO_\cN\la \delta^*_\lambda\ra \to \cK_\lambda \to
\qquad\text{and}\qquad
\cK'_\lambda \to H^0(\lambda) \otimes \cO_\cN\la -\delta^*_\lambda\ra \to \nabla_\lambda \to
\]
with $\cK_\lambda, \cK'_\lambda \in \Db\Coh^{G \times \Gm}(\cN)_{< \lambda}$.

The derived equivalences are more difficult here than in the exotic case, mainly because there may be nontrivial higher $\Ext$-groups between proper standard and proper costandard objects. In the $(G \times \Gm)$-equivariant case, the result is proved in~\cite{a:pcsnc}.  The proof makes use of the $\Gm$-action in an essential way; it cannot simply be copied in the ungraded case.  However, by~\cite[Lemma~A.7.1]{bei:dcps}, the following diagram commutes up to natural isomorphism:
\[
\xymatrix{
\Db\PCoh^{G \times \Gm}(\cN) \ar[r]^{\sim} \ar[d]_U &
  \Db\Coh^{G \times \Gm}(\cN) \ar[d]^U \\
\Db\PCoh^{G}(\cN) \ar[r] &
  \Db\Coh^{G}(\cN) }
\]
Here, both vertical arrows are the functors that forget the $\Gm$-equivariance.  It is not difficult to check that these vertical arrows are \emph{degrading functors} as in~\cite[\S4.3]{bgs:kdprt}.  Thus, for any $\lambda, \mu \in \bX$, we have a commutative diagram
\[
\xymatrix@C=15pt{
\bigoplus_{n \in \Z} \Ext^k_{\PCoh^{G \times \Gm}(\cN)}(\bDelta_\lambda, \bnabla_\mu\la n\ra) \ar[r]^-{\sim} \ar[d]_{\wr} &
\bigoplus_{k \in \Z} \Hom_{\Db\Coh^{G \times \Gm}(\cN)}(\bDelta_\lambda, \bnabla_\mu\la k\ra[n]) \ar[d]^{\wr} \\
\Ext^k_{\PCoh^G(\cN)}(U(\bDelta_\lambda), U(\bnabla_\mu)) \ar[r] &
\Hom_{\Db\Coh^G(\cN)}(U(\bDelta_\lambda), U(\bnabla_\mu)[k])}
\]
Since the top arrow and both vertical arrows are isomorphisms, the bottom arrow must be as well.  That map is analogous to~\eqref{eqn:excoh-dereq}, and, as in the proof of the preceding theorem, it implies that $\Db\PCoh^G(\cN) \cong \Db\Coh^G(\cN)$.
\end{proofsk}

\subsection{Costandard and tilting objects}
\label{ss:costandard}

The abstract categorical framework of Section~\ref{ss:prop-strat} places standard and costandard objects on an equal footing, but in practice, the following result makes costandard objects considerably easier to work with explicitly.

\begin{thm}\label{thm:costd-coh}
\begin{enumerate}
\item For all $\lambda \in \bX$, $\hnabla_\lambda$ is a coherent sheaf.
\item For all $\lambda \in \bXp$, $\bnabla_\lambda$ is a coherent sheaf.
\end{enumerate}
\end{thm}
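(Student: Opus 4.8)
The plan is to prove both statements in parallel, reducing (2) to (1) via the exactness of $\pi_*$. For part (1), I would argue that $\hnabla_\lambda$ is concentrated in nonnegative cohomological degrees (it is an honest sheaf "from below") and, separately, that it is concentrated in nonpositive cohomological degrees ("from above"); combining these gives that $\hnabla_\lambda$ is a sheaf. The nonnegativity is the easy direction: by the defining triangle $\cK'_\lambda \to \cO_\tcN(\lambda)\la\delta_\lambda\ra \to \hnabla_\lambda \to$ from Proposition~\ref{prop:mut}, together with induction on $\lambda$ in the order $\le$ (the base cases being antidominant $\lambda$, where $\hnabla_\lambda \cong \cO_\tcN(\lambda)$ by~\eqref{eqn:excoh-dom}), one sees $\hnabla_\lambda$ lives in degrees $\ge 0$. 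The substantive direction is the vanishing of positive cohomology sheaves.

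For the vanishing of $\cH^i(\hnabla_\lambda)$ with $i > 0$, the key tool is Proposition~\ref{prop:braid}\eqref{it:braid-ses}: the triangle $\Psi_\alpha(\hnabla_\lambda)[-1] \to \hnabla_\lambda\la -1\ra \to \hnabla_{s_\alpha\lambda} \to$. I would induct on $\delta_\lambda = \min\{\ell(w) \mid w\lambda \in \bXp\}$. When $\delta_\lambda = 0$, i.e. $\lambda$ is dominant, $\hnabla_\lambda \cong \cO_\tcN(\lambda)$ is manifestly a coherent sheaf. For the inductive step, pick a simple reflection $s_\alpha$ with $s_\alpha\lambda \prec \lambda$ and $\delta_{s_\alpha\lambda} < \delta_\lambda$; by induction $\hnabla_{s_\alpha\lambda}$ is a sheaf. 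It then suffices to control $\Psi_\alpha(\hnabla_\lambda)$, and here one analyzes the functor $\Psi_\alpha$ directly: $\Psi_\alpha(\cF) = i_{\alpha*}\pi_\alpha^*\pi_{\alpha*}i_\alpha^*(\cF\otimes\cO_\tcN(\hrho-\alpha))\otimes\cO_\tcN(-\hrho)\la 1\ra$. Since $i_\alpha$ is a divisorial inclusion, $i_\alpha^*$ can raise cohomological degree by at most $1$; $\pi_{\alpha*}$ along a $\mathbb{P}^1$-bundle has cohomological amplitude $[0,1]$; $\pi_\alpha^*$ and $i_{\alpha*}$ are exact. The crucial cancellation — which I expect to be the main obstacle — is showing that these potential degree shifts do not accumulate: one needs that $i_\alpha^*$ applied to a line bundle twist of $\hnabla_\lambda$ has no higher cohomology sheaves, or that the Leray-type spectral sequence for $\pi_\alpha^*\pi_{\alpha*}i_\alpha^*$ degenerates appropriately. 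The cleanest route is probably to observe that on $\tcN_\alpha$ the relevant object is already a sheaf (using that $i_\alpha^*\cO_\tcN(\mu)$ is a line bundle and running the same induction on the smaller variety, or invoking the induction equivalence~\eqref{eqn:ind-equiv} to reduce to an explicit computation on $\fu$ and $\fu_\alpha$), so that $\Psi_\alpha(\hnabla_\lambda)$ is concentrated in degrees $[0,1]$; combined with the triangle and the degree bounds on $\hnabla_\lambda\la -1\ra$ and $\hnabla_{s_\alpha\lambda}$, a diagram chase on the long exact cohomology sequence forces $\cH^i(\hnabla_\lambda) = 0$ for $i \neq 0$.

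For part (2), given part (1), I would invoke Proposition~\ref{prop:mu-exact}: $\pi_*\hnabla_\lambda \cong \bnabla_{\dom(\lambda)}\la\delta^*_\lambda\ra$. Taking $\lambda$ dominant (so $\dom(\lambda) = \lambda$ and $\delta^*_\lambda$ is whatever it is), we get $\bnabla_\lambda \cong \pi_*\hnabla_{w_0\lambda}\la -\delta^*_\lambda\ra$ up to a shift, wait — more directly, from~\eqref{eqn:bnabla-defn}, $\bnabla_\lambda = A_\lambda\la -\delta^*_\lambda\ra = \pi_*\cO_\tcN(\lambda)\la -\delta^*_\lambda\ra$, and since $\lambda \in \bXp$ is dominant, $\cO_\tcN(\lambda)$ has vanishing higher direct images under $\pi$ by the Andersen–Jantzen / Kempf vanishing theorem (or by the well-known fact that $R^i\pi_*\cO_\tcN(\lambda) = 0$ for $i>0$ when $\lambda$ is dominant). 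Hence $\pi_*\cO_\tcN(\lambda)$ is a coherent sheaf, and therefore so is $\bnabla_\lambda$. Alternatively, and more in the spirit of the paper, one can derive it from part (1): $\pi_*$ is exact for the exotic and perverse-coherent $t$-structures, and $\pi_*$ of a coherent sheaf that happens to have no higher derived pushforward is again a coherent sheaf; the dominant case of part (1) already gives $\hnabla_{w_0 \cdot w_0\lambda}$... the simplest honest statement is just the Kempf vanishing argument above. I would present part (2) as a short corollary of part (1) plus standard positivity of $\pi_*$ on dominant line bundles.
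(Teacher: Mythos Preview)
Your treatment of part~(2) is essentially correct and matches the paper: since $\bnabla_\lambda = A_\lambda\la -\delta^*_\lambda\ra = \pi_*\cO_\tcN(\lambda)\la -\delta^*_\lambda\ra$ for $\lambda \in \bXp$, the claim reduces to the vanishing of $R^i\pi_*\cO_\tcN(\lambda)$ for $i>0$, which is the theorem of Kumar--Lauritzen--Thomsen (Andersen--Jantzen in characteristic zero). Note, however, that this is \emph{not} a corollary of part~(1): the $t$-exactness of $\pi_*$ in Proposition~\ref{prop:mu-exact} refers to the exotic and perverse-coherent $t$-structures, not to the standard ones, so knowing that $\hnabla_\lambda$ is a coherent sheaf does not by itself tell you $\pi_*\hnabla_\lambda$ is.

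Your approach to part~(1), on the other hand, has a genuine gap and differs sharply from the paper's. The paper does not argue directly on $\tcN$ at all: it invokes the derived equivalence of Theorem~\ref{thm:excoh-gr} with mixed perverse sheaves on the affine Grassmannian for $\Gv$, and proves the coherence of $\hnabla_\lambda$ by analyzing the corresponding object on the constructible side. Your proposed direct induction via the $\Psi_\alpha$-triangles runs into exactly the obstacle you flag and do not resolve. Beyond rank one, the inductive step requires applying $\Psi_\alpha$ to some $\hnabla_\mu$ with $\mu$ nondominant; such $\hnabla_\mu$ is no longer a line bundle, so $i_\alpha^*(\hnabla_\mu \otimes \cO_\tcN(\hrho-\alpha))$ may acquire an $\cH^1$, and there is no evident mechanism forcing this to cancel against $R^1\pi_{\alpha*}$. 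Your suggestion to ``run the same induction on the smaller variety'' $\tcN_\alpha$ does not help: the objects appearing there are not costandard objects for any exotic $t$-structure you have set up, so there is nothing to induct on. In short, the $\SL_2$ computation of Section~\ref{sect:sl2} (where the induction has only one step, from a dominant line bundle to its reflection) does not scale.

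There are also smaller slips worth fixing: the base case for $\hnabla_\lambda$ is $\lambda$ \emph{dominant} (not antidominant) by~\eqref{eqn:excoh-dom}; and the conditions ``$s_\alpha\lambda \prec \lambda$'' and ``$\delta_{s_\alpha\lambda} < \delta_\lambda$'' are incompatible, since moving toward the dominant chamber means $s_\alpha\lambda \succ \lambda$. Finally, your ``nonnegativity is easy'' argument does not close up: the triangle $\cK'_\lambda \to \cO_\tcN(\lambda)\la\delta_\lambda\ra \to \hnabla_\lambda \to$ only shows $\hnabla_\lambda$ lies in degrees $\ge -1$ once you know $\cK'_\lambda$ lies in degrees $\ge 0$, and there is no induction hypothesis giving you the latter, since $\cK'_\lambda$ is merely some object of the subcategory $\Db\Coh^{G \times \Gm}(\tcN)_{< w_0\dom(\lambda)}$, not a costandard object.
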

In contrast, even for $\SL_2$, many standard objects are complexes with cohomology in more than one degree.
\begin{proofsk}
The first assertion is proved in~\cite{ar:agsr}, using Theorem~\ref{thm:excoh-gr} below to translate it into a question about the dual affine Grassmannian.  The second assertion is due to Kumar--Lauritzen--Thomsen~\cite{klt:fscb}, although in many cases it goes back to much older work of Andersen--Jantzen~\cite{aj:cirag}.
\end{proofsk}

Recall that a \emph{tilting object} in a quasihereditary category is one that has both a standard filtration and a costandard filtration.  The isomorphism classes of indecomposable tilting objects are in bijection with the isomorphism classes of simple (or standard, or costandard) objects.  Let
\[
\hfT_\lambda \in \ExCoh^{G \times \Gm}(\tcN)
\]
denote the indecomposable tilting object corresponding to $\fE_\lambda$.

In a properly stratified category that is not quasihereditary, there are two distinct versions of this notion: an object is called \emph{tilting} if it has a true standard filtration and a proper costandard filtration, and \emph{cotilting} if it has a proper standard filtration and a true costandard filtration.  These notions need not coincide in general.  See~\cite[\S2.2]{ar:psag} for general background on (co)tilting objects in this setting.

\begin{prop}[\cite{min:mfpcs}]\label{prop:pcoh-tilt}
In $\PCoh^{G \times \Gm}(\cN)$, the indecomposable tilting and cotilting objects coincide, and they are all of the form $T(\lambda) \otimes \cO_\cN\la n\ra$.
\end{prop}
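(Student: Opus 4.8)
The plan is to establish all three assertions at once by proving that, for $\lambda \in \bXp$ and $n \in \Z$, the object $T(\lambda) \otimes \cO_\cN \la n\ra$ is an indecomposable object of $\PCoh^{G \times \Gm}(\cN)$ that is simultaneously tilting and cotilting, and that conversely every indecomposable tilting or cotilting object is of this form. Note first that $\cO_\cN = \bnabla_0 = \bDelta_0$ lies in the heart by Theorem~\ref{thm:pcoh-defn}, and $M \mapsto M \otimes \cO_\cN$ is exact as a functor to $\Db\Coh^{G \times \Gm}(\cN)$, so it carries $\Rep(G)$ into $\PCoh^{G \times \Gm}(\cN)$.

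The technical core is a quartet of filtration statements: for every $\lambda \in \bXp$, the object $V(\lambda) \otimes \cO_\cN$ has both a true standard and a proper standard filtration, and dually $H^0(\lambda) \otimes \cO_\cN$ has both a true costandard and a proper costandard filtration. The proper (co)standard filtrations come from the Andersen--Jantzen filtrations of $\pi_*$ of line bundles on $\tcN$: pushing forward the ``excellent'' resolutions of the line bundles $\cO_\tcN(\lambda)$ and using~\eqref{eqn:aj-refl} together with the identities $A_{w_0\mu}\la m\ra = \bDelta_\mu\la m - \delta^*_\mu\ra$ and $A_\mu\la m\ra = \bnabla_\mu\la m + \delta^*_\mu\ra$, one obtains that $V(\lambda) \otimes \cO_\cN$ is filtered by proper standards $\bDelta_\mu\la m\ra$ with $\mu \prec \lambda$ and $H^0(\lambda) \otimes \cO_\cN$ by proper costandards, following~\cite{aj:cirag, a:pcsnc}. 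The refinement of a proper standard filtration to a true one is Minn-Thu-Aye's argument~\cite[Thm~4.3]{min:mfpcs}: for each fixed $\mu$ the summands $\bDelta_\mu\la m\ra$ in the filtration of $V(\lambda)\otimes\cO_\cN$ can be collected into true standard objects $\Delta_\mu\la m'\ra$, because $\uExt^1(\bDelta_\mu\la a\ra, \bDelta_\nu\la b\ra)$ vanishes unless $\nu > \mu$, or $\nu = \mu$ with $a,b$ in an appropriate relation, so the pieces for distinct $\mu$ cannot interleave obstructively. This reorganization uses the $\Gm$-grading essentially, and is the main obstacle in the proof.

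Granting the quartet, the remainder is assembly. Tensoring a Weyl filtration of the tilting module $T(\lambda)$ (subquotients $V(\mu)$, with $V(\lambda)$ occurring once and maximal) with the exact functor $-\otimes\cO_\cN$ realizes $T(\lambda)\otimes\cO_\cN$ as an iterated extension of objects $V(\mu)\otimes\cO_\cN$; since the classes of true-standard-filtered and of proper-standard-filtered objects are each closed under extensions (a general property of properly stratified categories; see~\cite[\S2.2]{ar:psag}), $T(\lambda)\otimes\cO_\cN$ acquires both a true standard and a proper standard filtration, and tensoring a dual Weyl filtration of $T(\lambda)$ with $-\otimes\cO_\cN$ likewise yields both a true costandard and a proper costandard filtration. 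Hence $T(\lambda)\otimes\cO_\cN$ is tilting (true standard $+$ proper costandard) and cotilting (proper standard $+$ true costandard), and so is $T(\lambda)\otimes\cO_\cN\la n\ra$. Indecomposability follows from $\uEnd(T(\lambda)\otimes\cO_\cN) \cong (\mathrm{End}_\bk(T(\lambda))\otimes\bk[\cN])^G$: this is finitely generated, hence finite-dimensional, over $\bk[\cN]^G = \bk$, it is concentrated in non-negative degrees, and its degree-zero part is the local ring $\mathrm{End}_G(T(\lambda))$; a non-negatively graded finite-dimensional algebra with local degree-zero part is local, so $T(\lambda)\otimes\cO_\cN$ is indecomposable in $\Coh^{G\times\Gm}(\cN)$, a fortiori in $\PCoh^{G\times\Gm}(\cN)$.

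Finally, to see exhaustion: in the true standard filtration of $T(\lambda)\otimes\cO_\cN$ exactly one term has label $\lambda$ — it is $\Delta_\lambda\la -\delta^*_\lambda\ra$, arising from the copy of $V(\lambda)$ at the maximal step of the Weyl filtration of $T(\lambda)$ — while every other term is $\Delta_\mu\la m\ra$ with $\mu < \lambda$; equivalently, $\fIC_\lambda$ occurs once, up to a single determined Tate twist, as a composition factor of $T(\lambda)\otimes\cO_\cN$, all other composition factors being $\fIC_\mu\la m\ra$ with $\mu < \lambda$. Thus $T(\lambda)\otimes\cO_\cN\la n\ra$ is the indecomposable tilting object attached to $\fIC_\lambda$ with a twist ranging over all Tate twists as $(\lambda,n)$ ranges over $\bXp\times\Z$, so $(\lambda,n)\mapsto T(\lambda)\otimes\cO_\cN\la n\ra$ is a bijection onto the isomorphism classes of indecomposable tilting objects; the same argument with ``cotilting'' in place of ``tilting'' shows it is also a bijection onto the indecomposable cotilting objects. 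Therefore the two classes coincide and consist precisely of the objects $T(\lambda)\otimes\cO_\cN\la n\ra$.
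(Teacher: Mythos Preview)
The paper does not prove this proposition; it merely states it with a citation to Minn-Thu-Aye's thesis~\cite{min:mfpcs}. Your sketch is therefore to be evaluated on its own merits, and its overall strategy---show that $T(\lambda)\otimes\cO_\cN$ is simultaneously tilting and cotilting, verify indecomposability, and check the labelling to get exhaustion---is sound and is what one expects from~\cite{min:mfpcs}. The assembly step from a Weyl (resp.\ dual Weyl) filtration of $T(\lambda)$ and the exhaustion argument are fine.

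Two points warrant attention. First, the justification for the proper standard filtration of $V(\lambda)\otimes\cO_\cN$ is vague: filtering $V(\lambda)$ as a $B$-module by one-dimensional subquotients and pushing forward yields a ``filtration'' by Andersen--Jantzen sheaves $A_\nu$, but the $A_\nu$ for $\nu$ not antidominant are not proper standards (they are $\bnabla$'s, not $\bDelta$'s, up to shift), so additional argument is needed to rearrange into a genuine $\bDelta$-filtration in the abelian category. This is where the real content of~\cite{min:mfpcs} lies, and your sketch defers rather than supplies it.

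Second, the indecomposability argument contains a logical slip: the implication ``finitely generated over $\bk[\cN]^G = \bk$, hence finite-dimensional'' is not justified as stated, since taking $G$-invariants of a finitely generated $\bk[\cN]$-module need not give something finitely generated over $\bk[\cN]^G$ when $\bk[\cN]$ is not module-finite over $\bk[\cN]^G$. The conclusion is nonetheless correct---one can see finite-dimensionality from the fact that $T(\lambda)^*\otimes T(\lambda)\otimes\bk[\cN]$ has a good filtration with only finitely many copies of $H^0(0)$---and in any case your idempotent argument only needs the endomorphism algebra to be non-negatively graded with local degree-zero part, not finite-dimensional.
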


\begin{prop}\label{prop:tilt-dom}
In $\ExCoh^{G \times \Gm}(\tcN)$, every tilting object is a coherent sheaf.  For $\lambda \in \bXp$, we have $\hfT_\lambda \cong T(\lambda) \otimes \cO_\tcN$.
\end{prop}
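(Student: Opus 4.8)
The plan is to establish the two assertions in reverse order, deriving the general statement about coherence from the identification of the dominant tilting objects together with the stratified structure. First I would prove that $T(\lambda) \otimes \cO_\tcN$ is the indecomposable exotic tilting object corresponding to $\fE_\lambda$ when $\lambda \in \bXp$. Recall from~\eqref{eqn:excoh-dom} that for dominant $\lambda$ we have $\hnabla_\lambda \cong \cO_\tcN(\lambda)$, and more generally $\hnabla_{w\lambda}$ for $w \in W$ is built from the line bundles $\cO_\tcN(w\lambda)$ via the distinguished triangles of Proposition~\ref{prop:braid}\eqref{it:braid-ses}. The key input is that, for $\lambda \in \bXp$, the $G$-module $T(\lambda)$ has a filtration with dual Weyl subquotients $H^0(\mu)$ (by highest-weight theory for $\Rep(G)$), hence by the tensor identity $T(\lambda) \otimes \cO_\tcN \cong T(\lambda) \otimes \pi^*\cO_\cN$ restricted appropriately — more precisely, working through the induction equivalence~\eqref{eqn:ind-equiv}, the $B$-module $T(\lambda)$ acquires a filtration by the $\bk_\mu$ that assembles into a costandard (i.e.\ $\hnabla$-) filtration of $T(\lambda) \otimes \cO_\tcN$ as an exotic object. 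A dual argument using the Weyl-module filtration of $T(\lambda)$ and the antidominant case $\hDelta_{w_0\lambda} \cong \cO_\tcN(w_0\lambda)\la\delta_{w_0\lambda}\ra$ produces a standard filtration. Since $T(\lambda) \otimes \cO_\tcN$ is then a tilting object, and since the multiplicity of $\hnabla_\lambda \cong \cO_\tcN(\lambda)$ in its costandard filtration is $1$ (matching the multiplicity of $H^0(\lambda)$ in $T(\lambda)$), it has $\fE_\lambda$ as a composition factor with multiplicity one in the appropriate graded degree; by the classification of indecomposable tiltings it must be $\hfT_\lambda$, possibly up to a Tate twist which one fixes by tracking gradings.

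Second, with the dominant case in hand, I would prove that \emph{every} exotic tilting object is a coherent sheaf. The strategy is downward induction on the closure order, exploiting Proposition~\ref{prop:braid}: any $\hfT_\mu$ for non-dominant $\mu$ can be related to tilting objects for larger weights (or for $\dom(\mu)$) through the functors $\Psi_\alpha$ and the triangles in part~\eqref{it:braid-ses}, since $\Psi_\alpha$ is built from the pushforward–pullback $\pi_\alpha^*\pi_{\alpha*}$ along a $\mathbb{P}^1$-bundle and from $i_{\alpha*}i_\alpha^*$ along a smooth divisor — both of which, being geometric functors of a benign sort, preserve the subcategory of objects concentrated in cohomological degree $0$ up to the controlled shifts recorded in the proposition. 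Concretely, one shows that if $\mathfrak{T}$ is a tilting sheaf (in degree $0$) then $\Psi_\alpha(\mathfrak{T})$, after the shift $\la\pm1\ra[\mp1]$ appearing in the formulas, remains a sheaf; combined with the triangles relating $\hfT_\mu$ and $\hfT_{s_\alpha\mu}$ (obtained by applying the braid functors to standard/costandard filtrations and using that $\Psi_\alpha$ sends $\hnabla_\lambda$-filtered objects to $\hnabla$-filtered objects), one propagates coherence from dominant weights to all weights. Alternatively — and this is probably cleaner — one invokes Theorem~\ref{thm:costd-coh}(1): each $\hnabla_\mu$ is a coherent sheaf, so a costandard filtration realizes $\hfT_\mu$ as an iterated extension of sheaves, hence a sheaf, provided the connecting maps in the filtration live in $\Hom$ rather than in higher $\Ext$; but since $\hfT_\mu$ lies in the heart $\ExCoh^{G\times\Gm}(\tcN)$ by definition and a costandard filtration of an object of the heart is automatically a filtration by subobjects in the abelian category, the extensions are genuine short exact sequences in $\ExCoh$, and a finite iterated extension (in an abelian category of complexes) of objects concentrated in a single cohomological degree is again concentrated in that degree. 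This second route essentially reduces the whole of "every tilting object is a coherent sheaf" to Theorem~\ref{thm:costd-coh}(1).

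The main obstacle, I expect, is the first part: pinning down that $T(\lambda) \otimes \cO_\tcN$ genuinely has an exotic costandard filtration, not merely a filtration by line bundles $\cO_\tcN(\mu)$. The subtlety is that a $B$-module filtration of $T(\lambda)$ by weight spaces $\bk_\mu$ gives, via~\eqref{eqn:ind-equiv}, a filtration of $T(\lambda)\otimes\cO_\tcN$ by the $\cO_\tcN(\mu)$; but $\cO_\tcN(\mu)$ equals $\hnabla_\mu$ only when $\mu$ is dominant, whereas the weights $\mu$ occurring in $T(\lambda)$ are generally not dominant. Resolving this requires the \emph{good filtration} structure at the level of $G$ (not just $B$): $T(\lambda)$ has a filtration whose subquotients are the dual Weyl modules $H^0(\nu)$ with $\nu$ dominant, and one must check that $H^0(\nu) \otimes \cO_\tcN$ — equivalently, by the tensor identity, the object obtained by inducing $\bk_\nu$ up and tensoring — is precisely $\hnabla_\nu$ for $\nu \in \bXp$, which is exactly the content of~\eqref{eqn:excoh-dom} combined with the projection formula $\pi_*(\pi^*V \otimes \cO_\tcN(\nu)) \cong V \otimes \pi_*\cO_\tcN(\nu)$ applied in reverse. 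So the real work is the compatibility between the $G$-module good-filtration theory of tilting modules and the exotic $t$-structure; once that bridge is built, both statements follow, and the coherence of general tilting objects then rides on Theorem~\ref{thm:costd-coh}(1) with no further difficulty.
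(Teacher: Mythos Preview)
Your route to the first assertion --- that every tilting object is a coherent sheaf --- via Theorem~\ref{thm:costd-coh}(1) is correct and is essentially what the paper has in mind: a costandard filtration in the heart is a chain of short exact sequences in $\ExCoh^{G\times\Gm}(\tcN)$, hence of distinguished triangles in $\Db\Coh^{G\times\Gm}(\tcN)$, and an extension of objects concentrated in cohomological degree~$0$ (for the standard $t$-structure) is again concentrated in degree~$0$.

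The problem lies in your argument for the identification $\hfT_\lambda \cong T(\lambda)\otimes\cO_\tcN$. The claim in your final paragraph that $H^0(\nu)\otimes\cO_\tcN$ ``is precisely $\hnabla_\nu$'' is false: for $\nu\in\bXp$ we have $\hnabla_\nu\cong\cO_\tcN(\nu)$ by~\eqref{eqn:excoh-dom}, a line bundle, whereas $H^0(\nu)\otimes\cO_\tcN$ is a vector bundle of rank $\dim H^0(\nu)$. Equation~\eqref{eqn:excoh-dom} does \emph{not} say what you attribute to it. Your own earlier diagnosis of the obstacle is accurate --- the good filtration of $T(\lambda)$ by $H^0(\nu)$'s, pushed through~\eqref{eqn:ind-equiv}, yields a filtration of $T(\lambda)\otimes\cO_\tcN$ by the objects $H^0(\nu)\otimes\cO_\tcN$, and each of these in turn has only a filtration by line bundles $\cO_\tcN(\mu)$ for the (generally non-dominant) $T$-weights $\mu$ of $H^0(\nu)$. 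Converting such a line-bundle filtration into a genuine $\hnabla$-filtration is not automatic and is exactly the point at issue; your proposed resolution does not resolve it.

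The paper sidesteps this entirely. Rather than building filtrations directly, it uses the adjunction $\pi^*\cO_\cN\cong\pi^!\cO_\cN\cong\cO_\tcN$ together with Proposition~\ref{prop:pcoh-tilt}: one checks the tilting criterion of \cite[Lemma~4]{bez:ctm} for $T(\lambda)\otimes\cO_\tcN$ by computing
\[
\uHom(\hDelta_\mu[-k],\,T(\lambda)\otimes\cO_\tcN)\cong\uHom(\pi_*\hDelta_\mu[-k],\,T(\lambda)\otimes\cO_\cN)
\]
and similarly with $\hnabla_\mu$ and $\pi^*$. By Proposition~\ref{prop:mu-exact}, $\pi_*\hDelta_\mu$ and $\pi_*\hnabla_\mu$ are proper standard and proper costandard in $\PCoh^{G\times\Gm}(\cN)$, and $T(\lambda)\otimes\cO_\cN$ is both tilting and cotilting there, so the required vanishing follows. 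This bypasses the need to exhibit any exotic (co)standard filtration by hand.
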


\begin{proofsk}
This can be deduced by adjunction from Proposition~\ref{prop:pcoh-tilt} using the fact that $\pi^*\cO_\cN \cong \pi^!\cO_\cN \cong \cO_\tcN$ and the criterion from~\cite[Lemma~4]{bez:ctm}.
\end{proofsk}

At the moment, there is no comparable statement describing $\hfT_\lambda$ for nondominant $\lambda$.  A better understanding of tilting exotic sheaves is highly desirable; in particular, it would shed light on the  question below.  An affirmative answer would have significant consequences for the geometry of affine Grassmannians and for modular representation theory. In Section~\ref{sect:sl2}, we will answer this question for $\SL_2$.

\begin{ques}[Positivity for tilting exotic sheaves]\label{ques:positivity}
Is it true that $\uHom(\hfT_\lambda, \hfT_\mu)$ is concentrated in nonnegative degrees for all $\lambda, \mu \in \bX$?
\end{ques}

\section{Applications}
\label{sect:apps}

Many of the applications of exotic and perverse-coherent sheaves rely on the fact that these $t$-structures can be constructed in several rather different ways.  
\begin{description}
\item[Quasiexceptional sets] This refers to the construction that was carried out in Section~\ref{sect:defn}. (For an explanation of this term and additional context, see~\cite[\S2.2]{bez:qes} and~\cite[\S2.1]{bez:ctm}.) 
\item[Whittaker sheaves] In this approach, we transport the natural $t$-structure across a derived equivalence relating our category of coherent sheaves to a suitable category of Iwahori--Whittaker perverse sheaves on the affine flag variety for the Langlands dual group $\Gv$.
\item[Affine Grassmannian] This approach is Koszul dual to the preceding one, and involves Iwahori-monodromic perverse sheaves on the affine Grassmannian for $\Gv$.
\item[Local cohomology] In the perverse-coherent case, there is an algebro-geo\-metric construction in terms of local cohomology that superficially resembles the definition of ordinary (constructible) perverse sheaves. Indeed, this description is the reason for the name ``perverse-coherent.''
\item[Braid positivity] In the exotic case, the $t$-structure is uniquely determined by certain exactness properties, the most important of which involves the affine braid group action of~\cite{br:abga}.  
\end{description}
In this section, we briefly review these various approaches and discuss some of their applications. Some of these are (for now?) available only in characteristic zero.

\subsection{Whittaker sheaves and quantum group cohomology}
\label{ss:whittaker}

Let $p$ be a prime number, and let $\bK = \bFp((t))$ and $\bO = \bFp[[t]]$.  Let $\Gv$ be the Langlands dual group to $G$ over $\bFp$, and let $\Bv^-, \Bv \subset \Gv$ be opposite Borel subgroups corresponding to negative and positive roots, respectively.  (Note that our convention for $\Gv$ differs from that for $G$, where $B \subset G$ denotes a negative Borel subgroup.)  These groups determine a pair $I^- = e^{-1}(\Bv^-)$, $I = e^{-1}(\Bv)$ of oppposite Iwahori subgroups, where $e: \Gv_\bO \to \Gv$ is the map induced by $t \mapsto 0$. Recall that the \emph{affine flag variety} is the space $\Fl = \Gv_\bK/I$.

Let $\Uv^- \subset \Bv^-$ be the unipotent radical, and for each simple root $\alpha$, let $\Uv^-_\alpha \subset \Uv^-$ be the root subgroup corresponding to $-\alpha$.  The quotient $\Uv^-/[\Uv^-,\Uv^-]$ can be identified with the product $\prod_{\alpha} \Uv^-_\alpha$.  For each $\alpha$, fix an isomorphism $\psi_\alpha: \Uv^-_\alpha \cong \Ga$.
Let $I^-_u = e^{-1}(\Uv^-)$ be the pro-unipotent radical of $I^-$, and let $\psi: I^-_u \to \Ga$ be the composition
\[
I^-_u \xrightarrow{e} \Uv^- \to \Uv^-/[\Uv^-,\Uv^-] \cong \prod_\alpha U^-_\alpha \xrightarrow{\prod \psi_\alpha} \prod_\alpha \Ga \xrightarrow{\sum} \Ga.
\]
Finally, let $\cX = \psi^*\mathrm{AS}$, where $\mathrm{AS}$ denotes an Artin--Schreier local system on $\Ga$.

Let $\ell$ be a prime number different from $p$.  The \emph{Iwahori--Whittaker derived category} of $\Fl$, denoted $\Db_\IW(\Fl,\Qlb)$, is defined to be the $(I^-_u,\cX)$-equivariant derived category of $\Qlb$-sheaves on $\Fl$.  (In many sources, this is simply called the $(I^-_u,\psi)$-equivariant derived category.  For background on this kind of equivariant derived category, see, for instance,~\cite[Appendix~A]{ar:mpsfv1}.)  We also have the abelian category $\Perv_\IW(\Fl,\Qlb)$ of \emph{Iwahori--Whittaker perverse sheaves} on $\Fl$.  

\begin{thm}[\cite{ab:psaf,bez:ctm}]\label{thm:excoh-iw}
Assume that $\bk = \Qlb$.  There is an equivalence of triangulated categories
\[
\Db\Coh^G(\tcN) \cong \Db_\IW(\Fl,\Qlb).
\]
This equivalence is $t$-exact for the exotic $t$-structure on the left-hand side and the perverse $t$-structure on the right-hand side.  In particular, there is an equivalence of abelian categories
\[
\ExCoh^G(\tcN) \cong \Perv_\IW(\Fl,\Qlb).
\]
\end{thm}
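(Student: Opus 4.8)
The plan is to construct the equivalence $\Db\Coh^G(\tcN) \cong \Db_\IW(\Fl,\Qlb)$ by matching a full exceptional collection on the coherent side with a ``standard'' collection on the Whittaker side, and then to identify the exotic $t$-structure as the one transported from the perverse $t$-structure. First I would recall that $\Perv_\IW(\Fl,\Qlb)$ is a highest-weight category whose standard and costandard objects $\Delta^\IW_w, \nabla^\IW_w$ are indexed by the orbits of $I^-_u$ on $\Fl$ that support a nonzero Iwahori--Whittaker local system; these orbits are indexed by the cosets $W_f \backslash W_{\mathrm{aff}}$, where $W_f$ is the finite Weyl group, and this index set is in natural bijection with $\bX$ (the same set indexing the exotic standard objects $\hDelta_\lambda$). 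Crucially, the $\nabla^\IW_w$ form a (graded-free) exceptional collection generating $\Db_\IW(\Fl,\Qlb)$, because each relevant orbit closure is, up to the Whittaker twist, an affine space bundle, so the costandard objects have no higher self-extensions and the $\Hom$-vanishing is governed by the Bruhat order.

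The main work is then to produce, on the coherent side, a functor realizing this matching. Following~\cite{ab:psaf} (with~\cite{bez:ctm} supplying the braid-theoretic input), I would use the action of the affine braid group $B_{\mathrm{aff}}$ on both categories: on $\Db\Coh^G(\tcN)$ via the geometric action of~\cite{br:abga} (wall-crossing functors $\Psi_\alpha$ and line-bundle twists), and on $\Db_\IW(\Fl,\Qlb)$ via convolution with standard/costandard sheaves on the affine flag variety. One pins down a single object on each side—on the coherent side the structure sheaf $\cO_\tcN$ (or a suitable line bundle), on the Whittaker side the ``big'' costandard sheaf attached to the base point—and checks that the braid group acts with the same combinatorics, so that applying braid group elements to the distinguished object sweeps out the two exceptional collections compatibly. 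Formally, one gets a functor $\Db\Coh^G(\tcN) \to \Db_\IW(\Fl,\Qlb)$ sending $\cO_\tcN(\lambda)$ to the corresponding Whittaker object; fully-faithfulness follows because both sides are generated by these objects and the relevant $\Hom$-spaces match degree-by-degree (exceptional-collection $\Hom$-vanishing on both sides, together with the computation of the surviving $\Hom$'s in terms of the same combinatorial data); essential surjectivity is then automatic from the generation statement.

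Granting the derived equivalence, the $t$-exactness statement is the real payoff and I would argue it as follows. Under the equivalence, the exotic standard object $\hDelta_\lambda$ goes to the Iwahori--Whittaker standard perverse sheaf $\Delta^\IW_w$ and $\hnabla_\lambda$ goes to $\nabla^\IW_w$ (for the $w$ matching $\lambda$), because the defining $\Hom$-vanishing properties in Proposition~\ref{prop:mut} are precisely those characterizing standard and costandard perverse sheaves relative to the stratification. Since $\ExCoh^G(\tcN)$ is by Theorem~\ref{thm:excoh-defn} the unique $t$-structure whose heart is stable under the relevant twists and contains all $\hDelta_\lambda$ and $\hnabla_\lambda$, and since the perverse $t$-structure on $\Db_\IW(\Fl,\Qlb)$ has a heart containing all $\Delta^\IW_w$ and $\nabla^\IW_w$, the equivalence must carry one $t$-structure to the other; restricting to hearts gives $\ExCoh^G(\tcN) \cong \Perv_\IW(\Fl,\Qlb)$.

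I expect the hard part to be verifying that the braid group actions on the two sides are genuinely compatible on the nose—i.e.\ that the chosen generator is sent to the chosen generator and that the two collections of braid-translates are identified with matching grading/shift conventions. This is where the normalization choices flagged in Section~\ref{ss:ungraded} matter, and where one must invoke the detailed computations of~\cite{ab:psaf,bez:ctm} rather than soft arguments; in particular, checking that the wall-crossing functors $\Psi_\alpha$ correspond to convolution with the relevant costandard sheaves, and that self-adjointness (Proposition~\ref{prop:braid}\eqref{it:braid-adjoint}) matches the known self-adjointness of Whittaker wall-crossing, is the technical crux. Everything else—generation, $\Hom$-vanishing, uniqueness of the $t$-structure—is then formal.
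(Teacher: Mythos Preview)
The paper does not prove this theorem. Immediately after stating it, the text says explicitly: ``We will not discuss the proof of Theorem~\ref{thm:excoh-iw}, but as a plausibility check, let us review the parametrization of simple objects in $\Perv_\IW(\Fl,\Qlb)$.'' The result is imported wholesale from~\cite{ab:psaf,bez:ctm}, and the only content the paper adds is the observation that simple Iwahori--Whittaker perverse sheaves are parametrized by ${}^f W_\ext$, which is in bijection with $\bX$. So there is nothing to compare your proposal against in this paper.

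As for your sketch itself: the broad shape---match exceptional collections, use braid-group compatibility to propagate from a generator, then read off $t$-exactness from where standards and costandards land---is in the right spirit, and the $t$-exactness paragraph is essentially how one would argue once the derived equivalence is in hand. Two caveats. First, your invocation of~\cite{br:abga} is anachronistic: that paper postdates both~\cite{ab:psaf} and~\cite{bez:ctm}, so the original proofs could not have relied on it (though the braid-positivity circle of ideas is certainly related). Second, the actual construction in~\cite{ab:psaf} is not quite ``pick a generator and act by the braid group''; it passes through Gaitsgory's central sheaves and a description of the anti-spherical quotient of the affine Hecke category, and the equivalence is built from that structure rather than from a direct matching of exceptional sets. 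Your outline is a plausible alternative strategy, but if you intend it as a summary of what~\cite{ab:psaf,bez:ctm} do, it is not accurate.
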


There is an equivalence of categories $\Db\Perv_\IW(\Fl,\Qlb) \simto \Db_\IW(\Fl,\Qlb)$ (see \cite[Lemma~1]{ab:psaf}), so Theorem~\ref{thm:excoh-iw} can be restated in a way that matches the exotic $t$-structure with the natural $t$-structure on $\Db\Perv_\IW(\Fl,\Qlb)$.

This equivalence plays a key role in Bezrukavnikov's computation of the cohomology of tilting modules for quantum groups at a root of unity~\cite{bez:ctm}.  Specifically, after relating $\Db\Coh^{G \times \Gm}(\tcN)$ to the derived category of the principal block of the quantum group, the desired facts about quantum group cohomology are reduced to the following statement about exotic sheaves, called the positivity lemma~\cite[Lemma~9]{bez:ctm}:
\begin{equation}\label{eqn:positivity-lemma}
\Ext^i(\hDelta_\lambda\la n\ra, \fE_\mu) = \Ext^i(\fE_\mu, \hnabla_\lambda\la -n\ra) = 0\qquad\text{if $i > n$.}
\end{equation}
To prove the positivity lemma, one uses Theorem~\ref{thm:excoh-iw} to translate it into a question about Weil perverse sheaves on $\Fl$.  The latter can be answered using the powerful and well-known machinery of~\cite{bbd}.

\bigskip

We will not discuss the proof of Theorem~\ref{thm:excoh-iw}, but as a plausibility check, let us review the parametrization of simple objects in $\Perv_\IW(\Fl,\Qlb)$.  Iwahori--Whittaker perverse sheaves are necessarily constructible along the $I^-$-orbits on $\Fl$, which, like the $I$-orbits, are naturally parametrized by the extended affine Weyl group $W_\ext$ for $G$.  However, not every $I^-$-orbit supports an $\cX$-equivariant local system: according to~\cite[Lemma~2]{ab:psaf}, those that do correspond to the set $^f W_\ext \subset W_\ext$ of minimal-length coset representatives for $W \backslash W_\ext$.  Thus, simple objects in $\Perv_\IW(\Fl,\Qlb)$ are parametrized by $^f W_\ext$, which is naturally in bijection with $\bX$.

For $w \in {}^f W_\ext$, let $L_w \in \Perv_\IW(\Fl,\Qlb)$ denote the corresponding simple object.  Let $^f W^f_\ext$ be the set of minimal-length representatives for the double cosets $W \backslash W_\ext / W$, and form the Serre quotient
\[
\Perv^f_\IW(\Fl,\Qlb) = \Perv_\IW(\Fl,\Qlb)\bigg/\left(
\begin{array}{c}
\text{the Serre subcategory generated} \\
\text{by the $L_w$ with $w \notin {}^fW^f_\ext$}
\end{array}
\right).
\]

\begin{thm}[\cite{bez:psaf}]\label{thm:pcoh-iw}
Assume that $\bk = \Qlb$.  There is an equivalence of triangulated categories
\[
\Db\Coh^G(\cN) \cong \Db\Perv^f_\IW(\Fl,\Qlb).
\]
This equivalence is $t$-exact for the perverse-coherent $t$-structure on the left-hand side and the natural $t$-structure on the right-hand side.  In particular, there is an equivalence of abelian categories
\[
\PCoh^G(\cN) \cong \Perv^f_\IW(\Fl,\Qlb).
\]
\end{thm}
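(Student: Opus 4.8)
\begin{proofsk}
The plan is to descend the exotic/Iwahori--Whittaker equivalence of Theorem~\ref{thm:excoh-iw} along the functor $\pi_*$ on the coherent side and along the pushforward $\varpi_*$ for the projection $\varpi\colon \Fl = \Gv_\bK/I \to \Gr = \Gv_\bK/\Gv_\bO$ (with fibres $\Gv/\Bv$) on the constructible side; Proposition~\ref{prop:mu-exact} identifies which objects these functors kill.

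First I would record that $\pi_*$ is a Verdier localization: since $R\pi_*\cO_\tcN \cong \cO_\cN$, the projection formula gives $\pi_*\pi^* \cong \mathrm{id}$, so $\pi^*$ is fully faithful and $\pi_*\colon \Db\Coh^G(\tcN) \to \Db\Coh^G(\cN)$ identifies its target with the Verdier quotient $\Db\Coh^G(\tcN)/\ker\pi_*$. Using that $\pi_*$ is $t$-exact for the exotic and perverse-coherent $t$-structures, that $\pi_*\fE_\lambda$ equals $\fIC_{w_0\lambda}$ for $\lambda\in-\bXp$ and $0$ otherwise (Proposition~\ref{prop:mu-exact}), and that $\Db\Coh^G(\tcN)$ is the bounded derived category of $\ExCoh^G(\tcN)$, one checks that $\ker\pi_*$ is the triangulated subcategory generated by the $\fE_\lambda$ with $\lambda\notin-\bXp$; and since $\pi_*$ is the $t$-exact quotient functor, the perverse-coherent $t$-structure on $\Db\Coh^G(\cN)$ corresponds to the quotient $t$-structure.

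Next I would transport this through Theorem~\ref{thm:excoh-iw}. That $t$-exact equivalence $\Db\Coh^G(\tcN)\cong\Db_\IW(\Fl,\Qlb)$ matches the $\fE_\lambda$ with the simple Iwahori--Whittaker perverse sheaves $L_w$; computing with the induced bijection $\bX\cong{}^f W_\ext$, one verifies that the antidominant weights $-\bXp$ correspond to ${}^f W^f_\ext$, so $\ker\pi_*$ is carried to the triangulated subcategory generated by the $L_w$ with $w\notin{}^f W^f_\ext$. As that subcategory is compatible with the perverse $t$-structure, the latter descends to the quotient, with heart the Serre quotient $\Perv_\IW(\Fl,\Qlb)/\langle L_w : w\notin{}^f W^f_\ext\rangle = \Perv^f_\IW(\Fl,\Qlb)$. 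Comparing with the previous paragraph gives the abelian equivalence $\PCoh^G(\cN)\cong\Perv^f_\IW(\Fl,\Qlb)$; combining this with the realization equivalence of Theorem~\ref{thm:pcoh-dereq} yields, $t$-exactly, $\Db\Coh^G(\cN)\cong\Db\PCoh^G(\cN)\cong\Db\Perv^f_\IW(\Fl,\Qlb)$.

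Two points require genuine care. The first is the normalization check that the equivalence of Theorem~\ref{thm:excoh-iw} carries $\{\fE_\lambda : \lambda\notin-\bXp\}$ to $\{L_w : w\notin{}^f W^f_\ext\}$ --- i.e.\ matches the simples killed by $\pi_*$ with those killed by the Serre quotient --- which comes down to a concrete computation with the bijection $\bX\cong{}^f W_\ext$. The second, which I expect to be the real obstacle if one does \emph{not} wish to invoke Theorem~\ref{thm:pcoh-dereq}, is to identify the Verdier quotient $\Db_\IW(\Fl,\Qlb)/\langle L_w : w\notin{}^f W^f_\ext\rangle$ with the bounded derived category of its heart $\Perv^f_\IW(\Fl,\Qlb)$; the passage from a Serre quotient of abelian categories to a Verdier quotient of derived categories is not formal. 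This is most naturally handled geometrically: one shows that $\varpi$ induces an equivalence $\Perv^f_\IW(\Fl,\Qlb)\cong\Perv_\IW(\Gr,\Qlb)$ and that $\varpi_*$, $\varpi^*$, $\varpi^!$ identify $\Db_\IW(\Gr,\Qlb)$ with the quotient above, and then invokes a Beilinson-type equivalence $\Db\Perv_\IW(\Gr,\Qlb)\cong\Db_\IW(\Gr,\Qlb)$.
\end{proofsk}
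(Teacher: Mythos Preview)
The paper does not give its own proof of this theorem: it is stated with a citation to~\cite{bez:psaf} and no argument or proof sketch follows, so there is nothing in the paper to compare your proposal against directly.

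That said, your outline is the natural strategy and is essentially the one behind the cited result: descend the equivalence of Theorem~\ref{thm:excoh-iw} along $\pi_*$ on the coherent side, using that $R\pi_*\cO_\tcN\cong\cO_\cN$ makes $\pi_*$ a Verdier localization, and invoke Proposition~\ref{prop:mu-exact} to identify its kernel with the triangulated subcategory generated by the $\fE_\lambda$ with $\lambda\notin-\bXp$. The two points you flag as requiring care are exactly the genuine ones. The matching of $\{\lambda\notin-\bXp\}$ with $\{w\notin{}^fW^f_\ext\}$ must be checked against the specific normalization of the equivalence in~\cite{ab:psaf}, not merely combinatorially. And the identification of the Verdier quotient with $\Db\Perv^f_\IW(\Fl,\Qlb)$ is indeed not formal; as you suggest, the clean route is geometric, via $\varpi\colon\Fl\to\Gr$ and a Beilinson-type realization $\Db\Perv_\IW(\Gr,\Qlb)\cong\Db_\IW(\Gr,\Qlb)$ on the Grassmannian side. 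Your invocation of Theorem~\ref{thm:pcoh-dereq} to sidestep this on the coherent side is legitimate but somewhat circular in spirit, since in practice one often wants Theorem~\ref{thm:pcoh-iw} as independent input; the geometric argument via $\Gr$ is the more robust path.
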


It seems likely that analogous statements to the theorems in this subsection hold when $\bk$ has positive characteristic.

\subsection{The affine Grassmannian and the Mirkovi\'c--Vilonen conjecture}
\label{ss:aff-grass}

Recall that the \emph{affine Grassmannian} is the space $\Gr = \Gv_\bK/\Gv_\bO$.  Here, we may either define $\bK$ and $\bO$ as in the previous subsection, and work with \'etale sheaves on $\Gr$, or we may instead put $\bK = \C((t))$ and $\bO = \C[[t]]$ and equip $\Gr$ with the classical topology.  (For a discussion of how to compare the two settings, see, e.g.,~\cite[Remark~7.1.4(2)]{rsw:mkd}.)  In this subsection, we will work with a certain category of ``mixed'' $I$-monodromic perverse $\bk$-sheaves on $\Gr$, denoted $\Perv^\mix_{(I)}(\Gr,\bk)$.
If $\bk$ has characteristic zero, this category  should be defined following the pattern of~\cite[Theorem~4.4.4]{bgs:kdprt} or~\cite[\S6.4]{ar:kdsf}: $\Perv^\mix_{(I)}(\Gr,\Qlb)$ is \emph{not} the category of all mixed perverse sheaves in the sense of~\cite{bbd}, but rather the full subcategory in which we allow only Tate local systems and require the associated graded of the weight filtration to be semisimple.  For $\bk$ of positive characteristic, this category is defined in~\cite{ar:mpsfv2} in terms of the homological algebra of parity sheaves.

In both cases, the additive category $\Parity_{(I)}(\Gr,\bk)$ of Iwahori-constructible parity sheaves can be identified with a full subcategory of $\Db\Perv^\mix_{(I)}(\Gr,\bk)$.  In the case where $\bk = \Qlb$, $\Parity_{(I)}(\Gr,\bk)$ is identified with the category of pure semisimple complexes of weight~$0$.  

\begin{thm}[\cite{abg:qglg} for $\bk = \Qlb$; \cite{ar:agsr,mr:etsps} in general]
\label{thm:excoh-gr}
There is an equivalence of triangulated categories
\[
P: \Db\Coh^{G \times \Gm}(\tcN) \simto \Db\Perv^\mix_{(I)}(\Gr,\bk)
\]
such that $P(\cF\la n\ra) \cong P(\cF)(\frac{n}{2})[n]$. This equivalence is not $t$-exact, but it does induce an equivalence of additive categories
\[
\Tilt(\ExCoh^{G \times \Gm}(\tcN)) \simto \Parity_{(I)}(\Gr,\bk).
\]
\end{thm}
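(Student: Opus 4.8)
The construction of the equivalence $P$ is the technical core of the theorem, and it proceeds by genuinely different routes in the two settings. For $\bk = \Qlb$ one uses the Arkhipov--Bezrukavnikov--Ginzburg equivalence~\cite{abg:qglg}, which rests on geometric Satake and on the description of $\Db\Coh^{G \times \Gm}(\tcN)$ via the small quantum group. For general $\bk$ one uses~\cite{ar:agsr, mr:etsps}, where the affine braid group action of~\cite{br:abga} on $\Db\Coh^{G \times \Gm}(\tcN)$ is the organizing principle and the hypothesis that $p$ is a JMW prime enters essentially, through the geometric Satake equivalence for tilting modules~\cite{jmw:pstm}. I will take $P$ and the shift formula $P(\cF\la n\ra) \cong P(\cF)(\frac{n}{2})[n]$ as given, and concentrate on why $P$ matches tilting exotic sheaves with parity sheaves; in the positive-characteristic construction this matching is in fact not a byproduct but the point of departure.

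The key structural observation is that both triangulated categories in the statement are bounded homotopy categories of the relevant additive categories. On the coherent side, recall that $\ExCoh^{G \times \Gm}(\tcN)$ is graded quasihereditary and that $\Db\ExCoh^{G \times \Gm}(\tcN) \simto \Db\Coh^{G \times \Gm}(\tcN)$; since the bounded homotopy category of tilting objects in a graded quasihereditary category recovers its bounded derived category, we get
\[
\Db\Coh^{G \times \Gm}(\tcN) \;\simeq\; K^{\mathrm{b}}\!\bigl(\Tilt(\ExCoh^{G \times \Gm}(\tcN))\bigr).
\]
On the Grassmannian side, $\Perv^\mix_{(I)}(\Gr,\bk)$ is set up precisely so that $\Db\Perv^\mix_{(I)}(\Gr,\bk) \simeq K^{\mathrm{b}}(\Parity_{(I)}(\Gr,\bk))$: in positive characteristic this is the definition~\cite{ar:mpsfv2}, and for $\bk = \Qlb$ it is a standard consequence of the theory of weights, using that $\Parity_{(I)}(\Gr,\Qlb)$ consists of the pure semisimple complexes of weight $0$ and that weight filtrations split. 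Hence constructing $P$ with the stated properties amounts to constructing an equivalence of additive categories $\Tilt(\ExCoh^{G \times \Gm}(\tcN)) \simto \Parity_{(I)}(\Gr,\bk)$ intertwining $\la 1\ra$ with $(\frac{1}{2})[1]$, and once this is done the second assertion of the theorem is automatic.

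To build this additive equivalence the plan is to present both sides by generators and relations in parallel. Both categories are generated, as Karoubian categories closed under $\la 1\ra$ (resp.\ $(\frac{1}{2})[1]$), by ``Bott--Samelson'' objects: on the coherent side, the objects obtained from $\cO_\tcN$ by iterating the wall-crossing functors attached to the $\mathbb{P}^1$-fibrations of Section~\ref{ss:mu-exact} (close relatives of the $\Psi_\alpha$ of Proposition~\ref{prop:braid}); on the Grassmannian side, the iterated push-pulls of the constant sheaf along the corresponding $\mathbb{P}^1$-fibrations on $\Gr$. That these objects generate reduces, on the coherent side, to the fact that each indecomposable tilting $\hfT_\lambda$ is a direct summand of such an iterated convolution --- a consequence of the affine braid group action together with the classification of tilting objects --- and, on the Grassmannian side, to the corresponding classical property of parity sheaves. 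One then verifies that the two families of functors obey the same relations and that the graded $\uHom$-spaces between matching Bott--Samelson objects coincide; geometric Satake for tilting $G$-modules identifies both with morphism spaces in a single Soergel-type category, so that $\hfT_\lambda$ is matched with the parity sheaf on the closure of the $I$-orbit labelled by $\lambda$ (the $I$-orbits on $\Gr$ being naturally indexed by $\bX$). Transporting the additive equivalence through $K^{\mathrm{b}}(-)$ then yields $P$ and the assertion.

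I expect the main obstacle to be exactly this last comparison: showing that the wall-crossing convolutions on $\tcN$ and the $\mathbb{P}^1$-push-pull convolutions on $\Gr$ carry literally the same morphisms between them, with matching gradings. In characteristic zero this is essentially the content of~\cite{abg:qglg}; in positive characteristic it is the heart of~\cite{mr:etsps}, and it is here that one genuinely needs the main result of~\cite{jmw:pstm}. Two smaller points deserve mention. First, the non-$t$-exactness of $P$ is consistent with the statement: by Proposition~\ref{prop:tilt-dom} tilting exotic sheaves are honest coherent sheaves, but they are sent to parity complexes, which are almost never perverse. Second, the normalization $\la 1\ra \leftrightarrow (\frac{1}{2})[1]$ is forced by the requirement that the additive equivalence respect gradings, the functor $(\frac{1}{2})[1]$ being the ``Tate twist'' native to the mixed modular formalism.
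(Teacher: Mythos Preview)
The paper does not give its own proof of this theorem: it is stated with attribution to~\cite{abg:qglg} (for $\bk = \Qlb$) and~\cite{ar:agsr, mr:etsps} (in general), and no proof or proof sketch follows. The only commentary is the remark after the statement about recovering the exotic $t$-structure from its tilting objects and about unmixed analogues. So there is nothing in the paper to compare your argument against; what you have written is a proof outline where the paper offers none.

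That said, your outline is a faithful summary of the strategy actually used in the cited references, especially~\cite{ar:agsr, mr:etsps}. The reduction to an equivalence of additive categories via $K^{\mathrm{b}}(-)$ is exactly the point: on the constructible side $\Db\Perv^\mix_{(I)}(\Gr,\bk) \simeq K^{\mathrm{b}}(\Parity_{(I)}(\Gr,\bk))$ is essentially the definition in the modular setting~\cite{ar:mpsfv2}, and on the coherent side the identification $\Db\Coh^{G \times \Gm}(\tcN) \simeq K^{\mathrm{b}}(\Tilt(\ExCoh^{G \times \Gm}(\tcN)))$ follows from graded quasihereditarity. Your description of the construction via Bott--Samelson-type generators and wall-crossing functors, with the JMW hypothesis entering through~\cite{jmw:pstm}, matches the architecture of~\cite{mr:etsps}. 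The one place where your sketch is a bit optimistic is the phrase ``geometric Satake for tilting $G$-modules identifies both with morphism spaces in a single Soergel-type category'': this identification is precisely the hard technical content of those papers, and your sketch does not indicate how it is carried out. But since the paper itself defers entirely to the references, this is not a defect relative to what you were asked to compare against.
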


Note that the exotic $t$-structure can be recovered from the class of tilting objects in its heart.  When $\bk = \Qlb$, there is also an ``unmixed'' version of this theorem~\cite{abg:qglg}.  In positive characteristic, a putative unmixed statement is equivalent to a ``modular formality'' property for $\Gr$ that is currently still open.

The next theorem is a similar statement for the perverse-coherent $t$-structure.  This result does not, however, extend to an equivalence involving the full derived category $\Db\Coh^{G \times \Gm}(\cN)$.

\begin{thm}[\cite{ar:psag}]\label{thm:pcoh-gr}
There is an equivalence of additive categories
\[
\Tilt(\PCoh^{G \times \Gm}(\cN)) \simto \Parity_{(\Gv_\bO)}(\Gr,\bk).
\]
\end{thm}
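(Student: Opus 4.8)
The plan is to deduce Theorem~\ref{thm:pcoh-gr} from its Iwahori-constructible counterpart, Theorem~\ref{thm:excoh-gr}, by passing on both sides to the ``dominant''/``spherical'' part: on the coherent side via pullback along the Springer resolution $\pi\colon\tcN\to\cN$, and on the topological side via the evident full embedding $\Parity_{(\Gv_\bO)}(\Gr,\bk)\subset\Parity_{(I)}(\Gr,\bk)$ (a complex constructible and parity for the coarser $\Gv_\bO$-stratification is a fortiori so for the finer $I$-stratification). The whole theorem then localizes in the single statement that the equivalence $P$ of Theorem~\ref{thm:excoh-gr} matches the first of these subcategories with the second.

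First I would record the coherent-side input. Because $\pi_*\cO_\tcN\cong\cO_\cN$ (equivalently $\pi^!\cO_\cN\cong\cO_\tcN$, as used in the proof of Proposition~\ref{prop:tilt-dom}), the projection formula gives $\pi_*\pi^*N\cong N$, so the adjunction unit $\mathrm{id}\to\pi_*\pi^*$ is an isomorphism and $\pi^*\colon\Db\Coh^{G\times\Gm}(\cN)\to\Db\Coh^{G\times\Gm}(\tcN)$ is fully faithful. By Propositions~\ref{prop:pcoh-tilt} and~\ref{prop:tilt-dom}, the indecomposable objects of $\Tilt(\PCoh^{G\times\Gm}(\cN))$ are the $T(\lambda)\otimes\cO_\cN\la n\ra$ with $\lambda\in\bXp$, and $\pi^*(T(\lambda)\otimes\cO_\cN)\cong T(\lambda)\otimes\cO_\tcN\cong\hfT_\lambda$. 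Hence $\pi^*$ restricts to a fully faithful functor $\Tilt(\PCoh^{G\times\Gm}(\cN))\to\Tilt(\ExCoh^{G\times\Gm}(\tcN))$ whose essential image is the full additive subcategory generated by the $\hfT_\lambda\la n\ra$ with $\lambda$ dominant.

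Next I would transport this through $P$, which restricts to an equivalence $\Tilt(\ExCoh^{G\times\Gm}(\tcN))\simto\Parity_{(I)}(\Gr,\bk)$ intertwining $\la n\ra$ with $(\frac n2)[n]$. Composing, $P\circ\pi^*$ is a fully faithful additive functor from $\Tilt(\PCoh^{G\times\Gm}(\cN))$ into $\Parity_{(I)}(\Gr,\bk)$ with essential image the additive subcategory generated by the indecomposable parity complexes $P(\hfT_\lambda)$, $\lambda\in\bXp$, and their Tate twists. It remains to show this subcategory equals $\Parity_{(\Gv_\bO)}(\Gr,\bk)$. Granting the inclusion ``$\subseteq$'', the reverse inclusion is a count: for good $p$ the $\Gv_\bO$-orbits on $\Gr$ satisfy the parity condition, so up to shift and Tate twist the indecomposable objects of $\Parity_{(\Gv_\bO)}(\Gr,\bk)$ are in bijection with these orbits, i.e.\ with $\bXp$; the $P(\hfT_\lambda)$, $\lambda\in\bXp$, then furnish $|\bXp|$ pairwise non-isomorphic indecomposable objects of $\Parity_{(\Gv_\bO)}(\Gr,\bk)$, so they exhaust them, and the full additive category they generate is all of $\Parity_{(\Gv_\bO)}(\Gr,\bk)$.

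The crux is therefore ``$\subseteq$'': that $P(\hfT_\lambda)=P(T(\lambda)\otimes\cO_\tcN)$ is $\Gv_\bO$-constructible for every dominant $\lambda$, which is where the geometry of $\Gr$ and the JMW hypothesis really enter, and which I expect to be the technical core of~\cite{ar:psag}. My approach would be to build an averaging (i.e.\ $\Gv_\bO$-equivariantization) endofunctor $\mathsf{Av}$ of $\Db\Perv^\mix_{(I)}(\Gr,\bk)$ that takes values in $\Gv_\bO$-constructible complexes and acts as the identity, up to shift and twist, on them, and to prove that under $P$ it corresponds to the endofunctor $\pi^*\pi_*$ of $\Db\Coh^{G\times\Gm}(\tcN)$ (convolution with the structure sheaf of the Steinberg variety $\tcN\times_\cN\tcN$). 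Granting $P\circ\pi^*\pi_*\cong\mathsf{Av}\circ P$, for dominant $\lambda$ we have $\hfT_\lambda\in\im(\pi^*)$, hence $\pi^*\pi_*\hfT_\lambda\cong\hfT_\lambda$ (the counit is an isomorphism on the image of a fully faithful left adjoint), hence $\mathsf{Av}(P(\hfT_\lambda))\cong P(\hfT_\lambda)$, so $P(\hfT_\lambda)$ is $\Gv_\bO$-constructible, as wanted. Establishing the intertwining $P\circ\pi^*\pi_*\cong\mathsf{Av}\circ P$ is where the real work lies: it should rest on compatibility of $P$ with the relevant convolution structures and, ultimately, on the modular geometric Satake equivalence. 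As a cross-check I would also compare the two categories directly as graded lifts of $\Tilt(\Rep(G))$ --- Proposition~\ref{prop:pcoh-tilt} presents $\Tilt(\PCoh^{G\times\Gm}(\cN))$ as such, with $\uHom(T(\lambda)\otimes\cO_\cN,T(\mu)\otimes\cO_\cN)=\Hom_G(T(\lambda),T(\mu)\otimes\bk[\cN])$ graded by the $\Gm$-action, while modular geometric Satake together with the JMW identification of spherical parity complexes with tilting perverse sheaves presents $\Parity_{(\Gv_\bO)}(\Gr,\bk)$ as a graded lift whose morphism spaces are governed by the spherical cohomology of $\Gr$, namely the graded ring $\bk[\cN]$; the $\SL_2$ calculations of Section~\ref{sect:sl2} would test all of this explicitly.
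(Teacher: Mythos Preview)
The paper does not actually contain a proof of Theorem~\ref{thm:pcoh-gr}: the result is simply stated with a citation to~\cite{ar:psag}. So there is no ``paper's own proof'' to compare against, and your proposal should be assessed on its own merits and against the approach of~\cite{ar:psag} itself.

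Your main strategy---deduce the spherical equivalence from the Iwahori one (Theorem~\ref{thm:excoh-gr}) by showing that $P\circ\pi^*$ lands in $\Parity_{(\Gv_\bO)}(\Gr,\bk)$ via an averaging argument---is \emph{not} how~\cite{ar:psag} proceeds. In fact the logical flow in the literature is the reverse: Theorem~\ref{thm:pcoh-gr} was established in~\cite{ar:psag} first, and the positive-characteristic case of Theorem~\ref{thm:excoh-gr} (in~\cite{ar:agsr,mr:etsps}) came afterwards and in part builds on that earlier work. So your proposed deduction is at best anachronistic, and depending on which proof of Theorem~\ref{thm:excoh-gr} you invoke, potentially circular.

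The actual argument in~\cite{ar:psag} is much closer to what you relegate to a ``cross-check'' at the end. One identifies both categories directly as graded enhancements of $\Tilt(\Rep(G))$: on the coherent side via Proposition~\ref{prop:pcoh-tilt}, with $\uHom(T(\lambda)\otimes\cO_\cN, T(\mu)\otimes\cO_\cN)\cong\Hom_G(T(\lambda),T(\mu)\otimes\bk[\cN])$; on the constructible side via the JMW theorem identifying spherical parity sheaves with tilting perverse sheaves under geometric Satake, together with the computation of equivariant cohomology $H^\bullet_{\Gv_\bO}(\Gr,\bk)$ in terms of $\bk[\cN]$ (going back to Ginzburg and Yun--Zhu). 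Matching these two descriptions of the graded $\Hom$-spaces gives the equivalence. No averaging functor or compatibility of $P$ with $\pi^*\pi_*$ is needed.

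A further technical concern with your main route: in positive characteristic the mixed category $\Db\Perv^\mix_{(I)}(\Gr,\bk)$ of~\cite{ar:mpsfv2} is defined formally from parity sheaves, and it is not obvious that a $\Gv_\bO$-averaging functor $\mathsf{Av}$ with the properties you need exists in that setting, nor that the intertwining $P\circ\pi^*\pi_*\cong\mathsf{Av}\circ P$ can be established without already knowing a great deal about $P$. You correctly flag this as ``where the real work lies,'' but it is work that the direct approach of~\cite{ar:psag} entirely avoids. My advice: promote your cross-check to the main argument.
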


An important consequence of the preceding theorem is the following result, known as the Mirkovi\'c--Vilonen conjecture (see~\cite[Conjecture~6.3]{mv:psag} or~\cite[Conjecture~13.3]{mv:gld}).  In bad characteristic, the conjecture is false~\cite{jut:mrrg}.

\begin{thm}[\cite{ar:psag}]
Under the geometric Satake equivalence, the stalks (resp.\ costalks) of the perverse sheaf on $\Gr$ corresponding to a Weyl module (resp.\ dual Weyl module) of $G$ vanish in odd degrees.
\end{thm}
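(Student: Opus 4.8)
Following~\cite{ar:psag}, the plan is to translate the assertion into a parity statement about coherent sheaves on $\cN$, where it becomes nearly tautological, and then transport it back. First I would pin down what ``the perverse sheaf corresponding to a Weyl module'' means: geometric Satake is an equivalence $\Perv_{\Gv_\bO}(\Gr,\bk) \cong \Rep(G)$ of highest weight categories for the dominance order (which matches the closure order of the $\Gv_\bO$-orbits on $\Gr$), and since the $\Gv_\bO$-orbits carry no nontrivial equivariant local systems, the Weyl module $V(\lambda)$ corresponds to the standard perverse sheaf $\cV_\lambda = {}^{p}j_{\lambda!}\,\underline{\bk}_{\Gr_\lambda}[\dim\Gr_\lambda]$ and the dual Weyl module $H^0(\lambda)$ to the costandard one $\cV'_\lambda = {}^{p}j_{\lambda*}\,\underline{\bk}_{\Gr_\lambda}[\dim\Gr_\lambda]$. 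The duality of $\Rep(G)$ that fixes simple modules interchanges $V(\lambda)$ and $H^0(\lambda)$ and intertwines with Verdier duality on $\Gr$, which interchanges stalks and costalks; so the two halves of the statement are equivalent, and I would reduce to proving that the stalks of $\cV_\lambda$ vanish in odd degrees.

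The next step is to feed in the coherent-side input. By Proposition~\ref{prop:pcoh-tilt} the indecomposable tilting objects of $\PCoh^{G\times\Gm}(\cN)$ are the sheaves $T(\mu)\otimes\cO_\cN\la n\ra$, and by Theorem~\ref{thm:pcoh-gr} they match the indecomposable parity sheaves $\cE_\mu$ on $\Gr$. The key arithmetic fact is that $T(\mu)\otimes\cO_\cN$ is a free $\cO_\cN$-module and that, because the $\Gm$-action places $\fg^*$ in degree $2$, the graded ring $\bk[\cN]$ is concentrated in even degrees; consequently $\uHom^\bullet(T(\mu)\otimes\cO_\cN, T(\nu)\otimes\cO_\cN)$ is concentrated in cohomological degree $0$ and in even internal degrees. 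The translation of the internal grading into cohomological shift --- $\la 1\ra \leftrightarrow (\frac{1}{2})[1]$ --- is supplied by Theorem~\ref{thm:excoh-gr} (for $\tcN$) together with the $t$-exactness of $\pi_*$ from Proposition~\ref{prop:mu-exact}; running the coherent computation through this dictionary shows that the mixed/graded structure induced on the $\Gv_\bO$-constructible derived category of $\Gr$ is ``parity-pure'' on parity sheaves and, more to the point, that each $\cE_\mu$ is perverse and is the geometric-Satake image of the tilting module $T(\mu)$.

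Now I would exploit the highest weight structure of $\PCoh^{G\times\Gm}(\cN)$. The (graded) standard object matching $V(\lambda)$ admits a finite resolution by direct sums of the tilting sheaves $T(\mu)\otimes\cO_\cN\la n\ra$, and the linearity/positivity properties of this graded properly stratified category (of the type recorded in~\eqref{eqn:positivity-lemma} on the exotic side) force the internal shift $n$ appearing in homological degree $p$ of the resolution to be congruent to $p$ modulo $2$. Applying geometric Satake and the grading dictionary, this becomes an expression of $\cV_\lambda$ inside $\Db_{(\Gv_\bO)}(\Gr,\bk)$ as a complex $[\,\cE^0 \to \cE^1 \to \cdots \to \cE^N\,]$ of sums of parity sheaves in which ``homological degree $+$ cohomological shift'' is always even. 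Since each term of such a complex has stalks concentrated in even degrees, the hypercohomology spectral sequence $E_1^{p,q} = \mathcal{H}^q(i_x^*\cE^p) \Rightarrow \mathcal{H}^{p+q}(i_x^*\cV_\lambda)$ has $E_1$-page supported on the even diagonal, which gives $\mathcal{H}^{\mathrm{odd}}(i_x^*\cV_\lambda) = 0$ for all $x$, as required.

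The main obstacle is the grading and parity bookkeeping underlying the middle two steps. The equivalences of Theorems~\ref{thm:excoh-gr} and~\ref{thm:pcoh-gr} are far from $t$-exact, so it takes real work to see that the elementary observation ``$\bk[\cN]$ is evenly graded'' propagates to a constraint linking cohomological degree with weight on $\Gr$, that the parity sheaves are perverse, and that the tilting resolution of the standard object can be chosen with homological degree correlated to internal grading; and one must check that all the normalizations --- those in~\eqref{eqn:bnabla-defn} and~\eqref{eqn:excoh-dom}, in Theorems~\ref{thm:excoh-gr}--\ref{thm:pcoh-gr}, and in geometric Satake --- are mutually compatible. This is exactly the technical heart of~\cite{ar:psag}; granting it, the deduction above is a formal matter of highest weight categories and spectral sequences.
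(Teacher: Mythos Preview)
Your overall strategy is sound and lands in the same place as the paper, but you take a more elaborate route and one of your intermediate justifications is off.  The paper's own sketch is more direct: once the stalk/costalk question is rewritten as the vanishing of certain $\Ext$-groups in $\Db_{(\Gv_\bO)}(\Gr,\bk)$, Theorem~\ref{thm:pcoh-gr} converts those into $\Hom$-groups inside the \emph{abelian} category $\PCoh^{G\times\Gm}(\cN)$, and the required vanishing is then a formal consequence of the properly stratified axioms (tilting${}={}$cotilting, $\Ext^{>0}$ between true standards and proper costandards vanishes, etc.).  No tilting resolutions or spectral sequences are invoked.

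Your version instead builds a tilting resolution of the true standard object on the coherent side, transports it to a complex of parity sheaves, and reads off the stalk parity from a hypercohomology spectral sequence.  This works in principle, but the step where you assert that ``the internal shift $n$ appearing in homological degree $p$ of the resolution is congruent to $p$ modulo $2$'' is not justified by the reference you give: the positivity lemma~\eqref{eqn:positivity-lemma} lives on the exotic side and is a \emph{positivity} statement ($i>n$ forces vanishing), not a parity statement.  The parity of a minimal tilting resolution in $\PCoh^{G\times\Gm}(\cN)$ does hold, but it comes from the fact that $\bk[\cN]$ is concentrated in even degrees together with the explicit form $T(\mu)\otimes\cO_\cN$ of the tiltings (so all $\uHom$'s between them are even), and this needs to be argued separately.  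You should also be careful about how you pass between Theorems~\ref{thm:excoh-gr} and~\ref{thm:pcoh-gr}: the former is a genuine derived equivalence for $\tcN$ and $I$-monodromic sheaves, while the latter is only an additive equivalence for $\cN$ and $\Gv_\bO$-equivariant parity sheaves, so ``transporting'' a resolution via $\pi_*$ and Theorem~\ref{thm:excoh-gr} does not literally land you in the setting of Theorem~\ref{thm:pcoh-gr} without further work.  None of these gaps is fatal---they are exactly the compatibilities established in~\cite{ar:psag}---but the paper's direct $\Ext$-translation avoids having to unpack them.
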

\begin{proofsk}
The statement we wish to prove can be rewritten as a statement about the vanishing of certain $\Ext$-vanishing groups in the derived category of constructible complexes of $\bk$-sheaves on $\Gr$.  Theorem~\ref{thm:pcoh-gr} lets us translate that question into one about $\Hom$-groups in the abelian category $\PCoh^{G \times \Gm}(\cN)$ instead.  The latter question turns out to be quite easy; it is an exercise using basic properties of properly stratified categories.
\end{proofsk}

\subsection{Local cohomology and the Lusztig--Vogan bijection}
\label{ss:local}

The following theorem describes $\PCoh^{G \times \Gm}(\cN)$ in terms of cohomology-vanishing conditions on a complex $\cF$ and on its Serre--Grothendieck dual $\SGD(\cF)$, given by $\SGD(\cF) = \cRHom(\cF,\cO_\cN)$.  These conditions closely resemble the definition of ordinary (constructible) perverse sheaves; indeed, this theorem is the justification for the term ``perverse-coherent.''

\begin{thm}[\cite{bez:qes}; see also~\cite{a:pcsnc}]\label{thm:pcoh-loc}
Let $\cF \in \Db\Coh^{G \times \Gm}(\cN)$.  The following conditions are equivalent:
\begin{enumerate}
\item $\cF$ lies in $\PCoh^{G \times \Gm}(\cN)$.
\item We have $\dim \supp \cH^i(\cF) \le \dim \cN - 2i$ and $\dim \supp \cH^i(\SGD\cF) \le \dim \cN - 2i$ for all $i \in \Z$.\label{it:dim-supp}
\item Whenever $x \in \cN$ is a generic point of a $G$-orbit, we have $H^i(\cF_x) = 0$ if $i > \frac{1}{2}\codim \bar x$, and $H^i_x(\cF) = 0$ if $i < \frac{1}{2}\codim \bar x$.\label{it:loc-coh}
\end{enumerate}
\end{thm}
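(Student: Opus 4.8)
The plan is to recognize conditions~(2) and~(3) as the two standard descriptions of the \emph{perverse-coherent $t$-structure} on $\Db\Coh^{G \times \Gm}(\cN)$ attached to the middle perversity, in the sense of Deligne and Bezrukavnikov~\cite{bez:pc}, and then to identify that $t$-structure with the one of Theorem~\ref{thm:pcoh-defn} by appealing to the uniqueness clause of that theorem.

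First I would set up the general machinery. Stratify $\cN$ by its (finitely many) $G$-orbits, and for a generic point $x$ of an orbit set $p(x) = \frac{1}{2}\codim\overline{\{x\}}$. The standing assumption that $\fg$ admits a nondegenerate $G$-invariant form identifies nilpotent adjoint orbits with coadjoint orbits, each of which carries a Kostant--Kirillov symplectic form; hence every $G$-orbit in $\cN$ is even-dimensional, every orbit-closure codimension is even, and $p$ takes integer values. Since $p$ is the middle perversity, it is monotone and comonotone, so Bezrukavnikov's general theorem~\cite{bez:pc} produces a bounded $t$-structure on $\Db\Coh^{G \times \Gm}(\cN)$ whose connective part consists of the $\cF$ with $\cH^i(i_x^*\cF) = 0$ for all $i > p(x)$ and all generic points $x$ of orbits, and whose coconnective part is defined dually using $i_x^!$; its heart consists of the $\cF$ satisfying both, which is precisely condition~(3). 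Rewriting ``$\cH^i(i_x^*\cF) \ne 0$ at a generic point $x$ of an orbit'' as ``$x \in \supp\cH^i(\cF)$'' turns the stalk condition into $\dim\supp\cH^i(\cF) \le \dim\cN - 2i$, and Grothendieck--Serre duality matches the costalk condition on $\cF$ with the stalk condition on $\SGD\cF$ (using that $\SGD$ is an involutive anti-equivalence, as $\cN$ is Gorenstein with trivial canonical sheaf), turning the costalk condition into $\dim\supp\cH^i(\SGD\cF) \le \dim\cN - 2i$. This yields the equivalence (2)$\Leftrightarrow$(3) and exhibits a $t$-structure whose heart is exactly the full subcategory defined by condition~(2).

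It then remains to identify this heart with $\PCoh^{G \times \Gm}(\cN)$. It is stable under $\la 1\ra$, since condition~(2) refers only to supports of cohomology sheaves, which are unchanged by Tate twist. By the uniqueness clause of Theorem~\ref{thm:pcoh-defn}, it suffices to verify that $\bDelta_\lambda$ and $\bnabla_\lambda$ satisfy~(2) for every $\lambda \in \bXp$; since these are Tate twists of the Andersen--Jantzen sheaves $A_\mu = \pi_*\cO_\tcN(\mu)$ (namely $\bDelta_\lambda = A_{w_0\lambda}\la\delta^*_\lambda\ra$ and $\bnabla_\lambda = A_\lambda\la-\delta^*_\lambda\ra$), it is enough to show that every $A_\mu$, $\mu \in \bX$, satisfies~(2). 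The key geometric input is the semismallness of the Springer resolution: for a generic point $x$ of an orbit, $\dim\pi^{-1}(x) = \frac{1}{2}\codim\overline{\{x\}}$. Hence, for any coherent sheaf $\cG$ on $\tcN$, the theorem on formal functions together with Grothendieck's cohomological-dimension bound gives $\supp R^i\pi_*\cG \subseteq \{x : \dim\pi^{-1}(x) \ge i\}$, a closed set all of whose components are orbit closures $\overline{\{x\}}$ with $\codim\overline{\{x\}} \ge 2i$, hence of dimension at most $\dim\cN - 2i$; taking $\cG = \cO_\tcN(\mu)$ controls $\cH^i(A_\mu) = R^i\pi_*\cO_\tcN(\mu)$. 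For the bound on $\SGD A_\mu$, Grothendieck duality for $\pi$ combined with $\pi^!\cO_\cN \cong \cO_\tcN$ (cf.\ the proof of Proposition~\ref{prop:tilt-dom}) gives $\SGD A_\mu \cong \pi_*\cRHom_\tcN(\cO_\tcN(\mu),\cO_\tcN) \cong \pi_*\cO_\tcN(-\mu)$, so $\cH^i(\SGD A_\mu) = R^i\pi_*\cO_\tcN(-\mu)$ is controlled by the same estimate. Thus every $A_\mu$, and therefore every $\bDelta_\lambda$ and $\bnabla_\lambda$, lies in the heart, which accordingly coincides with $\PCoh^{G \times \Gm}(\cN)$. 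Together with the previous step, this proves that conditions~(1), (2) and~(3) are equivalent.

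The step I expect to be the main obstacle is the general existence result invoked in the first step --- equivalently, the verification that the middle perversity is monotone and comonotone, which ultimately rests on the even-dimensionality of nilpotent orbits --- though in the expository spirit of these notes this is imported wholesale from~\cite{bez:pc}. Beyond that, the one genuinely delicate geometric point is the self-dual support estimate for $A_\mu$: the compatibility of the semismallness of $\pi$ with Grothendieck duality, encapsulated in $\SGD A_\mu \cong \pi_*\cO_\tcN(-\mu)$, is exactly what forces condition~(2) to hold for $A_\mu$ symmetrically in $\cF$ and $\SGD\cF$.
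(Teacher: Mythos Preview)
Your proposal is correct and follows essentially the same route as the paper's sketch: invoke the general perverse-coherent machinery of~\cite{bez:pc,ab:pcs} to produce the $t$-structure described by conditions~(2) and~(3), and then identify it with the quasiexceptional $t$-structure of Theorem~\ref{thm:pcoh-defn} by checking that the Andersen--Jantzen sheaves $A_\mu$ lie in its heart. Your explicit use of semismallness of $\pi$ together with the Grothendieck-duality identity $\SGD A_\mu \cong \pi_*\cO_\tcN(-\mu)$ is exactly the computation underlying~\cite[Corollary~3]{bez:qes} that the paper cites, and your reformulation of the stalk/costalk conditions as support bounds is the content of~\cite[Lemma~2.18]{ab:pcs}.
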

(In the last assertion, $\cF_x$ is just the stalk of $\cF$ at $x$, while $H^i_x({-})$ is cohomology with support at $x$.)
\begin{proofsk}
In~\cite{bez:qes}, condition~\eqref{it:loc-coh} was taken as the definition of the category $\PCoh^{G \times \Gm}(\cN)$, following~\cite{bez:pc, ab:pcs}, while the $t$-structure of Theorem~\ref{thm:pcoh-defn} is considered separately and initially given no name.  According to~\cite[Corollary~3]{bez:qes}, the two $t$-structures coincide; the proof consists of showing that the $A_\lambda$ satisfy condition~\eqref{it:loc-coh}.

Condition~\eqref{it:loc-coh} can be used to define ``perverse-coherent'' $t$-structures on varieties or stacks in considerable generality, not just on the nilpotent cone of a reductive group.  This theory is developed in~\cite{bez:pc,ab:pcs}.  The equivalence of conditions~\eqref{it:dim-supp} and~\eqref{it:loc-coh} holds in this general framework; see~\cite[Lemma~2.18]{ab:pcs}.
\end{proofsk}

As with ordinary perverse sheaves, there is a special class of perverse-coherent sheaves satisfying stronger dimension bounds. Let $C \subset \cN$ be a nilpotent orbit, and let $\cE$ be a $(G \times \Gm)$-equivariant vector bundle on $C$.  There is an object
\[
\fIC(C,\cE) \in \PCoh^{G \times \Gm}(\cN),
\]
called a \emph{coherent intersection cohomology complex}, that is uniquely characterized by the following two conditions:
\begin{enumerate}
\item $\fIC(C,\cE)$ is supported on $\overline{C}$, and $\fIC(C,\cE)|_C \cong \cE[-\frac{1}{2}\codim C]$.
\item We have $\dim \supp \cH^i(\cF) < \dim \cN - 2i$ and $\dim \supp \cH^i(\SGD\cF) < \dim \cN - 2i$ for all $i > \frac{1}{2}\codim C$.
\end{enumerate}
Moreover, when $\cE$ is an irreducible vector bundle, $\fIC(C,\cE)$ is a simple object of $\PCoh^{G \times \Gm}(\cN)$, and every simple object arises in this way.

Theorem~\ref{thm:pcoh-loc} has an obvious $G$-equivariant analogue (omitting the $\Gm$-equi\-var\-i\-ance), as does the notion of coherent intersection cohomology complexes.  The latter yields a bijection
\begin{equation}\label{eqn:pcoh-ic}
\left\{
\begin{array}{@{}c@{}}
\text{simple objects} \\
\text{in $\PCoh^G(\cN)$}
\end{array}
\right\} \overset{\sim}{\longleftrightarrow}
\left\{(C,\cE) \mathbin{\Big|}
\begin{array}{@{}c@{}}
\text{$C$ a $G$-orbit, $\cE$ an irreducible} \\
\text{$G$-equivariant vector bundle on $C$}
\end{array}
\right\}
\end{equation}
that looks very different from the parametrization of simple objects in~\S\ref{ss:pcoh}.  Comparing the two yields the following result of Bezrukavnikov.

\begin{thm}[\cite{bez:qes}]\label{thm:lv}
There is a canonical bijection
\[
\bXp \overset{\sim}{\longleftrightarrow} \{(C,\cE)\}.
\]
\end{thm}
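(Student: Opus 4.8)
The plan is to obtain the bijection by comparing the two different parametrizations of the simple objects of $\PCoh^G(\cN)$ recorded above; beyond assembling results already recalled, essentially no new argument is needed. There are three ingredients. The first is the $G$-equivariant (ungraded) analogue of Theorem~\ref{thm:pcoh-defn}: $\PCoh^G(\cN)$ is a finite-length abelian category whose simple objects are exactly the $\fIC_\lambda = \im(\bDelta_\lambda \to \bnabla_\lambda)$ for $\lambda \in \bXp$, pairwise nonisomorphic. Since $\bDelta_\lambda$, $\bnabla_\lambda$, and the canonical map between them are attached intrinsically to the perverse-coherent $t$-structure, which is itself unique, the assignment $\lambda \mapsto [\fIC_\lambda]$ is a canonical bijection
\[
\bXp \;\overset{\sim}{\longrightarrow}\; \{\text{isomorphism classes of simple objects in } \PCoh^G(\cN)\}.
\]

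The second ingredient is the $G$-equivariant analogue of Theorem~\ref{thm:pcoh-loc} together with the discussion of coherent intersection cohomology complexes that follows it: in the description of $\PCoh^G(\cN)$ by the support and local-cohomology conditions, every simple object has the form $\fIC(C,\cE)$ for a unique pair $(C,\cE)$ with $C$ a nilpotent $G$-orbit and $\cE$ an irreducible $G$-equivariant vector bundle on $C$, and each such $\fIC(C,\cE)$ is simple. As $\fIC(C,\cE)$ is uniquely pinned down by its support together with its restriction to $C$, this yields a second canonical bijection, which is precisely the ungraded form of the map~\eqref{eqn:pcoh-ic},
\[
\{(C,\cE)\} \;\overset{\sim}{\longrightarrow}\; \{\text{isomorphism classes of simple objects in } \PCoh^G(\cN)\}.
\]

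The third ingredient --- the one genuinely used rather than merely recalled --- is that these really are two descriptions of the \emph{same} abelian category: the quasiexceptional-set $t$-structure of Theorem~\ref{thm:pcoh-defn} and the $t$-structure defined by condition~(3) of Theorem~\ref{thm:pcoh-loc} coincide. This is \cite[Corollary~3]{bez:qes} (the ungraded version of the equivalence of conditions~(1) and~(3) in Theorem~\ref{thm:pcoh-loc}), whose proof amounts to checking that the Andersen--Jantzen sheaves $A_\lambda$ satisfy the local-cohomology conditions. Granting it, the two target sets displayed above coincide, so composing the first bijection with the inverse of the second produces the asserted canonical bijection $\bXp \overset{\sim}{\longrightarrow} \{(C,\cE)\}$.

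The main obstacle is not the existence of the bijection, which is formal once both parametrizations are in place, but that the construction is entirely inexplicit: it reveals nothing about which orbit $C$ and which bundle $\cE$ correspond to a given dominant weight $\lambda$ --- equivalently, nothing about the support of $\fIC_\lambda$ or the leading term of its class in $K$-theory. Making this combinatorially explicit is the Lusztig--Vogan problem proper and is genuinely hard; it is not addressed by the theorem as stated. A secondary, purely technical, point is to confirm that the ungraded analogues of Theorems~\ref{thm:pcoh-defn} and~\ref{thm:pcoh-loc}, and of their coincidence, all hold, which follows from the remarks in \S\ref{ss:ungraded} and from the arguments of \cite{bez:qes,a:pcsnc}.
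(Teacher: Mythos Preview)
Your proposal is correct and follows exactly the approach indicated in the paper: the theorem is obtained simply by comparing the parametrization of simple objects in $\PCoh^G(\cN)$ by $\bXp$ (from Theorem~\ref{thm:pcoh-defn}) with the parametrization by pairs $(C,\cE)$ via coherent intersection cohomology complexes (from the discussion around~\eqref{eqn:pcoh-ic}). The paper itself offers no further argument beyond the sentence ``Comparing the two yields the following result of Bezrukavnikov,'' and your write-up is a faithful elaboration of that, including the correct citation to \cite[Corollary~3]{bez:qes} for the coincidence of the two $t$-structures.
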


The existence of such a bijection was independently conjectured by Lusztig~\cite{lus:cawg4} and Vogan.  For $G = \mathrm{GL}_n$, the Lusztig--Vogan bijection was established earlier~\cite{a:phd} (see also~\cite{a:ekt}) by an argument that provided an explicit combinatorial description of the bijection.  

In general, it is rather difficult to carry out computations with coherent $\fIC$'s, and the problem of computing the Lusztig--Vogan bijection explicitly remains open in most cases.  The extreme cases corresponding to the regular and zero nilpotent orbits are discussed below, following~\cite[Proposition~2.8]{a:ekt}.  Let
\[
\cN_\reg \subset \cN
\qquad\text{and}\qquad
C_0 \subset \cN
\]
denote the regular and zero nilpotent orbits, respectively.

\begin{prop}\label{prop:lv-regular}
The bijection of Theorem~\ref{thm:lv} restricts to a bijection
\[
\bXmin \overset{\sim}{\longleftrightarrow} \{(\cN_\reg,\cE)\}
\]
\end{prop}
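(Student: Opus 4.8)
The plan is to identify, combinatorially, the set of dominant weights $\lambda$ for which $\fIC_\lambda$ is supported on all of $\cN$, and to check that it is exactly $\bXmin$. Write $C_\lambda \subseteq \cN$ and $\cE_\lambda$ for the orbit and equivariant bundle with $\fIC_\lambda \cong \fIC(C_\lambda, \cE_\lambda)$, so that the bijection of Theorem~\ref{thm:lv} is $\lambda \mapsto (C_\lambda, \cE_\lambda)$. Since $\cN_\reg$ is the open orbit, $C_\lambda = \cN_\reg$ if and only if $j^*\fIC_\lambda \neq 0$, where $j\colon \cN_\reg \hookrightarrow \cN$ is the inclusion. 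As the map of Theorem~\ref{thm:lv} is already known to be a bijection, it will automatically restrict to a bijection from $\{\lambda \in \bXp : C_\lambda = \cN_\reg\}$ onto $\{(\cN_\reg, \cE)\}$; so the proposition reduces to proving the equality $\{\lambda \in \bXp : C_\lambda = \cN_\reg\} = \bXmin$. (This is consistent with the shape of the target set: $\cN_\reg \cong G/Z_G(e)$ for a regular nilpotent $e$, the reductive quotient of $Z_G(e)$ is the center $Z(G)$ by the theory of the principal $\mathfrak{sl}_2$-triple, so the irreducible $G$-equivariant bundles on $\cN_\reg$ are exactly the characters of $Z(G)$, indexed by $\widehat{Z(G)} = \bX/\Z\Phi$, which $\smp$ identifies with $\bXmin$.) Although Theorem~\ref{thm:lv} is stated without $\Gm$, I will argue $\Gm$-equivariantly throughout, which is harmless since the functor forgetting $\Gm$-equivariance is exact, preserves supports, and carries $\fIC_\lambda$ to $\fIC_\lambda$.

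First I would reduce to a question about line bundles on $\cN_\reg$. Open restriction is $t$-exact for the perverse-coherent $t$-structure, and $\PCoh^{G\times\Gm}(\cN_\reg) = \Coh^{G\times\Gm}(\cN_\reg)$ because $\cN_\reg$ is smooth and homogeneous; hence $j^*$ is exact on perverse-coherent sheaves and $j^*\fIC_\lambda = \im(j^*\bDelta_\lambda \to j^*\bnabla_\lambda)$. Since $\pi$ is an isomorphism over $\cN_\reg$, flat base change identifies $j^*A_\mu$ with the restriction of the line bundle $\cO_\tcN(\mu)$ to $\pi^{-1}(\cN_\reg)$; writing $\cO_{\cN_\reg}(\mu)$ for the resulting line bundle on $\cN_\reg$, one gets $j^*\bDelta_\lambda \cong \cO_{\cN_\reg}(w_0\lambda)\la \delta^*_\lambda\ra$ and $j^*\bnabla_\lambda \cong \cO_{\cN_\reg}(\lambda)\la -\delta^*_\lambda\ra$. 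These two line bundles have the same underlying $G$-equivariant line bundle: they differ by $\cO_{\cN_\reg}(w_0\lambda - \lambda)$, and $w_0\lambda - \lambda$ lies in the root lattice $\Z\Phi$, which is trivial on $Z_G(e)$ (all roots vanish on $Z(G)$). Consequently $j^*\bDelta_\lambda \to j^*\bnabla_\lambda$ is a $G$-equivariant morphism between line bundles with a common underlying $G$-equivariant line bundle, and since $\cN_\reg$ is a single orbit we have $\bk[\cN_\reg]^G = \bk$, so the morphism is either $0$ or an isomorphism; it is nonzero precisely when $j^*\bDelta_\lambda$ and $j^*\bnabla_\lambda$ coincide as $G\times\Gm$-equivariant line bundles, i.e.\ precisely when they carry the same $\Gm$-weight.

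The substance of the argument is then to compute that $\Gm$-weight. The $\Gm$-action on $\cN_\reg$ agrees with the action of the cocharacter $2\rho^\vee := \sum_{\beta \in \Phi^+}\beta^\vee$ of $T$, because $\mathrm{Ad}\big((2\rho^\vee)(z)\big)$ scales a regular nilpotent $e \in \fu$ by $z^{-2}$ — the paper's $\Gm$-action — and because this cocharacter normalizes $Z_G(e) = Z_G(z^{-2}e)$. Tracking the $\Gm$-equivariant structure on $\cO_\tcN(\nu) = p^*\cO_{G/B}(\nu)$ (on which $\Gm$ acts through its trivial action on $G/B$) through the identification $\cN_\reg \cong G/Z_G(e)$ is a short computation in the associated-bundle picture; I expect it to identify $\cO_\tcN(\nu)|_{\cN_\reg}$ with the class $\big(\nu|_{Z(G)},\, -\langle \nu, 2\rho^\vee\rangle\big)$ in $\mathrm{Pic}^{G\times\Gm}(\cN_\reg) = \widehat{Z(G)} \times \Z$. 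Feeding in the grading shifts from the previous paragraph and using $w_0(2\rho^\vee) = -2\rho^\vee$, the condition ``$j^*\bDelta_\lambda \cong j^*\bnabla_\lambda$ as $G\times\Gm$-equivariant bundles'' should become $\delta^*_\lambda = \langle \lambda, 2\rho^\vee\rangle = \sum_{\beta \in \Phi^+}\langle \lambda, \beta^\vee\rangle$.

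To conclude, I would use $\delta^*_\lambda = \delta_{w_0\lambda}$ together with the standard length formula $\delta_\mu = \#\{\beta \in \Phi^+ : \langle \mu, \beta^\vee\rangle < 0\}$ and the substitution $\beta \mapsto -w_0\beta$ to rewrite $\delta^*_\lambda = \#\{\beta \in \Phi^+ : \langle \lambda, \beta^\vee\rangle > 0\}$. For dominant $\lambda$ every term $\langle \lambda, \beta^\vee\rangle$ is nonnegative, so the equality above — the number of strictly positive terms equalling their sum — holds if and only if every $\langle \lambda, \beta^\vee\rangle$ lies in $\{0,1\}$, which is exactly the condition that $\lambda$ be minuscule in the paper's convention (with $0$ minuscule). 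This gives $\{\lambda : C_\lambda = \cN_\reg\} = \bXmin$, hence the proposition; one even reads off that under the bijection the minuscule weight $\lambda$ corresponds to $(\cN_\reg, \cO_{\cN_\reg}(\lambda))$. The step I expect to be the real obstacle is the $\Gm$-equivariant bookkeeping in the third paragraph: pinning down the class of $\cO_\tcN(\nu)|_{\cN_\reg}$ in $\mathrm{Pic}^{G\times\Gm}(\cN_\reg)$ and keeping the sign conventions for the $\la\cdot\ra$-shifts straight. The $t$-exactness reduction, the vanishing of root-lattice twists on $\cN_\reg$, and the final combinatorial identity are all routine.
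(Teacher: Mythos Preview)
Your approach is different from the paper's and contains one genuine gap; once the gap is repaired, your argument becomes a pleasant alternative to the paper's proof.

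\textbf{The gap.} You claim that $j^*\bDelta_\lambda \to j^*\bnabla_\lambda$ is ``nonzero precisely when $j^*\bDelta_\lambda$ and $j^*\bnabla_\lambda$ coincide as $G\times\Gm$-equivariant line bundles.'' Only the ``only if'' direction follows from what you wrote: if the bundles are not $G\times\Gm$-isomorphic, there is no nonzero equivariant map, so $j^*f=0$. But if they \emph{are} isomorphic, the equivariant $\Hom$-space is $\bk$, and nothing you have said rules out $j^*f=0$. This is exactly the case $\lambda\in\bXmin$ that you need, so the argument as written does not establish $\bXmin\subset\{\lambda:C_\lambda=\cN_\reg\}$. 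The step you flagged as the obstacle (the $\Gm$-bookkeeping) is actually fine; the real obstacle is here.

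\textbf{The fix, and how the paper argues.} The paper bypasses all of your $\Gm$-weight analysis by observing that minuscule weights are minimal in $\bXp$ for $\preceq$ (equivalently for $\le$), so for such $\lambda$ the cone of $\bDelta_\lambda\to\bnabla_\lambda$ lies in the trivial category $\Db\Coh^{G\times\Gm}(\cN)_{<\lambda}=0$; hence $\bDelta_\lambda\cong\fIC_\lambda\cong\bnabla_\lambda$. Then $j^*\fIC_\lambda=j^*\bnabla_\lambda$, which is a line bundle on $\cN_\reg$ and visibly nonzero. This one-line observation is exactly what plugs your gap. For the reverse inclusion the paper invokes cardinality: irreducible $G$-equivariant bundles on $\cN_\reg$ are parametrized by $\widehat{Z(G)}\cong\bXmin$, so the injection $\bXmin\hookrightarrow\{(\cN_\reg,\cE)\}$ is a bijection.

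\textbf{What your approach buys.} Your $\Gm$-weight computation (that $j^*\bDelta_\lambda\cong j^*\bnabla_\lambda$ forces $\delta^*_\lambda=\langle\lambda,2\rho^\vee\rangle$, hence $\langle\lambda,\beta^\vee\rangle\in\{0,1\}$ for all $\beta\in\Phi^+$) gives a direct, internal proof of the implication ``$\lambda\notin\bXmin\Rightarrow C_\lambda\neq\cN_\reg$'' that avoids the paper's appeal to the classification of equivariant bundles on $\cN_\reg$. So the cleanest hybrid argument is: use minimality (as in the paper) for $\lambda\in\bXmin\Rightarrow C_\lambda=\cN_\reg$, and use your $\Gm$-weight mismatch for the converse. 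Your computation of $\delta^*_\lambda=\#\{\beta\in\Phi^+:\langle\lambda,\beta^\vee\rangle>0\}$ and the combinatorial conclusion are correct; the formula for the class of $\cO_\tcN(\nu)|_{\cN_\reg}$ in $\mathrm{Pic}^{G\times\Gm}(\cN_\reg)$ is consistent with Lemma~\ref{lem:line-tcn-reg} in the paper (the sign convention there gives $\cO_{\cN_\reg}(\nu)\cong\cO_{\cN_\reg}\langle(2\rho^\vee,\nu)\rangle$ for $\nu\in\Z\Phi$, matching your computation).
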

\begin{proofsk}
Since the elements of $\bXmin$ are precisely the minimal elements of $\bXp$ with respect to $\preceq$ (or $\le$), the proper costandard objects $\{ \bnabla_\lambda \mid \lambda \in \bXmin \}$ are simple.  Every $A_\lambda$ has nonzero restriction to $\cN_\reg$ (since $\pi$ is an isomorphism over $\cN_\reg$), so for $\lambda \in \bXmin$, the simple object $\fIC_\lambda = \bnabla_\lambda$ must coincide with some $\fIC(\cN_\reg,\cE)$.  Thus, the bijection of Theorem~\ref{thm:lv} at least restricts to an injective map $\bXmin \hookrightarrow \{(\cN_\reg,\cE)\}$.  The fact that it is also surjective can be deduced from the well-known relationship between minuscule weights and representations of the center of $G$.
\end{proofsk}

\begin{prop}\label{prop:lv-zero}
The bijection of Theorem~\ref{thm:lv} restricts to a bijection
\[
\bXp + 2\rho \overset{\sim}{\longleftrightarrow} \{(C_0,\cE)\}.
\]
\end{prop}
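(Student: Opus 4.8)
The plan is to identify, for each irreducible $G$-equivariant vector bundle $\cE$ on the zero orbit $C_0$, the unique dominant weight $\lambda$ such that the coherent $\fIC$-complex $\fIC(C_0,\cE)$ coincides (up to Tate twist) with the simple object $\fIC_\lambda$ of \S\ref{ss:pcoh}, and then to check that this assignment matches $\lambda = \nu + 2\rho$ for $\nu \in \bXp$. The starting observation is that $G$-equivariant vector bundles on the single point $C_0 = \{0\}$ are the same thing as finite-dimensional $G$-representations, and the irreducible ones are indexed by $\bXp$; so the right-hand side $\{(C_0,\cE)\}$ is naturally in bijection with $\bXp$, and the content of the proposition is that the composite bijection $\bXp \to \{(C_0,\cE)\} \to \bXp$ coming from Theorem~\ref{thm:lv} is the shift $\nu \mapsto \nu + 2\rho$.

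First I would analyze which simple perverse-coherent sheaves are supported at $C_0$. A simple object $\fIC_\lambda$ is supported on $C_0$ precisely when $\bnabla_\lambda = A_\lambda\la -\delta^*_\lambda\ra$ is, i.e.\ when $\pi_*\cO_\tcN(\lambda)$ is supported at the origin. Equivalently, by the induction equivalence~\eqref{eqn:ind-equiv} and the definition $\tcN = G\times^B\fu$, one is asking when the $B$-module $\bk_\lambda$, viewed as a sheaf on $\fu$ and pushed to the origin of $\cN$, has zero-dimensional support after applying $\pi_*$ — this forces the sections of $\cO_\tcN(\lambda)$ to be annihilated by enough of $\bk[\cN]$. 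The key input is the classical computation of $R\Gamma(\tcN,\cO_\tcN(\lambda)) = R\Gamma(G/B, S(\fu^*)\otimes\bk_\lambda)$ and the fact, going back to Broer and to Andersen--Jantzen, that $A_\lambda$ is concentrated in degree zero and \emph{scheme-theoretically supported at $0$} exactly when $\lambda$ is of the form $\nu + 2\rho$ with $\nu \in \bXp$ (the weights ``in the lowest alcove shifted by $2\rho$''), in which case $\Gamma(\tcN,\cO_\tcN(\nu+2\rho)) \cong V(\nu+2\rho)/(\text{something})$ has a simple $G$-socle with highest weight governed by $\nu$. More precisely, for $\lambda = \nu + 2\rho$ one expects $A_\lambda \cong H^0(\nu)^* \otimes \cO_{C_0}$ or a close variant, so that $\bnabla_{\nu+2\rho}$ is already simple and equals $\fIC(C_0, \cE_\nu)$ for the irreducible bundle $\cE_\nu$ corresponding to the $G$-representation of highest weight $\nu$.

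The heart of the argument is therefore twofold: (i) show $\dom(\lambda) = \nu + 2\rho$ with $\nu\in\bXp$ is \emph{equivalent} to $\supp\fIC_\lambda = C_0$; and (ii) pin down the bijection $\nu \leftrightarrow \cE$ on the nose. For (i), the inclusion ``$\lambda = \nu+2\rho \Rightarrow$ supported at $0$'' is the Andersen--Jantzen/Broer vanishing statement quoted above; the converse follows by a counting argument — the number of $\lambda$ with $\fIC_\lambda$ supported at $C_0$ must equal $\#\{(C_0,\cE)\} = \#\bXp$, and $\{\nu+2\rho : \nu\in\bXp\}$ already furnishes that many, so there can be no others. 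For (ii), one matches highest weights: restricting the defining triangle $\bDelta_\lambda \to \bnabla_\lambda$ (which for $\lambda = \nu+2\rho$ degenerates since the cone lies in $\Db\Coh(\cN)_{<\lambda}$ and nothing lies strictly below a weight whose $\fIC$ is already at the bottom stratum) to $C_0$ and reading off the $G$-module structure of $\Gamma(\bnabla_{\nu+2\rho})$ identifies $\cE$ as the irreducible of highest weight $\nu$.

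The main obstacle I expect is part (i), specifically proving that \emph{only} the weights $\nu + 2\rho$ give simple perverse-coherent sheaves supported at the origin — i.e.\ ruling out any other $\lambda$ whose $\fIC_\lambda$ happens to live on $C_0$. The clean way around this is precisely the cardinality/bijectivity argument: since Theorem~\ref{thm:lv} is already known to be a bijection $\bXp \simto \{(C,\cE)\}$, and since the fibre over the stratum $C_0$ on the right has exactly $\#\bXp$ elements, it suffices to exhibit the injection $\bXp \hookrightarrow \{\lambda : \supp\fIC_\lambda = C_0\}$, $\nu\mapsto \nu+2\rho$, and surjectivity is then automatic — no independent classification of the supported-at-$0$ locus is needed. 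This mirrors exactly the structure of the proof of Proposition~\ref{prop:lv-regular} for the opposite extreme, where minusculeness plays the role that the $2\rho$-shift plays here.
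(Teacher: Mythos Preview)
There is a genuine gap at the central step. You assert that ``$\fIC_\lambda$ is supported on $C_0$ precisely when $\bnabla_\lambda = A_\lambda\la -\delta^*_\lambda\ra$ is,'' and then that (by Broer/Andersen--Jantzen) $A_{\nu+2\rho}$ is scheme-theoretically supported at $0$, so that $\bnabla_{\nu+2\rho}$ is already simple. Both of these are false. By Proposition~\ref{prop:pcoh-torsion}, $\bnabla_\lambda$ is a \emph{torsion-free} coherent sheaf on $\cN$ for every $\lambda\in\bXp$; in particular its support is all of $\cN$, never $C_0$. The simple object $\fIC_\lambda$ is only the image of the map $\bDelta_\lambda\to\bnabla_\lambda$, typically a proper subquotient whose support can be much smaller than that of $\bnabla_\lambda$ (see the $\SL_2$ computations in Section~\ref{sect:sl2}, where $\bnabla_n$ for $n\ge 2$ has a long composition series and $\fIC_n = i_{0*}L(n-2)\la 1\ra[-1]$ is a proper sub). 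So the displayed equivalence you start from does not hold, and there is no Andersen--Jantzen statement of the form you invoke. The analogy with Proposition~\ref{prop:lv-regular} breaks down precisely here: for minuscule $\lambda$ the minimality forces $\bnabla_\lambda$ to be simple, but there is no ``maximality'' of $\nu+2\rho$ that plays the dual role. (A secondary issue: your cardinality argument compares two infinite subsets of $\bXp$, so even with a valid injection it would not yield equality without more input.)

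The paper's proof takes a completely different, character-theoretic route. It does not attempt to locate $\fIC_{\nu+2\rho}$ directly by support considerations. Instead it uses Lemma~\ref{lem:hnabla-nreg}, a combinatorial identity expressing the Weyl character $\chi(\lambda)$ as the alternating sum $\sum_{w}(-1)^{\ell(w)}\ch A_{\lambda+\rho-w\rho}$ evaluated at $q=1$. Since $\fIC_{\lambda+2\rho}$ occurs as a composition factor of $A_{\lambda+2\rho}$ (the $w=w_0$ term) but of no other $A_{\lambda+\rho-w\rho}$, one obtains an upper-triangular expression for $\chi(\lambda)$ in terms of the $(\ch\fIC_\mu)|_{q=1}$ with leading term $(-1)^{\ell(w_0)}(\ch\fIC_{\lambda+2\rho})|_{q=1}$. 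On the other hand, since $C_0$ is a point, $\fIC(C_0,L(\lambda)) = i_{0*}L(\lambda)[-\ell(w_0)]$, giving a second upper-triangular expression for $\chi(\lambda)$ with leading term $(-1)^{\ell(w_0)}(\ch\fIC(C_0,L(\lambda)))|_{q=1}$. Linear independence of the $(\ch\fIC_\mu)|_{q=1}$ then lets one match the two by induction.
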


Here, $2\rho = \sum_{\alpha \in \Phi^+} \alpha$, as in Section~\ref{ss:mu-exact}.  The proof of this will be briefly discussed at the end of Section~\ref{ss:charform}.

\subsection{Affine braid group action and modular representation theory}

In~\cite{bm:rssla}, Bezrukavnikov and Mirkovi\'c proved a collection of conjectures of Lusztig \cite{lus:bekt2} involving the equivariant $K$-theory of Springer fibers and the representation theory of semisimple Lie algebras in positive characteristic.  In this work, which builds on the localization theory developed in~\cite{bmr:lmsla, bmr:slif}, a key ingredient is the \emph{noncommutative Springer resolution}, a certain $\bk[\cN]$-algebra $A^0$ equipped with a $(G \times \Gm)$-action, along with a derived equivalence
\begin{equation}\label{eqn:noncomm}
\Db(A^0\lmod^{G \times \Gm}) \cong \Db\Coh^{G \times \Gm}(\tcN).
\end{equation}
Here, we will just discuss one small aspect of the argument.  At a late stage in~\cite{bm:rssla}, one learns that Lusztig's conjectures follow from a certain positivity statement about graded $A^0$-modules, and that, moreover, it is enough to prove that positivity statement in characteristic~$0$.  From then on, the proof follows the pattern we saw with~\eqref{eqn:positivity-lemma}: by composing~\eqref{eqn:noncomm} with Theorem~\ref{thm:excoh-iw}, one can translate the desired positivity statement into a statement about Weil perverse sheaves on the affine flag variety $\Fl$, and then use the machinery of weights from~\cite{bbd}.

To carry out the ``translation'' step, one needs an appropriate description of the $t$-structure on $\Db(A^0\lmod^{G \times \Gm})$ corresponding to the perverse $t$-structure on $\Db\Perv_\IW(\Fl,\Qlb)$, or, equivalently, to the exotic $t$-structure\footnote{A caveat about terminology: most of~\cite{bm:rssla} is concerned with \emph{nonequivariant} coherent sheaves or $A^0$-modules.  In that paper, the term \emph{exotic $t$-structure} refers to a certain $t$-structure in the nonequivariant setting, and not to the $t$-structure of Theorem~\ref{thm:excoh-defn}.  In~\cite{bm:rssla}, the latter $t$-structure is instead called \emph{perversely exotic}.} on $\Db\Coh^{G \times \Gm}(\tcN)$.  Unfortunately, the exceptional set construction of Section~\ref{sect:defn} is ill-suited to this purpose.

The theorem below gives a new characterization of $\ExCoh^{G \times \Gm}(\tcN)$ that does adapt well to the setting of $A^0$-modules.  Its key feature is the prominent role it gives to the \emph{affine braid group action} on $\Db\Coh^{G \times \Gm}(\tcN)$ that is constructed in~\cite{br:abga}.  Specifically, it involves the following notion: a $t$-structure on $\Db\Coh^{G \times \Gm}(\tcN)$ is said to be \emph{braid-positive} if, in the aforementioned affine braid group action, the action of positive words in the braid group is right $t$-exact.  (The definition of the ring $A^0$ also involves braid positivity, and the $t$-structure on $\Db\Coh^{G \times \Gm}(\tcN)$ corresponding to the natural $t$-structure on $\Db(A^0\lmod^{G \times \Gm})$ is braid-positive.)

Before stating the theorem, we need some additional notation and terminology.  Given a closed $G$-stable subset $Z \subset \cN$, let $\Db_Z \Coh^{G \times \Gm}(\tcN) \subset \Db\Coh^{G \times \Gm}(\tcN)$ be the full subcategory consisting of objects supported set-theo\-ret\-i\-cally on $\pi^{-1}(Z)$.  For a nilpotent orbit $C \subset \cN$, let $\Db_C\Coh^{G \times \Gm}(\tcN)$ be the quotient category
\[
\Db_{\overline{C}}\Coh^{G \times \Gm}(\tcN)/\Db_{\overline{C} \smallsetminus C} \Coh^{G \times \Gm}(\tcN).
\]
A $t$-structure on $\Db\Coh^{G \times \Gm}(\tcN)$ is said to be \emph{compatible with the support filtration} if for every nilpotent orbit $C$, there are induced $t$-structures on $\Db_{\overline{C}}\Coh^{G \times \Gm}(\tcN)$ and $\Db_C\Coh^{G \times \Gm}(\tcN)$ such that the inclusion and quotient functors, respectively, are $t$-exact.

\begin{thm}[{\cite[\S6.2.2]{bm:rssla}}]\label{thm:noncomm}
Assume that the characteristic of $\bk$ is zero or larger than the Coxeter number of $G$. 
The exotic $t$-structure on $\Db\Coh^{G \times \Gm}(\tcN)$  is the unique $t$-structure with all three of the following properties:
\begin{enumerate}
\item It is braid-positive.
\item It is compatible with the support filtration.
\item The functor $\pi_*$ is $t$-exact with respect to this $t$-structure and the perverse-coherent $t$-structure on $\Db\Coh^{G \times \Gm}(\cN)$.
\end{enumerate}
\end{thm}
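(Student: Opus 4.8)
The plan splits into two parts: verifying that the exotic $t$-structure of Theorem~\ref{thm:excoh-defn} satisfies the three listed conditions, and then proving that it is the only $t$-structure that does. The first part is largely bookkeeping with results already in hand. Property~(3), $t$-exactness of $\pi_*$, is exactly Proposition~\ref{prop:mu-exact}. For property~(2), one shows --- this is carried out in~\cite{bm:rssla}, and can also be deduced from~\cite{br:abga} --- that the exotic $t$-structure is compatible with the support filtration; the crux is that each simple exotic sheaf $\fE_\lambda$ is set-theoretically supported on $\pi^{-1}(\overline{C})$ for a single nilpotent orbit $C$ (compatibly with the fact, from Proposition~\ref{prop:mu-exact}, that $\pi_*\fE_\lambda$ is either $0$ or the simple $\fIC_{w_0\lambda}$, supported on one orbit closure), so that the exotic sheaves supported on $\pi^{-1}(\overline C)$ form a Serre subcategory of $\ExCoh^{G\times\Gm}(\tcN)$ stable under $\la1\ra$, whence the recollement formalism produces induced $t$-structures on $\Db_{\overline C}\Coh^{G\times\Gm}(\tcN)$ and on the quotient $\Db_C\Coh^{G\times\Gm}(\tcN)$ with the inclusion, resp.\ quotient, functor $t$-exact. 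Property~(1), braid-positivity, is a basic property of the exotic $t$-structure established in~\cite{br:abga,bm:rssla} (cf.\ Remark~\ref{rmk:mr}); on standard objects it also follows from Proposition~\ref{prop:braid}, since the triangles in part~\eqref{it:braid-ses} place $T_{s_\alpha}\hDelta_\lambda$, up to grading shift, in the subcategory generated under extensions and positive cohomological shifts by the $\hDelta_\mu\la n\ra$ --- that is, in the non-positive part of the exotic $t$-structure --- and the $\hDelta_\mu\la n\ra$ generate that part.

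Now let $\tau$ be any $t$-structure with properties~(1)--(3); I want to conclude that $\tau$ is the exotic one. Because $\tau$ is compatible with the support filtration and there are only finitely many nilpotent orbits, the gluing theory of~\cite[\S1.4]{bbd}, applied iteratively along the (finite) filtration of $\Db\Coh^{G\times\Gm}(\tcN)$ by the subcategories $\Db_{\overline C}\Coh^{G\times\Gm}(\tcN)$, shows that $\tau$ is recovered from the family of $t$-structures it induces on the subquotients $\Db_C\Coh^{G\times\Gm}(\tcN)$, one per orbit $C$. So it suffices to prove, orbit by orbit, that the $t$-structure $\tau$ induces on $\Db_C\Coh^{G\times\Gm}(\tcN)$ coincides with the one induced by the exotic $t$-structure. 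Fix such a $C$ and a point $e\in C$. The localization theory of~\cite{bmr:lmsla,bmr:slif,bm:rssla} --- and this is where the hypothesis that the characteristic of $\bk$ is $0$ or larger than the Coxeter number enters --- identifies $\Db_C\Coh^{G\times\Gm}(\tcN)$ with a ``small'' category controlled by the equivariant (co)homology of the Springer fibre $\mathcal B_e$, carrying an action of the affine braid group. The key structural fact is a rigidity statement: on such a category, \emph{any} $t$-structure for which all positive braids act right $t$-exactly is obtained from the tautological one --- hence from the one induced by the exotic $t$-structure --- by a single cohomological shift $[\,n_C\,]$, with $n_C\in\Z$. Finally, property~(3) pins down $n_C$: $\pi_*$ induces on this subquotient a $t$-exact functor into the corresponding subquotient of $\Db\Coh^{G\times\Gm}(\cN)$, on which the perverse-coherent $t$-structure is the one described by Theorem~\ref{thm:pcoh-loc} with its normalization $\fIC(C,\cE)|_C\cong\cE[-\tfrac12\codim C]$; matching these forces $n_C$ to be exactly the value realized by the exotic $t$-structure. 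Re-gluing over all orbits gives that $\tau$ is the exotic $t$-structure.

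The main obstacle is the rigidity statement in the second paragraph: that on each support-subquotient $\Db_C\Coh^{G\times\Gm}(\tcN)$, braid-positivity alone collapses the space of admissible $t$-structures to a single family of shifts. Establishing it requires a concrete grip on the affine braid group action on these small categories --- one must exhibit enough positive braid elements acting there, and show that the right $t$-exactness of the corresponding functors, combined with the matching constraint coming from a Serre-type duality under which the exotic $t$-structure is (anti-)self-dual, forces the induced $t$-structure to be standard up to an overall shift. Everything else --- Propositions~\ref{prop:braid} and~\ref{prop:mu-exact}, Theorem~\ref{thm:pcoh-loc}, and the gluing machinery of~\cite{bbd} --- is characteristic-free and essentially formal once the pieces are assembled; it is the localization input together with this braid-rigidity on strata that do the real work, and that account for the restriction on the characteristic of $\bk$.
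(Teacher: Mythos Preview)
The paper does not supply its own proof of this theorem: it is stated with a citation to \cite[\S6.2.2]{bm:rssla} and no argument is given. So there is nothing to compare your proposal against in the paper itself.

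As a sketch of the argument in \cite{bm:rssla}, your outline is broadly in the right spirit --- the proof there does hinge on the affine braid group action of \cite{br:abga}, the localization theory of \cite{bmr:lmsla,bmr:slif}, and an orbit-by-orbit analysis via the support filtration. But you are candid that the ``rigidity statement'' on each stratum is the load-bearing step you have not established, and indeed that is where the real content lies; what you have written is a plausible road map rather than a proof. If you want to pursue this, the references \cite{dod:ecssb, mr:etsps} mentioned just after the theorem give more detailed accounts of how the braid action interacts with the exotic $t$-structure.
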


See~\cite{dod:ecssb, mr:etsps} for comprehensive accounts of the role of the affine braid group action in the study of the exotic $t$-structure.

\section{Structure theory II}

\subsection{Minuscule objects}

As noted above, exotic sheaves do not, in general, admit a ``local'' description like that of perverse-coherent sheaves in Section~\ref{ss:local}.  Nevertheless, when we look at the set of regular elements
\[
\tcN_\reg := \pi^{-1}(\cN_\reg),
\]
the compatibility with the support filtration from Theorem~\ref{thm:noncomm} lets us identify a handful of simple exotic sheaves.  Recall that $\pi$ is an isomorphism over $\cN_\reg$, so $\tcN_\reg$, like $\cN_\reg$, is a single $G$-orbit.

\begin{lem}\label{lem:minu-exotic}
\begin{enumerate}
\item If $\lambda \in -\bXmin$, then $\fE_\lambda \cong \cO_\tcN(\lambda)\la \delta_\lambda\ra$.
\item If $\lambda \notin -\bXmin$, then $\fE_\lambda|_{\tcN_\reg} = 0$.
\end{enumerate}
\end{lem}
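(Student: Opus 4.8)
The plan is to prove the two parts directly from results already in hand: Proposition~\ref{prop:mut} together with~\eqref{eqn:excoh-dom} for part~(1), and Proposition~\ref{prop:mu-exact} together with Proposition~\ref{prop:lv-regular} for part~(2). (The compatibility of the exotic $t$-structure with the support filtration, Theorem~\ref{thm:noncomm}, is the conceptual reason these restrictions behave well — it shows that $\fE_\lambda|_{\tcN_\reg}$ is a simple object or zero in the heart induced on $\Db\Coh^{G \times \Gm}(\tcN_\reg)$ — but it will not be logically needed below, which is just as well, since it carries an extra hypothesis on the characteristic.)

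For part~(1), I would write $\mu = w_0\lambda$, so that the hypothesis $\lambda \in -\bXmin$ says precisely that $\mu$ is minuscule and $\lambda = w_0\mu$. The key point is that $\Db\Coh^{G \times \Gm}(\tcN)_{< w_0\mu} = 0$: a minuscule weight is minimal in $(\bXp, \preceq)$, and $w_0\mu$ is the $\preceq$-minimal weight of its own Weyl orbit, so unwinding the definition of the order $\le$ shows that no weight is $\le$-strictly below $w_0\mu$. Feeding this into Proposition~\ref{prop:mut} — and using $w_0\dom(w_0\mu) = w_0\mu$ — forces both $\cK_{w_0\mu}$ and $\cK'_{w_0\mu}$ to vanish, so $\hDelta_{w_0\mu} \cong \cO_\tcN(w_0\mu)\la\delta_{w_0\mu}\ra \cong \hnabla_{w_0\mu}$. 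The canonical nonzero morphism between these two copies of the same line bundle is then an isomorphism (its group of endomorphisms is $\bk$), and hence $\fE_\lambda = \fE_{w_0\mu} \cong \cO_\tcN(\lambda)\la\delta_\lambda\ra$. In particular this $\fE_\lambda$ restricts to a nonzero line bundle on $\tcN_\reg$.

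For part~(2), suppose $\lambda \notin -\bXmin$. Since $\pi$ restricts to an isomorphism $\tcN_\reg \simto \cN_\reg$, flat base change along the open inclusion $\cN_\reg \hookrightarrow \cN$ identifies $\fE_\lambda|_{\tcN_\reg}$ with $(\pi_*\fE_\lambda)|_{\cN_\reg}$, so it suffices to see that $\pi_*\fE_\lambda$ is supported away from $\cN_\reg$. If $\lambda \notin -\bXp$, this is immediate from Proposition~\ref{prop:mu-exact}, which gives $\pi_*\fE_\lambda = 0$. Otherwise $\lambda \in -\bXp \smallsetminus -\bXmin$, so $w_0\lambda$ is dominant but not minuscule and $\pi_*\fE_\lambda \cong \fIC_{w_0\lambda}$; by Proposition~\ref{prop:lv-regular} the coherent intersection cohomology complex $\fIC_{w_0\lambda}$ is then attached to a nilpotent orbit $C \ne \cN_\reg$, whose closure, being a proper closed subset of $\cN$, is disjoint from the dense orbit $\cN_\reg$. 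Hence $\fIC_{w_0\lambda}|_{\cN_\reg} = 0$, and therefore $\fE_\lambda|_{\tcN_\reg} = 0$.

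Most of this is routine once the earlier results are granted. The two places calling for care are: the combinatorial claim in part~(1) that nothing lies strictly below an antiminuscule weight (which rests on the description of the minuscule weights as the minimal dominant weights); and, in part~(2), the passage from the $\Gm$-equivariant $\fIC_{w_0\lambda}$ of Proposition~\ref{prop:mu-exact} to the non-$\Gm$-equivariant setting of Proposition~\ref{prop:lv-regular} — this is harmless, because only the support of the coherent $\fIC$ is used and the support is unaffected by forgetting the $\Gm$-action. I would regard the order bookkeeping in part~(1) as the real, if minor, crux of the argument.
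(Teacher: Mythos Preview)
The proposal is correct and follows essentially the same approach as the paper: part~(1) is deduced from the minimality of antiminuscule weights for $\le$ (making $\hDelta_\lambda \cong \hnabla_\lambda$), and part~(2) is proved by pushing forward along $\pi$ and invoking Propositions~\ref{prop:mu-exact} and~\ref{prop:lv-regular}. Your version is simply more explicit about the base-change identification $\fE_\lambda|_{\tcN_\reg} \cong (\pi_*\fE_\lambda)|_{\cN_\reg}$ and the $\Gm$-equivariance bookkeeping, whereas the paper phrases part~(2) as a contrapositive.
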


This lemma says that we can detect antiminuscule composition factors in an exotic sheaf by restricting to $\tcN_\reg$.

\begin{proof}
The first assertion follows from the observation that elements of $-\bXmin$ are minimal with respect to $\le$, so the standard objects $\hDelta_\lambda = \cO_\tcN(\lambda)\la \delta_\lambda\ra$ are simple.

Now take an arbitrary $\lambda \in \bX$, and suppose that $\fE_\lambda|_{\tcN_\reg} \ne 0$.  Since $\pi$ is an isomorphism over $\cN_\reg$, $\pi_*\fE_\lambda$ has nonzero restriction to $\cN_\reg$.  A fortiori, $\pi_*\fE_\lambda$ is nonzero.  By Proposition~\ref{prop:mu-exact}, $\lambda$ must be antidominant, and $\pi_*\fE_\lambda \cong \fIC_{w_0\lambda}$.  Then Proposition~\ref{prop:lv-regular} tells us that $w_0\lambda \in \bXmin$, so $\lambda \in -\bXmin$, as desired.
\end{proof}

\begin{lem}\label{lem:line-tcn-reg}
Let $\lambda, \mu \in \bX$, and suppose that $\lambda - \mu$ lies in the root lattice.  Then $\cO_\tcN(\lambda)\la (2\rho^\vee,\mu-\lambda) \ra|_{\tcN_\reg} \cong \cO_\tcN(\mu)|_{\tcN_\reg}$.
\end{lem}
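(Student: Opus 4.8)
The plan is to relate the line bundle $\cO_\tcN(\lambda)$ on $\tcN_\reg$ to the purely $T$-theoretic datum of its weight, using the fact that $\tcN_\reg \cong \cN_\reg$ is a single $G$-orbit. Pick a regular nilpotent $e \in \fu \subset \cN_\reg$ and let $x_0 \in \tcN_\reg$ be the corresponding point, so $\tcN_\reg \cong G/G^e$ where $G^e$ is the centralizer of $e$. Since $\cO_\tcN(\lambda)$ is obtained by induction from the $B$-module $\bk_\lambda$ (via the equivalence~\eqref{eqn:ind-equiv}), its restriction to $\tcN_\reg$ is the $G$-equivariant line bundle on $G/G^e$ associated to the character by which $G^e$ acts on the fiber at $x_0$. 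Thus the first step is: \emph{the restriction $\cO_\tcN(\nu)|_{\tcN_\reg}$ depends only on the restriction of $\nu$ (viewed as a character of $B$, extended suitably) to $G^e \cap B$.}

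Next I would make $G^e$ explicit. For $e$ a regular nilpotent, $G^e$ is abelian, and its reductive part is the center $Z(G)$; more precisely $G^e = Z(G) \cdot U^e$ with $U^e$ unipotent, and $G^e \subset B$ since $e \in \fu$. Two characters $\lambda, \mu$ of $T$ (hence of $B$) restrict to the same character of $G^e$ precisely when they agree on $Z(G)$ and on the relevant one-parameter subgroups of $U^e$. The condition that $\lambda - \mu$ lies in the root lattice is exactly the condition that $\lambda$ and $\mu$ agree on $Z(G)$. So the second step is to check that, \emph{after the grading correction, $\cO_\tcN(\lambda)\la(2\rho^\vee,\mu-\lambda)\ra$ and $\cO_\tcN(\mu)$ induce the same character of $G^e$, including on the $\Gm$-factor.} The $\Gm$-action on $\tcN$ is the one coming from dilation on $\fu$; the point $x_0 = (e \bmod \fu^{?})$ is \emph{not} $\Gm$-fixed, but $\Gm$ acts on $\tcN_\reg$ through a cocharacter composed with the $G$-action — concretely, scaling $e$ by $z^{-2}$ can be undone by the adjoint action of $\rho^\vee(z^{?})$ up to an element of $G^e$, and tracking this is precisely where the factor $(2\rho^\vee, \mu - \lambda)$ enters. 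The upshot I expect is that the combined $(G^e \times \Gm)$-character of the fiber of $\cO_\tcN(\nu)\la(2\rho^\vee, c - \nu)\ra$ at $x_0$ is independent of $\nu$ within a fixed coset of the root lattice, for the appropriate constant; specializing $c = \mu$ gives the claim.

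An alternative, possibly cleaner, route avoids centralizers: realize $\tcN_\reg$ via the map $\fu^\reg \hookrightarrow \tcN$, $e \mapsto (e, \mathrm{pt})$, note that $\cO_\tcN(\nu)|_{\fu^\reg}$ is the restriction to $\fu^\reg$ of the $B$-equivariant line bundle $\bk_\nu \otimes \cO_{\fu}$, and observe that the $B$-orbit of a regular $e \in \fu$ is \emph{open dense} in $\fu$ with stabilizer $T^e \cdot (\text{unipotent}) = Z(G) \cdot U^e$. Then one computes the weight of the $\Gm$-equivariant structure directly: $\Gm$ scales the coordinate $e$, and on the open $B$-orbit this is implemented (up to the stabilizer) by $t \mapsto \rho^\vee(t)$ acting by $\mathrm{Ad}$, which scales each simple root vector by $t^2$; the $\nu$-line then picks up the weight $(\nu, \rho^\vee)$ worth of $\Gm$-degree, and matching $\cO(\lambda)$ against $\cO(\mu)$ forces the shift $(2\rho^\vee, \mu - \lambda)$.

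The main obstacle is the bookkeeping of the $\Gm$-equivariant structure: the $\Gm$-action on $\tcN$ does not fix the regular point, so one cannot simply read off a fiberwise weight, and one must carefully commute the $\Gm$-action past the $G$-action (via $\rho^\vee$, or rather via $\hrho$ since $\rho^\vee$ need not be a genuine cocharacter) to land back in the stabilizer — getting the sign and the factor of $2$ right here is the entire content of the lemma. Everything else is a routine identification of equivariant line bundles on a homogeneous space with characters of a point stabilizer.
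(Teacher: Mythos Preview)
Your approach via stabilizer characters on the homogeneous space $\tcN_\reg \cong G/G^e$ is sound and would yield the lemma once the $\Gm$-bookkeeping is carried out, but it is quite different from the paper's argument. The paper proceeds by reduction: since both sides are multiplicative in $\lambda - \mu$ (tensor product of line bundles, additivity of the pairing), it suffices to treat $\mu = 0$ and $\lambda = \alpha$ a simple root. In that case the paper simply invokes the short exact sequence~\eqref{eqn:nalpha-pres},
\[
0 \to \cO_\tcN(\alpha)\la -2\ra \to \cO_\tcN \to i_{\alpha*}\cO_{\tcN_\alpha} \to 0,
\]
and observes that $\tcN_\alpha$ does not meet $\tcN_\reg$ (its image under $\pi$ lies in the subregular closure), so restricting to $\tcN_\reg$ turns the first arrow into an isomorphism. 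No centralizer structure and no delicate $\Gm$-tracking are needed; the grading shift $-2 = (2\rho^\vee, -\alpha)$ is read off directly from the sequence.

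Your route is more conceptual --- it explains \emph{why} the correction is governed by $2\rho^\vee$ --- but the paper's is considerably shorter and sidesteps exactly the sign-and-factor-of-two bookkeeping you flag as the main obstacle. One small correction to your sketch: the cocharacter you want, satisfying $\mathrm{Ad}(\sigma(z))e = z^2 e$ for $e = \sum e_\alpha$ regular, is $2\rho^\vee$ (the sum of positive coroots, always a genuine cocharacter of $T$); the weight $\hrho$ from Section~\ref{ss:mu-exact} is a character, not a cocharacter, and plays no role here.
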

\begin{proof}
Since $\lambda - \mu$ is a linear combination of simple roots, it suffices to prove the lemma in the special case where $\mu = 0$ and $\lambda$ is a simple root, say $\alpha$.  In this case, $(2\rho^\vee,-\alpha) = -2$.

Let $\tcN_\alpha$ be as in Section~\ref{ss:mu-exact}.  As explained in~\cite[Lemma~6]{bez:ctm} or~\cite[Lemma~5.3]{a:pcsnc}, there is a short exact sequence of coherent sheaves 
\begin{equation}\label{eqn:nalpha-pres}
0 \to \cO_\tcN(\alpha)\la -2\ra \to \cO_\tcN \to i_{\alpha*} \cO_{\tcN_\alpha} \to 0.
\end{equation}
Recall that $\tcN_\alpha$ does not meet $\tcN_\reg$---indeed, its image under $\pi$ is the closure of the subregular nilpotent orbit.  So when we restrict to $\tcN_\reg$, that short exact sequence gives us the desired isomorphism $\cO_\tcN(\alpha)\la -2\ra|_{\tcN_\reg} \simto \cO_\tcN|_{\tcN_\reg}$.
\end{proof}

\begin{prop}\label{prop:pcoh-torsion}
For all $\lambda \in \bXp$, $\bnabla_\lambda$ is a torsion-free coherent sheaf on~$\cN$.
\end{prop}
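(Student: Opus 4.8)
The plan is to reduce the statement to an elementary fact about push-forward along a dominant morphism. Recall from~\eqref{eqn:bnabla-defn} that $\bnabla_\lambda = A_\lambda\la -\delta^*_\lambda\ra$ with $A_\lambda = \pi_*\cO_\tcN(\lambda)$, and that the Tate twist $\la -\delta^*_\lambda\ra$ only modifies the grading, not the underlying $\cO_\cN$-module. So it suffices to prove that the coherent sheaf $A_\lambda$ (coherence being Theorem~\ref{thm:costd-coh}(2)) is torsion-free on $\cN$.

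First I would observe that $\cN$ is integral: it is irreducible, being the closure of the regular nilpotent orbit, and under the standing assumptions on the characteristic of $\bk$ it is reduced (this is classical --- Kostant in characteristic zero, together with its extension to good characteristic --- and is the one place where those hypotheses enter rather than formal generalities). Likewise $\tcN = G\times^B\fu$ is a vector bundle over $G/B$, hence smooth and irreducible, so integral. Thus ``torsion-free'' has its usual meaning on both spaces, and $\cO_\tcN(\lambda)$, being a line bundle on the integral scheme $\tcN$, is torsion-free.

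Next I would run the dominance argument. Let $s$ be a torsion section of $A_\lambda = \pi_*\cO_\tcN(\lambda)$ over an affine open $U \subseteq \cN$; so $s$ is a section of $\cO_\tcN(\lambda)$ over $\pi^{-1}(U)$ that is annihilated by some nonzero $g \in \cO_\cN(U)$, and hence by the pullback $\pi^\sharp g \in \cO_\tcN(\pi^{-1}(U))$. Since $\pi$ is surjective, its restriction $\pi^{-1}(U) \to U$ is dominant, so $\pi^\sharp g \ne 0$ (as $\tcN$ is integral). Torsion-freeness of $\cO_\tcN(\lambda)$ then forces $s = 0$. Hence $A_\lambda$, and therefore $\bnabla_\lambda$, is torsion-free.

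The main point --- and the only one requiring any care --- is this dominance step together with the integrality of $\tcN$ and $\cN$; beyond that there is no genuine obstacle, since the argument is purely formal once those are in place. One could alternatively phrase it as: $\pi$ is an isomorphism over the dense open $\cN_\reg$, so $A_\lambda$ agrees there with a line bundle and has no nonzero subsheaf supported on $\cN \smallsetminus \cN_\reg$ --- but making that precise amounts to the same computation.
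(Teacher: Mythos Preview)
Your argument is correct, and it takes a genuinely different route from the paper's. The paper argues via the local description of $\PCoh^{G \times \Gm}(\cN)$ from Section~\ref{ss:local}: a torsion subsheaf of $\bnabla_\lambda$ would be supported on $Y = \cN \smallsetminus \cN_\reg$, so $\cH^0(s^!\bnabla_\lambda)$ would be nonzero for the inclusion $s: Y \hookrightarrow \cN$; but the perversity condition (Theorem~\ref{thm:pcoh-loc}(3), namely $H^i_x(\cF) = 0$ for $i < \tfrac{1}{2}\codim \bar x$) forbids this, since every orbit in $Y$ has codimension${}\ge 2$.

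Your approach is more elementary: it bypasses the perverse-coherent machinery entirely and uses only that $\bnabla_\lambda$ is a genuine sheaf (Theorem~\ref{thm:costd-coh}(2)) together with the general fact that pushforward along a surjective morphism of integral schemes preserves torsion-freeness. What the paper's approach buys is a uniform conceptual reason --- ``no perverse-coherent sheaf can have a subobject supported on a proper orbit closure'' --- which it reuses in later arguments such as Proposition~\ref{prop:pcoh-socle}. What yours buys is independence from that theory: it would prove torsion-freeness of $A_\lambda$ for any $\lambda$ with $R^{>0}\pi_*\cO_\tcN(\lambda)=0$, without ever invoking the $t$-structure. Your closing remark is apt: the alternative phrasing you sketch (``$A_\lambda$ restricts to a line bundle over $\cN_\reg$, hence has no subsheaf supported on its complement'') is in fact how the paper \emph{does} make it precise, using the perversity condition rather than your dominance computation.
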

\begin{proof}
Let $Y = \cN \smallsetminus \cN_\reg$, and let $s: Y \hookrightarrow \cN$ be the inclusion map.  Since $\cN_\reg$ is the unique open $G$-orbit, any coherent sheaf with torsion must have a subsheaf supported on $Y$.  If $\bnabla_\lambda$ had such a subsheaf, then $\cH^0(s^!\bnabla_\lambda)$ would be nonzero.  But this contradicts the local description of $\PCoh^{G \times \Gm}(\cN)$ from Section~\ref{ss:local}.
\end{proof}

\begin{prop}\label{prop:aj-regular}
Let $\mu\in \bXmin$.  For any $\lambda \in \bX$, we have
\[
[ A_\lambda : \fIC_\mu\la n\ra] =
\begin{cases}
1 & \text{if $\mu = \smp(\lambda)$ and $n = (2\rho^\vee, \lambda)$,} \\
0 & \text{otherwise.}
\end{cases}
\]
\end{prop}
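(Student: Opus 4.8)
The plan is to detect the composition factors $\fIC_\mu\langle n\rangle$ with $\mu\in\bXmin$ by restricting along the open inclusion $j\colon\cN_\reg\hookrightarrow\cN$ of the regular nilpotent orbit, where everything becomes a computation with equivariant line bundles on a single orbit. First, $j^*$ is exact and $t$-exact for the perverse-coherent $t$-structures: it is restriction to an open subset in a glued $t$-structure, and concretely this is immediate from Theorem~\ref{thm:pcoh-loc}\eqref{it:dim-supp}, since restriction only shrinks supports. Since $\cN_\reg$ is a single $G$-orbit, its simple perverse-coherent objects are the irreducible $(G\times\Gm)$-equivariant vector bundles, and these are line bundles because the reductive quotient of $\mathrm{Stab}_{G\times\Gm}(e)$ (for $e$ a regular nilpotent) is abelian. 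Hence $j^*\fIC_\nu=0$ for $\nu\in\bXp\smallsetminus\bXmin$ (by Proposition~\ref{prop:lv-regular}, such an $\fIC_\nu$ is $\fIC(C,\cE)$ with $C\neq\cN_\reg$, so is supported on $\cN\smallsetminus\cN_\reg$), while for $\mu\in\bXmin$ the weight is $\le$-minimal, so $\fIC_\mu\cong\bnabla_\mu=A_\mu\langle-\delta^*_\mu\rangle$, and since $\pi$ is an isomorphism over $\cN_\reg$,
\[
j^*\fIC_\mu\langle n\rangle\cong\cO_\tcN(\mu)|_{\tcN_\reg}\langle n-\delta^*_\mu\rangle,
\]
a simple object of $\PCoh^{G\times\Gm}(\cN_\reg)$. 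Being exact and carrying distinct minuscule simples to distinct simples and the rest to $0$, the functor $j^*$ preserves the multiplicity of every $\fIC_\mu\langle n\rangle$ with $\mu\in\bXmin$. Using $A_\lambda\in\PCoh^{G\times\Gm}(\cN)$ (Theorem~\ref{thm:pcoh-loc}) and $j^*A_\lambda\cong\cO_\tcN(\lambda)|_{\tcN_\reg}$, the problem reduces to computing $\big[\cO_\tcN(\lambda)|_{\tcN_\reg}:\cO_\tcN(\mu)|_{\tcN_\reg}\langle n-\delta^*_\mu\rangle\big]$ for $\mu\in\bXmin$.

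Next I would classify $(G\times\Gm)$-equivariant line bundles on the single orbit $\cN_\reg\cong\tcN_\reg$. Both objects above are simple, so the multiplicity is $1$ or $0$ according as they are isomorphic. The claim is that $\cO_\tcN(\nu)|_{\tcN_\reg}\langle a\rangle\cong\cO_\tcN(\nu')|_{\tcN_\reg}\langle b\rangle$ if and only if $\nu-\nu'$ lies in the root lattice and $a-b=(2\rho^\vee,\nu'-\nu)$. The ``if'' direction is Lemma~\ref{lem:line-tcn-reg} together with the fact that different grading shifts give different $\Gm$-characters. For ``only if'': $Z(G)\subseteq\mathrm{Stab}_{G\times\Gm}(e)$ centralizes $e$, and $\bX(T)\to\bX(Z(G))$ has kernel the root lattice, which forces $\nu\equiv\nu'$ modulo the root lattice; granting this, Lemma~\ref{lem:line-tcn-reg} identifies the two bundles up to a shift $\langle c\rangle$, and such a shift is nontrivial on $\tcN_\reg$ unless $c=0$, since $\mathrm{Stab}_{G\times\Gm}(e)\to\Gm$ is surjective (for instance $z\mapsto\big(2\rho^\vee(z),z\big)$ lands in it, using that $2\rho^\vee$ is a cocharacter with $(2\rho^\vee,\alpha)=2$ for each simple root $\alpha$). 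It follows that $[A_\lambda:\fIC_\mu\langle n\rangle]=1$ exactly when $\mu=\smp(\lambda)$ and $n=\delta^*_\mu+(2\rho^\vee,\lambda-\mu)$, and is $0$ otherwise.

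Finally I would check $\delta^*_\mu=(2\rho^\vee,\mu)$ for $\mu\in\bXmin$, which rewrites $\delta^*_\mu+(2\rho^\vee,\lambda-\mu)$ as the asserted $(2\rho^\vee,\lambda)$. This is a short Weyl-group computation: $\delta^*_\mu=\delta_{w_0\mu}$ is the length of the shortest $w$ with $w(w_0\mu)$ dominant, namely $\ell(w_0)-\ell(w_{0,\mu^*})$ where $\mu^*=-w_0\mu$ and $w_{0,\mu^*}$ is the longest element of $\mathrm{Stab}_W(\mu^*)$; this equals the number of positive roots not orthogonal to $\mu^*$, which is $\sum_{\alpha\in\Phi^+}(\alpha^\vee,\mu^*)=(2\rho^\vee,\mu^*)=(2\rho^\vee,\mu)$ (the first equality because $\mu^*$ is minuscule, the last because $w_0(2\rho^\vee)=-2\rho^\vee$). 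Alternatively, the identity drops out of the two preceding paragraphs by comparing with the tautological equalities $A_\mu=\fIC_\mu\langle\delta^*_\mu\rangle$ and $A_{w_0\mu}=\fIC_\mu\langle-\delta^*_\mu\rangle$ coming from~\eqref{eqn:bnabla-defn}. This gives the stated formula.

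The step requiring the most care is the first one: verifying that $j^*$ genuinely reflects the composition multiplicities of precisely the minuscule $\fIC$'s. This is routine Serre-quotient bookkeeping once the geometric inputs are in place---Proposition~\ref{prop:lv-regular}, the fact that $\pi$ restricts to an isomorphism over $\cN_\reg$, and the identification of simple perverse-coherent sheaves with coherent $\fIC$'s---but it is where one must be attentive. The remaining difficulty is purely clerical: keeping the signs straight in the various grading shifts $\langle\delta^*\rangle$ and pairings $(2\rho^\vee,-)$. The representation-theoretic ingredients (the structure of the centralizer of a regular nilpotent and the character group of $Z(G)$) are classical.
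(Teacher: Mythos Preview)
Your argument is correct and follows the same overall strategy as the paper: restrict to $\cN_\reg$ and read off the minuscule multiplicities there, using Lemma~\ref{lem:line-tcn-reg} and Proposition~\ref{prop:lv-regular} as the essential inputs. The execution differs in two places. First, to identify $A_\lambda|_{\cN_\reg}$ with the correct simple, the paper passes through the exotic side: it uses Lemma~\ref{lem:minu-exotic} to recognize $\cO_\tcN(\smm(\lambda))$ as $\fE_{\smm(\lambda)}\la -\delta_{\smm(\lambda)}\ra$ and then applies $\pi_*$ via Proposition~\ref{prop:mu-exact}. You instead stay on $\cN$ and classify $(G\times\Gm)$-equivariant line bundles on $\cN_\reg$ directly by a stabilizer computation (the $Z(G)$ argument for the root-lattice condition, the cocharacter $2\rho^\vee$ for the grading shift). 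Your route is a little more self-contained, since it never invokes the exotic $t$-structure; the paper's route has the virtue of making the link to the antiminuscule exotic simples explicit, which feeds into Corollary~\ref{cor:aj-regular}(2). Second, for the identity $\delta^*_\mu=(2\rho^\vee,\mu)$ with $\mu\in\bXmin$, the paper obtains it by bootstrapping from the tautological case $\lambda=\mu$ (this is exactly your ``alternatively'' remark), whereas your primary argument is a direct Weyl-group count. Both are fine; the bootstrap is shorter but your direct computation records the identity~\eqref{eqn:delta-min} independently. One small citation quibble: Theorem~\ref{thm:pcoh-loc} is a characterization of perverse-coherence, not a proof that $A_\lambda$ is perverse-coherent; the latter is implicit in the paper (via the $t$-exactness of $\pi_*$ and the fact, stated as an exercise, that every $\cO_\tcN(\lambda)$ is exotic), so you may want to adjust the reference.
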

\begin{proof}
Proposition~\ref{prop:lv-regular} implies that we can determine the multiplicities of minuscule objects in any perverse-coherent sheaf by considering its restriction to $\cN_\reg$.  By Lemmas~\ref{lem:minu-exotic} and~\ref{lem:line-tcn-reg}, we have
\begin{multline*}
A_\lambda|_{\cN_\reg} \cong \pi_*\cO_\tcN(\lambda)|_{\cN_\reg} 
\cong \pi_*\cO_\tcN(\smm(\lambda))\la (2\rho^\vee, \lambda - \smm(\lambda))\ra|_{\cN_\reg} \\
\cong \pi_*\fE_{\smm(\lambda)}\la (2\rho^\vee, \lambda - \smm(\lambda)) - \delta_{\smm(\lambda)} \ra|_{\cN_\reg} \\
\cong \fIC_{\smp(\lambda)}\la (2\rho^\vee, \lambda - \smm(\lambda)) - \delta^*_{\smp(\lambda)} \ra|_{\cN_\reg}.
\end{multline*}
Consider the special case where $\lambda \in \bXmin$.  In other words, $\lambda = \smp(\lambda)$, and $A_{\smp(\lambda)} \cong \bnabla_{\smp(\lambda)}\la \delta^*_{\smp(\lambda)}\ra \cong \fIC_{\smp(\lambda)} \la \delta^*_{\smp(\lambda)}\ra$.  Comparing with the formula above, we see that
\[
(2\rho^\vee, \smp(\lambda) - \smm(\lambda)) - \delta^*_{\smp(\lambda)} = \delta^*_{\smp(\lambda)}
\]
for any $\lambda \in \bX$.  Since $(2\rho^\vee, \smp(\lambda)) = -(2\rho^\vee, \smm(\lambda))$, we deduce that
\begin{equation}\label{eqn:delta-min}
(2\rho^\vee, \smp(\lambda)) = (2\rho^\vee, -\smm(\lambda)) = \delta_{\smm(\lambda)} = \delta^*_{\smp(\lambda)}.
\end{equation}
The result follows.
\end{proof}

\begin{cor}\label{cor:aj-regular}
\begin{enumerate}
\item Let $\lambda \in \bXp$ and $\mu \in \bXmin$.  We have
\[
[ \bnabla_\lambda : \fIC_\mu\la n\ra] =
\begin{cases}
1 & \text{if $\mu = \smp(\lambda)$ and $n = (2\rho^\vee, \lambda) - \delta^*_\lambda$,} \\
0 & \text{otherwise.}
\end{cases}
\]
\item Let $\lambda \in \bX$ and $\mu \in -\bXmin$.  We have
\[
[ \hnabla_\lambda : \fE_\mu\la n\ra ] =
\begin{cases}
1 & \text{if $\mu = \smm(\lambda)$ and $n = (2\rho^\vee, \dom(\lambda)) - \delta_\lambda$,} \\
0 & \text{otherwise.}
\end{cases}
\]
\end{enumerate}
\end{cor}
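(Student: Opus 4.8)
The plan is to deduce both parts from Proposition~\ref{prop:aj-regular}. Part~(1) is essentially a restatement: since $\bnabla_\lambda = A_\lambda\la-\delta^*_\lambda\ra$ by~\eqref{eqn:bnabla-defn} and $[\cF\la m\ra : L\la n\ra] = [\cF : L\la n-m\ra]$, we have $[\bnabla_\lambda:\fIC_\mu\la n\ra] = [A_\lambda:\fIC_\mu\la n+\delta^*_\lambda\ra]$, and Proposition~\ref{prop:aj-regular} applied with grading weight $n+\delta^*_\lambda$ yields exactly the asserted formula (for $\lambda\in\bXp$ one has $\dom(\lambda)=\lambda$, so $\smp(\dom(\lambda))=\smp(\lambda)$). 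So the real task is part~(2), which I would obtain from part~(1) by pushing forward along $\pi$.

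For part~(2), fix $\mu \in -\bXmin$ and set $\kappa := w_0\mu \in \bXmin$. By Proposition~\ref{prop:mu-exact} the functor $\pi_*$ is exact from $\ExCoh^{G \times \Gm}(\tcN)$ to $\PCoh^{G \times \Gm}(\cN)$, with $\pi_*\hnabla_\lambda \cong \bnabla_{\dom(\lambda)}\la\delta^*_\lambda\ra$ and $\pi_*\fE_\nu \cong \fIC_{w_0\nu}$ for $\nu \in -\bXp$, $\pi_*\fE_\nu = 0$ otherwise. Applying the exact functor $\pi_*$ to a composition series of $\hnabla_\lambda$ (which has finite length by Theorem~\ref{thm:excoh-defn}) and extracting the multiplicity of $\fIC_\kappa\la n\ra$ gives $[\pi_*\hnabla_\lambda : \fIC_\kappa\la n\ra] = \sum_{\nu}[\hnabla_\lambda:\fE_\nu\la n\ra]$, the sum over $\nu \in -\bXp$ with $w_0\nu = \kappa$; since $\kappa$ is minuscule, the only such $\nu$ is $w_0\kappa = \mu$, so the sum collapses to $[\hnabla_\lambda:\fE_\mu\la n\ra]$. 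Feeding in the formula for $\pi_*\hnabla_\lambda$ together with one grading shift,
\[
[\hnabla_\lambda:\fE_\mu\la n\ra] = [\bnabla_{\dom(\lambda)}\la\delta^*_\lambda\ra : \fIC_\kappa\la n\ra] = [\bnabla_{\dom(\lambda)} : \fIC_\kappa\la n-\delta^*_\lambda\ra],
\]
and part~(1), applied with $\dom(\lambda)\in\bXp$ in place of $\lambda$ and $\kappa\in\bXmin$ in place of $\mu$, says this equals $1$ exactly when $\kappa = \smp(\dom(\lambda))$ and $n - \delta^*_\lambda = (2\rho^\vee,\dom(\lambda)) - \delta^*_{\dom(\lambda)}$, and $0$ otherwise.

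It remains to put these two conditions into the claimed form. Since $\dom(\lambda)$ and $\lambda$ lie in one $W$-orbit they differ by a root-lattice element, so $\smp(\dom(\lambda)) = \smp(\lambda)$; hence the first condition reads $\kappa = \smp(\lambda)$, i.e.\ $\mu = w_0\kappa = w_0\smp(\lambda) = \smm(\lambda)$. For the second I would prove the length identity $\delta_\lambda + \delta^*_\lambda = \delta^*_{\dom(\lambda)}$, after which $n - \delta^*_\lambda = (2\rho^\vee,\dom(\lambda)) - \delta^*_{\dom(\lambda)}$ becomes $n = (2\rho^\vee,\dom(\lambda)) - \delta_\lambda$, as claimed. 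This identity is the only genuinely combinatorial input: writing $\mu := \dom(\lambda)$, taking $v_1$ to be the shortest element of $W$ with $v_1\mu = \lambda$ and $w_{0,\mu}$ the longest element of $\mathrm{Stab}_W(\mu)$, one has $\delta_\lambda = \ell(v_1)$, and using $\ell(w_0 g) = \ell(w_0) - \ell(g)$ together with the factorization $\ell(v_1 u) = \ell(v_1) + \ell(u)$ valid for $u \in \mathrm{Stab}_W(\mu)$, one computes $\delta^*_\lambda = \delta_{w_0\lambda} = \ell(w_0) - \ell(v_1) - \ell(w_{0,\mu})$ and $\delta^*_{\dom(\lambda)} = \ell(w_0) - \ell(w_{0,\mu})$, whence the identity by addition. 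I expect no serious obstacle here: the argument is a chain of applications of Propositions~\ref{prop:aj-regular} and~\ref{prop:mu-exact}, and the only points requiring care are the consistent tracking of grading shifts through $\pi_*$ and this last length computation — together with the (easy but worth noting) observation that pushing forward a composition series really does compute $[\hnabla_\lambda:\fE_\mu\la n\ra]$, because $\pi_*$ is exact, each $\pi_*\fE_\nu$ is $0$ or simple, and exactly one $\nu\in-\bXp$ maps to the minuscule weight $\kappa$ under $\nu\mapsto w_0\nu$.
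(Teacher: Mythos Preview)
Your proof is correct and follows essentially the same route as the paper: part~(1) is a direct restatement of Proposition~\ref{prop:aj-regular} after unwinding the grading shift in~\eqref{eqn:bnabla-defn}, and part~(2) is obtained by pushing forward along $\pi_*$ via Proposition~\ref{prop:mu-exact} and then invoking the identity $\delta_\lambda + \delta^*_\lambda = \delta^*_{\dom(\lambda)}$ (which the paper states as $\delta^*_\lambda - \delta^*_{\dom(\lambda)} = -\delta_\lambda$). You supply more detail than the paper does---in particular, the explicit justification that exactly one antidominant $\nu$ contributes to the pushed-forward composition series, and the Coxeter-theoretic verification of the length identity---but the strategy is identical.
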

\begin{proof}
The first part is just a restatement of Proposition~\ref{prop:aj-regular}.  Next, Proposition~\ref{prop:mu-exact} implies that
$
[ \hnabla_\lambda : \fE_\mu\la n\ra ] = [ \bnabla_{\dom(\lambda)}\la \delta^*_\lambda\ra : \fIC_{w_0\mu}\la n\ra ]
$.  Using the observation that $\delta^*_\lambda - \delta^*_{\dom(\lambda)} = - \delta_\lambda$, the second part follows from the first.
\end{proof}

\subsection{Character formulas}
\label{ss:charform}

In this subsection, we work with the group ring $\Z[\bX]$ and its extension $\Z[\bX][[q]][q^{-1}] = \Z[\bX] \otimes_\Z \Z[[q]][q^{-1}]$.  For $\lambda \in \bX$, we denote by $e^\lambda$ the corresponding element of $\Z[\bX]$ or $\Z[\bX][[q]][q^{-1}]$.  If $V = \bigoplus_{n \in \Z} V_n$ is a graded $T$-representation (or a representation of some larger group, such as $B$ or $G$) with $\dim V_n < \infty$ for all $n$ and $V_n = 0$ for $n \ll 0$, we put
\[
\ch V = \sum_{n \in \Z} \sum_{\nu \in \bX} (\dim V^\nu_n) q^n e^\nu 
\]
where $V^\nu_n$ is the $\nu$-weight space of $V_n$.  More generally, if $V$ is a chain complex of graded representations, we put
\[
\ch V = \sum_{i \in \Z} (-1)^i \ch H^i(V).
\]
Next, for any $\lambda \in \bXp$, we put
\[
\chi(\lambda) = \ch H^0(\lambda) = \frac{\sum_{w \in W} (-1)^{\ell(w)} e^{w(\lambda+\rho)-\rho}}{\prod_{\alpha \in \Phi^+}(1 - e^{-\alpha})}
\qquad\text{for $\lambda \in \bXp$.}
\]
(The right-hand side is, of course, the Weyl character formula.)

Let $\lambda \in \bXp$ and $\mu \in \bX$, and let $M_\lambda^\mu(q)$ be Lusztig's $q$-analogue of the weight multiplicity.  Recall (see~\cite[(9.4)]{lus:scfqa} or~\cite[(3.3)]{bry:lws}) that this is given by
\[
M_\lambda^\mu(q) = \sum_{w \in W} (-1)^{\ell(w)} P_{w(\lambda+\rho) - (\mu + \rho)}(q),
\]
where $P_\nu(q)$ is a $q$-analogue of Kostant's partition function, determined by
\[
\prod_{\alpha \in \Phi^+} \frac{1}{1 - q e^\alpha} =
\sum_{\nu \in \bX} P_\nu(q) e^\nu.
\]
Kostant's multiplicity formula says that $M_\lambda^\mu(1)$ is the dimension of the $\mu$-weight space of the dual Weyl module $H^0(\lambda)$.

It is clear from the definition that $P_\nu(q) = 0$ unless $\nu \succeq 0$, and of course $P_0(q) = 1$.  From this, one can deduce that
\begin{equation}\label{eqn:qan-van}
M_\lambda^\mu(q) = 0 \qquad\text{if $\mu \not \preceq \lambda$.}
\end{equation}
It is known that when $\mu \in \bXp$, all coefficients in $M_\lambda^\mu(q)$ are nonnegative, but this is not true for general $\mu$.  On the other hand, for nondominant $\mu$, it may happen that $M_\lambda^\mu(q)$ is nonzero but $M_\lambda^\mu(1) = 0$.

\begin{lem}[{\cite[Lemma~6.1]{bry:lws}}]\label{lem:hesselink}
Let $\lambda\in \bX$.  We have
\[
\ch A_\lambda = \sum_{\mu \in \bXp,\ \mu \succeq \lambda} M_\mu^\lambda(q) \chi(\mu).
\]
\end{lem}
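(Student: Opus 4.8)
The plan is to compute $\ch A_\lambda = \ch \pi_* \cO_\tcN(\lambda)$ directly using the Leray spectral sequence for $\pi$ together with the induction equivalence. Recall that $\tcN = G \times^B \fu$, and under the induction equivalence~\eqref{eqn:ind-equiv}, the line bundle $\cO_\tcN(\lambda)$ corresponds to the $B$-module $\bk_\lambda \otimes \bk[\fu]$, where $\bk[\fu] = \mathrm{Sym}(\fu^*)$ with $\fu^*$ placed in degree~$2$. Therefore $R\pi_* \cO_\tcN(\lambda) = R\ind_B^G(\bk_\lambda \otimes \bk[\fu])$, and since Euler characteristics are additive, $\ch A_\lambda = \ch R\ind_B^G(\bk_\lambda \otimes \bk[\fu])$. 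The first step is to write $\ch \bk[\fu] = \prod_{\alpha \in \Phi^+} (1 - q e^{-\alpha})^{-1}$, recording that the weights of $\fu$ are the negative roots (since $\fu$ is the nilradical of the \emph{negative} Borel in our conventions) and each contributes a factor of degree~$2$, i.e.\ a power of $q$.

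Next I would use the fact that the Euler characteristic of $R\ind_B^G$ factors through the character, in the sense that $\ch R\ind_B^G(M) = \sum_i (-1)^i \ch R^i\ind_B^G(M)$ depends only on $\ch M \in \Z[\bX][[q]][q^{-1}]$, and on a weight line $\bk_\nu$ it produces $\ch \sum_i (-1)^i H^i(\nu)$. By the standard argument with the Weyl character formula (Bott's theorem / the Euler characteristic version of Borel--Weil--Bott), $\sum_i (-1)^i \ch H^i(\nu)$ equals $\chi(\dom^+(\nu))$ up to sign when $\nu + \rho$ is regular and is $0$ otherwise; more precisely it equals $(-1)^{\ell(w)} \chi(\mu)$ where $w(\nu+\rho) = \mu + \rho$ with $\mu \in \bXp$. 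Combining this with the expansion $\ch(\bk_\lambda \otimes \bk[\fu]) = \sum_\nu (\dim \bk[\fu]^{\nu - \lambda}) q^{?} e^\nu$ and carefully bookkeeping the $q$-powers via $P_\bullet(q)$, one gets
\[
\ch A_\lambda = \sum_{\nu \in \bX} \Big( \sum_{w \in W} (-1)^{\ell(w)} \big[\text{coefficient from } \bk[\fu]\big] \Big) \chi(\text{dominant rep}),
\]
and regrouping by the dominant weight $\mu$ that appears, the inner sum becomes exactly $M_\mu^\lambda(q) = \sum_{w \in W}(-1)^{\ell(w)} P_{w(\mu+\rho)-(\lambda+\rho)}(q)$, using that $P_\nu(q)$ is the generating function for $\bk[\fu]$-weights (note the sign flip $e^\alpha \leftrightarrow e^{-\alpha}$ between the partition function convention and the weights of $\fu$, which matches because $\fu$ carries the negative roots).

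The main obstacle is purely bookkeeping: aligning three sign/convention conventions simultaneously — (i) the paper's convention that $B$ is the \emph{negative} Borel so $\fu$ has the negative roots as weights; (ii) the convention in the definition of $P_\nu(q)$, namely $\prod_{\alpha \in \Phi^+}(1 - q e^\alpha)^{-1} = \sum P_\nu(q) e^\nu$; and (iii) the $\Gm$-action $z \cdot x = z^{-2}x$, which forces $\fu^* \subset \bk[\fu]$ to sit in degree $+2$, hence each root direction contributes a factor $q$ rather than $q^{-1}$. Getting all three right is what makes the coefficient of $\chi(\mu)$ come out as $M_\mu^\lambda(q)$ rather than, say, $M_\mu^\lambda(q^{-1})$ or $M^\mu_\lambda(q)$. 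The restriction of the sum to $\mu \succeq \lambda$ is then automatic from~\eqref{eqn:qan-van} (equivalently, from $P_\nu(q) = 0$ unless $\nu \succeq 0$). Everything else — additivity of $\ch$ over the Leray spectral sequence, the Euler-characteristic form of Bott's theorem, and the flatness needed to identify $\pi_*\cO_\tcN(\lambda)$ with $\ind_B^G(\bk_\lambda \otimes \bk[\fu])$ — is standard and can be cited or dispatched in a line. This is, of course, precisely the computation of Broer and of Brylinski referenced as~\cite[Lemma~6.1]{bry:lws}; I would simply recall its proof along the above lines.
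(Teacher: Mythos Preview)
Your proposal is correct and matches the paper's treatment: the paper does not supply its own argument for this lemma but simply cites \cite[Lemma~6.1]{bry:lws}, adding only the remark that Brylinski's assumption $\bk = \C$ plays no role. Your sketch recapitulates exactly the Hesselink--Brylinski computation (identify $R\pi_*\cO_\tcN(\lambda)$ with $R\ind_B^G(\bk_\lambda \otimes \bk[\fu])$, expand $\ch \bk[\fu]$ via the partition-function generating series, and apply the Euler-characteristic form of Borel--Weil--Bott to regroup by dominant $\mu$), and you correctly identify the only delicate point as the alignment of sign and grading conventions.
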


The proof of this lemma in~\cite{bry:lws} seems to assume that $\bk = \C$, but  this actually plays no role in the proof.  

\begin{thm}
Let $\lambda,\mu \in \bXp$.  As a $G$-module, $A_\lambda$ has a good filtration, and
\[
\sum_{n \ge 0} [A_\lambda : H^0(\mu)\la -2n\ra]q^n = M_\mu^\lambda(q).
\]
\end{thm}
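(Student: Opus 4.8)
\begin{proofsk}
The plan is to deduce both assertions from two inputs: Brylinski's character formula (Lemma~\ref{lem:hesselink}), and the fact that for dominant $\lambda$ the module $A_\lambda$ admits a good filtration as a $G$-module. Beyond these, everything is a formal manipulation of Weyl characters and Kostant's $q$-partition function, and the good filtration is the only genuinely external ingredient.

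First I would dispose of the good-filtration claim. By Theorem~\ref{thm:costd-coh}(2), for $\lambda \in \bXp$ the object $A_\lambda = \pi_*\cO_\tcN(\lambda)$ is a single coherent sheaf, i.e.\ $R^{>0}\pi_*\cO_\tcN(\lambda) = 0$; since the projection $\tcN = G \times^B \fu \to G/B$ is affine, it follows that $A_\lambda$, viewed as a graded $(G \times \Gm)$-module through~\eqref{eqn:ind-equiv}, is $\ind_B^G(\bk[\fu] \otimes \bk_\lambda)$, concentrated in nonnegative (and, by the $\Gm$-conventions of Section~\ref{ss:notation}, even) internal degrees with degree-zero part $H^0(\lambda)$. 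If $\bk$ has characteristic zero this module is semisimple and the good-filtration assertion is vacuous. In positive characteristic it is exactly (the graded form of) the Kumar--Lauritzen--Thomsen Frobenius-splitting theorem for $\tcN$~\cite{klt:fscb}, with roots in earlier work of Andersen--Jantzen~\cite{aj:cirag}; I would simply quote it. This is the main obstacle---indeed the only one---since I see no way to produce it from the $t$-structure formalism of the preceding sections.

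Granting a good filtration, the graded multiplicities $[A_\lambda : H^0(\mu)\la m\ra]$ are finite and independent of the chosen filtration---for instance $[A_\lambda : H^0(\mu)\la m\ra] = \dim\Hom_{G \times \Gm}(V(\mu)\la m\ra, A_\lambda)$---and by the degree bounds just noted they vanish unless $m = -2n$ with $n \ge 0$. Since $\ch(H^0(\mu)\la -2n\ra) = q^{n}\chi(\mu)$ in the grading normalization of Lemma~\ref{lem:hesselink} (there Brylinski's $q$ records half the internal $\Gm$-degree, which is precisely the normalization built into the statement to be proved), and since the Weyl characters $\{\chi(\mu) : \mu \in \bXp\}$ are $\Z$-linearly independent in $\Z[\bX]^W$, expanding the character along this basis gives
\[
\ch A_\lambda \;=\; \sum_{\mu \in \bXp}\Big(\sum_{n \ge 0} [A_\lambda : H^0(\mu)\la -2n\ra]\, q^{n}\Big)\chi(\mu).
\]
Comparing with Lemma~\ref{lem:hesselink} and invoking linear independence of the $\chi(\mu)$ once more, one reads off $\sum_{n \ge 0}[A_\lambda : H^0(\mu)\la -2n\ra]\, q^{n} = M_\mu^\lambda(q)$ for every $\mu \in \bXp$; the sum on the left is automatically supported on $\mu \succeq \lambda$ by~\eqref{eqn:qan-van}.

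A final remark: Lemma~\ref{lem:hesselink} is itself elementary in this context and need not be taken from~\cite{bry:lws}. One has $\ch A_\lambda = \sum_i (-1)^i \ch H^i(\tcN, \cO_\tcN(\lambda)) = \ch R\ind_B^G(\bk[\fu] \otimes \bk_\lambda)$; on virtual characters the operator $\ch \circ R\ind_B^G$ sends $e^\nu$ to the Weyl-formula expression $\big(\sum_{w \in W}(-1)^{\ell(w)}e^{w(\nu+\rho)-\rho}\big) / \prod_{\alpha \in \Phi^+}(1-e^{-\alpha})$; and $\ch(\bk[\fu] \otimes \bk_\lambda) = e^\lambda \prod_{\alpha \in \Phi^+}(1-qe^\alpha)^{-1} = \sum_\mu P_{\mu-\lambda}(q)\, e^\mu$. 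Regrouping the resulting alternating sum over $W$ according to dominant chambers converts $\sum_\mu P_{\mu-\lambda}(q)$ into $\sum_{\mu \in \bXp} M_\mu^\lambda(q)\chi(\mu)$. So the theorem rests on precisely two facts---the cohomology vanishing of Theorem~\ref{thm:costd-coh}(2), which is already available, and the good-filtration property of $A_\lambda$---and the latter is the crux.
\end{proofsk}
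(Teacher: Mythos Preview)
Your proof is correct and follows essentially the same route as the paper's: quote the good-filtration result from~\cite{klt:fscb} (with the coherent-sheaf property from Theorem~\ref{thm:costd-coh}(2)), then read off the multiplicities by comparing the character of a good-filtered module with Brylinski's formula in Lemma~\ref{lem:hesselink}. Your version is simply more explicit about the grading normalization and about why Lemma~\ref{lem:hesselink} holds, but the substance is identical.
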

\begin{proof}
For dominant $\lambda$, recall that $A_\lambda$ is actually a coherent sheaf on $\cN$.  The fact that $A_\lambda$ has a good filtration is due to~\cite{klt:fscb}.  The character of any $G$-module with a good filtration is, of course, a linear combination of various $\chi(\mu)$ (with $\mu \in \bXp$), and the coefficient of $\chi(\mu)$ is the multiplicity of $H^0(\mu)$. 
\end{proof}

If $M$ is a $G$-module with a good filtration, then we also have
\[
\dim \Hom_G(V(\mu),M) = [M : H^0(\mu)].
\]
This observation can be used to reformulate the preceeding theorem: for $\lambda, \mu \in \bXp$,
\[
M_\mu^\lambda(q) = \sum_{n \ge 0} \dim \Hom_{G \times \Gm}(V(\mu)\la -2n\ra, A_\lambda)q^n.
\]
This can be generalized to arbitrary $\lambda \in \bX$, using Lemma~\ref{lem:hesselink} and the fact that $\ch$ gives an embedding of the Grothendieck group of $\PCoh^{G \times \Gm}(\cN)$ in $\Z[\bX][[q]][q^{-1}]$.

\begin{thm}
Let $\lambda \in \bX$.  For any $\mu \in \bXp$, we have
\[
\sum_{i \ge 0} (-1)^i \sum_{n \ge 0} \dim \Ext^i_{G \times \Gm}(V(\mu)\la -2n\ra, A_\lambda) q^n = M_\mu^\lambda(q).
\]
\end{thm}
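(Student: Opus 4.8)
The plan is to realize both sides of the identity as additive functions of the class $[A_\lambda]$ in the Grothendieck group $K_0(\Db\Coh^{G\times\Gm}(\cN))$, and to check the equality on the classes indexed by dominant weights, where it is already known. For $\cF\in\Db\Coh^{G\times\Gm}(\cN)$ write
\[
\phi_\mu(\cF)=\sum_{i\ge 0}(-1)^i\sum_{n\in\Z}\dim\Ext^i_{G\times\Gm}\bigl(V(\mu)\otimes\cO_\cN\la-2n\ra,\cF\bigr)\,q^n.
\]
The first thing to verify is that this is meaningful, i.e.\ that for each fixed $n$ only finitely many $i$ occur. Since $V(\mu)\otimes\cO_\cN$ is a free $\cO_\cN$-module, the adjunction $(\pi^*,\pi_*)$ together with the induction equivalence~\eqref{eqn:ind-equiv} identify $\Ext^i_{G\times\Gm}(V(\mu)\otimes\cO_\cN\la-2n\ra,A_\lambda)$ with $H^i\bigl(B,(\res^G_BV(\mu))^*\otimes\bk_\lambda\otimes S^n\fu^*\bigr)$, and the Ext-groups against a general $\cF$ are computed in the same way by $H^\bullet(B,-)$ of finite-dimensional modules built from $\cF$. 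In characteristic zero such a group vanishes for $i>\dim\fu$; in positive characteristic one writes $H^\bullet(B,-)$ as the $T$-invariants of $H^\bullet(U,-)$, with $U$ the unipotent radical of $B$, and uses that the $T$-weights of $H^i(U,N)$ exceed those of $N$ by sums of positive roots of height unbounded as $i\to\infty$, so the invariants vanish once $i$ is large. I expect this finiteness to be the only point in the argument that is not purely formal. Granting it, the long exact sequences of Ext-groups make $\phi_\mu$ additive on distinguished triangles, so it descends to a group homomorphism $K_0(\Db\Coh^{G\times\Gm}(\cN))\to\Z((q))$. On the other side, by Lemma~\ref{lem:hesselink} the series $M_\mu^\lambda(q)$ is the coefficient of $\chi(\mu)$ in the expansion of $\ch A_\lambda$ in the basis $\{\chi(\nu):\nu\in\bXp\}$ of $\Z[\bX]^W$; and since $\ch$ is additive and embeds $K_0(\PCoh^{G\times\Gm}(\cN))=K_0(\Db\Coh^{G\times\Gm}(\cN))$ into $\Z[\bX][[q]][q^{-1}]$, the rule $\cF\mapsto[\ch\cF:\chi(\mu)]$ is a second homomorphism $K_0(\Db\Coh^{G\times\Gm}(\cN))\to\Z((q))$, carrying $[A_\lambda]$ to $M_\mu^\lambda(q)$. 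It therefore suffices to show that these two homomorphisms coincide.

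They agree on the classes $[A_\nu\la m\ra]$ with $\nu\in\bXp$ and $m\in\Z$. Indeed, for dominant $\nu$ the object $A_\nu$ is a coherent sheaf whose degree-$2n$ component is a $G$-module with good filtration (by~\cite{klt:fscb}), so $\Ext^{>0}_{G\times\Gm}(V(\mu)\otimes\cO_\cN\la-2n\ra,A_\nu)\cong\Ext^{>0}_G(V(\mu),(A_\nu)_{2n})=0$; hence the alternating sum collapses, $\phi_\mu(A_\nu)=\sum_n\dim\Hom_{G\times\Gm}(V(\mu)\la-2n\ra,A_\nu)\,q^n$, and this equals $M_\mu^\nu(q)$ by the reformulation of the preceding theorem recorded above for dominant weights, and also equals $[\ch A_\nu:\chi(\mu)]$ by Lemma~\ref{lem:hesselink}. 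Applying a Tate twist $\la m\ra$ multiplies both functionals by the same power of $q$, so the agreement extends to all $[A_\nu\la m\ra]$.

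It remains to observe that these classes span $K_0(\Db\Coh^{G\times\Gm}(\cN))$. By Theorem~\ref{thm:pcoh-dereq} this group is $K_0(\PCoh^{G\times\Gm}(\cN))$, which (the category being finite length) is free on the classes of the simple objects $\fIC_\nu\la m\ra$; the proper costandard objects $\bnabla_\nu\la m\ra=A_\nu\la m-\delta^*_\nu\ra$ are obtained from these by a unitriangular change of basis, which is legitimate because each set $\{\xi\in\bXp:\xi\preceq\nu\}$ is finite, so the $[A_\nu\la m\ra]$ form a $\Z$-basis. Two homomorphisms out of $K_0$ that agree on a spanning set are equal, so $\phi_\mu([A_\lambda])=[\ch A_\lambda:\chi(\mu)]$ for every $\lambda\in\bX$, and by Lemma~\ref{lem:hesselink} the right-hand side is $M_\mu^\lambda(q)$. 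This is the assertion to be proved.
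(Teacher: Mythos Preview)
Your approach is precisely the one the paper has in mind: the text immediately preceding this theorem says that the dominant case ``can be generalized to arbitrary $\lambda \in \bX$, using Lemma~\ref{lem:hesselink} and the fact that $\ch$ gives an embedding of the Grothendieck group of $\PCoh^{G \times \Gm}(\cN)$ in $\Z[\bX][[q]][q^{-1}]$,'' and you have simply fleshed this out by identifying both sides as $\Z((q))$-valued functionals on $K_0$ and checking that they agree on the spanning set $\{[A_\nu\la m\ra]: \nu \in \bXp\}$. The one step where you go beyond the paper is the finiteness discussion; your weight argument for the positive-characteristic case is a bit informal, but one can alternatively obtain finiteness by d\'evissage, since the $A_\nu$ with $\nu$ dominant generate $\Db\Coh^{G\times\Gm}(\cN)$ as a triangulated category and have vanishing higher Ext from $V(\mu)\otimes\cO_\cN$.
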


S.~Riche has communicated to me another proof of this fact, based on Broer's treatment~\cite{bro:sge} of the $M_\mu^\lambda(q)$ rather than Brylinski's.

We conclude this subsection with a sketch of the proof of Proposition~\ref{prop:lv-zero}. We begin with a lemma about characters of Andersen--Jantzen sheaves.

\begin{lem}\label{lem:hnabla-nreg}
Let $\lambda \in \bXp$.  We have
\[
\chi(\lambda) = \left.\left(\sum_{w \in W} (-1)^{\ell(w)} \ch A_{\lambda + \rho - w\rho}\right)\right|_{q=1}.
\]
\end{lem}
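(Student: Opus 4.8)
The plan is to deduce this from Lemma~\ref{lem:hesselink} together with a short Weyl-group manipulation, the one delicate point being the meaning of the specialization at $q=1$. First I would substitute Hesselink's formula $\ch A_\nu = \sum_{\mu \in \bXp} M_\mu^\nu(q)\chi(\mu)$ (extended to all $\nu\in\bX$ by the convention $M_\mu^\nu(q)=0$ unless $\mu\succeq\nu$) into the alternating sum. Since the sum over $w$ is finite, this gives
\[
\sum_{w \in W}(-1)^{\ell(w)}\ch A_{\lambda+\rho-w\rho}=\sum_{\mu\in\bXp}c_\mu(q)\,\chi(\mu),
\qquad
c_\mu(q):=\sum_{w\in W}(-1)^{\ell(w)}M_\mu^{\lambda+\rho-w\rho}(q),
\]
and each $c_\mu(q)$ is a genuine polynomial in $q$. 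Because the $\chi(\mu)$ are linearly independent in $\Z[\bX]$, I would interpret the specialization at $q=1$ of this (a priori infinite) sum coefficientwise, as $\sum_{\mu}c_\mu(1)\chi(\mu)$; the lemma then reduces to the claim that $c_\mu(1)$ equals $1$ when $\mu=\lambda$ and $0$ otherwise, since that yields $\sum_\mu c_\mu(1)\chi(\mu)=\chi(\lambda)$.

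To prove that claim I would use Kostant's multiplicity formula, which gives $M_\mu^\nu(1)=\dim H^0(\mu)^\nu$, i.e.\ the coefficient of $e^\nu$ in $\chi(\mu)$. Writing $[\,\cdot\,]_\xi$ for the coefficient of $e^\xi$, we then have $c_\mu(1)=\sum_{w\in W}(-1)^{\ell(w)}[\chi(\mu)]_{\lambda+\rho-w\rho}$, which is by inspection the coefficient of $e^{\lambda+\rho}$ in the product $\chi(\mu)\cdot\sum_{w\in W}(-1)^{\ell(w)}e^{w\rho}$. Since the second factor is the Weyl denominator, the Weyl character formula turns this product into the Weyl numerator $\sum_{v\in W}(-1)^{\ell(v)}e^{v(\mu+\rho)}$; and because $\lambda+\rho$ and $\mu+\rho$ are both regular dominant (as $\lambda,\mu\in\bXp$), the equation $v(\mu+\rho)=\lambda+\rho$ forces $v=e$ and $\mu=\lambda$. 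Hence the coefficient of $e^{\lambda+\rho}$ is $1$ if $\mu=\lambda$ and $0$ otherwise, which is exactly what is needed.

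The step I expect to be the real obstacle is making the phrase $|_{q=1}$ rigorous: each individual $\ch A_\nu$ is the character of an infinite-dimensional graded module and does not converge at $q=1$, so one cannot substitute naively. One remedy is the one above --- expand in the basis $\{\chi(\mu)\}$, where the coefficients are honest polynomials. Alternatively, and perhaps more cleanly, one can make the finiteness visible before expanding by computing a closed form: from the character formula $\ch A_\nu = \frac{1}{D}\sum_{u\in W}(-1)^{\ell(u)}u\bigl(e^{\nu+\rho}\ch\bk[\fu]\bigr)$ for (Euler characteristics of) induced modules --- with $u(-)$ the natural $W$-action on characters, $D=e^\rho\prod_{\alpha\in\Phi^+}(1-e^{-\alpha})$ the Weyl denominator, and $\ch\bk[\fu]=\prod_{\alpha\in\Phi^+}(1-qe^\alpha)^{-1}$ --- together with the identity $\sum_{w\in W}(-1)^{\ell(w)}e^{-w\rho}=(-1)^{\ell(w_0)}D$, one obtains
\[
\sum_{w\in W}(-1)^{\ell(w)}\ch A_{\lambda+\rho-w\rho}=(-1)^{\ell(w_0)}\sum_{u\in W}e^{u(\lambda+2\rho)}\prod_{\alpha\in\Phi^+}\frac{1}{1-qe^{u\alpha}},
\]
a rational function manifestly regular at $q=1$; setting $q=1$ and using $\prod_{\alpha}(1-e^{u\alpha})=(-1)^{\ell(w_0)}e^{u(2\rho)}\prod_{\alpha}(1-e^{-u\alpha})$ collapses the right side to $\sum_{u\in W}e^{u\lambda}\prod_{\alpha}(1-e^{-u\alpha})^{-1}$, which is the Weyl character formula for $\chi(\lambda)$ in its fixed-point form (itself immediate from the usual one via $\prod_{\alpha}(1-e^{-u\alpha})=(-1)^{\ell(u)}e^{-u\rho}D$). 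Beyond this bookkeeping around $q=1$, every computation is routine.
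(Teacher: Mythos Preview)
Your main argument is correct and follows essentially the same route as the paper: both expand via Lemma~\ref{lem:hesselink} in the basis $\{\chi(\mu)\}$, reduce the statement to showing that the coefficient $c_\mu(1)=\sum_{w}(-1)^{\ell(w)}M_\mu^{\lambda+\rho-w\rho}(1)$ equals $\delta_{\mu,\lambda}$, and then use that $\lambda+\rho$ and $\mu+\rho$ are dominant regular to pin down the single surviving term. The only difference is in the intermediate bookkeeping: the paper rewrites $c_\mu(q)$ purely algebraically, via the definition of $M$ in terms of the partition function $P$, as $\sum_{v}(-1)^{\ell(v)}M_0^{\lambda+\rho-v(\mu+\rho)}(q)$ and then evaluates at $q=1$ using that $M_0^\nu(1)=\delta_{\nu,0}$; you instead invoke Kostant's multiplicity formula to interpret $M_\mu^\nu(1)$ as a weight multiplicity and then multiply $\chi(\mu)$ by the Weyl denominator. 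These land on the same final expression and are equivalent computations.

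Your second, closed-form approach via $\ch A_\nu = D^{-1}\sum_u(-1)^{\ell(u)}u(e^{\nu+\rho}\ch\bk[\fu])$ is a genuinely different and self-contained argument that the paper does not pursue; it has the advantage of making the regularity at $q=1$ visible before any expansion, at the cost of relying on the Euler-characteristic character formula for induced modules rather than just Lemma~\ref{lem:hesselink}.
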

\begin{proof}
Fix a dominant weight $\mu \in \bXp$, and consider the following calculation:
\begin{multline*}
\sum_{w \in W} (-1)^{\ell(w)} M_\mu^{\lambda+\rho-w\rho}(q)
= \sum_{w,v \in W} (-1)^{\ell(w)} (-1)^{\ell(v)} P_{v(\mu+\rho) - (\lambda+2\rho-w\rho)}(q) \\
= \sum_{v \in W} (-1)^{\ell(v)} \left(\sum_{w \in W} (-1)^{\ell(w)} P_{w\rho - (\lambda + \rho - v(\mu+ \rho) + \rho)}(q) \right) \\
= \sum_{v \in W} (-1)^{\ell(v)} M_0^{\lambda+\rho - v(\mu+\rho)}(q).
\end{multline*}
Now evaluate this at $q = 1$.  We have $M_0^{\lambda+\rho- v(\mu+\rho)}(1) = 0$ unless $\lambda+\rho - v(\mu+\rho) = 0$.  But since $\lambda$ and $\mu$ are both dominant, $\lambda+\rho$ and $\mu+\rho$ are both dominant regular, and the condition $\lambda+\rho - v(\mu+\rho) = 0$ implies that $v = 1$ and $\mu = \lambda$.  Thus,
\[
\left.\left(\sum_{w \in W} (-1)^{\ell(w)} M_\mu^{\lambda+\rho-w\rho}(q)\right)\right|_{q=1} =
\begin{cases}
1 & \text{if $\mu = \lambda$,} \\
0 & \text{otherwise.}
\end{cases}
\]
The left-hand side is the coefficient of $\chi(\mu)$ in $\left(\sum (-1)^{\ell(w)} \ch A_{\lambda+\rho - w\rho}\right)\big|_{q=1}$.
\end{proof}

\begin{proof}[Proof sketch for Proposition~\ref{prop:lv-zero}]
We first describe a way to interpret ``$(\ch \fIC_\mu)|_{q=1}$'' for arbitrary $\mu \in \bXp$. Although there are typically infinitely many $q^n$ with nonzero coefficient in $\ch \fIC_\mu$, it can be shown that there is a (possibly infinite) sum 
\[
\ch \fIC_\mu = \sum_{\nu \in \bXp} c_\nu(q) \chi(\nu)
\]
where each $c_\nu(q)$ is a Laurent polynomial in $\Z[q,q^{-1}]$.  The collection of integers $\{ c_\nu(1) \}_{\nu \in \bXp}$ can be regarded as a function $\bXp \to \Z$.  In an abuse of notation, we let $(\ch \fIC_\mu)|_{q=1}$ denote that function.

A key point is that in the space of functions $\bXp \to \Z$, the various $\{ (\ch \fIC_\mu)|_{q=1} \}$ remain linearly independent.  (This fact was explained to me in 1999 by David Vogan.  It is closely related to the ideas in~\cite[Lecture 8]{vog:mco}.)

Since $\fIC_{\lambda+2\rho}\la \ell(w_0)\ra$ occurs as a composition factor in  $A_{\lambda+2\rho}$ but not in any $A_{\lambda + \rho - w\rho}$ with $w \ne w_0$, Lemma~\ref{lem:hnabla-nreg} implies that for some integers $a_\mu$, we have
\begin{equation}\label{eqn:lv-zero-calc}
\chi(\lambda) = (-1)^{\ell(w_0)} (\ch \fIC_{\lambda+2\rho})|_{q=1} + \sum_{\mu < \lambda + 2\rho} a_\mu (\ch \fIC_\mu)|_{q=1}.
\end{equation}
On the other hand, any simple $G$-representation $L(\nu)$ gives rise to a coherent intersection cohomology complex $\fIC(C_0,L(\nu))$. For some $b_\mu \in \Z$, we have
\begin{equation}\label{eqn:lv-zero-calc2}
\chi(\lambda) = (-1)^{\ell(w_0)} (\ch \fIC(C_0,L(\lambda)))|_{q=1} +
\sum_{\mu < \lambda} b_\mu (\ch \fIC(C_0,L(\mu)))|_{q=1}.
\end{equation}
An induction argument comparing~\eqref{eqn:lv-zero-calc} and~\eqref{eqn:lv-zero-calc2} yields the result.
\end{proof}

\subsection{Socles and morphisms}
\label{ss:socles}

In this subsection, we study the socles of standard objects, the cosocles of costandard objects, and $\Hom$-spaces between them.  The results for $\PCoh^{G \times \Gm}(\cN)$ strongly resemble classical facts about category $\cO$ for a complex semisimple Lie algebra, or about perverse sheaves on a flag variety (see, for instance, ~\cite[\S2.1]{bbm:te}).  In the case of $\ExCoh^{G \times \Gm}(\tcN)$, the corresponding picture is partly conjectural.

\begin{prop}\label{prop:pcoh-socle}
Let $\lambda \in \bXp$.
\begin{enumerate}
\item The socle of $\bDelta_\lambda$ is isomorphic to $\fIC_{\smp(\lambda)}\la -(2\rho^\vee,\lambda)+\delta^*_\lambda\ra$, and the cokernel of $\fIC_{\smp(\lambda)}\la -(2\rho^\vee,\lambda)+\delta^*_\lambda\ra \hookrightarrow \bDelta_\lambda$ contains no composition factor of the form $\fIC_{\mu}\la m\ra$ with $\mu \in \bXmin$.
\item The cosocle of $\bnabla_\lambda$ is isomorphic to $\fIC_{\smp(\lambda)}\la (2\rho^\vee,\lambda)-\delta^*_\lambda\ra$, and the kernel of $\bnabla_\lambda \twoheadrightarrow \fIC_{\smp(\lambda)}\la (2\rho^\vee,\lambda)-\delta^*_\lambda\ra$ contains no composition factor of the form $\fIC_{\mu}\la m\ra$ with $\mu \in \bXmin$.
\end{enumerate}
\end{prop}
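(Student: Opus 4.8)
The two statements are interchanged by Serre--Grothendieck duality $\SGD$, so it suffices to prove one of them, say part~(2). Recall that $\SGD$ is an involution of $\Db\Coh^{G\times\Gm}(\cN)$, $t$-exact up to sign for the perverse-coherent $t$-structure, which sends $\fIC_\mu$ to $\fIC_{-w_0\mu}$ up to a Tate twist, and which carries $\bnabla_\lambda$ to $\bDelta_{-w_0\lambda}$ up to a Tate twist and with no cohomological shift (here one uses that $\pi^!\cO_\cN\cong\cO_\tcN\la k\ra$ for a suitable $k$, so that $\SGD(A_\mu)\cong A_{-\mu}\la k\ra$). Since $\SGD$ turns subobjects into quotient objects, it identifies the socle of $\bDelta_{-w_0\lambda}$ with the cosocle of $\bnabla_\lambda$, and the numerical data in~(1) for $-w_0\lambda$ match those in~(2) for $\lambda$ via $(2\rho^\vee,\lambda)=(2\rho^\vee,-w_0\lambda)$, $\delta^*_\lambda=\delta^*_{-w_0\lambda}$, and $\smp(-w_0\lambda)=-w_0\,\smp(\lambda)$.

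Fix $\lambda\in\bXp$ and write $\mathsf n=(2\rho^\vee,\lambda)-\delta^*_\lambda$. By Corollary~\ref{cor:aj-regular}(1), $\fIC_{\smp(\lambda)}\la\mathsf n\ra$ is the unique composition factor of $\bnabla_\lambda$ of the form $\fIC_\mu\la m\ra$ with $\mu\in\bXmin$, and it occurs with multiplicity~$1$. Hence the second assertion of~(2) follows at once from the first, and everything reduces to showing that the cosocle of $\bnabla_\lambda$ equals $\fIC_{\smp(\lambda)}\la\mathsf n\ra$.

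To produce a surjection, note that $\smp(\lambda)$ is minimal in $\bXp$, so $\fIC_{\smp(\lambda)}=\bnabla_{\smp(\lambda)}$ is simple. Write $\lambda-\smp(\lambda)=\alpha_1+\dots+\alpha_h$ as a sum of simple roots, which is possible since $\smp(\lambda)\preceq\lambda$ and $\lambda-\smp(\lambda)$ lies in the root lattice. Tensoring the exact sequences~\eqref{eqn:nalpha-pres} by line bundles and composing yields a map $\cO_\tcN(\lambda)\to\cO_\tcN(\smp(\lambda))\la(2\rho^\vee,\lambda)-\delta^*_{\smp(\lambda)}\ra$ --- multiplication by the product of the sections cutting out $\tcN_{\alpha_1},\dots,\tcN_{\alpha_h}$ --- and applying $\pi_*$, then rewriting in terms of proper costandard objects by means of $(2\rho^\vee,\smp(\lambda))=\delta^*_{\smp(\lambda)}$ (this is~\eqref{eqn:delta-min}), gives a morphism $f\colon\bnabla_\lambda\to\fIC_{\smp(\lambda)}\la\mathsf n\ra$. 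Each $\tcN_{\alpha_i}$ is disjoint from $\tcN_\reg$ (as in the proof of Lemma~\ref{lem:line-tcn-reg}), so the sections involved are invertible there; thus $f$ restricts to an isomorphism over $\cN_\reg$, in particular $f\ne0$, so $f$ is surjective because its target is simple. Therefore $\fIC_{\smp(\lambda)}\la\mathsf n\ra$ is a summand of the cosocle.

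It remains to prove there is nothing more. Put $K=\ker f$. As $f$ is an isomorphism over $\cN_\reg$, $K$ is supported on $\cN\smallsetminus\cN_\reg$; comparing cohomology sheaves in the triangle $K\to\bnabla_\lambda\to\fIC_{\smp(\lambda)}\la\mathsf n\ra\to$, using that the outer terms are sheaves in degree~$0$ and that $\bnabla_\lambda$ is torsion-free (Proposition~\ref{prop:pcoh-torsion}), one finds $\cH^0(K)=0$, so $K$ is concentrated in degree~$1$. Now let $L$ be a simple object in the cosocle of $\bnabla_\lambda$ with $L\not\cong\fIC_{\smp(\lambda)}\la\mathsf n\ra$. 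The composite $\bnabla_\lambda\twoheadrightarrow L$ does not factor through $f$ (otherwise $L$ would be a quotient of the simple object $\fIC_{\smp(\lambda)}\la\mathsf n\ra$), so the restriction $K\to L$ is nonzero, hence onto; thus $L$ is supported on $\cN\smallsetminus\cN_\reg$, and by Proposition~\ref{prop:lv-regular} is not of the form $\fIC_\mu\la m\ra$ with $\mu$ minuscule. One excludes such an $L$ using the local characterization of $\PCoh^{G\times\Gm}(\cN)$ (Theorem~\ref{thm:pcoh-loc}): localizing at the generic point $x$ of a nonregular orbit $C$ over which $L$ is nonzero, $L$ lives in cohomological degree $\tfrac12\codim C\ge1$ there, whereas $\bnabla_\lambda$ is a torsion-free sheaf, and a surjection $\bnabla_\lambda\twoheadrightarrow L$ is then incompatible with the stalk/costalk vanishing for $\bnabla_\lambda$ at $x$ in Theorem~\ref{thm:pcoh-loc}\eqref{it:loc-coh}, a contradiction. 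This last step is the crux: the cosocle of a proper costandard object is not governed by the abstract properly-stratified structure, so one genuinely needs the geometric input of Theorem~\ref{thm:pcoh-loc} together with Proposition~\ref{prop:pcoh-torsion}, and a careful bookkeeping of the cohomological degrees of coherent $\fIC$'s near boundary strata; the duality reduction, the multiplicity count, and the construction of $f$ are routine by comparison.
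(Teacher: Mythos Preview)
Your argument is essentially correct, but it is considerably longer than the paper's, and the extra work is not needed.

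The paper's proof proceeds as follows: since $\bnabla_\lambda$ is a coherent sheaf concentrated in cohomological degree~$0$, a general lemma about perverse-coherent sheaves (\cite[Lemma~6]{bez:pc} or \cite[Lemma~4.1]{ab:pcs}) says immediately that $\bnabla_\lambda$ has \emph{no} perverse-coherent quotient supported on $\cN\smallsetminus\cN_\reg$. Hence every simple summand of the cosocle is of the form $\fIC(\cN_\reg,\cE)$, i.e.\ minuscule by Proposition~\ref{prop:lv-regular}, and Corollary~\ref{cor:aj-regular} identifies it uniquely. Duality then gives~(1). No explicit map need be produced: once the cosocle is known to be minuscule, the surjection exists automatically.

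Your route differs in two ways. First, you construct the surjection $f$ by hand from the sections~\eqref{eqn:nalpha-pres}; this is correct but unnecessary. Second, your final step is in effect a reproof of the cited lemma in this special case, and your phrasing there is slightly off: the contradiction does not come from the stalk/costalk conditions for $\bnabla_\lambda$ itself, but for the \emph{kernel} $M=\ker(\bnabla_\lambda\twoheadrightarrow L)$. Precisely: at the generic point $x$ of the open orbit $C\subset\supp L$ with $d=\tfrac12\codim C\ge1$, the triangle $M\to\bnabla_\lambda\to L\to$ yields an exact sequence $H^d((\bnabla_\lambda)_x)\to H^d(L_x)\to H^{d+1}(M_x)$; the first term vanishes since $\bnabla_\lambda$ is a sheaf in degree~$0$, while $H^d(L_x)\ne0$, forcing $H^{d+1}(M_x)\ne0$ and contradicting Theorem~\ref{thm:pcoh-loc}\eqref{it:loc-coh} for $M\in\PCoh$. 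Your analysis of $K=\ker f$ (showing $\cH^0(K)=0$) is correct but plays no role in the rest of the argument; what matters is only that $K$ is supported on the boundary, which follows directly from $f$ being an isomorphism over $\cN_\reg$.
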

\begin{proof}
Because $\bnabla_\lambda$ is a coherent sheaf, the local description of $\PCoh^{G \times \Gm}(\cN)$ from Section~\ref{ss:local} implies that it has no quotient supported on $\cN \smallsetminus \cN_\reg$.  (See~\cite[Lemma~6]{bez:pc} or~\cite[Lemma~4.1]{ab:pcs} for details.)  Therefore, its cosocle must contain only  composition factors of the form $\fIC(\cN_\reg,\cE)$.  The claims about $\bnabla_\lambda$ then follow from Propositions~\ref{prop:lv-regular} and~\ref{prop:aj-regular}. Finally, we apply Serre--Grothendieck duality to deduce the claims about $\bDelta_\lambda$ .
\end{proof}

\begin{lem}\label{lem:hom-line}
Let $\lambda, \mu \in \bX$.  We have
\[
\dim \Hom(\cO_\tcN(\lambda), \cO_\tcN(\mu)\la n\ra)
=
\begin{cases}
1 & \text{if $\lambda \succeq \mu$ and $n = (2\rho^\vee, \lambda - \mu)$,} \\
0 & \text{otherwise.}
\end{cases}
\]
\end{lem}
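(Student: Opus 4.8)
The plan is to identify the $\Hom$-space with a space of $B$-semiinvariants in $\bk[\fu]$ and then to write down the unique semiinvariant by hand. First I would tensor by $\cO_\tcN(-\lambda)$, which reduces the problem to computing $\Hom(\cO_\tcN,\cO_\tcN(\mu-\lambda)\la n\ra)$. Under the induction equivalence~\eqref{eqn:ind-equiv}, the line bundle $\cO_\tcN(\nu)$ corresponds to the free rank-one module $\bk_\nu\otimes\bk[\fu]$ over $\bk[\fu]=\mathrm{Sym}(\fu^*)$, graded so that $\fu^*$ lies in internal degree $2$. An endomorphism of a free rank-one module is multiplication by an element, so a morphism $\cO_\tcN\to\cO_\tcN(\mu-\lambda)\la n\ra$ is the same thing as an element $f\in\bk[\fu]$, homogeneous of internal degree $n$, with $b\cdot f=(\lambda-\mu)(b)\,f$ for all $b\in B$; that is, $\Hom(\cO_\tcN(\lambda),\cO_\tcN(\mu)\la n\ra)$ is the space of $B$-semiinvariants in $\bk[\fu]$ of $T$-weight $\lambda-\mu$ and internal degree $n$. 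Since $B$ is the ``negative'' Borel, $\fu^*$ carries the positive roots, so every $T$-weight occurring in $\mathrm{Sym}(\fu^*)$ is a sum of positive roots; hence the space vanishes unless $\lambda-\mu\succeq 0$, which is the asserted vanishing (one could also invoke~\eqref{eqn:line-excep} here), and it vanishes for odd $n$ because $\bk[\fu]$ is concentrated in even internal degrees.

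Now assume $\lambda\succeq\mu$, and write $\lambda-\mu=\sum_i m_i\alpha_i$ with $m_i\in\Z_{\ge0}$. For each simple root $\alpha_i$, let $x_i\in\fu^*$ be the coordinate function dual to the root vector $e_{-\alpha_i}\in\fu$. Then $x_i$ is a $T$-weight vector of weight $\alpha_i$, and I claim it is invariant under the unipotent radical $U$ of $B$: the operator $\mathrm{Ad}(u)-\mathrm{id}$ on $\fu$ strictly lowers $T$-weights, while $-\alpha_i$ is a maximal $T$-weight of $\fu$, so the $e_{-\alpha_i}$-coordinate of any vector is unchanged by $U$. Consequently $f_0:=\prod_i x_i^{m_i}$ is a nonzero $B$-semiinvariant of weight $\lambda-\mu$, of internal degree $2\sum_i m_i$, which equals $(2\rho^\vee,\lambda-\mu)$ because $(\rho^\vee,\alpha_i)=1$ for every $i$. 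This produces the claimed one-dimensional $\Hom$-space in internal degree $(2\rho^\vee,\lambda-\mu)$. For the upper bound I would use that $B$ acts on $\fu$ with a dense orbit, namely the regular nilpotents contained in $\fu$: a $B$-semiinvariant of a fixed weight is determined up to a scalar by its value at one point of this orbit, since otherwise it vanishes on a dense subset and hence on all of $\fu$. Thus the whole graded space of semiinvariants of weight $\lambda-\mu$ is at most one-dimensional, and combining the two estimates completes the argument.

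The only genuinely delicate point is the bookkeeping with the two gradings and the sign conventions: $\fu$ carries the negative roots, $\fu^*\subset\fg^*$ sits in internal degree $2$, and the shift $\la n\ra$ must be matched against the internal degree $2\sum_i m_i=(2\rho^\vee,\lambda-\mu)$. The geometric inputs --- the induction equivalence, the density of the regular $B$-orbit in $\fu$, and the maximality of the simple-root weight lines in $\fu$ --- are all standard, so I do not expect any serious obstacle beyond this bookkeeping.
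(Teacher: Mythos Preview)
Your argument is correct, and it is a genuinely different (and arguably cleaner) route from the paper's own proof. The paper argues geometrically: it restricts to the regular locus $\tcN_\reg$, which is a single $(G\times\Gm)$-orbit, so that torsion-freeness of $\cO_\tcN$ forces the $\Hom$-space to inject into a $\Hom$ between line bundles on an orbit, giving the ``at most one'' bound; then it reduces via the tensor-product trick to the case of a simple root and exhibits the nonzero map using the short exact sequence~\eqref{eqn:nalpha-pres}. Your upper bound is really the same geometric fact pulled across the induction equivalence (the dense $B$-orbit on $\fu$ corresponds exactly to $\tcN_\reg$), but your lower bound is different and more explicit: you write down the semiinvariant $\prod_i x_i^{m_i}$ in one stroke, rather than building it up root by root via~\eqref{eqn:nalpha-pres}. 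This buys you a self-contained argument that does not depend on the auxiliary short exact sequence, and it makes the degree computation $n=2\sum_i m_i=(2\rho^\vee,\lambda-\mu)$ transparent. The paper's approach, on the other hand, ties the lemma more tightly to the ambient geometry of $\tcN_\alpha$ that is used elsewhere in the paper.
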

\begin{proof}
We have already seen in~\eqref{eqn:line-excep} that this $\Hom$-group vanishes unless $\lambda \succeq \mu$.  Assume henceforth that $\lambda \succeq \mu$.  We may also assume without loss of generality that $\mu = 0$.  Because $\cO_\tcN$ is a torsion-free coherent sheaf, the restriction map
\begin{equation}\label{eqn:hom-line}
\Hom(\cO_\tcN(\lambda), \cO_\tcN\la n\ra) \to \Hom(\cO_\tcN(\lambda)|_{\tcN_\reg}, \cO_\tcN\la n\ra|_{\tcN_\reg})
\end{equation}
is injective. The latter is a $\Hom$-group between two equivariant line bundles on a $(G \times \Gm)$-orbit.  This group has dimension $1$ if those line bundles are isomorphic, and $0$ otherwise.  In particular, $\Hom(\cO_\tcN(\lambda)|_{\tcN_\reg}, \cO_\tcN\la n\ra|_{\tcN_\reg})$ can be nonzero for at most one value of $n$, and hence likewise for $\Hom(\cO_\tcN(\lambda), \cO_\tcN\la n\ra)$.

Note that if $\Hom(\cO_\tcN(\lambda_1), \cO_\tcN\la n_1\ra)$ and $\Hom(\cO_\tcN(\lambda_2), \cO_\tcN\la n_2\ra)$ are known to be nonzero (and hence $1$-dimensional), then taking their tensor product shows that
\[
\Hom(\cO_\tcN(\lambda_1+\lambda_2), \cO_\tcN\la n_1+n_2\ra)
\]
is nonzero.  Therefore, we can reduce to the case where $\lambda$ is a simple positive root, say $\alpha$.  Note that $(2\rho^\vee, \alpha) = 2$.  Thus, to finish the proof, it suffices to exhibit a nonzero map $\cO_\tcN(\alpha) \to \cO_\tcN\la2\ra$.  We have seen such a map in~\eqref{eqn:nalpha-pres}.
\end{proof}

\begin{thm}\label{thm:hom-bnabla}
Let $\lambda, \mu \in \bXp$.  We have
\[
\dim \Hom(\bnabla_\lambda, \bnabla_\mu\la n\ra) = 
\begin{cases}
1 & \text{if $\lambda \ge \mu$ and $n = (2\rho^\vee, \lambda-\mu) - \delta^*_\lambda + \delta^*_\mu$,} \\
0 & \text{otherwise.}
\end{cases}
\]
\end{thm}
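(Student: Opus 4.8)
The plan is to combine the quasiexceptional structure of the proper costandard objects with a restriction argument to the open orbit $\cN_\reg$, in the spirit of the proof of Lemma~\ref{lem:hom-line}. The first step disposes of the case $\lambda \not\ge \mu$. Since the objects $\bnabla_\nu$ ($\nu \in \bXp$) form a graded quasiexceptional set (see the proof sketch of Theorem~\ref{thm:pcoh-defn}), any morphism $\bnabla_\lambda \to \bnabla_\mu\la n\ra$ has image whose socle is a subobject of the simple socle $\fIC_\mu\la n\ra$ of $\bnabla_\mu\la n\ra$, and whose composition factors are among those of $\bnabla_\lambda$, all of which are Tate twists of $\fIC_\xi$ with $\xi \le \lambda$; hence a nonzero such morphism forces $\mu \le \lambda$, i.e. $\lambda \ge \mu$. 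For the remainder one assumes $\lambda \ge \mu$, which, both weights being dominant, is the same as $\lambda \succeq \mu$; in particular $\lambda - \mu$ lies in the root lattice.

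Next I would prove the upper bound. By Theorem~\ref{thm:costd-coh}(2) and Proposition~\ref{prop:pcoh-torsion}, $\bnabla_\lambda$ and $\bnabla_\mu$ are torsion-free coherent sheaves, so $\Hom$ in the derived category is ordinary sheaf $\Hom$; and since $\cN_\reg$ is dense in $\cN$ and $\bnabla_\mu\la n\ra$ is torsion-free, a morphism that vanishes over $\cN_\reg$ has torsion image, hence vanishes, so restriction gives an injection
\[
\Hom(\bnabla_\lambda, \bnabla_\mu\la n\ra) \hookrightarrow \Hom_{\cN_\reg}(\bnabla_\lambda|_{\cN_\reg}, \bnabla_\mu\la n\ra|_{\cN_\reg}).
\]
Flat base change along the open immersion $\cN_\reg \hookrightarrow \cN$, together with the facts that $\pi$ restricts to an isomorphism $\tcN_\reg \simto \cN_\reg$ and $A_\nu = \pi_*\cO_\tcN(\nu)$, identifies $\bnabla_\nu|_{\cN_\reg}$ with $\cO_\tcN(\nu)\la -\delta^*_\nu\ra|_{\tcN_\reg}$. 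Thus the right-hand side is a $\Hom$-group between two $(G \times \Gm)$-equivariant line bundles on the single orbit $\tcN_\reg$, which, exactly as in the proof of Lemma~\ref{lem:hom-line}, is at most one-dimensional and is nonzero precisely when those two line bundles are isomorphic. By Lemma~\ref{lem:line-tcn-reg} (applicable since $\lambda - \mu$ is in the root lattice) this happens exactly when $n = (2\rho^\vee, \lambda - \mu) - \delta^*_\lambda + \delta^*_\mu$. Hence $\dim \Hom(\bnabla_\lambda, \bnabla_\mu\la n\ra) \le 1$, with equality possible only for this $n$.

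Finally I would check nonvanishing for $n_0 := (2\rho^\vee, \lambda - \mu) - \delta^*_\lambda + \delta^*_\mu$. Since $\lambda \succeq \mu$, Lemma~\ref{lem:hom-line} furnishes a nonzero morphism $\cO_\tcN(\lambda) \to \cO_\tcN(\mu)\la (2\rho^\vee, \lambda-\mu)\ra$; being a nonzero map of line bundles on the integral scheme $\tcN$, it is injective. Applying the left-exact functor $\pi_*$ yields an injective, hence nonzero, map of coherent sheaves $A_\lambda \to A_\mu\la (2\rho^\vee, \lambda-\mu)\ra$; twisting by $\la -\delta^*_\lambda\ra$ and comparing with~\eqref{eqn:bnabla-defn} produces a nonzero map $\bnabla_\lambda \to \bnabla_\mu\la n_0\ra$. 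Combined with the upper bound, this gives the stated formula.

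The step I expect to be the main obstacle is the very first one: restriction to $\cN_\reg$ cannot by itself detect the vanishing when $\lambda \not\ge \mu$, because there exist dominant weights with $\lambda \not\ge \mu$ but $\lambda - \mu$ in the root lattice, for which $\bnabla_\lambda|_{\cN_\reg}$ is isomorphic to a Tate twist of $\bnabla_\mu|_{\cN_\reg}$. So that vanishing must be extracted from the global quasiexceptional (properly stratified) structure, and one must take care that the relevant socle and composition-factor statements for the $\bnabla_\nu$ are precisely the ones provided by that structure.
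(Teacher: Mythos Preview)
Your proof is correct and close in spirit to the paper's, but the middle step genuinely differs.  For the vanishing when $\lambda \not\ge \mu$ and for the nonvanishing when $n = n_0$, you and the paper do essentially the same thing (the paper also produces the nonzero map from Lemma~\ref{lem:hom-line} and argues it survives $\pi_*$ by looking over $\cN_\reg$; your injectivity-plus-left-exactness phrasing is an equivalent way to say this).  The difference is in how the upper bound $\dim \Hom \le 1$ and the pinning down of $n$ are obtained.  The paper invokes Proposition~\ref{prop:pcoh-socle}: knowing that the cosocle of $\bnabla_\lambda$ is $\fIC_{\smp(\lambda)}\la (2\rho^\vee,\lambda)-\delta^*_\lambda\ra$ and that no other minuscule composition factor appears in $\bnabla_\mu$, one reads off both statements from a short exact sequence.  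You instead use Proposition~\ref{prop:pcoh-torsion} directly: torsion-freeness of $\bnabla_\mu$ makes restriction to $\cN_\reg$ injective on $\Hom$, and the target is a $\Hom$ between equivariant line bundles on a single orbit, which is at most one-dimensional and nonzero only for the predicted $n$ by Lemma~\ref{lem:line-tcn-reg}.  This bypasses the cosocle computation entirely (indeed, Proposition~\ref{prop:pcoh-socle} is itself proved via the same torsion-freeness input), so your route is a mild streamlining.  Your closing worry about the case $\lambda \not\ge \mu$ is well placed but already handled: the socle argument you give in your first step is exactly the standard properly-stratified fact that $\bnabla_\mu$ has simple socle $\fIC_\mu$, and that suffices.
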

\begin{proof}
It is clear that this $\Hom$-group vanishes if $\lambda \not \ge \mu$.  If $\lambda \ge \mu$ but $n \neq (2\rho^\vee, \lambda - \mu) - \delta^*_\lambda + \delta^*_\mu$, then by Proposition~\ref{prop:pcoh-socle}, $\bnabla_\mu\la n\ra$ has no composition factor isomorphic to the cosocle of $\bnabla_\lambda$, and again the $\Hom$-group vanishes.

Assume henceforth that $\lambda \ge \mu$ and $n = (2\rho^\vee, \lambda-\mu) - \delta^*_\lambda + \delta^*_\mu$.  Let $\cK$ be the kernel of the map $\bnabla_\mu\la n\ra \to \fE_{\smp(\mu)}\la (2\rho^\vee, \lambda) - \delta^*_\lambda\ra$, and consider the exact sequence
\[
\cdots \to \Hom(\bnabla_\lambda, \cK) \to \Hom(\bnabla_\lambda, \bnabla_\mu\la n\ra) 
\overset{c}{\to} \Hom(\bnabla_\lambda, \fE_{\smp(\mu)}\la (2\rho^\vee, \lambda) - \delta^*_\lambda\ra) \to \cdots.
\]
The first term vanishes because $\cK$ contains no composition factor isomorphic to the cosocle of $\bnabla_\lambda$.  Therefore, the map labeled $c$ is injective.  The last term clearly has dimension~$1$, so $\dim \Hom(\bnabla_\lambda, \bnabla_\mu\la n\ra) \le 1$.  To finish the proof, it suffices to show that $\Hom(\bnabla_\lambda, \bnabla_\mu\la n\ra) \ne 0$.

By Lemma~\ref{lem:hom-line}, there is a nonzero map $\cO_\tcN(\lambda)\la-\delta^*_\lambda\ra \to \cO_\tcN(\mu)\la n - \delta^*_\mu\ra$.  Recall from~\eqref{eqn:hom-line} that that map has nonzero restriction to $\tcN_\reg$.  Applying $\pi_*$, we obtain a map $\bnabla_\lambda \to \bnabla_\mu\la n\ra$ that is nonzero, because its restriction to $\cN_\reg$ is nonzero.
\end{proof}

It is likely that statements of a similar flavor hold in the exotic case.  Corollary~\ref{cor:aj-regular} lets us predict what the socles of standard objects and cosocles of costandard objects should look like.  In Section~\ref{sect:sl2}, we will confirm the following statement for $G = \SL_2$.

\begin{conj}\label{conj:exo-socle}
Let $\lambda \in \bX$.
\begin{enumerate}
\item The socle of $\hDelta_\lambda$ is isomorphic to $\fE_{\smm(\lambda)}\la -(2\rho^\vee,\dom(\lambda))+\delta_\lambda\ra$, and the cokernel of $\fE_{\smm(\lambda)}\la -(2\rho^\vee,\dom(\lambda))+\delta_\lambda\ra \hookrightarrow \hDelta_\lambda$ contains no composition factor of the form $\fE_{\mu}\la m\ra$ with $\mu \in -\bXmin$.
\item The cosocle of $\hnabla_\lambda$ is isomorphic to $\fE_{\smm(\lambda)}\la (2\rho^\vee,\dom(\lambda))-\delta_\lambda\ra$, and the kernel of $\hnabla_\lambda \twoheadrightarrow \fE_{\smm(\lambda)}\la (2\rho^\vee,\dom(\lambda))-\delta_\lambda\ra$ contains no composition factor of the form $\fE_{\mu}\la m\ra$ with $\mu \in -\bXmin$.
\end{enumerate}
\end{conj}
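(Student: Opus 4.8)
We verify Conjecture~\ref{conj:exo-socle} only for $G=\SL_2$, where by this point in Section~\ref{sect:sl2} everything is explicit: $\tcN=T^*\mathbb{P}^1$, $\bX$ is identified with $\Z$, $-\bXmin=\{0,-1\}$, the only antiminuscule simples are the line bundles $\fE_0\cong\cO_\tcN$ and $\fE_{-1}\cong\cO_\tcN(-1)\la1\ra$ (Lemma~\ref{lem:minu-exotic}(1)), every other $\fE_\mu$ is supported on the zero section $\tcN\smallsetminus\tcN_\reg\cong\mathbb{P}^1$ (Lemma~\ref{lem:minu-exotic}(2)), and each $\hDelta_\lambda$, $\hnabla_\lambda$ has an explicit representative and a known composition series. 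First note that the statements about the kernel and cokernel are automatic once the socle and cosocle have been identified: by Corollary~\ref{cor:aj-regular} the predicted simple is the \emph{unique} composition factor of $\hDelta_\lambda$, resp.\ $\hnabla_\lambda$, of the form $\fE_\mu\la m\ra$ with $\mu\in-\bXmin$. So the task is to compute the socle of $\hDelta_\lambda$ and the cosocle of $\hnabla_\lambda$; the two are treated symmetrically, and we describe the cosocle of $\hnabla_\lambda$.

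Write the cosocle of $\hnabla_\lambda$ as $\bigoplus_i\fE_{\mu_i}\la m_i\ra$ (it is nonzero). Since $\pi_*$ is exact (Proposition~\ref{prop:mu-exact}), $\bigoplus_i\pi_*(\fE_{\mu_i}\la m_i\ra)$ is a semisimple quotient of $\pi_*\hnabla_\lambda=\bnabla_{\dom\lambda}\la\delta^*_\lambda\ra$, hence a quotient of its cosocle, which by Proposition~\ref{prop:pcoh-socle}(2) together with the identity $\delta^*_\lambda-\delta^*_{\dom\lambda}=-\delta_\lambda$ equals the single simple $\fIC_{\smp(\lambda)}\la(2\rho^\vee,\dom\lambda)-\delta_\lambda\ra$. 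Because each $\pi_*(\fE_{\mu_i}\la m_i\ra)$ is either $0$ or a simple $\fIC_{w_0\mu_i}\la m_i\ra$ (Proposition~\ref{prop:mu-exact}), at most one index $i$ has $\pi_*\fE_{\mu_i}\ne0$; for that $i$ one reads off $w_0\mu_i=\smp(\lambda)$ and $m_i=(2\rho^\vee,\dom\lambda)-\delta_\lambda$, i.e.\ $\fE_{\mu_i}\la m_i\ra\cong\fE_{\smm(\lambda)}\la(2\rho^\vee,\dom\lambda)-\delta_\lambda\ra$. Thus the cosocle is a nonzero direct sum of at most one copy of $\fE_{\smm(\lambda)}\la(2\rho^\vee,\dom\lambda)-\delta_\lambda\ra$ together with simples $\fE_\mu\la m\ra$ with $\pi_*\fE_\mu=0$, i.e.\ (for $\SL_2$) with $\mu\ge1$. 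Ruling out summands of this latter kind is the crux; granting it, the rest is formal. For part~(1) one argues identically, using that $\pi_*$ preserves the inclusion of the socle of $\hDelta_\lambda$ into $\hDelta_\lambda$ and invoking the socle of $\bDelta_{\dom\lambda}$ from Proposition~\ref{prop:pcoh-socle}(1); alternatively, part~(1) follows from part~(2) via a duality $\mathbb{D}$ on $\ExCoh^{G\times\Gm}(\tcN)$ with $\mathbb{D}\hDelta_\lambda\cong\hnabla_\lambda$ and $\mathbb{D}(\fE_\mu\la n\ra)\cong\fE_\mu\la-n\ra$, obtained by transporting Verdier duality through Theorem~\ref{thm:excoh-iw} (when $\bk=\Qlb$) or built by hand on $\Db\Coh^{G\times\Gm}(T^*\mathbb{P}^1)$.

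The one step that really uses the $\SL_2$ geometry is showing that the cosocle of $\hnabla_\lambda$ (and, dually, the socle of $\hDelta_\lambda$) contains no simple $\fE_\mu\la m\ra$ with $\mu\ge1$ — equivalently, that $\Hom_{\ExCoh}(\hnabla_\lambda,\fE_\mu\la m\ra)=0$ for all such $\mu,m$. Here, unlike in the perverse-coherent case — where the analogous input (Proposition~\ref{prop:pcoh-socle}) came for free from the local description of Theorem~\ref{thm:pcoh-loc} — there is no shortcut, since exotic sheaves admit no such local characterization. For $\SL_2$, however, each $\fE_\mu$ with $\mu\ge1$ is a small explicit object on the zero section $\mathbb{P}^1$, built from line bundles with vanishing cohomology (consistent with $\pi_*\fE_\mu=0$), so the groups $\Hom_{\ExCoh}(\hnabla_\lambda,\fE_\mu\la m\ra)=\Hom_{\Db\Coh}(\hnabla_\lambda,\fE_\mu\la m\ra)$ can be computed directly from the explicit representatives — for the line-bundle contributions this is a computation of the type in Lemma~\ref{lem:hom-line} — and one checks that they vanish in the required range. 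Combining this with the formal part of the argument above yields the conjecture for $G=\SL_2$. The main obstacle, then, is precisely this last vanishing: without a local description of $\ExCoh$ one is forced back on the explicit classification, and the argument is only as clean as that classification allows.
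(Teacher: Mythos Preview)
Your approach is correct but takes a genuinely different route from the paper's.  The paper never invokes $\pi_*$ or the perverse-coherent cosocle (Proposition~\ref{prop:pcoh-socle}); instead it first proves Proposition~\ref{prop:sl2-costd-hom}, which computes $\dim\Hom(\hnabla_m,\hnabla_n\la k\ra)$ for all $m,n,k$ and shows that every nonzero such map is \emph{surjective}.  The cosocle argument is then a one-line trick: the surjective map $\hnabla_n\twoheadrightarrow\hnabla_{\smm(n)}\la|n|-\delta_n\ra\cong\fE_{\smm(n)}\la|n|-\delta_n\ra$ exhibits the predicted simple as a quotient, and if some $\fE_m\la k\ra$ with $m\notin\{0,-1\}$ were also a quotient of $\hnabla_n$, then composing with the inclusion $\fE_m\hookrightarrow\hnabla_m$ would give a nonzero \emph{nonsurjective} map $\hnabla_n\to\hnabla_m\la k\ra$, contradicting Proposition~\ref{prop:sl2-costd-hom}.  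This argument never uses the explicit description of the non-antiminuscule simples; yours does, and in the paper's logical order that description comes only afterwards (in Section~5.4), though there is no circularity.

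Your route via $\pi_*$ is more conceptual and closer in spirit to what one would want for general $G$.  One comment on the step you flag as the ``crux'': for the cosocle of $\hnabla_\lambda$ it is actually much easier than you suggest.  Once you know $\fE_\mu\cong i_{\alpha*}L(\mu-1)\bk_{-1}\la2\ra[-1]$ for $\mu\ge1$, the vanishing of $\Hom(\hnabla_\lambda,\fE_\mu\la m\ra)$ is immediate for cohomological-degree reasons, since $\hnabla_\lambda$ is an honest coherent sheaf (Theorem~\ref{thm:costd-coh}) and $\fE_\mu$ sits in degree~$+1$.  The socle of $\hDelta_\lambda$ is where your method becomes genuinely laborious: the dual vanishing $\Hom(\fE_\mu,\hDelta_\lambda)=0$ for $\mu\ge1$ is an honest $\Ext^1$ computation (and $\hDelta_\lambda$ is not even a sheaf for $\lambda>0$), while the duality you invoke is not set up in the paper for general $\bk$.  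The paper's surjectivity trick, by contrast, handles the socle and cosocle cases symmetrically without this asymmetry.
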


It may be possible to prove this conjecture using the affine braid group technology developed in~\cite{br:abga, mr:etsps}.  Below is an outline of another possible approach:
\begin{enumerate}
\item Consider the pair of functors
\[
\xymatrix{
\Db\Coh^{G \times \Gm}(\tcN) \ar@<0.5ex>[r]^{\Pi_\alpha} &
\Db\Coh^{G \times \Gm}(\tcN_\alpha) \ar@<0.5ex>[l]^{\Pi^\alpha} }
\]
given by $\Pi_\alpha(\cF) = \pi_{\alpha*}i_\alpha^*(\cF(\hrho - \alpha))$ and $\Pi^\alpha(\cF) = (i_{\alpha*}\pi_\alpha^*\cF)(-\hrho)\la 1\ra$, where $\hrho$ is as in Section~\ref{ss:mu-exact}.  Note that $\Psi_\alpha \cong \Pi^\alpha \circ \Pi_\alpha$.  Check that $\Pi_\alpha$ is left adjoint to $\Pi^\alpha\la -1\ra[1]$ and right adjoint to $\Pi^\alpha\la 1\ra[1]$.

\item Define an ``exotic $t$-structure'' on $\Db\Coh^{G \times \Gm}(\tcN_\alpha)$.  Its heart should be a graded quasihereditary category whose standard (resp.~costandard) objects are $\Pi_\alpha(\hDelta_\lambda)$ (resp.~$\Pi_\alpha(\hnabla_\lambda)$) with $\lambda \prec s_\alpha\lambda$.  The functor $\Pi^\alpha$ should be $t$-exact.

\item Now imitate the strategy of~\cite[\S2.1]{bbm:te} or~\cite[Lemma~4.4.7]{by:kdkmg}, with the functors $\Pi_\alpha$ and $\Pi^\alpha$ playing the role of push-forward or pullback along the projection from the full flag variety to a partial flag variety associated to a simple root.
\end{enumerate}
One would likely have to show along the way that the distinguished triangles of Proposition~\ref{prop:braid}\eqref{it:braid-ses} are actually short exact sequences in $\ExCoh^{G \times \Gm}(\tcN)$:
\begin{equation}\label{eqn:braid-ses}
\begin{gathered}
0 \to \hDelta_{s_\alpha\lambda} \to \hDelta_\lambda\la 1\ra \to \Psi_\alpha(\hDelta_\lambda)[1] \to 0, \\
0 \to \Psi_\alpha(\hnabla_\lambda)[-1] \to \hnabla_\lambda\la -1\ra \to \hnabla_{s_\alpha\lambda} \to 0
\end{gathered}
\qquad\text{if $s_\alpha\lambda \prec \lambda$.}
\end{equation}
There should also be an equivalence like that in Theorem~\ref{thm:excoh-iw} relating $\ExCoh^G(\tcN_\alpha)$ to Iwahori--Whittaker sheaves on a partial affine flag variety $\Gv_\bK/J_\alpha$, where $J_\alpha \subset \Gv_\bO$ is the parahoric subgroup corresponding to $\alpha$.

If these expectations hold, we would obtain the following analogue of Theorem~\ref{thm:hom-bnabla}.

\begin{thm}\label{thm:hom-hnabla}
Assume that Conjecture~\ref{conj:exo-socle} holds, and that the sequences in~\eqref{eqn:braid-ses} are exact.  Let $\lambda, \mu \in \bX$.  Then $\dim \Hom(\hnabla_\lambda, \hnabla_\mu\la n\ra) \le 1$, and
\[
\dim \Hom(\hnabla_\lambda, \hnabla_\mu\la n\ra) = 0
\quad\text{if $\lambda \not \ge \mu$, or if $n \neq (2\rho^\vee,\dom(\lambda) - \dom(\mu)) - \delta_\lambda + \delta_\mu$}.
\]
If $\lambda \in \bXp$ and $\lambda \ge \mu$, then $\dim \Hom(\hnabla_\lambda, \hnabla_\mu\la (2\rho^\vee, \lambda - \dom(\mu)) + \delta_\mu\ra) = 1$.
\end{thm}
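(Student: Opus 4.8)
The plan is to follow the proof of Theorem~\ref{thm:hom-bnabla} as closely as possible, substituting the exotic analogues of the facts used there: Corollary~\ref{cor:aj-regular}(2) for the multiplicities of antiminuscule simples, Conjecture~\ref{conj:exo-socle} for the (co)socles of (co)standard objects, and Lemma~\ref{lem:hom-line}, the sequences~\eqref{eqn:braid-ses}, and Proposition~\ref{prop:mu-exact} for the construction and detection of morphisms.

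First I would treat the vanishing and the bound. Since $\ExCoh^{G\times\Gm}(\tcN)$ is quasihereditary, the socle of $\hnabla_\mu\la n\ra$ is $\fE_\mu\la n\ra$, so the image of any nonzero map $\hnabla_\lambda\to\hnabla_\mu\la n\ra$ contains $\fE_\mu\la n\ra$; hence $\fE_\mu\la n\ra$ is a composition factor of $\hnabla_\lambda$, which forces $\mu\le\lambda$. For the grading constraint, Conjecture~\ref{conj:exo-socle}(2) identifies the cosocle of $\hnabla_\lambda$ with the simple object $\fE_{\smm(\lambda)}\la(2\rho^\vee,\dom(\lambda))-\delta_\lambda\ra$; a nonzero map $\hnabla_\lambda\to\hnabla_\mu\la n\ra$ would make this simple (whose label lies in $-\bXmin$) a composition factor of $\hnabla_\mu\la n\ra$, and Corollary~\ref{cor:aj-regular}(2) then forces $\smm(\lambda)=\smm(\mu)$ and $n=(2\rho^\vee,\dom(\lambda)-\dom(\mu))-\delta_\lambda+\delta_\mu$. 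For the bound $\dim\Hom(\hnabla_\lambda,\hnabla_\mu\la n\ra)\le 1$, apply $\Hom(\hnabla_\lambda,-)$ to the short exact sequence $0\to\cK\to\hnabla_\mu\la n\ra\to\fE_{\smm(\mu)}\la m\ra\to0$ exhibiting the cosocle quotient of $\hnabla_\mu\la n\ra$: by Conjecture~\ref{conj:exo-socle}(2) the subobject $\cK$ has no composition factor $\fE_\nu\la \ell\ra$ with $\nu\in-\bXmin$, so $\Hom(\hnabla_\lambda,\cK)=0$ and $\Hom(\hnabla_\lambda,\hnabla_\mu\la n\ra)$ embeds into the at-most-one-dimensional space $\Hom(\hnabla_\lambda,\fE_{\smm(\mu)}\la m\ra)$.

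The crux is the nonvanishing for $\lambda\in\bXp$. Here $\hnabla_\lambda\cong\cO_\tcN(\lambda)$ by~\eqref{eqn:excoh-dom}, and $\lambda\ge\mu$ with $\lambda$ dominant forces $\lambda\succeq\dom(\mu)$, so Lemma~\ref{lem:hom-line} gives a nonzero map $g_1\colon\cO_\tcN(\lambda)\to\cO_\tcN(\dom(\mu))\la(2\rho^\vee,\lambda-\dom(\mu))\ra=\hnabla_{\dom(\mu)}\la(2\rho^\vee,\lambda-\dom(\mu))\ra$ which, by the restriction injectivity used in that lemma's proof, is nonzero on $\tcN_\reg$. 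Iterating the exact sequences~\eqref{eqn:braid-ses} along a length-$\delta_\mu$ chain of strict $\prec$-descents from $\dom(\mu)$ to $\mu$ produces a surjection $q\colon\hnabla_{\dom(\mu)}\twoheadrightarrow\hnabla_\mu\la\delta_\mu\ra$ in $\ExCoh^{G\times\Gm}(\tcN)$; composing a grading shift of $q$ with $g_1$ yields $g\colon\hnabla_\lambda\to\hnabla_\mu\la n\ra$ with $n=(2\rho^\vee,\lambda-\dom(\mu))+\delta_\mu$, the value in the theorem. To see $g\ne0$, push forward by $\pi_*$: by Proposition~\ref{prop:mu-exact} the source and target of $q$ are both carried to $\bnabla_{\dom(\mu)}\la\delta^*_{\dom(\mu)}\ra$, so $\pi_*q$ is, up to this identification, a surjective endomorphism of a finite-length object and hence an isomorphism, while $\pi_*g_1\ne0$ because its restriction to $\cN_\reg$ is nonzero (exactly as in the last paragraph of the proof of Theorem~\ref{thm:hom-bnabla}); therefore $\pi_*g=\pi_*q\circ\pi_*g_1\ne0$, so $g\ne0$, and with the bound the $\Hom$-space is exactly one-dimensional.

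The main obstacle is this last step: producing the surjection $q$ is precisely where the conjectural exactness of~\eqref{eqn:braid-ses} enters (together with the standard fact that $\mu$ is reached from $\dom(\mu)$ by $\delta_\mu$ strict $\prec$-descents), one must check that the accumulated grading shift is $\delta_\mu$—most easily by pushing forward to $\cN$—and one must verify that $q$ does not annihilate the image of $g_1$, which is exactly the role of the $\pi_*$ computation. By contrast, the vanishing and upper-bound steps use only Conjecture~\ref{conj:exo-socle} and Corollary~\ref{cor:aj-regular}, not the exactness of~\eqref{eqn:braid-ses}.
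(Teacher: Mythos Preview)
Your proposal is correct and follows essentially the same route as the paper: the vanishing and upper bound are obtained exactly as in the proof of Theorem~\ref{thm:hom-bnabla}, using Conjecture~\ref{conj:exo-socle} in place of Proposition~\ref{prop:pcoh-socle} and Corollary~\ref{cor:aj-regular}(2) in place of Proposition~\ref{prop:aj-regular}; for nonvanishing when $\lambda\in\bXp$, both arguments produce a map $\hnabla_\lambda\to\hnabla_{\dom(\mu)}\la(2\rho^\vee,\lambda-\dom(\mu))\ra$ via Lemma~\ref{lem:hom-line} and then compose with the surjection $\hnabla_{\dom(\mu)}\twoheadrightarrow\hnabla_\mu\la\delta_\mu\ra$ coming from~\eqref{eqn:braid-ses}.

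The only genuine difference is in the last step, verifying that this composition is nonzero. The paper observes directly (from Conjecture~\ref{conj:exo-socle}) that the first map is already surjective: its image, being a nonzero quotient of $\hnabla_\lambda$, has the antiminuscule simple $\fE_{\smm(\lambda)}\la\cdot\ra$ as cosocle, so cannot sit inside the radical of $\hnabla_{\dom(\mu)}\la\cdot\ra$; a composition of surjections is then nonzero. Your $\pi_*$ argument is a valid alternative: using $\delta^*_{\dom(\mu)}=\delta^*_\mu+\delta_\mu$ (cf.\ the proof of Corollary~\ref{cor:aj-regular}), $\pi_*q$ is a surjective endomorphism of the finite-length object $\bnabla_{\dom(\mu)}\la\delta^*_{\dom(\mu)}\ra$, hence an isomorphism, and $\pi_*g_1\ne0$ by restriction to $\cN_\reg$. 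The paper's route is slightly shorter since it stays entirely within $\ExCoh^{G\times\Gm}(\tcN)$ and uses only the hypothesis already in play; your route has the mild advantage that it does not require knowing $g_1$ is surjective.
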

In contrast with Theorem~\ref{thm:hom-bnabla}, we do not expect the $\Hom$-group to be nonzero for arbitrary weights $\lambda \ge \mu$.  Rather, it should only be nonzero when $\mu$ is smaller than $\lambda$ in the finer partial order coming from the geometry of $\Fl$ or $\Gr$. See, for instance,~\cite[Footnote~5]{bez:ctm}.

\begin{proof}
To show that this $\Hom$-group vanishes if $\lambda \not\ge \mu$ or $n \neq (2\rho^\vee, \dom(\lambda) - \dom(\mu)) - \delta_\lambda + \delta_\mu$, and that it always has dimension at most~$1$, one can repeat the arguments from the proof of Theorem~\ref{thm:hom-bnabla}.

Suppose now that $\lambda \in \bXp$, $\lambda \ge \mu$, and $n = (2\rho^\vee, \lambda - \dom(\mu)) + \delta_\mu$.  We must show that $\Hom(\hnabla_\lambda, \hnabla_\mu\la n\ra) \ne 0$.  If $\mu$ happens to be dominant as well, then the claim follows from Lemma~\ref{lem:hom-line}.  Otherwise, note that $\lambda \ge \dom(\mu) \ge \mu$.  By the previous case, we have a nonzero map $u: \hnabla_\lambda \to \hnabla_{\dom(\mu)}\la (2\rho^\vee, \lambda - \dom(\mu))\ra$.  That map must be surjective, as can be seen by considering cosocles.  Next, the exact sequences in~\eqref{eqn:braid-ses} imply that there is a surjective map $v: \hnabla_{\dom(\mu)} \to \hnabla_\mu \la \delta_\mu\ra$.  The composition $v\la  (2\rho^\vee, \lambda - \dom(\mu))\ra \circ u$ is the desired nonzero map $\hnabla_\lambda \to \hnabla_\mu\la n\ra$.
\end{proof}

\section{Explicit computations for $\SL_2$}
\label{sect:sl2}

For the remainder of the paper, we focus on $G = \SL_2$.  In keeping with the assumptions of Section~\ref{ss:notation}, we assume that the characteristic of $\bk$ is not $2$.  We identify $\bX = \Z$ and $\bXp = \Z_{\ge 0}$.  Note that neither of the partial orders of Section~\ref{ss:notation} agrees with the usual  order on $\Z$.  In this section, $\le$ will mean the usual order on $\Z$.  We write $\preceq_\bX$ and $\le_\bX$ for those from Section~\ref{ss:notation}.  Thus, for $n, m \in \Z$, we have
\begin{center}
\begin{tabular}{ll}
$n \preceq_\bX m$ & if $m -n \in 2\Z_{\ge 0}$, \\
$n \leq_\bX m$ & if $|n| < |m|$, or else if $|n| = |m|$ and $n \leq m$.
\end{tabular}
\end{center}

\subsection{Standard and costandard exotic sheaves}

Throughout, we will work in terms of the left-hand side of the equivalence~\eqref{eqn:ind-equiv}.  Typically, ``writing down an object of $\Coh^{G \times \Gm}(\tcN)$'' will mean writing down the underlying graded $B$-module for an object of $\Coh^{B \times \Gm}(\fu)$.  For instance, the structure sheaf $\cO_\tcN$ looks like
\[
\begin{array}{rccccccccccccc}
\text{grading degree:} & \cdots & -2 & -1 & 0 & 1 & 2 & 3 & 4 & 5 & 6 & 7 & 8 & \cdots\\
\text{$B$-representation:} & \cdots & - & - & \bk_0 & - & \bk_2 & - & \bk_4 & - & \bk_6 & - & \bk_8 & \cdots
\end{array}
\]
Of course, an indecomposable object of $\Coh^{B \times \Gm}(\fu)$ must be concentrated either in even degrees or in odd degrees.  In the computations below, we will often omit grading labels for degrees in which the given module vanishes.

We will also make use of notation from Section~\ref{ss:mu-exact} such as $\tcN_\alpha$, $P_\alpha$, etc., where $\alpha = 2$ is the unique positive root of $G$. Note that $\tcN_\alpha$ can be identified with the zero section $G/B \subset \tcN$.  As in~\eqref{eqn:ind-equiv},  we have an equivalence
\[
\Coh^{B \times \Gm}(\pt) \cong \Coh^{G \times \Gm}(\tcN_\alpha).
\]
The composition of $\pi_{\alpha*}: \Db\Coh^{G \times \Gm}(\tcN_\alpha) \to \Db\Coh^{G \times \Gm}(\pt)$ with this equivalence is the induction functor $R\ind_B^G: \Db\Coh^{B \times \Gm}(\pt) \to \Db\Coh^{G \times \Gm}(\pt)$.  

If $V$ is a $B$-representation, then $i_{\alpha*}V$ denotes the object
\[
i_{\alpha*}V \cong
\begin{array}{ccccccc}
0 & 2 & 4 & 6 & 8 & 10 & \cdots\\
V & - & - & - & - & - & \cdots
\end{array}
\]
in $\Coh^{B \times \Gm}(\fu)$.  In this section, we will generally suppress the notation for $\res^G_B$ and tensor products.  For instance, in the following statement, $H^0(-n-1)\bk_{-1}$ should be understood as the $B$-representation $\res^G_B H^0(-n-1) \otimes \bk_{-1}$.

\begin{lem}\label{lem:sl2-psi}
If $n < 0$, then
\begin{multline*}
\Psi_\alpha(\cO_\tcN(n)) \cong i_{\alpha*}(V(-n-1)\bk_{-1})\la 1\ra[-1] \\
\cong
\left(\begin{array}{ccccccc}
-1 & 1 & 3 & 5 & 7 & 9 & \cdots\\
V(-n-1)\bk_{-1} & - & - & - & - & - & \cdots
\end{array}\right)[-1].
\end{multline*}
If $n > 0$, then
\[
\Psi_\alpha(\cO_\tcN(n)) \cong i_{\alpha*}(H^0(n-1)\bk_{-1})\la 1\ra \cong
\begin{array}{ccccccc}
-1 & 1 & 3 & 5 & 7 & 9 & \cdots\\
H^0(n-1)\bk_{-1} & - & - & - & - & - & \cdots
\end{array}.
\]
Finally, $\Psi_\alpha(\cO_\tcN) = 0$.
\end{lem}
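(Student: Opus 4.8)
The plan is to unwind, step by step, the definition
\[
\Psi_\alpha(\cF) = i_{\alpha*}\pi_\alpha^*\pi_{\alpha*}i_\alpha^*(\cF \otimes \cO_\tcN(\hrho-\alpha)) \otimes \cO_\tcN(-\hrho)\la 1\ra
\]
applied to $\cF = \cO_\tcN(n)$, specializing every ingredient to $G = \SL_2$. Here $\alpha = 2$ and $\hrho = 1$, so $\hrho - \alpha = -1$ and $-\hrho = -1$; moreover $P_\alpha = G$, $\fu_\alpha = 0$, and $\tcN_\alpha$ is the zero section $G/B \cong \mathbb{P}^1 \subset \tcN$ with $\pi_\alpha\colon \tcN_\alpha \to \pt$.

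First I would note that $\cO_\tcN(n) \otimes \cO_\tcN(-1) = \cO_\tcN(n-1)$, and that restricting this line bundle to the zero section gives $\cO_{G/B}(n-1)$, which corresponds to the one-dimensional $B$-module $\bk_{n-1}$, concentrated in $\Gm$-degree $0$, under $\Coh^{G \times \Gm}(\tcN_\alpha) \cong \Coh^{B \times \Gm}(\pt)$. By the identification recalled in this section, $\pi_{\alpha*}$ followed by that equivalence is $R\ind_B^G$, so $\pi_{\alpha*}i_\alpha^*\cO_\tcN(n-1) \cong R\ind_B^G\bk_{n-1}$. This is the one genuinely computational input: Borel--Weil--Bott on $\mathbb{P}^1$ gives $H^0(n-1)$ in degree $0$ when $n > 0$, gives $0$ when $n = 0$ (the line bundle $\cO(-1)$ is acyclic), and gives $H^1(n-1)[-1]$ when $n < 0$; in the last case the formula $V(\lambda) = H^{\dim G/B}(w_0\lambda - 2\rho)$ (with $w_0 = -1$ and $2\rho = 2$ for $\SL_2$) identifies $H^1(n-1) \cong V(-n-1)$, a Weyl module of dominant highest weight since $n \le -1$. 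Applying next $\pi_\alpha^*$, which corresponds to $\res^G_B$, then $i_{\alpha*}$, which places a $B$-module in $\Gm$-degree $0$ annihilated by $\fu^*$ exactly as in the notation fixed above, then the twist by $\cO_\tcN(-1)$, which by the projection formula tensors the underlying $B$-module by $\bk_{-1}$, and finally the shift $\la 1\ra$, which moves everything into $\Gm$-degree $-1$, one arrives at precisely the three displayed formulas.

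The argument is essentially mechanical once the dictionary of this section is in place, so I do not expect a real obstacle; the only point demanding care is keeping the two kinds of shift straight --- the internal grading shift $\la 1\ra$ and the cohomological shift $[-1]$ produced by passing to $H^1$ --- together with the sign conventions behind $H^1(n-1) \cong V(-n-1)$. The case $n = 0$ is immediate from $R\ind_B^G\bk_{-1} = 0$, and the cases $n > 0$ and $n < 0$ differ only in whether the Borel--Weil--Bott output is $H^0$ in degree $0$ or a Weyl module in degree $1$.
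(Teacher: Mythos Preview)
Your proposal is correct and follows essentially the same route as the paper's own proof: both reduce the computation to $\pi_{\alpha*} i_\alpha^*\cO_\tcN(n-1) \cong R\ind_B^G \bk_{n-1}$ and then invoke Borel--Weil--Bott on $\mathbb{P}^1$ to obtain $H^0(n-1)$, $V(-n-1)[-1]$, or $0$ according to the sign of $n$. The paper's version is terser, but the underlying argument is the same.
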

\begin{proof}
Recall that $\Psi_\alpha(\cO_\tcN(n)) \cong i_{\alpha*}\pi_\alpha^* \pi_{\alpha*} i_\alpha^*(\cO_\tcN(n-1)) \otimes \cO_\tcN(-1)\la 1\ra$.  In particular, we have
\[
\pi_{\alpha*} i_\alpha^*(\cO_\tcN(n-1))  \cong \pi_{\alpha*} \cO_{G/B}(n-1) \cong R\ind_B^G \bk_{n-1}
\cong
\begin{cases}
H^0(n-1) & \text{if $n > 0$,} \\
V(-n-1)[-1] & \text{if $n < 0$,} \\
0 & \text{if $n = 0$.}
\end{cases}
\]
The result follows.
\end{proof}

\begin{prop}[Costandard exotic sheaves]\label{prop:sl2-costd}
If $n < 0$, then
\[
\hnabla_n \cong
\begin{array}{ccccccll}
-1 & 1 & 3 & 5 & 7 & 9 & \cdots\\
H^0(-n-1)\bk_{-1} & \bk_{-n} & \bk_{-n+2} & \bk_{-n+4} & \bk_{-n+6} & \bk_{-n+8} & \cdots
\end{array}
\]
If $n \ge 0$, then
\[
\hnabla_n \cong 
\begin{array}{ccccccc}
0 & 2 & 4 & 6 & 8 & 10 & \cdots\\
\bk_n & \bk_{n+2} & \bk_{n+4} & \bk_{n+6} & \bk_{n+8} & \bk_{n+10} & \cdots
\end{array}
\]
\end{prop}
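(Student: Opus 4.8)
The strategy is to dispose of the dominant weights $n\ge 0$ directly from the general structure theory, and to reduce every negative weight to a positive one by a single application of the braid-group recursion of Proposition~\ref{prop:braid}, feeding it the computation of $\Psi_\alpha$ on line bundles from Lemma~\ref{lem:sl2-psi}.

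For $n\ge 0$ the weight $n$ is dominant, so \eqref{eqn:excoh-dom} gives $\hnabla_n\cong\cO_\tcN(n)$. Under the induction equivalence~\eqref{eqn:ind-equiv} this is the free rank-one graded module $\bk[\fu]\otimes\bk_n$; since $\fu$ is one-dimensional of weight $-\alpha$, the ring $\bk[\fu]$ is a polynomial ring on one generator of grading degree $2$ and $B$-weight $\alpha$, so the grading-degree-$2k$ piece of $\cO_\tcN(n)$ is $\bk_{n+2k}$. That is the displayed picture. Now suppose $n<0$ and write $n=-m$ with $m\ge 1$. Since $m>0$ we have $s_\alpha m=-m\prec_\bX m$, so Proposition~\ref{prop:braid}\eqref{it:braid-ses} (costandard version) supplies a distinguished triangle
\[
\Psi_\alpha(\hnabla_m)[-1]\to\hnabla_m\la -1\ra\to\hnabla_{-m}\to.
\]
By the dominant case $\hnabla_m\la-1\ra\cong\cO_\tcN(m)\la-1\ra$, and applying Lemma~\ref{lem:sl2-psi} (the case of a strictly positive weight) to $\cO_\tcN(m)\cong\hnabla_m$ gives $\Psi_\alpha(\hnabla_m)\cong i_{\alpha*}\bigl(H^0(m-1)\bk_{-1}\bigr)\la 1\ra$, a coherent sheaf concentrated in grading degree $-1$.

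Hence in the triangle the first term sits in cohomological degree $1$ and the middle term in degree $0$, so the long exact sequence of cohomology sheaves shows that $\hnabla_{-m}$ is concentrated in cohomological degree $0$ (consistently with Theorem~\ref{thm:costd-coh}) and fits in a short exact sequence of coherent sheaves
\[
0\to\cO_\tcN(m)\la -1\ra\to\hnabla_{-m}\to i_{\alpha*}\bigl(H^0(m-1)\bk_{-1}\bigr)\la 1\ra\to 0.
\]
The subobject contributes $\bk_{m+2k}$ in grading degrees $2k+1$ for $k\ge 0$, and the quotient contributes $H^0(m-1)\bk_{-1}$ in grading degree $-1$; these occupy disjoint grading degrees, so as a graded $B$-module $\hnabla_{-m}$ is their direct sum, which upon substituting $m=-n$ is exactly the claimed description. (If the full $\bk[\fu]$-module structure is wanted rather than just the graded $B$-module: $\ExCoh^{G\times\Gm}(\tcN)$ is graded quasihereditary, so $\hnabla_{-m}$ has simple socle and is indecomposable, which forces the above extension to be non-split and thereby pins down the action of the degree-$2$ generator from the degree-$(-1)$ part onto the degree-$1$ part.)

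I do not expect any essential obstacle: the main thing to be careful about is the bookkeeping of the grading shifts $\la\pm1\ra$ against the cohomological shift $[-1]$ in the braid triangle, and checking that for $m>0$ one has $s_\alpha m\prec_\bX m$ (and not the reverse) under the $\SL_2$ conventions fixed at the start of this section, so that Proposition~\ref{prop:braid}\eqref{it:braid-ses} is being applied in the direction that expresses $\hnabla_{-m}$ in terms of $\hnabla_m$ rather than vice versa.
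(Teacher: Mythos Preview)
Your argument is correct and follows exactly the same route as the paper: the dominant case via~\eqref{eqn:excoh-dom}, then the negative case via the distinguished triangle of Proposition~\ref{prop:braid}\eqref{it:braid-ses} together with Lemma~\ref{lem:sl2-psi}. You supply more detail than the paper (which simply says ``combining those gives the result''), but the strategy is identical.
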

\begin{proof}
For dominant weights $n \ge 0$, this is just a restatement of the fact from~\eqref{eqn:excoh-dom} that $\hnabla_n \cong \cO_\tcN(n)$.  Suppose now that $n < 0$, and consider the distinguished triangle $\hnabla_{-n}\la -1\ra \to \hnabla_n \to \Psi_\alpha(\hnabla_{-n}) \to$ from Proposition~\ref{prop:braid}\eqref{it:braid-ses}.  We have already determined the first term, and the last term is given in Lemma~\ref{lem:sl2-psi}.  Combining those gives the result.
\end{proof}

\begin{prop}[Standard exotic sheaves]\label{prop:sl2-std}
If $n \le 0$, then $\hDelta_n$ is a coherent sheaf, given by
\begin{align*}
\hDelta_0 &\cong
\begin{array}{ccccccll}
0 & 2 & 4 & 6 & 8 & 10 & \cdots\\
\bk_0 & \bk_{2} & \bk_{4} & \bk_{6} & \bk_{8} & \bk_{10} & \cdots
\end{array} \\
\hDelta_n &\cong
\begin{array}{ccccccll}
-1 & 1 & 3 & 5 & 7 & 9 & \cdots\\
\bk_n & \bk_{n+2} & \bk_{n+4} & \bk_{n+6} & \bk_{n+8} & \bk_{n+10} & \cdots
\end{array}
\qquad\text{if $n < 0$.}
\end{align*}
If $n > 0$, there is a distinguished triangle $\cH^0(\hDelta_n) \to \hDelta_n \to \cH^1(\hDelta_n)[-1] \to$ with
\[
\begin{array}{rccccccl}
& 
-2 & 0 & 2 & 4 & 6 & 8 & \cdots \\
\cH^1(\hDelta_n) \cong{} & 
V(n-1)\bk_{-1} & - & - & - & - & - & \cdots \\
\cH^0(\hDelta_n) \cong{} & 
- & \bk_{-n} & \bk_{-n+2} & \bk_{-n+4} & \bk_{-n+6} & \bk_{-n+8} & \cdots
\end{array}
\]
\end{prop}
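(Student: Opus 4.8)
The plan is to split into the antidominant case, where~\eqref{eqn:excoh-dom} applies verbatim, and the case $n > 0$, which I would handle by the same method used for costandard objects in Proposition~\ref{prop:sl2-costd}: running $\hDelta_{-n}$ and the functor $\Psi_\alpha$ through the braid triangle of Proposition~\ref{prop:braid}\eqref{it:braid-ses}. For $n \le 0$ the weight $n$ is antidominant, so~\eqref{eqn:excoh-dom} gives $\hDelta_n \cong \cO_\tcN(n)\la\delta_n\ra$ with $\delta_0 = 0$ and $\delta_n = 1$ for $n < 0$; there remains only the routine task of writing $\cO_\tcN$ and $\cO_\tcN(n)\la 1\ra$ out as graded $B$-modules (recall $\cO_\tcN$ carries $\bk_{2k}$ in grading degree $2k$), which reproduces the two displayed formulas and exhibits them as coherent sheaves.

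For $n > 0$, the first step is to compute $\Psi_\alpha(\hDelta_n)$. Since $s_\alpha n = -n \prec n$, Proposition~\ref{prop:braid}\eqref{it:braid-short} gives $\Psi_\alpha(\hDelta_n) \cong \Psi_\alpha(\hDelta_{-n})\la 1\ra[-1]$. Now $\hDelta_{-n} \cong \cO_\tcN(-n)\la 1\ra$ by~\eqref{eqn:excoh-dom} (here $\delta_{-n} = 1$), and $\Psi_\alpha$ commutes with $\la 1\ra$, so Lemma~\ref{lem:sl2-psi} applied to the negative weight $-n$ yields $\Psi_\alpha(\hDelta_{-n}) \cong i_{\alpha*}(V(n-1)\bk_{-1})\la 2\ra[-1]$, and hence $\Psi_\alpha(\hDelta_n) \cong i_{\alpha*}(V(n-1)\bk_{-1})\la 3\ra[-2]$. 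The second step is to feed this into the first triangle of Proposition~\ref{prop:braid}\eqref{it:braid-ses} with $\lambda = n$, namely $\hDelta_{-n} \to \hDelta_n\la 1\ra \to \Psi_\alpha(\hDelta_n)[1] \to$; rotating and twisting by $\la -1\ra$ turns this into
\[
i_{\alpha*}(V(n-1)\bk_{-1})\la 2\ra[-2] \to \cO_\tcN(-n) \to \hDelta_n \to
\]
The third step is to take the long exact sequence of cohomology sheaves of this triangle. The left-hand term is a sheaf placed in cohomological degree $2$ and the middle term a sheaf in degree $0$, so all connecting maps vanish for degree reasons, and one reads off $\cH^0(\hDelta_n) \cong \cO_\tcN(-n)$, $\cH^1(\hDelta_n) \cong i_{\alpha*}(V(n-1)\bk_{-1})\la 2\ra$, and no other cohomology. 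Writing these two sheaves out as graded $B$-modules produces exactly the displayed formulas (and, since $V(n-1) \neq 0$, confirms that $\hDelta_n$ is genuinely a two-term complex for $n > 0$); the asserted triangle $\cH^0(\hDelta_n) \to \hDelta_n \to \cH^1(\hDelta_n)[-1] \to$ is then just the canonical truncation triangle $\tau_{\le 0}\hDelta_n \to \hDelta_n \to \tau_{\ge 1}\hDelta_n \to$.

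I do not anticipate a genuine obstacle here: the argument is assembled entirely from~\eqref{eqn:excoh-dom}, Proposition~\ref{prop:braid}, and Lemma~\ref{lem:sl2-psi}. The delicate point is purely bookkeeping — keeping the grading shifts $\la\cdot\ra$ and the cohomological shifts $[\cdot]$ straight across the two applications of Proposition~\ref{prop:braid}, and in particular applying part~\eqref{it:braid-ses} with $\lambda = n$ rather than $\lambda = -n$. Only that orientation of the triangle pairs $\hDelta_n$ against the term $\Psi_\alpha(\hDelta_n)$, which part~\eqref{it:braid-short} lets us determine independently, and only that orientation makes the cohomology-sheaf long exact sequence degenerate, so that the final identification requires no auxiliary vanishing of a connecting morphism.
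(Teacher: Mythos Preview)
Your argument is correct and is essentially the same as the paper's: for $n\le 0$ you both invoke~\eqref{eqn:excoh-dom}, and for $n>0$ you both read off the cohomology sheaves from the braid triangle of Proposition~\ref{prop:braid}\eqref{it:braid-ses}, using Lemma~\ref{lem:sl2-psi} to identify $\Psi_\alpha(\hDelta_{-n})$. The only difference is cosmetic: the paper writes the triangle directly as $\hDelta_{-n}\la -1\ra \to \hDelta_n \to \Psi_\alpha(\hDelta_{-n}) \to$, whereas you first pass through part~\eqref{it:braid-short} to express $\Psi_\alpha(\hDelta_n)$ in terms of $\Psi_\alpha(\hDelta_{-n})$ and then rotate and twist, arriving at the same triangle.
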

\begin{proof}
For $n \le 0$, this is again just a restatement of~\eqref{eqn:excoh-dom}, while for $n > 0$, it follows from the distinguished triangle $\hDelta_{-n}\la -1\ra \to \hDelta_n \to \Psi(\hDelta_{-n}) \to$ of Proposition~\ref{prop:braid}\eqref{it:braid-ses}.
\end{proof}

\subsection{Auxiliary calculations}

In this subsection, we collect a number of minor results that will be needed later for the study of simple and tilting objects.

\begin{lem}\label{lem:v1}
For any $V, V' \in \Rep(G)$, we have $\RHom_{\Rep(B)}(V, V'\bk_{-1}) = 0$.
\end{lem}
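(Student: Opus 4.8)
The plan is to bootstrap everything from the single computation $R\ind_B^G\bk_{-1} = 0$. This is precisely the $n = 0$ case of the formula recorded inside the proof of Lemma~\ref{lem:sl2-psi}, and it is the classical fact that $H^i(-\rho)$ vanishes for all $i$: here $-1 = -\rho$, so $-1+\rho = 0$ is singular, and Kempf vanishing together with Serre duality on $G/B \cong \mathbb{P}^1$ give $H^0(-1) = H^1(-1) = 0$ (higher cohomology vanishing since $\dim \mathbb{P}^1 = 1$).

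First I would record that Frobenius reciprocity holds on the derived level. Since $\res^G_B$ is exact and its right adjoint $\ind_B^G$ therefore carries injectives to injectives, the adjunction $(\res^G_B,\ind_B^G)$ lifts to an adjunction $(\res^G_B, R\ind_B^G)$ of derived categories, so that
\[
\RHom_{\Rep(B)}(V, M) \cong \RHom_{\Rep(G)}(V, R\ind_B^G M)
\qquad\text{for } V \in \Rep(G),\ M \in \Rep(B).
\]
Next I would apply the tensor identity. As $V'$ is finite-dimensional over the field $\bk$, the functor $\res^G_B V' \otimes (-)$ on $\Rep(B)$ is exact, so the abelian-level isomorphism $\ind_B^G(\res^G_B V' \otimes N) \cong V' \otimes \ind_B^G N$ passes directly to derived functors, yielding $R\ind_B^G(V'\bk_{-1}) \cong V' \otimes R\ind_B^G\bk_{-1}$. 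Combining this with the previous display and the vanishing above,
\[
\RHom_{\Rep(B)}(V, V'\bk_{-1}) \cong \RHom_{\Rep(G)}\bigl(V,\ V' \otimes R\ind_B^G\bk_{-1}\bigr) = \RHom_{\Rep(G)}(V, 0) = 0.
\]

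The argument is purely formal; the only points requiring care are the two passages to the derived category, which rest on the exactness of $\res^G_B$, the exactness of tensoring with the finite-dimensional module $\res^G_B V'$, and the fact that $\ind_B^G$ preserves injectives. I do not anticipate a genuine obstacle: the entire substantive content is packaged into the vanishing $R\ind_B^G\bk_{-1} = 0$, which is already available in the paper.
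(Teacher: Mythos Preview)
Your argument is correct and matches the paper's proof essentially line for line: adjunction to reduce to $\RHom_{\Rep(G)}(V, R\ind_B^G(V'\bk_{-1}))$, then the tensor identity and the vanishing $R\ind_B^G\bk_{-1}=0$. The paper simply states these steps without your justifications for passing to the derived level, but the content is identical.
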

\begin{proof}
By adjunction, $\RHom(V, V'\bk_{-1}) \cong \RHom_{\Rep(G)}(V, R\ind_B^G(V'\bk_{-1}))$. But $R\ind_B^G (V'\bk_{-1}) \cong V' \otimes R\ind_B^G \bk_{-1} = 0$.
\end{proof}

\begin{lem}\label{lem:v2}
For any $V \in \Rep(G)$, $i_{\alpha*}(V\bk_{-2})$ lies in $\ExCoh^{G \times \Gm}(\tcN)$.
\end{lem}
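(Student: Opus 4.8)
The plan is to verify the two $\Hom$‑vanishing conditions that, by the general theory of $t$‑structures attached to exceptional sets (\cite[\S\S2.1--2.3]{bez:ctm}, as used in the proofs of Proposition~\ref{prop:mut} and Theorem~\ref{thm:excoh-defn}), suffice to place an object $\cF \in \Db\Coh^{G \times \Gm}(\tcN)$ into $\ExCoh^{G \times \Gm}(\tcN)$: namely $\uHom^{<0}(\hDelta_\lambda, \cF) = 0$ and $\uHom^{<0}(\cF, \hnabla_\lambda) = 0$ for all $\lambda \in \bX$. Taking $\cF = i_{\alpha*}(V\bk_{-2})$, the second family of conditions is immediate, because $i_{\alpha*}(V\bk_{-2})$ is a coherent sheaf and so is every $\hnabla_\lambda$ (Proposition~\ref{prop:sl2-costd}, or Theorem~\ref{thm:costd-coh}), while negative $\Ext$‑groups between coherent sheaves vanish.

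For the remaining conditions I would split on $\lambda$. If $\lambda \le 0$ then $\hDelta_\lambda = \cO_\tcN(\lambda)\la\delta_\lambda\ra$ by~\eqref{eqn:excoh-dom}, again a coherent sheaf, so the same remark disposes of this case. If $\lambda = n > 0$, apply $\uHom^\bullet(-, i_{\alpha*}(V\bk_{-2}))$ to the distinguished triangle $\cH^0(\hDelta_n) \to \hDelta_n \to \cH^1(\hDelta_n)[-1] \to$ of Proposition~\ref{prop:sl2-std}. In the long exact sequence the term coming from the coherent sheaf $\cH^0(\hDelta_n)$ vanishes in negative degrees, so it is enough to show $\uHom^j(\cH^1(\hDelta_n), i_{\alpha*}(V\bk_{-2})) = 0$ for all $j \le 0$. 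For $j < 0$ this is once more the vanishing of negative $\Ext$ between coherent sheaves, so the crux is $j = 0$. By Proposition~\ref{prop:sl2-std}, $\cH^1(\hDelta_n) \cong i_{\alpha*}(V(n-1)\bk_{-1})\la 2\ra$; using the full faithfulness of $i_{\alpha*}$ on coherent sheaves together with the grading (only the shift $\la 2\ra$ survives the comparison, and then $\bk_{-1} = \bk_{-2} \otimes \bk_1$), this $\Hom$‑space becomes $\Hom_{\Rep(B)}(V(n-1), V\bk_{-1})$, which vanishes by Lemma~\ref{lem:v1}. Feeding this back into the long exact sequence gives $\uHom^{<0}(\hDelta_n, i_{\alpha*}(V\bk_{-2})) = 0$, completing the verification.

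No single step is hard; the only thing that demands care is the grading bookkeeping in the last identification, which is precisely what turns the potential obstruction into a morphism from a $G$‑module to a $\bk_{-1}$‑twist of a $G$‑module, so that Lemma~\ref{lem:v1}---whose proof uses the $G$‑structure through $R\ind_B^G\bk_{-1} = 0$---can be applied. It is worth emphasizing that the statement genuinely needs $V$ to be a $G$‑module: the sheaf $i_{\alpha*}\cO_{\tcN_\alpha} = i_{\alpha*}\bk_0$ is \emph{not} exotic (from~\eqref{eqn:nalpha-pres} and the simplicity of $\fE_0 = \cO_\tcN$ one sees that it is a cohomological shift of an exotic object), so one cannot hope to prove the lemma by filtering $V$ by one‑dimensional $B$‑submodules.
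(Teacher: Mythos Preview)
Your proof is correct and follows essentially the same approach as the paper's: both verify the pair of $\uHom$-vanishing conditions against standard and costandard objects, dispose of the costandard side and the $\lambda\le 0$ standard side by the coherent-sheaf observation, and reduce the remaining case $\lambda=n>0$, $k=-1$ to $\Hom_{\Rep(B)}(V(n-1),V\bk_{-1})=0$ via Lemma~\ref{lem:v1}. The only cosmetic difference is that the paper writes the key identification $\uHom(\hDelta_m[1],i_{\alpha*}V\bk_{-2})\cong\uHom(i_{\alpha*}V(m-1)\bk_{-1}\la2\ra,i_{\alpha*}V\bk_{-2})$ in one line, whereas you route it through the distinguished triangle of Proposition~\ref{prop:sl2-std}; the content is identical.
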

\begin{proof}
It suffices to show that 
\[
\uHom(\hDelta_m[-k], i_{\alpha*}V\bk_{-2}) = 
\uHom(i_{\alpha*}V\bk_{-2}, \hnabla_m[k]) = 0
\qquad\text{for all $k < 0$.}
\]
The vanishing of the latter is obvious, since $i_{\alpha*}V\bk_{-2}$ and $\hnabla_m$ are both coherent sheaves.  Likewise, the vanishing of the former is obvious when $m \le 0$, or when $k < -1$.  When $m > 0$ and $k = -1$, using Lemma~\ref{lem:v1}, we have
\begin{multline*}
\uHom(\hDelta_m[1], i_{\alpha*}V\bk_{-2}) \cong
\uHom(i_{\alpha*}V(m-1)\bk_{-1}\la 2\ra, i_{\alpha*}V\bk_{-2}) \\
\cong \Hom_{\Rep(B)}(V(m-1)\bk_{-1}, V\bk_{-2}) \la -2\ra  \\
\cong \Hom_{\Rep(B)}(V(m-1), V\bk_{-1})\la -2\ra = 0. \qedhere
\end{multline*}
\end{proof}

\begin{lem}\label{lem:brep}
For $n > 0$, there are short exact sequences of $B$-representations
\begin{gather*}
0 \to H^0(n-1)\bk_{-1} \to H^0(n) \to \bk_n \to 0, \\
0 \to \bk_{-n} \to V(n) \to V(n-1)\bk_1 \to 0.
\end{gather*}
\end{lem}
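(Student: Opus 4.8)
The plan is to realize both modules inside the polynomial ring $\bk[x,y]$ carrying the standard $\SL_2$-action, to obtain the first exact sequence via multiplication by a $B$-semiinvariant linear form, and to deduce the second by taking linear duals.

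First I would recall the characteristic-free fact that, for $G=\SL_2$, the dual Weyl module $H^0(n)$ is canonically the degree-$n$ component $\bk[x,y]_n\cong S^nH^0(1)$, where $H^0(1)=\bk x\oplus\bk y$ is the standard $2$-dimensional module. Here $x^iy^{n-i}$ has weight $2i-n$, so $x$ has weight $1$ and $y$ has weight $-1$. Since $B$ is the negative Borel, $\fu$ lowers weights by $\alpha=2$, whence $\fu\cdot x\in\bk y$ and $\fu\cdot y=0$; thus $\bk y$ is a $B$-submodule of $H^0(1)$ isomorphic to $\bk_{-1}$, which already settles the case $n=1$ of both sequences, namely $0\to\bk_{-1}\to H^0(1)\to\bk_1\to 0$.

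Next, for the first sequence with arbitrary $n>0$, I would use that multiplication by $y$ in $\bk[x,y]$ restricts to a $B$-module homomorphism $H^0(n-1)\bk_{-1}\to H^0(n)$, $q\mapsto yq$ (the shift $\bk_{-1}$ records the weight of $y$; equivariance follows, e.g., by restricting the $G$-equivariant surjection $H^0(1)\otimes H^0(n-1)\twoheadrightarrow H^0(n)$ to the submodule $\bk y\subset H^0(1)$). This map is injective because $\bk[x,y]$ is a domain, and its image $y\cdot\bk[x,y]_{n-1}$ is the span of the monomials $x^iy^{n-i}$ with $i<n$; hence the cokernel is one-dimensional, spanned by the image of $x^n$. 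That cokernel has $T$-weight $n$, and $\fu$ acts on it trivially since $\fu\cdot x^n\in\bk\,x^{n-1}y\subset y\cdot\bk[x,y]_{n-1}$, so it is isomorphic to $\bk_n$. This proves $0\to H^0(n-1)\bk_{-1}\to H^0(n)\to\bk_n\to 0$.

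Finally, the second sequence follows by duality. For $\SL_2$ one has $w_0=-1$ on $\bX$, so $V(\lambda)=H^0(-w_0\lambda)^*=H^0(\lambda)^*$; applying the (exact) linear-dual functor on finite-dimensional $B$-modules to the first sequence, and using $(\bk_n)^*\cong\bk_{-n}$ together with $(H^0(n-1)\bk_{-1})^*\cong H^0(n-1)^*\bk_1=V(n-1)\bk_1$, yields $0\to\bk_{-n}\to V(n)\to V(n-1)\bk_1\to 0$. I do not expect a genuine obstacle here; the only delicate point is the bookkeeping of $B$-equivariant structures and the weight shifts $\bk_{\pm1}$, in particular the verification that the one-dimensional cokernel above is $\bk_n$ and not some other character, which is precisely why one should spell out the $\fu$-action on $x^n$.
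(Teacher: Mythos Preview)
Your argument is correct and follows exactly the approach the paper indicates: the paper's proof consists of the single sentence ``This can be checked by direct computation using, say, the realization of $H^0(n)$ as the space of homogeneous polynomials of degree $n$ on $\bA^2$,'' and your proposal is a careful execution of precisely that computation, with the second sequence obtained by dualizing the first.
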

\begin{proof}
This can be checked by direct computation using, say, the realization of $H^0(n)$ as the space of homogeneous polynomials of degree $n$ on $\bA^2$.
\end{proof}

\begin{lem}\label{lem:new-costd}
For $n \le -2$, there is a short exact sequence in $\ExCoh^{G \times \Gm}(\tcN)$:
\[
0 \to i_{\alpha*}H^0(-n-2)\bk_{-2}\la 1\ra \to \hnabla_n \to \hnabla_{-n-2}\la 1\ra \to 0.
\]
\end{lem}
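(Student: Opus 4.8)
The plan is to produce the sequence first at the level of coherent sheaves, by writing down an explicit morphism $\hnabla_n \to \hnabla_{-n-2}\la 1\ra$ using the formulas of Proposition~\ref{prop:sl2-costd}, and then to observe that all three terms lie in $\ExCoh^{G\times\Gm}(\tcN)$, which automatically makes the sequence exact in that heart. Note first that $-n-2\ge 0$, so $\hnabla_{-n-2}\cong\cO_\tcN(-n-2)$ by~\eqref{eqn:excoh-dom}; thus $\hnabla_{-n-2}\la 1\ra$ is the free rank-one $\bk[\fu]$-module generated in degree $-1$ with weight $\bk_{-n-2}$.

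The first real step is to pin down the $\bk[\fu]$-module structure of $\hnabla_n$. Rotating the second distinguished triangle of Proposition~\ref{prop:braid}\eqref{it:braid-ses} with $\lambda=-n$ (so $s_\alpha\lambda=n\prec-n$) and feeding in Lemma~\ref{lem:sl2-psi} gives a short exact sequence of coherent sheaves
\[
0\to\cO_\tcN(-n)\la -1\ra\to\hnabla_n\to i_{\alpha*}\bigl(H^0(-n-1)\bk_{-1}\bigr)\la 1\ra\to 0 .
\]
This tells us that $\hnabla_n$ is $H^0(-n-1)\bk_{-1}$ in degree $-1$ and is free over $\bk[\fu]$ in degrees $\ge 1$. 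I claim this sequence does not split: a splitting would provide a nonzero map $\hnabla_n\to\cO_\tcN(-n)\la -1\ra=\hnabla_{-n}\la -1\ra$, but $\hnabla_n\in\Db\Coh^{G\times\Gm}(\tcN)_{\le n}\subset\Db\Coh^{G\times\Gm}(\tcN)_{<-n}$, so $\uHom^\bullet(\hnabla_n,\hnabla_{-n})=0$ by Proposition~\ref{prop:mut}. Consequently, writing $x$ for a generator of $\bk[\fu]=\bk[x]$, multiplication by $x$ is nonzero from degree $-1$ to degree $1$ of $\hnabla_n$. Viewed $B$-equivariantly this is a map $H^0(-n-1)\bk_{-1}\to\bk_{-n-2}$; a comparison of $T$-weights shows it kills every weight space but the top one, so, being nonzero, its kernel is the unique $B$-submodule of codimension one, namely $H^0(-n-2)\bk_{-2}$ by Lemma~\ref{lem:brep}.

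Next I would define $f\colon\hnabla_n\to\cO_\tcN(-n-2)\la 1\ra=\hnabla_{-n-2}\la 1\ra$ by taking, in degree $-1$, the quotient $H^0(-n-1)\bk_{-1}\twoheadrightarrow\bk_{-n-2}$ of Lemma~\ref{lem:brep}, and, in degrees $\ge 1$, the identity (both sides being $\bk_{-n+2j-2}$ in degree $2j-1$). The description of the action of $x$ just obtained makes $f$ compatible with the $\bk[\fu]$-action, hence a morphism in $\Coh^{G\times\Gm}(\tcN)$; it is visibly surjective, and its kernel is $H^0(-n-2)\bk_{-2}$ sitting in degree $-1$ and annihilated by $x$, that is, $i_{\alpha*}H^0(-n-2)\bk_{-2}\la 1\ra$. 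This yields the asserted short exact sequence in $\Coh^{G\times\Gm}(\tcN)$.

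It remains only to see that the sequence lives in $\ExCoh^{G\times\Gm}(\tcN)$. The two costandard objects lie there by definition, and $i_{\alpha*}H^0(-n-2)\bk_{-2}\la 1\ra$ lies there by Lemma~\ref{lem:v2} (applied to $V=H^0(-n-2)\in\Rep(G)$) together with the stability of $\ExCoh^{G\times\Gm}(\tcN)$ under $\la 1\ra$. Since a short exact sequence of coherent sheaves all of whose terms belong to the heart of a $t$-structure is automatically exact in that heart — apply the cohomology functors of the exotic $t$-structure to the associated distinguished triangle — this finishes the proof. The step I expect to be the main obstacle is identifying the kernel of $f$ precisely: it depends on getting the $\bk[\fu]$-action on $\hnabla_n$ exactly right, for which the non-splitting of the displayed sequence (equivalently, the nonvanishing of $x$ in degree $-1$) is the essential input.
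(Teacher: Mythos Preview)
Your argument is correct, and it reaches the same conclusion as the paper's by a closely related but genuinely more hands-on route.  Both proofs establish a surjection $\hnabla_n \twoheadrightarrow \hnabla_{-n-2}\la 1\ra$ of coherent sheaves, identify its kernel via Lemma~\ref{lem:brep}, and then invoke Lemma~\ref{lem:v2} to land in $\ExCoh^{G\times\Gm}(\tcN)$.  The difference lies in how the surjection is produced.  The paper works abstractly: it computes that $\Hom(\hnabla_n,\cO_\tcN(-n-2)\la 1\ra)$ is one-dimensional by a chain of $\Hom$/$\Ext$ calculations (using~\eqref{eqn:hnc-dim}, the short exact sequence~\eqref{eqn:nalpha-pres}, and an $\Ext^1$-vanishing in $\Rep(B)$), takes a nonzero such map, and then argues surjectivity in degree $-1$ by observing that a failure would split the presentation of $\hnabla_n$, contradicting its indecomposability.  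You instead pin down the $\bk[\fu]$-module structure of $\hnabla_n$ explicitly: from Proposition~\ref{prop:braid}\eqref{it:braid-ses} and Lemma~\ref{lem:sl2-psi} you extract the coherent-sheaf presentation of $\hnabla_n$, use the $\uHom$-vanishing of Proposition~\ref{prop:mut} to rule out splitting (a clean alternative to the indecomposability argument), deduce that multiplication by the generator $x$ is nonzero on degree $-1$, and then write down $f$ degree-by-degree.

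What your approach buys is that it avoids the auxiliary $\Ext$ computations~\eqref{eqn:hnc1}--\eqref{eqn:hnc-ext} entirely; once the module structure is understood, the morphism and its kernel are read off directly.  What the paper's approach buys is that it never needs to analyze the $x$-action on the degree $-1$ piece explicitly.  One small point worth making precise in your write-up: when you say $f$ is ``the identity'' on degrees~$\ge 1$, there is an implicit scalar normalization needed to make $f$ genuinely $\bk[\fu]$-linear across the passage from degree $-1$ to degree $1$; the two candidate maps $H^0(-n-1)\bk_{-1}\to\bk_{-n}$ (namely $x$-action in $\hnabla_n$, versus quotient-then-$x$ in the target) have the same kernel and are therefore proportional, so a single rescaling fixes this.
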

\begin{proof}
Note that for any $G$-representation $V$, there are natural isomorphisms
\begin{multline}\label{eqn:hnc-dim}
\Hom_{\Coh^{G \times \Gm}(\tcN)}(V \otimes \cO_\tcN\la k\ra, \cO_\tcN(m)) \cong
\Hom_{\Rep(B \times \Gm)}(V\la k\ra, \bk_m \otimes \bk[u]) \\
\cong
\begin{cases}
\Hom_{\Rep(B)}(V, \bk_{m - k}) & \text{if $k \le 0$ and $k$ is even,} \\
0 & \text{otherwise.}
\end{cases}
\end{multline}
For instance, we can see in this way that
\begin{equation}\label{eqn:hnc1}
\begin{gathered}
\Hom(H^0(-n-1) \otimes \cO_\tcN(1)\la -2\ra, \cO_\tcN(-n-2)), \\
\Hom(H^0(-n-1) \otimes \cO_\tcN(-1), \cO_\tcN(-n-2))
\end{gathered}
\end{equation}
are both $1$-dimensional.  Consider the following exact sequence, induced by~\eqref{eqn:nalpha-pres}:
\begin{multline}\label{eqn:hnc-ses}
0 \to H^0(-n-1) \otimes \cO_\tcN(1)\la -2\ra \overset{i}{\to} H^0(-n-1) \otimes \cO_\tcN(-1)\\
 \to i_{\alpha*}(H^0(-n-1)\bk_{-1}) \to 0.
\end{multline}
The map $i$ induces an isomorphism between the two $\Hom$-groups in~\eqref{eqn:hnc1}.

By similar reasoning, we find that
\begin{multline}\label{eqn:hnc2}
\Ext^1(H^0(-n-1) \otimes \cO_\tcN(-1), \cO_\tcN(-n-2))
\\
\cong \Ext^1_{\Rep(B)}(H^0(-n-1)\bk_{-1}, \bk_{-n-2}) 
\cong \Ext^1_{\Rep(B)}(H^0(-n-1), \bk_{-n-1})
\\
\cong \Ext^1_{\Rep(G)}(H^0(-n-1), H^0(-n-1)) = 0.
\end{multline}
Now apply $\Ext^\bullet({-}, \cO_\tcN(-n-2))$ to~\eqref{eqn:hnc-ses} to obtain a long exact sequence.  From~\eqref{eqn:hnc1} and~\eqref{eqn:hnc2}, we deduce that
\begin{equation}\label{eqn:hnc-ext}
\Ext^1(i_{\alpha*}(H^0(-n-1)\bk_{-1}), \cO_\tcN(-n-2)) = 0.
\end{equation}

Now apply $\Hom({-}, \cO_\tcN(-n-2)\la 1\ra)$ to the distinguished triangle
\begin{equation}\label{eqn:hnc-dt}
\cO_\tcN(-n)\la -1\ra \to \hnabla_n \to i_{\alpha*}H^0(-n-1)\bk_{-1}\la 1\ra \to 
\end{equation}
from Proposition~\ref{prop:braid}\eqref{it:braid-ses}.  It is clear that $\Hom(i_{\alpha*}H^0(-n-1)\bk_{-1}\la 1\ra, \cO_\tcN(-n-2)\la 1\ra)$ vanishes.  Combining this with~\eqref{eqn:hnc-ext}, we find that
\[
\Hom(\hnabla_n, \cO_\tcN(-n-2)\la 1\ra) \simto \Hom(\cO_\tcN(-n)\la -1\ra, \cO_\tcN(-n-2)\la 1\ra)
\]
is an isomorphism.  The latter is $1$-dimensional (by~\eqref{eqn:hnc-dim}), so the former is as well.

We have constructed a nonzero map $h: \hnabla_n \to \hnabla_{-n-2}\la 1\ra$.  We claim that as a map of coherent sheaves, $h$ is surjective.  By construction, its image at least contains the image of $\hnabla_{-n}\la -1\ra \cong \cO_\tcN(-n)\la -1\ra$, i.e., the submodule containing all homogeneous elements in degrees${}\ge 1$.  The only question is whether $h$ is surjective in grading degree $-1$.  If it were not, then $\hnabla_{-n}\la -1\ra$ would be a quotient of $\hnabla_n$ as a coherent sheaf.  This would imply the splitting of the distinguished triangle~\eqref{eqn:hnc-dt}, contradicting the indecomposability of $\hnabla_n$.  Thus, $h$ is surjective.

Restricting $h$ to the space of homogeneous elements of degree $-1$, we get a surjective map of $B$-representations $H^0(-n-1)\bk_{-1} \to \bk_{-n-2}$.  Lemma~\ref{lem:brep} identifies the kernel of that map for us.  We therefore have a short exact sequence of coherent sheaves
\[
0 \to i_{\alpha*}H^0(-n-2)\bk_{-2}\la 1\ra \to \hnabla_n \to \hnabla_{-n-2}\la 1\ra \to 0.
\]
Lemma~\ref{lem:v2} tells us that this is also a short exact sequence in $\ExCoh^{G \times \Gm}(\tcN)$.
\end{proof}

\begin{lem}\label{lem:new-std}
For $n \le -2$, there is a short exact sequence in $\ExCoh^{G \times \Gm}(\tcN)$:
\[
0 \to \hDelta_{-n-2}\la -1\ra \to \hDelta_n \to i_{\alpha*}V(-n-2)\bk_{-2} \to 0.
\]
\end{lem}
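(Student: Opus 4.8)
The plan is to transpose the proof of Lemma~\ref{lem:new-costd} to the standard side. Two structural facts are the starting point. First, since $n \le -2$ is antidominant with $\delta_n = 1$, equation~\eqref{eqn:excoh-dom} gives $\hDelta_n \cong \cO_\tcN(n)\la 1\ra$. Second, applying Proposition~\ref{prop:braid}\eqref{it:braid-ses} with $\lambda = -n-2$ (dominant, since $n \le -2$) and evaluating $\Psi_\alpha$ by Lemma~\ref{lem:sl2-psi} presents $\hDelta_{-n-2}$ by a distinguished triangle whose $\la -1\ra$-shift reads
\[
\cO_\tcN(n+2)\la -1\ra \to \hDelta_{-n-2}\la -1\ra \to i_{\alpha*}\bigl(V(-n-3)\bk_{-1}\bigr)\la 1\ra[-1] \to .
\]
For $n = -2$ this degenerates, and one argues directly from~\eqref{eqn:nalpha-pres} using $\hDelta_0 = \cO_\tcN$.

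The first step is to produce a nonzero morphism $f\colon \hDelta_{-n-2}\la -1\ra \to \hDelta_n$ in $\Db\Coh^{G \times \Gm}(\tcN)$; as both objects lie in the exotic heart, this is the same as a morphism there. I would apply $\Hom({-},\hDelta_n)$ to the triangle above: the middle term $\Hom(\cO_\tcN(n+2)\la -1\ra,\hDelta_n) = \Hom(\cO_\tcN(n+2),\cO_\tcN(n)\la 2\ra)$ is one-dimensional by Lemma~\ref{lem:hom-line} (the weight difference is $\alpha$, with $(2\rho^\vee,\alpha) = 2$), while the two neighbouring terms are $\Ext^1$- and $\Ext^2$-groups of the type occurring in~\eqref{eqn:hnc2}; they vanish because, after passing through the induction equivalence~\eqref{eqn:ind-equiv} and resolving the skyscraper $i_{\alpha*}({-})$ by a Koszul complex, they reduce to $\RHom_{\Rep(B)}$-groups killed by Lemma~\ref{lem:v1} (i.e.\ by $R\ind_B^G\bk_{-1} = 0$) together with the vanishing of positive self-extensions of dual Weyl modules. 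Thus $\Hom(\hDelta_{-n-2}\la -1\ra,\hDelta_n)$ is one-dimensional and restriction along $\cO_\tcN(n+2)\la -1\ra \to \hDelta_{-n-2}\la -1\ra$ identifies it with $\Hom(\cO_\tcN(n+2)\la -1\ra,\hDelta_n)$; in particular $\cH^0(f)\colon \cO_\tcN(n+2)\la -1\ra \to \hDelta_n$ is nonzero, hence injective, and by~\eqref{eqn:nalpha-pres} (tensored with $\cO_\tcN(n)$ and shifted) it is up to scalar the canonical embedding, with sheaf cokernel of the form $i_{\alpha*}(\bk_n)$ up to a grading shift.

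The second step is to compute the cone $C$ of $f$. The long exact sequence of cohomology sheaves of $\hDelta_{-n-2}\la -1\ra \xrightarrow{f} \hDelta_n \to C \to$, using $\cH^{>0}(\hDelta_n) = 0$ and the injectivity of $\cH^0(f)$, gives $\cH^i(C) = 0$ for $i \ne 0$ and a short exact sequence of coherent sheaves
\[
0 \to i_{\alpha*}(\bk_n)\la m\ra \to \cH^0(C) \to i_{\alpha*}\bigl(V(-n-3)\bk_{-1}\bigr)\la m\ra \to 0
\]
for a common grading shift $m$; hence $C \cong i_{\alpha*}(E)\la m\ra$ with $E$ an extension of $V(-n-3)\bk_{-1}$ by $\bk_n$ in $\Rep(B)$. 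Twisting the short exact sequence $0 \to \bk_{n+2} \to V(-n-2) \to V(-n-3)\bk_1 \to 0$ of Lemma~\ref{lem:brep} by $\bk_{-2}$ exhibits $V(-n-2)\bk_{-2}$ as precisely such an extension, and $E$ is identified with it once one checks $E$ is nonsplit (using, say, that $\Ext^1_{\Rep(B)}(V(-n-3)\bk_{-1},\bk_n)$ is one-dimensional and that a split $C$ would contradict the indecomposability of $\hDelta_{-n-2}$ via the octahedron relating the two triangles). Then $C \cong i_{\alpha*}V(-n-2)\bk_{-2}\la m\ra$ lies in $\ExCoh^{G \times \Gm}(\tcN)$ by Lemma~\ref{lem:v2}, and since all three terms of $\hDelta_{-n-2}\la -1\ra \xrightarrow{f} \hDelta_n \to C \to$ then lie in the exotic heart, that triangle is a short exact sequence there, as claimed.

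The step I expect to be the main obstacle is the identification of the cone in the second step. Because $\hDelta_{-n-2}$ is a genuine two-term complex whenever $-n-2 > 0$, $f$ is not a morphism of coherent sheaves, so the shape of its cone---and even that it is a sheaf, let alone an object of the exotic heart---must be teased out of the long exact cohomology-sheaf sequence, and to finish one must match not merely the composition factors but the precise $B$-module-and-grading structure of $\cH^0(C)$ with that of $i_{\alpha*}V(-n-2)\bk_{-2}$, i.e.\ rule out the split extension. (Alternatively, given a Verdier or Serre--Grothendieck duality on $\Db\Coh^{G \times \Gm}(\tcN)$ exchanging $\hDelta_\lambda$ with $\hnabla_\lambda$ and intertwining $i_{\alpha*}H^0(m)\bk_j$ with $i_{\alpha*}V(m)\bk_{j'}$, the lemma would follow formally by dualizing Lemma~\ref{lem:new-costd}; but no such duality is set up here, so the direct route above seems preferable.)
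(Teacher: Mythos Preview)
Your approach is correct and is precisely the transposition of the proof of Lemma~\ref{lem:new-costd} that the paper intends. The paper omits the argument entirely, saying only that it is ``quite similar'' to that lemma, and gives as its sole hint the distinguished triangle of $B$-representations in grading degree $-1$:
\[
V(-n-3)\bk_{-1}[-1] \to \bk_n \to V(-n-2)\bk_{-2} \to,
\]
obtained from Lemma~\ref{lem:brep} by tensoring with $\bk_{-2}$. This is exactly the identification you carry out in your second step when you assemble $\cH^0(C)$ from the long exact cohomology sequence.

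One small simplification: you worry about ruling out the split extension for $E$, and propose an indecomposability/octahedron argument. The paper's hint points to a more direct route. The map $f$ you construct restricts (via the triangle presenting $\hDelta_{-n-2}\la -1\ra$) to the canonical inclusion $\cO_\tcN(n+2)\la -1\ra \hookrightarrow \cO_\tcN(n)\la 1\ra$ coming from~\eqref{eqn:nalpha-pres}; tracing through, the degree $-1$ component of the triangle $\hDelta_{-n-2}\la -1\ra \to \hDelta_n \to C \to$ is literally the triangle displayed above, which already identifies $E$ with $V(-n-2)\bk_{-2}$ as a $B$-module. So the extension class is pinned down without a separate nonsplitness argument.

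Your parenthetical about duality is also apt: there is no Serre--Grothendieck duality set up for $\tcN$ in the paper that would let one simply dualize Lemma~\ref{lem:new-costd}, so the direct argument is indeed what is needed.
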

We omit the proof of this lemma, which is quite similar to Lemma~\ref{lem:new-costd}.  Note that in grading degree $-1$, we have the distinguished triangle of $B$-representations
\[
V(-n-3)\bk_{-1}[-1] \to \bk_n \to V(-n-2)\bk_{-2} \to
\]
obtained from Lemma~\ref{lem:brep} by tensoring with $\bk_{-2}$.

\begin{lem}\label{lem:v11}
For any $V \in \Rep(G)$, $i_{\alpha*}(V\bk_{-1})[-1]$ lies in $\ExCoh^{G \times \Gm}(\tcN)$.
\end{lem}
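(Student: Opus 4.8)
The plan is to run the argument used for Lemma~\ref{lem:v2}, but with the roles of the standard and costandard sides interchanged. Write $\cF = i_{\alpha*}(V\bk_{-1})[-1]$. As in the proof of Lemma~\ref{lem:v2} (and the same criterion is valid for an arbitrary object of $\Db\Coh^{G \times \Gm}(\tcN)$, not just for a sheaf), $\cF$ lies in $\ExCoh^{G \times \Gm}(\tcN)$ provided
\[
\uHom(\hDelta_m[-k],\cF) = \uHom(\cF,\hnabla_m[k]) = 0 \qquad\text{for all $m \in \bX$ and all $k < 0$.}
\]

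First I would dispose of the standard side, which is automatic. Since $\cF = i_{\alpha*}(V\bk_{-1})[-1]$, we have $\uHom(\hDelta_m[-k],\cF) = \uExt^{k-1}(\hDelta_m, i_{\alpha*}(V\bk_{-1}))$, and $k < 0$ forces the exponent to be $\le -2$. By Proposition~\ref{prop:sl2-std} the object $\hDelta_m$ has coherent cohomology concentrated in degrees $0$ and $1$, while $i_{\alpha*}(V\bk_{-1})$ is an honest sheaf; applying $\uRHom(-,i_{\alpha*}(V\bk_{-1}))$ to the triangle $\cH^0(\hDelta_m) \to \hDelta_m \to \cH^1(\hDelta_m)[-1] \to$ and using that negative $\uExt$'s between coherent sheaves vanish, one gets $\uExt^{\le -2}(\hDelta_m, i_{\alpha*}(V\bk_{-1})) = 0$. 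On the costandard side, $\uHom(\cF,\hnabla_m[k]) = \uExt^{k+1}(i_{\alpha*}(V\bk_{-1}),\hnabla_m)$, which vanishes for $k < -1$ because $\hnabla_m$ is a coherent sheaf (Theorem~\ref{thm:costd-coh}); so the only remaining point is
\[
\uHom(i_{\alpha*}(V\bk_{-1}),\hnabla_m) = 0 \qquad\text{for all $m \in \bX$ and all $V \in \Rep(G)$.}
\]

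To prove this last vanishing I would argue that every morphism $i_{\alpha*}(V\bk_{-1}) \to \hnabla_m$ factors through the largest subsheaf of $\hnabla_m$ supported on $\tcN_\alpha$, i.e.\ through the torsion submodule of $\hnabla_m$ under the equivalence~\eqref{eqn:ind-equiv}, and then compute that submodule. For $m \ge 0$ we have $\hnabla_m \cong \cO_\tcN(m)$, which is locally free, so the $\Hom$ is zero. For $m = -1$, the explicit $B$-module of Proposition~\ref{prop:sl2-costd} together with indecomposability of costandard objects forces $\hnabla_{-1}$ to be cyclic, hence $\cong \cO_\tcN(-1)\la 1\ra$, again torsion-free. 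For $m \le -2$, Lemma~\ref{lem:new-costd} identifies the torsion submodule with $i_{\alpha*}(H^0(-m-2)\bk_{-2})\la 1\ra$; then, using that $i_{\alpha*}$ is fully faithful on coherent sheaves and cancelling one $\bk_{-1}$,
\[
\uHom\bigl(i_{\alpha*}(V\bk_{-1}),\, i_{\alpha*}(H^0(-m-2)\bk_{-2})\la 1\ra\bigr) \cong \Hom_{\Rep(B)}\bigl(V,\, H^0(-m-2)\bk_{-1}\bigr) = 0
\]
by Lemma~\ref{lem:v1}.

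The only place where anything nontrivial happens is this identification of the torsion submodule of $\hnabla_m$: the shift $\bk_{-2}$ (rather than $\bk_{-1}$) in Lemma~\ref{lem:new-costd} is exactly what lets Lemma~\ref{lem:v1} bite after one cancellation. The mildly annoying case is $m=-1$, where Lemma~\ref{lem:new-costd} as stated does not apply and one must check torsion-freeness of $\hnabla_{-1}$ directly --- reassuringly, the would-be formula $i_{\alpha*}H^0(-1)\bk_{-2}$ is zero, consistently with $\hnabla_{-1}\cong\cO_\tcN(-1)\la 1\ra$.
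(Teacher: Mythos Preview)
Your proof is correct and follows essentially the same approach as the paper: the same criterion, the same easy dispatches via the natural $t$-structure and torsion-freeness of $\hnabla_m$ for $m \ge -1$, and for $m \le -2$ the same use of Lemma~\ref{lem:new-costd} together with Lemma~\ref{lem:v1}. The only cosmetic difference is that the paper phrases the $m \le -2$ step as a three-term exact sequence in $\uHom$ (with the last term vanishing because $-m-2 \ge 0$), while you phrase it as factoring through the torsion submodule; these are the same argument.
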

\begin{proof}
As in Lemma~\ref{lem:v2}, it suffices to show that 
\[
\uHom(\hDelta_m[-k], i_{\alpha*} V\bk_{-1}[-1]) = 
\uHom(i_{\alpha*} V\bk_{-1}[-1], \hnabla_m[k]) = 0
\qquad\text{for all $k < 0$.}
\]
The vanishing of the former is easily seen in terms of the natural $t$-structure on $\Db\Coh^{G \times \Gm}(\tcN)$.  The vanishing of the latter is also clear when $k \le -2$.  If $k = -1$ and $m \ge -1$, then this $\Hom$-group vanishes because $\hnabla_m$ is a torsion-free coherent sheaf, while $i_{\alpha*} V\bk_{-1}$ is torsion.  Finally, if $k = -1$ and $m \le -2$, we use Lemma~\ref{lem:new-costd}.  Consider the exact sequence
\begin{multline}\label{eqn:v11-les}
0 \to \uHom(i_{\alpha*} V\bk_{-1}, i_{\alpha*}H^0(-m-2)\bk_{-2})
\to \uHom(i_{\alpha*} V\bk_{-1}, \hnabla_m) \\
\to \uHom(i_{\alpha*} V\bk_{-1}, \hnabla_{-m-2}\la 1\ra) \to \cdots
\end{multline}
We have already seen that the last term vanishes.  The first term is isomorphic to
\[
\uHom_{\Rep(B \times \Gm)}(V\bk_{-1}, H^0(-m-2)\bk_{-2}) \cong
\uHom_{\Rep(B \times \Gm)}(V, H^0(-m-2)\bk_{-1}),
\]
and this vanishes by Lemma~\ref{lem:v1}.  So the middle term in~\eqref{eqn:v11-les} vanishes as well, and $i_{\alpha*} V\bk_{-1}\la 2\ra[-1]$ lies in $\ExCoh^{G \times \Gm}(\tcN)$, as desired.
\end{proof}

\begin{lem}\label{lem:o1-twist}
We have
\begin{align*}
\pi_*(\hDelta_n \otimes \cO_\tcN(1)) &\cong
\begin{cases}
\pi_*(\hDelta_{n+1}) & \text{if $n \le -2$,} \\
\pi_*(\hDelta_{-n-1})\la 1\ra & \text{if $n \ge -1$.}
\end{cases}
\\
\pi_*(\hnabla_n \otimes \cO_\tcN(1)) &\cong
\begin{cases}
\pi_*(\hnabla_{-n-1})\la 1\ra & \text{if $n < 0$,} \\
\pi_*(\hnabla_{n+1}) & \text{if $n \ge 0$.}
\end{cases}
\end{align*}
\end{lem}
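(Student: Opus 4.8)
The plan is to reduce the statement to a handful of elementary observations, the decisive one being that after twisting by $\cO_\tcN(1)$ the ``error terms'' occurring in the short exact sequences of Lemmas~\ref{lem:new-costd} and~\ref{lem:new-std} are annihilated by $\pi_*$. Indeed, by the projection formula and the fact that $i_\alpha^*\cO_\tcN(1)$ corresponds to $\bk_1$ under $\Coh^{B \times \Gm}(\pt) \cong \Coh^{G \times \Gm}(\tcN_\alpha)$, an object $i_{\alpha*}(W \otimes \bk_{-2})$ (the shape of those error terms, up to a grading shift) satisfies $i_{\alpha*}(W \otimes \bk_{-2}) \otimes \cO_\tcN(1) \cong i_{\alpha*}(W \otimes \bk_{-1})$. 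Moreover $\pi \circ i_\alpha$ equals $\iota \circ \pi_\alpha$, where $\iota \colon \pt \hookrightarrow \cN$ is the inclusion of the origin (recall $\tcN_\alpha$ is the zero section $G/B \subset \tcN$ and $\pi$ collapses it to $0$) and $\pi_\alpha$ induces $R\ind_B^G$ (Section~\ref{sect:sl2}); hence $\pi_* i_{\alpha*}(W \otimes \bk_{-1}) \cong R\ind_B^G(W \otimes \bk_{-1})$, which vanishes whenever $W$ is (the restriction of) a $G$-module, by Lemma~\ref{lem:v1}.

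First I would dispose of the cases in which the relevant object is a line bundle. By~\eqref{eqn:excoh-dom} and Proposition~\ref{prop:sl2-costd}, $\hnabla_n \cong \cO_\tcN(n)$ for $n \ge 0$ and $\hnabla_{-1} \cong \cO_\tcN(-1)\la 1\ra$; by~\eqref{eqn:excoh-dom} and Proposition~\ref{prop:sl2-std}, $\hDelta_0 \cong \cO_\tcN$ and $\hDelta_n \cong \cO_\tcN(n)\la 1\ra$ for $n < 0$. In each of these cases the twist by $\cO_\tcN(1)$ is again a line bundle, identified directly: for $\hnabla$ it is $\cO_\tcN(n+1) = \hnabla_{n+1}$ when $n \ge 0$ and $\cO_\tcN\la 1\ra = \hnabla_0\la 1\ra$ when $n = -1$; for $\hDelta$ it is $\cO_\tcN(n+1)\la 1\ra = \hDelta_{n+1}$ when $n \le -2$ and $\cO_\tcN\la 1\ra = \hDelta_0\la 1\ra$ when $n = -1$. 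Applying $\pi_*$ gives the asserted formulas in all these cases.

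For the remaining costandard case $n \le -2$, I would tensor the short exact sequence of Lemma~\ref{lem:new-costd} by $\cO_\tcN(1)$ to obtain a distinguished triangle
\[
i_{\alpha*}(H^0(-n-2) \otimes \bk_{-1})\la 1\ra \to \hnabla_n \otimes \cO_\tcN(1) \to (\hnabla_{-n-2} \otimes \cO_\tcN(1))\la 1\ra \to,
\]
and then apply $\pi_*$: the leftmost term dies by the observation above, so $\pi_*(\hnabla_n \otimes \cO_\tcN(1)) \cong \pi_*(\hnabla_{-n-2} \otimes \cO_\tcN(1))\la 1\ra$, and since $-n-2 \ge 0$ the line-bundle case already handled identifies this with $\pi_*(\hnabla_{-n-1})\la 1\ra$. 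For the remaining standard case $n \ge 0$, I would run the same argument using Lemma~\ref{lem:new-std} \emph{applied with parameter $-n-2 \le -2$}: tensoring its sequence $0 \to \hDelta_n\la -1\ra \to \hDelta_{-n-2} \to i_{\alpha*}(V(n) \otimes \bk_{-2}) \to 0$ by $\cO_\tcN(1)$, using $\hDelta_{-n-2} \otimes \cO_\tcN(1) \cong \cO_\tcN(-n-1)\la 1\ra \cong \hDelta_{-n-1}$, and shifting by $\la 1\ra$, one gets a distinguished triangle
\[
\hDelta_n \otimes \cO_\tcN(1) \to \hDelta_{-n-1}\la 1\ra \to i_{\alpha*}(V(n) \otimes \bk_{-1})\la 1\ra \to,
\]
whose last term is killed by $\pi_*$; hence $\pi_*(\hDelta_n \otimes \cO_\tcN(1)) \cong \pi_*(\hDelta_{-n-1})\la 1\ra$, as desired.

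I do not expect a genuine obstacle. The only steps requiring attention are (a) the identification $\pi_* i_{\alpha*} \cong R\ind_B^G$ ``at the origin'' of $\cN$, which for $\SL_2$ is immediate from the description of $\tcN_\alpha$ and $\pi|_{\tcN_\alpha}$ just recalled, and (b) keeping track of the grading shifts that appear when a module ``concentrated in grading degree $d$'' is rewritten in the form $i_{\alpha*}(W)\la -d\ra$ — but such shifts are irrelevant to the vanishing, since $R\ind_B^G(W \otimes \bk_{-1})\la d\ra = 0$ for every $d$. The entire content of the lemma is the single vanishing $R\ind_B^G \bk_{-1} = 0$, propagated through the distinguished triangles supplied by Lemmas~\ref{lem:new-costd} and~\ref{lem:new-std}.
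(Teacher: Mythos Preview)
Your proposal is correct and follows essentially the same approach as the paper: handle the line-bundle cases directly, then for the remaining cases tensor the short exact sequences of Lemmas~\ref{lem:new-costd} and~\ref{lem:new-std} by $\cO_\tcN(1)$ and use that $\pi_*$ kills the resulting terms of the form $i_{\alpha*}(W\bk_{-1})$ since $R\ind_B^G\bk_{-1}=0$. The paper's proof is terser (it treats only the costandard case explicitly and says ``the proof for standard objects is similar''), whereas you spell out the standard case via Lemma~\ref{lem:new-std} applied at $-n-2$; one small slip is that $\pi_* i_{\alpha*}(W\bk_{-1})$ should really be $\iota_* R\ind_B^G(W\bk_{-1})$ rather than $R\ind_B^G(W\bk_{-1})$, but this is immaterial since the object vanishes.
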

\begin{proof}
If $n \ge 0$, then it is clear that $\hnabla_n \otimes \cO_\tcN(1) \cong \hnabla_{n+1}$.  Similarly, for $n = -1$, we have $\hnabla_{-1} \otimes \cO_\tcN(1) \cong \hnabla_0\la 1\ra$.  If $n \le -2$, we use Lemma~\ref{lem:new-costd} together with the fact that $\pi_*(i_{\alpha*} H^0(-n-2)\bk_{-1}) = 0$ to deduce that
\[
\pi_*(\hnabla_n \otimes \cO_\tcN(1)) \cong \pi_*(\hnabla_{-n-2}\la 1\ra \otimes \cO_\tcN(1)) \cong \pi_*(\hnabla_{-n-1})\la 1\ra.
\]
The proof for standard objects is similar.
\end{proof}

\subsection{Socles and morphisms}

In this subsection, we verify Conjecture~\ref{conj:exo-socle} and the conclusions of Theorem~\ref{thm:hom-hnabla} for $G = \SL_2$.

\begin{prop}\label{prop:sl2-braid-ses}
If $n > 0$, we have the following short exact sequences in $\ExCoh^{G \times \Gm}(\tcN)$:
\begin{gather*}
0 \to \hDelta_{-n} \to \hDelta_n\la 1\ra \to \Psi_\alpha(\hDelta_n)[1] \to 0, \\
0 \to \Psi_\alpha(\hnabla_n)[-1] \to \hnabla_n\la -1\ra \to \hnabla_{-n} \to 0.
\end{gather*}
\end{prop}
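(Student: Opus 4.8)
The plan is to use the standard fact that a distinguished triangle all of whose three vertices lie in the heart of a $t$-structure is the same thing as a short exact sequence in that heart. The two sequences we want are obtained from the distinguished triangles of Proposition~\ref{prop:braid}\eqref{it:braid-ses} applied with $\lambda = n$ (legitimate since $s_\alpha n = -n \prec_\bX n$ for $n > 0$). By Theorem~\ref{thm:excoh-defn} and the stability of $\ExCoh^{G \times \Gm}(\tcN)$ under $\la 1\ra$, the four vertices $\hDelta_{-n}$, $\hDelta_n\la 1\ra$, $\hnabla_n\la -1\ra$, $\hnabla_{-n}$ already lie in $\ExCoh^{G \times \Gm}(\tcN)$; so the whole argument reduces to showing that the two remaining vertices $\Psi_\alpha(\hDelta_n)[1]$ and $\Psi_\alpha(\hnabla_n)[-1]$ lie there too.

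For the costandard sequence I would compute $\Psi_\alpha(\hnabla_n)$ directly: since $\hnabla_n \cong \cO_\tcN(n)$ for $n \ge 0$ by~\eqref{eqn:excoh-dom}, Lemma~\ref{lem:sl2-psi} gives $\Psi_\alpha(\hnabla_n) \cong i_{\alpha*}(H^0(n-1)\bk_{-1})\la 1\ra$ for $n > 0$, hence $\Psi_\alpha(\hnabla_n)[-1] \cong i_{\alpha*}(H^0(n-1)\bk_{-1})\la 1\ra[-1]$, which lies in $\ExCoh^{G \times \Gm}(\tcN)$ by Lemma~\ref{lem:v11}. For the standard sequence I would first use Proposition~\ref{prop:braid}\eqref{it:braid-short} to write $\Psi_\alpha(\hDelta_n) \cong \Psi_\alpha(\hDelta_{-n})\la 1\ra[-1]$; then, since $\hDelta_{-n} \cong \cO_\tcN(-n)\la 1\ra$ by~\eqref{eqn:excoh-dom} (note $\delta_{-n} = 1$ for $n > 0$) and $\Psi_\alpha$ commutes with $\la 1\ra$, the $n < 0$ case of Lemma~\ref{lem:sl2-psi} gives $\Psi_\alpha(\hDelta_{-n}) \cong i_{\alpha*}(V(n-1)\bk_{-1})\la 2\ra[-1]$, whence $\Psi_\alpha(\hDelta_n)[1] \cong i_{\alpha*}(V(n-1)\bk_{-1})\la 3\ra[-1]$, which again lies in $\ExCoh^{G \times \Gm}(\tcN)$ by Lemma~\ref{lem:v11} and $\la 1\ra$-stability. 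With both remaining vertices in the heart, the two triangles are the asserted short exact sequences.

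All of the needed ingredients---the $\Psi_\alpha$-computations of Lemma~\ref{lem:sl2-psi}, the identifications in~\eqref{eqn:excoh-dom}, and the membership statement of Lemma~\ref{lem:v11}---are already in place, so I do not anticipate a genuine obstacle here; the only thing requiring care is the bookkeeping of the Tate twists and cohomological shifts, together with the verification that $s_\alpha n = -n \prec_\bX n$ and $\delta_{-n} = 1$ for $n > 0$, so that Proposition~\ref{prop:braid}\eqref{it:braid-short} and~\eqref{eqn:excoh-dom} apply in the form used. If one prefers to avoid Proposition~\ref{prop:braid}\eqref{it:braid-short} in the standard case, one can instead compute $\Psi_\alpha(\hDelta_n)$ directly from the explicit description of $\hDelta_n$ in Proposition~\ref{prop:sl2-std}, or simply check that $\hDelta_{-n} \to \hDelta_n\la 1\ra$ is a monomorphism in the heart (equivalently, that $\Psi_\alpha(\hDelta_n)[1]$ has vanishing $\cH^{-1}$); either route yields the same conclusion.
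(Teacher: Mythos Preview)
Your proposal is correct and follows essentially the same route as the paper's proof: reduce via Proposition~\ref{prop:braid}\eqref{it:braid-ses} to showing that $\Psi_\alpha(\hDelta_n)[1]$ and $\Psi_\alpha(\hnabla_n)[-1]$ lie in the heart, compute each explicitly using~\eqref{eqn:excoh-dom}, Proposition~\ref{prop:braid}\eqref{it:braid-short}, and Lemma~\ref{lem:sl2-psi} to identify them as $i_{\alpha*}V(n-1)\bk_{-1}\la 3\ra[-1]$ and $i_{\alpha*}H^0(n-1)\bk_{-1}\la 1\ra[-1]$, and then invoke Lemma~\ref{lem:v11}. Your bookkeeping of twists and shifts is correct.
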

\begin{proof}
In view of Proposition~\ref{prop:braid}, all we need to do is check that $\Psi_\alpha(\hDelta_n)[1]$ and $\Psi_\alpha(\hnabla_n)[-1]$ lie in $\ExCoh^{G \times \Gm}(\tcN)$.  By Lemma~\ref{lem:sl2-psi}, we have
\begin{gather*}
\Psi_\alpha(\hDelta_n)[1] \cong \Psi_\alpha(\hDelta_{-n})\la 1\ra \cong \Psi_\alpha(\cO_\tcN(-n))\la 2\ra
\cong i_{\alpha*}V(n-1)\bk_{-1}\la 3\ra[-1], \\
\Psi_\alpha(\hnabla_n)[-1] \cong \Psi_\alpha(\cO_\tcN(n))[-1] \cong i_{\alpha*}H^0(n-1)\bk_{-1}\la 1\ra[-1].
\end{gather*}
By Lemma~\ref{lem:v11}, these both lie in $\ExCoh^{G \times \Gm}(\tcN)$.
\end{proof}

\begin{prop}\label{prop:sl2-costd-hom}
Let $m,n \in \Z$.  Then
\[
\dim \Hom(\hnabla_m, \hnabla_n\la k\ra) =
\begin{cases}
1 & \text{if $m \ge_{\bX} n$ and $k = |m| - |n| - \delta_m + \delta_n$,} \\
0 & \text{otherwise.}
\end{cases}
\]
Moreover, any nonzero map $\hnabla_m \to \hnabla_n\la k\ra$ is surjective.
\end{prop}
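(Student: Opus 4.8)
The plan is to prove the unconditional $\SL_2$ case of Theorem~\ref{thm:hom-hnabla} by the same mechanism as the proof of Theorem~\ref{thm:hom-bnabla}: reduce to line bundles, where Lemma~\ref{lem:hom-line} applies, and to the regular locus $\tcN_\reg$. First note that each $\hnabla_p$ is concentrated in $B$-weights congruent to $p$ modulo the root lattice $2\Z$, and that $p\ge_\bX q$ forces $p\equiv q\pmod 2$; hence $\Hom(\hnabla_m,\hnabla_n\la k\ra)=0$ unless $m\equiv n\pmod 2$, which I assume henceforth, writing $\nu=\smm(m)=\smm(n)\in\{0,-1\}$ and recalling $\fE_\nu\cong\cO_\tcN(\nu)\la\delta_\nu\ra$ with $\delta_\nu=-\nu$ (Lemma~\ref{lem:minu-exotic}(1)). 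The group $\Hom$ can only be detected along $\tcN_\reg$: by Corollary~\ref{cor:aj-regular}(2) and Lemma~\ref{lem:minu-exotic}, $\hnabla_p|_{\tcN_\reg}\cong\cO_\tcN(\nu)\la\delta_\nu+|p|-\delta_p\ra|_{\tcN_\reg}$ for every $p\in\Z$ (using $(2\rho^\vee,\dom p)=|p|$), and since $\tcN_\reg$ is a single $(G\times\Gm)$-orbit, $\Hom$ between two such shifted copies of the line bundle $\cO_\tcN(\nu)|_{\tcN_\reg}$ is $\bk$ when the shifts agree and $0$ otherwise; so $\Hom(\hnabla_m|_{\tcN_\reg},\hnabla_n\la k\ra|_{\tcN_\reg})$ is one-dimensional exactly when $k=|m|-|n|-\delta_m+\delta_n$ and vanishes otherwise.

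For the upper bound I would first replace the source by a line bundle: $\hnabla_m=\cO_\tcN(m)$ if $m\ge0$, while for $m<0$ Proposition~\ref{prop:sl2-braid-ses} gives a surjection $\cO_\tcN(|m|)\la-1\ra\twoheadrightarrow\hnabla_m$, so precomposition embeds $\Hom(\hnabla_m,{-})$ into $\Hom(\cO_\tcN(|m|)\la\epsilon_m\ra,{-})$ with $\epsilon_m\in\{0,-1\}$. If $n\ge-1$ the target is a torsion-free line bundle, so $\Hom(\cO_\tcN(|m|)\la\epsilon_m\ra,\hnabla_n\la k\ra)$ injects into its restriction to $\tcN_\reg$ and is computed by Lemma~\ref{lem:hom-line}; if $n\le-2$ one applies $\Hom(\cO_\tcN(|m|)\la\epsilon_m\ra,{-})$ to the short exact sequence of Lemma~\ref{lem:new-costd}, whose $i_{\alpha*}$-term receives no map from the line bundle because the $B$-socle of each $H^0(r)$ is the one-dimensional $\bk_{-r}$ (visible from Lemma~\ref{lem:brep}, and the required socle weight is never hit), leaving the torsion-free term $\hnabla_{-n-2}$. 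In every case this gives $\dim\Hom(\hnabla_m,\hnabla_n\la k\ra)\le1$, vanishing unless $k=|m|-|n|-\delta_m+\delta_n$; the only residual discrepancy is that this bound would also allow a nonzero group when $|m|\ge|n|$ but $m\not\ge_\bX n$ (the ``spurious'' cases, e.g.\ $m=-|n|$). Those are excluded because the relevant generator of the bounding group is then an isomorphism of line bundles, or a non-split extension class, which cannot factor through a surjection off an indecomposable object with nonzero kernel.

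For non-vanishing and surjectivity I would suppose $m\ge_\bX n$ and $k=|m|-|n|-\delta_m+\delta_n$ and construct the (necessarily essentially unique) map by hand. Lemma~\ref{lem:hom-line} supplies a nonzero, generically injective map $\cO_\tcN(|m|)\to\cO_\tcN(|n|)\la|m|-|n|\ra$; composing it with the surjection $\cO_\tcN(|n|)\la\cdot\ra\twoheadrightarrow\hnabla_n\la\cdot\ra$ of Proposition~\ref{prop:sl2-braid-ses} (or the identity, if $n\ge0$), and, when $m<0$, with the surjection onto $\hnabla_m$ on the other side — after checking by a $B$-weight count in internal degree $-1$ that the composite kills the torsion kernel $\Psi_\alpha(\hnabla_{|m|})[-1]$, which is built from $H^0(|m|-1)\bk_{-1}$ by Lemma~\ref{lem:sl2-psi} and lies in $\ExCoh$ by Lemma~\ref{lem:v11} — produces a map $\hnabla_m\to\hnabla_n\la k\ra$ with nonzero restriction to $\tcN_\reg$, hence nonzero. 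Surjectivity then follows as in the proof of Theorem~\ref{thm:hom-hnabla}: the cosocle of $\hnabla_n$ is the simple object $\fE_\nu\la\cdot\ra$ (read off the explicit descriptions in Proposition~\ref{prop:sl2-costd}), so a map is surjective precisely when it is nonzero on that cosocle, i.e.\ nonzero on $\tcN_\reg$.

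The main obstacle, I expect, is the bookkeeping around the $i_{\alpha*}$-pieces — showing they neither receive the relevant maps (in the upper bound for $n\le-2$) nor obstruct them (the torsion kernel in the non-vanishing for $m<0$) — together with separating the genuine order $\ge_\bX$ from the coarser comparison $|m|\ge|n|$ that restriction to $\tcN_\reg$ detects; this is exactly where one must invoke indecomposability of the $\hnabla_p$ and the explicit $B$-module tables rather than a soft argument. Everything else is forced by Lemma~\ref{lem:hom-line}, Corollary~\ref{cor:aj-regular}, and Proposition~\ref{prop:sl2-braid-ses}.
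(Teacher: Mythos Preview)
Your upper-bound argument --- embedding $\Hom(\hnabla_m,-)$ into $\Hom$ from a line bundle via the $\ExCoh$-surjection of Proposition~\ref{prop:sl2-braid-ses}, then using Lemma~\ref{lem:hom-line} and the short exact sequence of Lemma~\ref{lem:new-costd}, with the split-mono trick to kill the spurious case $m=-n$, $n>0$ --- is correct and is a reasonable alternative to the paper's direct computation. The problems lie in the remaining two steps.

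First, your surjectivity argument is circular. You appeal to the cosocle of $\hnabla_n$ as something to be ``read off the explicit descriptions in Proposition~\ref{prop:sl2-costd}'', but that proposition only gives $\hnabla_n$ as a graded $B$-module, not its socle series in $\ExCoh^{G\times\Gm}(\tcN)$; the cosocle statement is exactly Conjecture~\ref{conj:exo-socle} for $\SL_2$, which the paper proves immediately \emph{after} this proposition, \emph{using} this proposition. Second, your non-vanishing argument for $m<0$ has a real gap. The kernel $\Psi_\alpha(\hnabla_{|m|})[-1]$ of the $\ExCoh$-surjection $\cO_\tcN(|m|)\la-1\ra\twoheadrightarrow\hnabla_m$ is not a coherent subsheaf --- as a complex it sits in cohomological degree~$1$ --- so showing that your composite kills it is not a $B$-weight count in degree~$-1$; it amounts to showing that a class in $\Hom_{\Db}(i_{\alpha*}H^0(|m|-1)\bk_{-1}\la1\ra[-1],\hnabla_n\la\cdot\ra)$, i.e.\ an $\Ext^1$ in $\Coh$, vanishes, and you have not done that.

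The paper sidesteps both issues. For the dimension formula it computes directly from Proposition~\ref{prop:sl2-costd}: each $\hnabla_p$ is generated over $\bk[\fu]$ by a single homogeneous component, and is free over $\bk[\fu]$ when $p\ge0$, so $\Hom$ reduces to $\Hom$-groups of $B$-representations. For non-vanishing and surjectivity simultaneously, it writes down (for $m\ge0$, say) the chain
\[
\hnabla_m \twoheadrightarrow \hnabla_{-m}\la1\ra \twoheadrightarrow \hnabla_{m-2}\la2\ra \twoheadrightarrow \hnabla_{-(m-2)}\la3\ra \twoheadrightarrow \cdots \twoheadrightarrow \hnabla_n\la k\ra
\]
of $\ExCoh$-surjections supplied alternately by Proposition~\ref{prop:sl2-braid-ses} and Lemma~\ref{lem:new-costd}. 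The composite is then automatically a nonzero surjection, with no reference to cosocles.
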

\begin{proof}
The determination of $\dim \Hom(\hnabla_m, \hnabla_n\la k\ra)$ can be done by direct computation using Proposition~\ref{prop:sl2-costd}.  Note that each costandard object is generated as a $\bk[\fu]$-module by a single homogeneous component (lying in grading degree $0$ or $-1$).  Moreover, the costandard objects associated to dominant weights are free over $\bk[\fu]$.  With these observations, the problem of computing $\Hom(\hnabla_m, \hnabla_n\la k\ra)$ can be reduced to that of computing $\Hom$-groups between certain $B$-representations.  The latter is quite straightforward.

We now consider the surjectivity claim.  Suppose $m \ge_\bX n$.  If $m \ge 0$, then
\[
m \ge_{\bX} -m \ge_{\bX} m-2 \ge_{\bX} -m+2 \ge_{\bX} \cdots \ge_{\bX} n.
\]
Lemma~\ref{lem:new-costd} and Proposition~\ref{prop:sl2-braid-ses} together give us a collection of surjective maps
\[
\hnabla_m \twoheadrightarrow
\hnabla_{-m}\la 1\ra \twoheadrightarrow 
\hnabla_{m-2}\la 2\ra \twoheadrightarrow 
\hnabla_{-m+2}\la 3\ra \twoheadrightarrow 
\cdots \twoheadrightarrow
\hnabla_n\la m - |n| + \delta_n\ra.
\]
Their composition is a nonzero element of $\Hom(\hnabla_m, \hnabla_n\la m - |n| + \delta_n\ra)$, and it is surjective.  Similar reasoning applies if $m < 0$.
\end{proof}

\begin{prop}
Let $n \in \Z$.
\begin{enumerate}
\item The socle of $\hDelta_n$ is isomorphic to $\fE_{\smm(n)}\la -|n|+ \delta_n\ra$, and the cokernel of $\fE_{\smm(n)}\la -|n|+ \delta_n\ra \hookrightarrow \hDelta_n$ contains no composition factor of the form $\fE_m\la k\ra$ with $m \in \{0,-1\}$.
\item The cosocle of $\hnabla_n$ is isomorphic to $\fE_{\smm(n)}\la |n|- \delta_n\ra$, and the kernel of $\hnabla_n \twoheadrightarrow \fE_{\smm(n)}\la |n|- \delta_n\ra $ contains no composition factor of the form $\fE_m\la k\ra$ with $m \in \{0,-1\}$.
\end{enumerate}
\end{prop}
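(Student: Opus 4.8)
The plan is to prove both parts by induction on $|n|$, treating them in parallel. (One can alternatively deduce part~(1) from part~(2) via the Verdier/Serre--Grothendieck duality on $\Db\Coh^{G \times \Gm}(\tcN)$ alluded to in Section~\ref{ss:aff-grass}, which interchanges $\hDelta_\lambda$ and $\hnabla_\lambda$ and fixes each $\fE_\mu$, but the explicit route below is more in the spirit of this section.) The base cases $n\in\{0,-1\}$ are trivial: there $\hDelta_n=\hnabla_n=\fE_n$ is simple by Lemma~\ref{lem:minu-exotic}, and $\fE_0\cong\cO_\tcN$, $\fE_{-1}\cong\cO_\tcN(-1)\la1\ra$. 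Throughout I use that $\smm(n)\in\{0,-1\}$, that $|\smm(n)|=\delta_{\smm(n)}$, and that $\smm(n)\equiv n$ modulo the root lattice, so that in particular $\fE_{\smm(n)}\cong\hnabla_{\smm(n)}\cong\hDelta_{\smm(n)}$.

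The two clauses ``no composition factor $\fE_m\la k\ra$ with $m\in\{0,-1\}$'' become formal once the socle/cosocle is identified: by Corollary~\ref{cor:aj-regular}(2), $\hnabla_n$ has a \emph{unique} composition factor of this shape, namely $\fE_{\smm(n)}\la|n|-\delta_n\ra$; and applying the exact functor $\pi_*$ (which by Proposition~\ref{prop:mu-exact} sends $\hDelta_n$ to $\bDelta_{|n|}\la-\delta^*_n\ra$ and each $\fE_m$ with $m\le 0$ to $\fIC_{-m}$, the rest to $0$), together with Proposition~\ref{prop:pcoh-socle}(1), gives the corresponding statement for $\hDelta_n$, with unique such factor $\fE_{\smm(n)}\la-|n|+\delta_n\ra$. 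So it remains to identify the socle of $\hDelta_n$ and the cosocle of $\hnabla_n$ and to show each is simple.

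That $\hnabla_n$ surjects onto $\fE_{\smm(n)}\la|n|-\delta_n\ra=\hnabla_{\smm(n)}\la|n|-\delta_n\ra$ is immediate from Proposition~\ref{prop:sl2-costd-hom}: the relevant $\Hom$-space is one-dimensional (the order condition holds because $\smm(n)$ is $\le_\bX$-minimal in its root-lattice coset, and the grading works out since $|\smm(n)|=\delta_{\smm(n)}$), and every nonzero map there is surjective. Dually, $\fE_{\smm(n)}\la-|n|+\delta_n\ra$ embeds in $\hDelta_n$; this one reads off the inductive step directly, e.g.\ for $n\ge 2$ by chaining the inclusions $\hDelta_{n-2}\la-2\ra\hookrightarrow\hDelta_{-n}\la-1\ra\hookrightarrow\hDelta_n$ coming from Lemma~\ref{lem:new-std} and Proposition~\ref{prop:sl2-braid-ses}, and for $n\le-2$ from the inclusion $\hDelta_{-n-2}\la-1\ra\hookrightarrow\hDelta_n$ of Lemma~\ref{lem:new-std}; the same two results give surjections $\hnabla_n\twoheadrightarrow\hnabla_{-n}\la1\ra\twoheadrightarrow\hnabla_{n-2}\la2\ra$ (for $n\ge 2$) and $\hnabla_n\twoheadrightarrow\hnabla_{-n-2}\la1\ra$ (for $n\le-2$), so the inductive hypothesis places the minuscule factor at the bottom of $\hDelta_n$ and the top of $\hnabla_n$.

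The crux is showing these socles/cosocles are \emph{simple}, equivalently that $\Hom(\hnabla_n,\fE_m\la k\ra)=0$ and $\Hom(\fE_m\la k\ra,\hDelta_n)=0$ whenever $m\notin\{0,-1\}$. By Lemma~\ref{lem:minu-exotic} such an $\fE_m$ restricts to $0$ on $\tcN_\reg$, hence is supported on the zero section $\tcN_\alpha$ (compatibility with the support filtration, Theorem~\ref{thm:noncomm}), so $\fE_m\la k\ra\cong i_{\alpha*}\cG$ for a simple $\cG\in\Db\Coh^{G\times\Gm}(\tcN_\alpha)\cong\Db\Rep(B\times\Gm)$ --- that is, $\cG$ is some $\bk_\nu\la j\ra$ placed in cohomological degree $0$ or $1$, these being exactly the possibilities allowed by Lemmas~\ref{lem:v2} and~\ref{lem:v11}. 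By adjunction $\Hom(\hnabla_n,i_{\alpha*}\cG)\cong\Hom_{\Db\Rep(B\times\Gm)}(Li_\alpha^*\hnabla_n,\cG)$, and since $\hnabla_n$ is a coherent sheaf (Theorem~\ref{thm:costd-coh}) this vanishes for degree reasons when $\cG$ sits in degree $1$, and reduces to $\Hom_{\Rep(B)}$ out of $i_\alpha^*\hnabla_n$ --- a line bundle $\bk_n$ or the module $H^0(-n-1)\bk_{-1}$, by Propositions~\ref{prop:sl2-costd}--\ref{prop:sl2-std} --- when $\cG$ sits in degree $0$; one then checks, using Lemma~\ref{lem:brep} and the classification of the zero-section simples, that the weight $\nu$ carried by a non-minuscule simple is never the one forced by this $\Hom$-space. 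The standard statement is handled symmetrically using $i_\alpha^!$ and the fact (Proposition~\ref{prop:sl2-std}) that $\hDelta_n$ is concentrated in cohomological degrees $0,1$ with $\cH^0(\hDelta_n)$ a line bundle. The main obstacle is precisely this last step: one genuinely needs a usable description of the simple exotic sheaves supported on the zero section (the degree-$0$ ones as composition factors of the $i_{\alpha*}V\bk_{-2}$, the degree-$1$ ones of the $i_{\alpha*}V\bk_{-1}[-1]$) in order to exclude them from the socle/cosocle; everything else is bookkeeping with short exact sequences already in hand.
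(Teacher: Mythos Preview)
Your setup agrees with the paper: the surjection $\hnabla_n\twoheadrightarrow\fE_{\smm(n)}\la|n|-\delta_n\ra$ comes straight from Proposition~\ref{prop:sl2-costd-hom}, and the uniqueness of the antiminuscule composition factor is exactly Corollary~\ref{cor:aj-regular}. Where you diverge is at the ``crux'' --- showing the cosocle is simple --- and there your route is both harder and, in the paper's logical order, premature.

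You propose to exclude a non-antiminuscule simple quotient $\fE_m\la k\ra$ by writing $\fE_m\cong i_{\alpha*}\cG$ and computing $\Hom(\hnabla_n,i_{\alpha*}\cG)$ via adjunction. As you yourself acknowledge, this requires the explicit classification of the simple exotic sheaves supported on the zero section, which in the paper is established only \emph{after} this proposition. So as written your argument depends on results not yet available; and even granting them, the case analysis you sketch (sorting out which $B$-weights can occur in $Li_\alpha^*\hnabla_n$ versus in the relevant $\cG$) is real work.

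The paper sidesteps all of this with a single stroke that uses only Proposition~\ref{prop:sl2-costd-hom}. Suppose $\hnabla_n$ had a simple quotient $\fE_m\la k\ra$ with $m\notin\{0,-1\}$. Composing with the canonical inclusion $\fE_m\la k\ra\hookrightarrow\hnabla_m\la k\ra$ yields a nonzero map $\hnabla_n\to\hnabla_m\la k\ra$ whose image lies in the proper subobject $\fE_m\la k\ra$ (proper because $m$ is not antiminuscule), hence is not surjective --- contradicting the surjectivity clause of Proposition~\ref{prop:sl2-costd-hom}. No knowledge of what $\fE_m$ looks like is needed, only that it embeds in $\hnabla_m$. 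The dual argument handles $\hDelta_n$. Your inductive chaining of the short exact sequences from Lemmas~\ref{lem:new-std},~\ref{lem:new-costd} and Proposition~\ref{prop:sl2-braid-ses} is a perfectly good way to exhibit the embedding/surjection explicitly, but it is not needed once Proposition~\ref{prop:sl2-costd-hom} is in hand.
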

\begin{proof}
We will treat only the costandard case.  Proposition~\ref{prop:sl2-costd-hom} tells us that there is a surjective map $\hnabla_n \twoheadrightarrow \hnabla_{\smm(n)}\la |n|- \delta_n\ra \cong \fE_{\smm(n)}\la |n|- \delta_n\ra$.  Corollary~\ref{cor:aj-regular} already tells us that $\hnabla_n$ can have no other antiminuscule composition factor.  To finish the proof, we must show that $\hnabla_n$ has no simple quotient $\fE_m\la k\ra$ with $m \notin \{0,-1\}$.  If it did, then the composition $\hnabla_n \to \fE_m\la k\ra \to \hnabla_m\la k\ra$ would be a nonzero, nonsurjective map, contradicting Proposition~\ref{prop:sl2-costd-hom}.
\end{proof}

\subsection{Simple and tilting exotic sheaves}

We are now ready to determine the simple exotic sheaves $\fE_n$ and the indecomposable tilting objects $\hfT_n$ for all $n \in \Z$.

\begin{prop}[Simple exotic sheaves]
We have
\[
\fE_n \cong
\begin{cases}
i_{\alpha*} L(-n-2)\bk_{-2}\la 1\ra & \text{if $n \le -2$,} \\
\cO_\tcN(-1)\la 1\ra & \text{if $n = -1$,} \\
\cO_\tcN & \text{if $n = 0$,} \\
i_{\alpha*} L(n-1)\bk_{-1}\la 2\ra[-1] & \text{if $n \ge 1$.}
\end{cases}
\]
\end{prop}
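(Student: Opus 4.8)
The cases $n=0$ and $n=-1$ are immediate: $0$ and $-1$ are precisely the two elements of $-\bXmin$ for $\SL_2$, and $\delta_0=0$, $\delta_{-1}=1$, so \eqref{eqn:excoh-dom} gives $\hDelta_n=\hnabla_n=\cO_\tcN(n)\la\delta_n\ra$; this object is its own image, hence simple, and $\fE_0\cong\cO_\tcN$, $\fE_{-1}\cong\cO_\tcN(-1)\la1\ra$ (cf.\ Lemma~\ref{lem:minu-exotic}(1)). For the remaining $n$ the plan is to use that, in the graded quasihereditary category $\ExCoh^{G\times\Gm}(\tcN)$, the costandard object $\hnabla_n$ has \emph{simple} socle, isomorphic to $\fE_n$. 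So it suffices to exhibit the claimed object as a simple subobject of $\hnabla_n$.

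To produce a subobject I would use the short exact sequences already at hand. For $n\le-2$, Lemma~\ref{lem:new-costd} presents $i_{\alpha*}H^0(-n-2)\bk_{-2}\la1\ra$ as a subobject of $\hnabla_n$; for $n\ge1$, Proposition~\ref{prop:sl2-braid-ses} together with the computation $\Psi_\alpha(\cO_\tcN(n))\cong i_{\alpha*}H^0(n-1)\bk_{-1}\la1\ra$ from Lemma~\ref{lem:sl2-psi} (and $\hnabla_n\cong\cO_\tcN(n)$) presents $i_{\alpha*}H^0(n-1)\bk_{-1}\la2\ra[-1]$ as a subobject of $\hnabla_n$. Since the socle of a subobject lies in the socle of the ambient object, and the latter is the simple object $\fE_n$, this subobject has socle exactly $\fE_n$. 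Now apply the exact functor $i_{\alpha*}(-)\bk_{-2}\la1\ra$ (resp.\ $i_{\alpha*}(-)\bk_{-1}\la2\ra[-1]$) to the socle inclusion $L(m)\hookrightarrow H^0(m)$ of $G$-modules, with $m=-n-2$ (resp.\ $m=n-1$); by Lemma~\ref{lem:v2} (resp.\ Lemma~\ref{lem:v11}) all terms lie in $\ExCoh^{G\times\Gm}(\tcN)$, so $i_{\alpha*}L(-n-2)\bk_{-2}\la1\ra$ (resp.\ $i_{\alpha*}L(n-1)\bk_{-1}\la2\ra[-1]$) is a subobject of the subobject above. Provided this last object is \emph{simple}, it is contained in the simple socle $\fE_n$ and therefore equals it, which is exactly the assertion, grading shifts included.

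The crux, and the step I expect to be the main obstacle, is thus the simplicity in $\ExCoh^{G\times\Gm}(\tcN)$ of $i_{\alpha*}L(m)\bk_{-2}\la1\ra$ and of $i_{\alpha*}L(m)\bk_{-1}\la2\ra[-1]$ for $m\ge0$. One useful input is a $\pi_*$-analysis: using $R\ind_B^G\bk_{-2}\cong\bk[-1]$ and $R\ind_B^G\bk_{-1}=0$ one finds that $\pi_*(i_{\alpha*}L(m)\bk_{-2}\la1\ra)$ is the coherent intersection cohomology sheaf $\fIC(C_0,L(m)\la1\ra)$ of the irreducible bundle $L(m)\la1\ra$ on the zero orbit---a \emph{simple} perverse-coherent sheaf---while $\pi_*(i_{\alpha*}L(m)\bk_{-1}\la2\ra[-1])=0$. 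By $t$-exactness of $\pi_*$ (Proposition~\ref{prop:mu-exact}) and the formula $\pi_*\fE_\mu\cong\fIC_{w_0\mu}$ or $0$, this forces every composition factor $\fE_\mu\la j\ra$ of $i_{\alpha*}L(m)\bk_{-1}\la2\ra[-1]$ to have $\mu\ge1$, and it pins down the unique composition factor of $i_{\alpha*}L(m)\bk_{-2}\la1\ra$ with $\mu\le0$. What remains is to eliminate the ``invisible'' factors $\fE_\mu\la j\ra$ with $\mu\ge1$; here I would argue directly in the style of Lemmas~\ref{lem:v1}, \ref{lem:v2}, \ref{lem:v11}, computing $\uHom$-groups from these $i_{\alpha*}$-objects into the explicit standard and costandard objects of Propositions~\ref{prop:sl2-std} and~\ref{prop:sl2-costd} (obtaining in particular $\uEnd\cong\bk$ and simple, coinciding socle and cosocle), or else invoke compatibility of the exotic $t$-structure with the support filtration (Theorem~\ref{thm:noncomm}) to describe the subcategory of $\ExCoh^{G\times\Gm}(\tcN)$ supported on the zero section $\tcN_\alpha$.

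As a consistency check once simplicity is established: for $n\le-2$ one should have $\pi_*\fE_n\cong\fIC_{-n}$, matching $\fIC(C_0,L(-n-2)\la1\ra)$ in accordance with Propositions~\ref{prop:lv-zero} and~\ref{prop:mu-exact}; for $n\ge1$ one has $\pi_*\fE_n=0$, matching $\pi_*(i_{\alpha*}L(n-1)\bk_{-1}\la2\ra[-1])=0$, so in this range the index is pinned down only by the socle computation of $\hnabla_n$, not by $\pi_*$.
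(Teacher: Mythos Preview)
Your approach is correct and runs parallel to the paper's. Both arguments first place the candidate objects in $\ExCoh^{G\times\Gm}(\tcN)$ via Lemmas~\ref{lem:v2} and~\ref{lem:v11}, and both identify \emph{simplicity} as the remaining step, to be extracted from the explicit descriptions of the (co)standard objects in Propositions~\ref{prop:sl2-std} and~\ref{prop:sl2-costd}. Where the paper simply says ``one can show by induction with respect to $\le_{\bX}$ that they are simple, using our explicit description of the standard and costandard objects,'' you add a clean labeling mechanism: the general quasihereditary fact that $\hnabla_n$ has simple socle $\fE_n$, combined with the subobject produced by Lemma~\ref{lem:new-costd} or Proposition~\ref{prop:sl2-braid-ses} and the inclusion $L(m)\hookrightarrow H^0(m)$, pins down the index automatically once simplicity is known. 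The paper's induction effectively does the same bookkeeping implicitly: knowing $\fE_{n'}$ for $n'<_\bX n$, the filtration of $i_{\alpha*}H^0(m)\bk_{-2}\la1\ra$ (resp.\ $i_{\alpha*}H^0(m)\bk_{-1}\la2\ra[-1]$) by the $i_{\alpha*}L(m')$'s, together with the multiplicity $[\hnabla_n:\fE_n]=1$, forces the top piece to be $\fE_n$. Your $\pi_*$-consistency checks are a nice addition but, as you note, do not by themselves eliminate composition factors with $\mu\ge 1$; the direct $\uHom$ computations you propose against the explicit (co)standards are exactly the calculation the paper has in mind.
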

\begin{proof}
The weights $n = 0$ and $n = -1$ are antiminuscule, so in those cases, $\fE_n$ is given by Lemma~\ref{lem:minu-exotic}.  

By Lemmas~\ref{lem:v2} and~\ref{lem:v11}, respectively, we know that $i_{\alpha*}L(-n-2)\bk_{-2}\la 1\ra$ and $i_{\alpha*} L(n-1)\bk_{-1}\la 2\ra[-1]$ belong to $\ExCoh^{G \times \Gm}(\tcN)$.  One can show by induction with respect to the partial order $\le_{\bX}$ that they are simple, using our explicit description of the standard and costandard objects.  We omit further details.
\end{proof}

\begin{prop}[Tilting exotic sheaves]\label{prop:sl2-tilt}
We have
\[
\hfT_n \cong
\begin{cases}
T(-n-1) \otimes \cO_\tcN(-1)\la 1\ra & \text{if $n < 0$,} \\
T(n) \otimes \cO_\tcN & \text{if $n \ge 0$.}
\end{cases}
\]
\end{prop}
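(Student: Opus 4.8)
For $n \ge 0$ there is nothing to prove: under the identifications $\bX = \Z$ and $\bXp = \Z_{\ge 0}$ this is a special case of Proposition~\ref{prop:tilt-dom}. So from now on assume $n < 0$, put $m = -n-1 \ge 0$, and set $\fT := T(m) \otimes \cO_\tcN(-1)\la 1\ra$; the goal is to show $\hfT_n \cong \fT$. I would establish this in three steps: that $\fT$ is a tilting object of $\ExCoh^{G\times\Gm}(\tcN)$; that it is indecomposable; and that the simple object it corresponds to is $\fE_n$.

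Indecomposability is the quickest step, and is logically independent of the other two. Since $\fT$ is a coherent sheaf, $\Hom_{\Db\Coh^{G\times\Gm}(\tcN)}(\fT,\fT) = \Hom_{\Coh^{G\times\Gm}(\tcN)}(T(m)\otimes\cO_\tcN,\ T(m)\otimes\cO_\tcN)$, the twist by $\cO_\tcN(-1)\la 1\ra$ having cancelled; and since $T(m)\otimes\cO_\tcN$ corresponds under~\eqref{eqn:ind-equiv} to the $\bk[\fu]$-free module $\res^G_B T(m) \otimes \bk[\fu]$, this $\Hom$-space is $\Hom_{\Rep(B)}(T(m),T(m))$. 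But this ring is exactly $\mathrm{End}(\hfT_m) = \mathrm{End}(T(m)\otimes\cO_\tcN)$, which is local because $\hfT_m$ is indecomposable (Proposition~\ref{prop:tilt-dom}). Hence $\mathrm{End}(\fT)$ is local and $\fT$ is indecomposable.

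The tilting property is the substantive step. Since $W\otimes(-)$ is exact on $\Rep(G)$ and $T(m)$ has both a Weyl filtration and a good filtration, it suffices to produce a $\hnabla$-filtration of $\fT$ from the good filtration and a $\hDelta$-filtration from the Weyl filtration. The costandard side is the gentler one, because costandard objects are honest coherent sheaves (Theorem~\ref{thm:costd-coh}, Proposition~\ref{prop:sl2-costd}): tensoring the $B$-module sequence $0 \to H^0(j-1)\bk_{-1} \to H^0(j) \to \bk_j \to 0$ of Lemma~\ref{lem:brep} with $\cO_\tcN(-1)\la 1\ra$ splits off the costandard quotient $\cO_\tcN(j-1)\la 1\ra = \hnabla_{j-1}\la 1\ra$ and reduces matters to $H^0(j-1)\otimes\cO_\tcN(-2)\la 1\ra$; running this recursively, and invoking Lemma~\ref{lem:new-costd} and Proposition~\ref{prop:sl2-braid-ses} to recognise the extensions coming from the residual $\cO_\tcN(-1)$-twists as exotic costandard objects, builds a $\hnabla$-filtration of each $H^0(j)\otimes\cO_\tcN(-1)\la 1\ra$, hence of $\fT$. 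I expect the $\hDelta$-side to be the main obstacle. The point is that the standard objects $\hDelta_\mu$ with $\mu > 0$ are genuine two-term complexes (Proposition~\ref{prop:sl2-std}), so a $\hDelta$-filtration of the \emph{coherent sheaf} $\fT$ must be assembled inside the exotic heart, with intermediate terms that are not coherent sheaves and whose spurious $\cH^1$-torsion only cancels once the whole filtration is in place; carrying this out carefully — from the $B$-filtration of $\res^G_B T(m)$ (whose unique bottom section is $\bk_{-m}$) tensored with $\cO_\tcN(-1)\la 1\ra$, regrouping the resulting line bundles into standard objects with the help of the second sequence of Lemma~\ref{lem:brep}, Lemma~\ref{lem:new-std} and Proposition~\ref{prop:sl2-braid-ses} — is where the real work lies. (Alternatively one could appeal to the general principle that tensoring an exotic $\hDelta$-filtered object by a Weyl module again gives a $\hDelta$-filtered object, but a direct $\SL_2$ argument is more in keeping with this section.)

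Finally, to identify $\fT$ I would read off from the $\hDelta$-filtration just constructed that its bottom section is $\hDelta_n$: indeed $\hDelta_n \cong \cO_\tcN(n)\la 1\ra$, which is the $\cO_\tcN(-1)\la 1\ra$-twist of the lowest weight line $\bk_{-m}$ of $T(m)$ (recall $n = -m-1$), and that line occurs in $T(m)$ exactly once; while every other section is a shift of some $\hDelta_\mu$ with $|\mu| \le m$, hence with $\mu <_\bX n$. Thus $[\fT : \hDelta_n] = 1$ and every other standard composition factor of $\fT$ has strictly smaller weight, which — $\fT$ being indecomposable and tilting — characterises $\fT$ as $\hfT_n$. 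This completes the proof.
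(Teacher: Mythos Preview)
Your overall plan is sound, and the indecomposability and parameter-identification paragraphs are fine. But the substantive step --- showing $\fT$ is tilting --- is not established in your proposal. You explicitly defer the $\hDelta$-filtration, and the $\hnabla$-filtration sketch does not work as written: iterating Lemma~\ref{lem:brep} and tensoring with $\cO_\tcN(-1)\la 1\ra$ gives a filtration of $H^0(j)\otimes\cO_\tcN(-1)\la 1\ra$ whose successive quotients are the line bundles $\cO_\tcN(j-1)\la 1\ra,\,\cO_\tcN(j-3)\la 1\ra,\,\ldots,\,\cO_\tcN(-j-1)\la 1\ra$. Those with nonnegative twist are indeed $\hnabla_k\la 1\ra$, but those with negative twist are \emph{standard} objects $\hDelta_k$, not costandard ones. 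Already for $j=1$ (i.e.\ $n=-2$) your sequence reads $0 \to \hDelta_{-2} \to \fT \to \hnabla_0\la 1\ra \to 0$, whereas the genuine $\hnabla$-filtration of $\hfT_{-2}$ has $\hnabla_{-2}$ as subobject and $\hnabla_0\la -1\ra$ as quotient. Converting the weight filtration of $\res^G_B T(m)$ into an exotic $\hnabla$-filtration requires a nontrivial reorganisation of the pieces; citing Lemma~\ref{lem:new-costd} and Proposition~\ref{prop:sl2-braid-ses} does not by itself accomplish that.

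The paper takes a quite different and much shorter route, bypassing filtrations entirely. It uses the criterion of~\cite[Lemma~4]{bez:ctm}: $\fT$ is tilting provided $\uHom(\hDelta_r[-k],\fT)=\uHom(\fT,\hnabla_r[k])=0$ for all $r$ and all $k>0$. After twisting by $\cO_\tcN(1)$ and using the adjunctions coming from $\pi^*\cO_\cN\cong\pi^!\cO_\cN\cong\cO_\tcN$, this becomes an $\Ext$-vanishing between $T(m)\otimes\cO_\cN$ and the objects $\pi_*(\hDelta_r\otimes\cO_\tcN(1))$, $\pi_*(\hnabla_r\otimes\cO_\tcN(1))$ in $\Db\Coh^{G\times\Gm}(\cN)$. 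Lemma~\ref{lem:o1-twist} --- the one auxiliary result you never invoke --- identifies these pushforwards as proper (co)standard perverse-coherent sheaves, whence the desired vanishing follows from Proposition~\ref{prop:pcoh-tilt}. The identification of the parameter is then read off from the obvious inclusion $\hDelta_n=\cO_\tcN(n)\la 1\ra\hookrightarrow\fT$, just as in your final paragraph.
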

\begin{proof}
For $n \ge 0$, this is just a restatement of Proposition~\ref{prop:tilt-dom}.  Assume henceforth that $n < 0$. To show that the $T(-n-1) \otimes \cO_\tcN(-1)$ are tilting objects, we will use the criterion of~\cite[Lemma~4]{bez:ctm}, which says that it is enough to check that for all $k > 0$, we have
\[
\uHom(\hDelta_m[-k], T(-n-1) \otimes \cO_\tcN(-1)) =
\uHom(T(-n-1) \otimes \cO_\tcN(-1), \hnabla_m[k]) = 0,
\]
or, equivalently,
\begin{multline*}
\uHom(\hDelta_m \otimes \cO_\tcN(1)[-k], T(-n-1) \otimes \cO_\tcN) = \\
\uHom(T(-n-1) \otimes \cO_\tcN, \hnabla_m \otimes \cO_\tcN(1)[k]) = 0.
\end{multline*}
By adjunction and the fact that $\pi^*\cO_\cN \cong \pi^!\cO_\cN \cong \cO_\tcN$, this is in turn equivalent to the vanishing of the following $\Hom$-groups in $\Db\Coh^{G \times \Gm}(\cN)$:
\begin{multline*}
\uHom(\pi_*(\hDelta_m \otimes \cO_\tcN(1))[-k], T(-n-1) \otimes \cO_\cN) = \\
\uHom(T(-n-1) \otimes \cO_\cN, \pi_*(\hnabla_m \otimes \cO_\tcN(1))[k]) = 0.
\end{multline*}
These equalities hold because $T(n-1) \otimes \cO_\cN$ is a tilting object in $\PCoh^{G \times \Gm}(\cN)$ (Proposition~\ref{prop:pcoh-tilt}), while $\pi_*(\hDelta_m \otimes \cO_\tcN(1))$ is proper standard and $\pi_*(\hnabla_m \otimes \cO_\tcN(1))$ is proper costandard (by Lemma~\ref{lem:o1-twist} and Proposition~\ref{prop:mu-exact}).

There is an obvious morphism $\hDelta_n \to T(-n-1) \otimes \cO_\tcN(-1)\la 1\ra$, and this shows that $\hfT_n \cong T(-n-1) \otimes \cO_\tcN(-1)\la 1\ra$, as desired.
\end{proof}

\begin{prop}
If $\bk = \C$, then every standard or costandard object in $\ExCoh^{G \times \Gm}(\tcN)$ is uniserial.
\end{prop}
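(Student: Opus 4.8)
The plan is to prove both assertions simultaneously by induction on the length of the object in the abelian category $\ExCoh^{G \times \Gm}(\tcN)$. I will spell out the costandard case; the standard case is strictly parallel, with Lemma~\ref{lem:new-std} in place of Lemma~\ref{lem:new-costd}, and with ``head'' and ``radical'' in place of ``socle'' and ``quotient by the socle.'' For the base cases, note that the antiminuscule weights for $\SL_2$ are $0$ and $-1$, and the objects $\hnabla_0 \cong \cO_\tcN$ and $\hnabla_{-1} \cong \cO_\tcN(-1)\la1\ra$ are simple, hence uniserial.

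For the inductive step, fix $n \notin \{0,-1\}$. If $n \le -2$, Lemma~\ref{lem:new-costd} gives a short exact sequence
\[
0 \to i_{\alpha*}H^0(-n-2)\bk_{-2}\la1\ra \to \hnabla_n \to \hnabla_{-n-2}\la1\ra \to 0 ,
\]
and if $n \ge 1$, twisting the second sequence of Proposition~\ref{prop:sl2-braid-ses} by $\la1\ra$ and using the identification of $\Psi_\alpha(\hnabla_n)[-1]$ from its proof gives a short exact sequence
\[
0 \to i_{\alpha*}H^0(n-1)\bk_{-1}\la2\ra[-1] \to \hnabla_n \to \hnabla_{-n}\la1\ra \to 0 .
\]
This is the only point where the hypothesis $\bk = \C$ is used: in characteristic zero $H^0(m) = L(m)$, so in both cases the left-hand term is a \emph{simple} object of $\ExCoh^{G \times \Gm}(\tcN)$ (a Tate twist of $\fE_n$, by the description of simple exotic sheaves for $\SL_2$). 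The right-hand term is $\hnabla_{n'}\la1\ra$ with $n' \in \{-n-2,\,-n\}$, and $\hnabla_{n'}$ has strictly smaller length, so it is uniserial by the inductive hypothesis (and a Tate twist of a uniserial object is uniserial).

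To conclude I invoke the elementary fact that a finite-length module $M$ with \emph{simple} socle for which $M/\mathrm{soc}(M)$ is uniserial is itself uniserial: any nonzero submodule of $M$ contains the simple $\mathrm{soc}(M)$, and the submodules containing $\mathrm{soc}(M)$ form a chain because they correspond to submodules of the uniserial quotient. Thus it remains to see that the simple subobject appearing in the displayed sequence is \emph{all} of $\mathrm{soc}(\hnabla_n)$, and this is the one nonformal input: since $\ExCoh^{G \times \Gm}(\tcN)$ is graded quasihereditary, each costandard object has simple socle, and a nonzero simple subobject of an object with simple socle must equal that socle. Hence $\hnabla_n/\mathrm{soc}(\hnabla_n) \cong \hnabla_{n'}\la1\ra$ is uniserial, and so is $\hnabla_n$. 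Dually, in the standard case Lemma~\ref{lem:new-std} and (after a Tate twist) the first sequence of Proposition~\ref{prop:sl2-braid-ses} exhibit $\hDelta_n$ as an extension of a simple object by $\hDelta_{n'}\la m\ra$ of strictly smaller length; since $\hDelta_n$ has simple head (again by quasihereditarity), that subobject is $\mathrm{rad}(\hDelta_n)$, uniserial by induction, and a module with simple head and uniserial radical is uniserial. I do not foresee a real obstacle: beyond the bookkeeping of Tate twists, the only ingredients are the characteristic-zero collapse $H^0(m)=L(m)$ (which is what makes the $i_{\alpha*}$-terms simple) and the standard fact that costandard, resp.\ standard, objects in a graded quasihereditary category have simple socle, resp.\ head.
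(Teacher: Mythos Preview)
Your argument is correct and follows the same route as the paper's proof: induction along $\le_\bX$ using the short exact sequences of Lemma~\ref{lem:new-costd}, Lemma~\ref{lem:new-std}, and Proposition~\ref{prop:sl2-braid-ses}, together with the observation that over $\C$ the $i_{\alpha*}$-term becomes simple. The paper states this in a single sentence; you have spelled out the one point it leaves implicit, namely that the simple subobject (resp.\ quotient) furnished by the sequence must coincide with the socle (resp.\ head) because costandard (resp.\ standard) objects in a quasihereditary category have simple socle (resp.\ head).
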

\begin{proof}
This holds by induction with respect to $\le_{\bX}$, using the short exact sequences in Lemmas~\ref{lem:new-costd} and~\ref{lem:new-std} and Proposition~\ref{prop:sl2-braid-ses}.
\end{proof}

For example, the composition series of $\hnabla_n$ looks like this:
\begin{align*}
n \ge 0: \hspace{-2em} &
\begin{array}{r|c|}
\cline{2-2}
\text{cosocle:} & \fE_{\smm(n)}\la n \ra \\ \cline{2-2}
& \vdots \\ \cline{2-2}
& \fE_{-n+2}\la 3\ra \\ \cline{2-2}
&\fE_{n-2}\la 2\ra \\ \cline{2-2}
& \fE_{-n}\la 1\ra \\ \cline{2-2}
\text{socle:} & \fE_n  \\ \cline{2-2}
\end{array}
&&&
n < 0: \hspace{-2em} &
\begin{array}{r|c|}
\cline{2-2}
\text{cosocle:} & \fE_{\smm(n)}\la -n-1\ra \\ \cline{2-2}
& \vdots \\ \cline{2-2}
& \fE_{-n-4}\la 3\ra \\ \cline{2-2}
&\fE_{n+2}\la 2\ra \\ \cline{2-2}
& \fE_{-n-2}\la 1\ra \\ \cline{2-2}
\text{socle:} & \fE_n  \\ \cline{2-2}
\end{array}.
\end{align*}

We conclude by answering Question~\ref{ques:positivity} for $G = \SL_2$.

\begin{prop}[Positivity for tilting exotic sheaves]
For any $n, m \in \Z$, the graded vector space $\uHom(\hfT_n, \hfT_m)$ is concentrated in nonnegative degrees.
\end{prop}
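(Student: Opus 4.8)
The plan is to reduce $\uHom(\hfT_n,\hfT_m)$ to a graded $\Hom$-space between $G$-modules, using the explicit description of tilting exotic sheaves in Proposition~\ref{prop:sl2-tilt} together with the fact that the global sections of $\cO_\tcN(c)$ on $\tcN$ are essentially nonnegatively graded. Since tilting objects lie in the heart of the exotic $t$-structure and, by Proposition~\ref{prop:tilt-dom}, are honest coherent sheaves, $\uHom(\hfT_n,\hfT_m)$ may be computed in $\Coh^{G \times \Gm}(\tcN)$.

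First I would record the identity
\[
\uHom\bigl(V \otimes \cO_\tcN(i),\, W \otimes \cO_\tcN(j)\bigr) \cong \Hom_G\bigl(V,\, W \otimes \Gamma(\tcN,\cO_\tcN(j-i))\bigr)
\]
of graded vector spaces, valid for $V, W \in \Rep(G)$ and $i, j \in \bX$: rewrite the left-hand side as $\uHom(\cO_\tcN,\, V^* \otimes W \otimes \cO_\tcN(j-i))$, using that $V \otimes ({-})$ is left adjoint to $V^* \otimes ({-})$ and that $\cO_\tcN(i)$ is invertible, and then note that $\uHom_{\Coh^{G \times \Gm}(\tcN)}(\cO_\tcN, \cG)$ is the graded $G$-invariant part of $\Gamma(\tcN,\cG)$. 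Next, through the induction equivalence~\eqref{eqn:ind-equiv}, the sheaf $\cO_\tcN(c)$ corresponds to the free module $\bk_c \otimes \bk[\fu]$ and $\Gamma(\tcN,{-})$ to $\ind_B^G$; since $\bk[\fu] \cong \bigoplus_{l \ge 0} \bk_{2l}\la -2l\ra$ as a graded $B$-representation, this yields $\Gamma(\tcN, \cO_\tcN(c)) \cong \bigoplus_{l \ge 0} H^0(c+2l)\la -2l\ra$. Hence $\Gamma(\tcN, \cO_\tcN(c))$ is concentrated in degrees ${}\ge 0$ when $c \ge 0$, and in degrees ${}\ge 2$ when $c = -1$, because the degree-$0$ term $H^0(-1)$ vanishes.

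Now write each tilting object as $\hfT_k = T(a_k) \otimes \cO_\tcN(\epsilon_k)\la \eta_k\ra$, where $(\epsilon_k,\eta_k,a_k) = (0,0,k)$ for $k \ge 0$ and $(\epsilon_k,\eta_k,a_k) = (-1,1,-k-1)$ for $k < 0$ (Proposition~\ref{prop:sl2-tilt}). Pulling the Tate twists out and applying the identity above gives
\[
\uHom(\hfT_n,\hfT_m) \cong \Hom_G\bigl(T(a_n),\, T(a_m) \otimes \Gamma(\tcN,\cO_\tcN(\epsilon_m-\epsilon_n))\bigr)\la \eta_m - \eta_n\ra .
\]
Since each $\epsilon_k \in \{0,-1\}$, there are three cases. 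If $\epsilon_m = \epsilon_n$ then $\eta_m = \eta_n$ and $\Gamma(\tcN,\cO_\tcN(0))$ sits in degrees ${}\ge 0$. If $\epsilon_m - \epsilon_n = 1$ (i.e.\ $n < 0 \le m$), then $\eta_m - \eta_n = -1$; as $\Gamma(\tcN,\cO_\tcN(1))$ sits in degrees ${}\ge 0$, the $\la -1\ra$-twist lands the whole thing in degrees ${}\ge 1$. If $\epsilon_m - \epsilon_n = -1$ (i.e.\ $m < 0 \le n$), then $\eta_m - \eta_n = 1$; here $\Gamma(\tcN,\cO_\tcN(-1))$ sits in degrees ${}\ge 2$, so even after the $\la 1\ra$-twist the result lies in degrees ${}\ge 1$. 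In all cases $\uHom(\hfT_n,\hfT_m)$ is concentrated in nonnegative degrees.

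The only delicate point is the last case: a priori the twist by $\cO_\tcN(-1)$ combined with the positive Tate shift coming from the $\la 1 \ra$ factors of $\hfT_n$ and $\hfT_m$ could drag the grading down to degree $-1$, and it is precisely the vanishing of $H^0(-1)$---equivalently, the absence of bottom-degree global sections of $\cO_\tcN(-1)$---that keeps everything in degrees ${}\ge 1$. I expect the sign bookkeeping here to be the only real content; the remaining steps are routine manipulations of equivariant sheaves on $\tcN$.
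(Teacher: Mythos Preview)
Your proof is correct and follows essentially the same line as the paper's. Both arguments reduce, via Proposition~\ref{prop:sl2-tilt}, to computing $\uHom$ between objects of the form $V \otimes \cO_\tcN(\epsilon)$ with $\epsilon \in \{0,-1\}$, and the only nontrivial case ($n \ge 0$, $m < 0$) hinges on the vanishing $H^0(-1)=0$; the paper invokes this via Lemma~\ref{lem:v1} on the $B$-side, while you invoke it directly as the vanishing of the degree-$0$ term of $\Gamma(\tcN,\cO_\tcN(-1))$ on the $G$-side --- these are the same fact by Frobenius reciprocity.
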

\begin{proof}
We must show that $\Hom(\hfT_n,\hfT_m\la k\ra) = 0$ for all $k < 0$.
From Proposition~\ref{prop:sl2-tilt}, this is obvious if $n < 0$ or if $m \ge 0$.  It is also obvious if $n \ge 0$, $m < 0$, and $k \le -2$.  It remains to consider the case where $n \ge 0$, $m < 0$, and $k = -1$.  Using Lemma~\ref{lem:v1}, we find that
\[
\Hom(T(n) \otimes \cO_\tcN, T(-m-1) \otimes \cO_\tcN(-1))
\cong \Hom_{\Rep(B)}(T(n), T(-m-1)\bk_{-1}) = 0,
\]
as desired.
\end{proof}

\subsection{Perverse-coherent sheaves}

After the hard work of the exotic case, the calculations in the perverse coherent case are relatively easy.

\begin{prop}\label{prop:sl2-pcoh}
For $n \in \Z_{\ge 0}$, $\bnabla_n$ is given by
\begin{align*}
\bnabla_0 &\cong
\begin{array}{ccccccll}
0 & 2 & 4 & 6 & 8 & 10 & \cdots\\
H^0(0) & H^0(2) & H^0(4) & H^0(6) & H^0(8) & H^0(10) & \cdots
\end{array} \\
\bnabla_n &\cong
\begin{array}{cccccc}
1 & 3 & 5 & 7 & \cdots\\
H^0(n) & H^0(n+2) & H^0(n+4) & H^0(n+6) & \cdots
\end{array}
\qquad\text{if $n > 0$.}
\end{align*}
For $n \in \{0,1\}$, we have $\bDelta_n \cong \bnabla_n$, whereas for $n \ge 2$, we have
{\tiny\[
\begin{array}{rcccccccc}
& 
  &   &        &  n - 3     &  n - 1     &  n + 1  & n + 3\\
&
-1 & 1 & \cdots & {}-\smp(n) & {}-\smp(n) & {}-\smp(n) & {}-\smp(n) & \cdots\\
\cH^1(\bDelta_n) \cong{} & 
V(n-2) & V(n-4) & \cdots & V(\smp(n)) & - & - & \cdots \\
\cH^0(\bDelta_n) \cong{} & 
- & - & \cdots & - & H^0(\smm(n)) & H^0(\smm(n)+2) & H^0(\smm(n)+4) & \cdots
\end{array}
\]}
\end{prop}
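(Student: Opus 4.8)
The plan is to reduce the whole statement to a single computation of the Andersen--Jantzen sheaves $A_n = \pi_*\cO_\tcN(n)$, together with the grading normalizations in~\eqref{eqn:bnabla-defn}. For $\SL_2$ we have $w_0 n = -n$, $\delta^*_0 = 0$, and $\delta^*_n = 1$ for $n>0$, so by definition $\bnabla_n = A_n\la -\delta^*_n\ra$ and $\bDelta_n = A_{-n}\la \delta^*_n\ra$. Exactly as in the proof of Lemma~\ref{lem:sl2-psi}, the induction equivalence~\eqref{eqn:ind-equiv} identifies $A_n$ with the object obtained by applying $R\ind_B^G$ \emph{degreewise} to the graded $B$-module $\bk_n\otimes\bk[\fu]$; since $\bk[\fu]$ contributes $\bk_{2k}$ in grading degree $2k$, the graded piece of $A_n$ in degree $2k$ is $R\ind_B^G\bk_{n+2k}$, and all other graded pieces vanish. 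Thus the only input is Borel--Weil--Bott for $\SL_2$, already recorded in the proof of Lemma~\ref{lem:sl2-psi}:
\[
R\ind_B^G \bk_m \cong
\begin{cases}
H^0(m) & \text{if } m \ge 0,\\
0 & \text{if } m = -1,\\
V(-m-2)[-1] & \text{if } m \le -2,
\end{cases}
\]
where $V(-m-2)\cong H^1(\mathbb{P}^1,\cO(m))$ by the definition $V(\lambda)=H^{\dim G/B}(w_0\lambda-2\rho)$.

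Given this, the costandard case is immediate. If $n\ge 0$, every weight $n+2k$ with $k\ge 0$ is nonnegative, so $A_n$ is a coherent sheaf with $H^0(n+2k)$ in grading degree $2k$; twisting by $\la -\delta^*_n\ra$ (the identity when $n=0$, a shift of the grading by one otherwise) yields the displayed formulas for $\bnabla_0$ and for $\bnabla_n$ with $n>0$. In particular $\bDelta_0 = A_0 = \bnabla_0$; and for $n=1$, the $k=0$ term of $A_{-1}$ has weight $-1$ and vanishes while the terms with $k\ge 1$ give $H^0(2k-1)$, so after the twist $\bDelta_1 = A_{-1}\la 1\ra\cong\bnabla_1$.

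For $\bDelta_n$ with $n\ge 2$ I would apply the same recipe to $A_{-n}$: in grading degree $2k$ the graded piece is $R\ind_B^G\bk_{-n+2k}$, which sits in cohomological degree $1$ and equals $V(n-2k-2)$ when $-n+2k\le -2$, vanishes when $-n+2k=-1$ (exactly one value of $k$, occurring precisely when $n$ is odd), and sits in cohomological degree $0$ and equals $H^0(2k-n)$ when $-n+2k\ge 0$. Hence $A_{-n}$ has cohomology only in degrees $0$ and $1$: $\cH^1(A_{-n})$ runs through $V(n-2),V(n-4),\dots,V(\smp(n))$ in grading degrees $0,2,\dots$, and $\cH^0(A_{-n})$ runs through $H^0(\smm(n)),H^0(\smm(n)+2),\dots$ in the subsequent grading degrees, the first of these being $0$ when $n$ is odd (since then $\smm(n)=-1$). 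Twisting by $\la\delta^*_n\ra=\la 1\ra$ shifts all grading degrees down by one, producing the two rows of the display, and the distinguished triangle $\cH^0(\bDelta_n)\to\bDelta_n\to\cH^1(\bDelta_n)[-1]\to$ is simply the truncation triangle for the two-term complex $\bDelta_n$.

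There is no conceptual obstacle here: the entire argument is degreewise Borel--Weil--Bott on $\mathbb{P}^1$. The only thing that needs care is the grading bookkeeping — the twist by $\delta^*_n$, the negation $w_0$ in the definition of $\bDelta_n$, the identification of the boundary terms $V(\smp(n))$ and $H^0(\smm(n))$, and the single grading degree that drops out when $n$ is odd. As a consistency check, the characters of the resulting modules can be matched against Lemma~\ref{lem:hesselink}, using that for $\SL_2$ the $q$-analogue $M_\mu^\lambda(q)$ is read off directly from its definition.
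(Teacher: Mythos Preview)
Your argument is correct and is essentially the paper's proof spelled out in full: the paper's proof is the single sentence ``Apply $R\ind_B^G$ to the formulas from Propositions~\ref{prop:sl2-costd} and~\ref{prop:sl2-std},'' and since $\hnabla_n=\cO_\tcN(n)$ for $n\ge 0$ and $\hDelta_{-n}=\cO_\tcN(-n)\la 1\ra$ for $n>0$, this amounts exactly to your degreewise Borel--Weil--Bott computation of $A_{\pm n}$ followed by the twist $\la\mp\delta^*_n\ra$. Your version is slightly more self-contained in that it goes straight from the definition~\eqref{eqn:bnabla-defn} rather than routing through the exotic sheaves, but the actual computation is identical.
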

\begin{proof}
Apply $R\ind_B^G$ to the formulas from Propositions~\ref{prop:sl2-costd} and~\ref{prop:sl2-std}.
\end{proof}

With a bit more effort, it is possible to give a finer description of the $\bk[\cN]$-action on these modules.  Recall that for $\SL_2$, the nilpotent cone $\cN$ is isomorphic as an $\SL_2$-variety to the quotient $\bA^2/(\Z/2)$, where the nontrivial element of $\Z/2$ acts by negation.  This gives rise to an isomorphism
\[
\bk[\cN] \cong \bk[x^2, xy, y^2],
\]
where the right-hand side is the subring of $(\Z/2)$-invariant elements in the polynomial ring $\bk[x,y]$.  For $n \in \Z_{\ge 0}$, let 
\[
M_n = \bk[x^2, xy, y^2]\cdot (x^n, x^{n-1}y, \ldots, y^n) \subset \bk[x,y].
\]
Thus, $M_n$ consists of polynomials whose terms have degrees${}\ge n$ and${}\equiv n \pmod 2$.

\begin{lem}\label{lem:bnabla-poly}
There is an isomorphism of $\bk[\cN]$-modules $\hnabla_n \cong M_n\la n - \delta^*_n\ra$.
\end{lem}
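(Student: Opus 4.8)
The plan is to reduce the statement to an $\ind_B^G$-computation for $G=\SL_2$. By~\eqref{eqn:bnabla-defn}, $\bnabla_n=A_n\la-\delta^*_n\ra$ with $A_n=\pi_*\cO_\tcN(n)$, and for $\SL_2$ one has $\delta^*_n=0$ if $n=0$ and $\delta^*_n=1$ if $n>0$, so the asserted shift $\la n-\delta^*_n\ra$ is the right one. Since $\cN$ is affine and $\pi_*\cO_\tcN=\cO_\cN$ (normality of $\cN$), the $\bk[\cN]$-module underlying $A_n$ is $\Gamma(\tcN,\cO_\tcN(n))=\ind_B^G(\bk_n\otimes\bk[\fu])$, viewed as a module over $\Gamma(\tcN,\cO_\tcN)=\ind_B^G\bk[\fu]\cong\bk[\cN]$ with the module structure induced from the obvious $\bk[\fu]$-module structure on $\bk_n\otimes\bk[\fu]$. (Under~\eqref{eqn:ind-equiv}, $\bk_n\otimes\bk[\fu]$ is exactly the object attached to $\cO_\tcN(n)$; compare the explicit picture in Proposition~\ref{prop:sl2-costd}.) So everything comes down to realizing this induced module concretely.

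I would then fix the geometry: write $\cN\cong\bA^2/(\Z/2)$ with $\bk[\cN]=\bk[x,y]^{\Z/2}=\bk[x^2,xy,y^2]$, the coordinates $x,y$ in $\Gm$-degree~$1$, as in the proof of Proposition~\ref{prop:sl2-pcoh}. The $B$-stable line $\fu\subset\cN$ pulls back under $\bA^2\to\cN$ to a coordinate axis, say $\{x=0\}$, yielding a $(B\times\Gm)$-equivariant surjection $\bk[\fu]=\bk[u]\twoheadrightarrow\bk[x,y]/(x)=\bk[\bar y]$ with $u$ mapped to a nonzero scalar times $\bar y^2$. Sending the weight-$(n,0)$ generator of $\bk_n\otimes\bk[\fu]$ to $\bar y^n$ identifies $\bk_n\otimes\bk[\fu]$, up to the shift $\la n\ra$, with the $B$- and $\bk[u]$-stable submodule $\bigoplus_{j\ge0}\bk\,\bar y^{n+2j}\subset\bk[\bar y]$. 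Applying the exact functor $\ind_B^G$ to the inclusion of this submodule into $\bk[\bar y]$ presents $A_n\la-n\ra$ as a $\bk[\cN]$-submodule of $\ind_B^G\bk[\bar y]$. The key identification is $\ind_B^G\bk[\bar y]\cong\bk[x,y]$ as graded $G$-algebras: the bundle $G\times^B\bA^1_{\bar y}$ maps properly and birationally onto $\bA^2$, contracting its zero section $G/B$ to the origin, so its ring of global functions is $\bk[\bA^2]$ (equivalently, $\ind_B^G\bk[\bar y]=\bigoplus_{m\ge0}H^0(m)=\bk[x,y]$ as graded $G$-modules), and under it the subring $\ind_B^G\bk[\fu]=\bk[\cN]$ goes to $\bk[x,y]^{\Z/2}$. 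Finally, $\ind_B^G$ carries $\bigoplus_{j\ge0}\bk_{n+2j}$ (with $\bk_{n+2j}$ in degree $n+2j$) to $\bigoplus_{j\ge0}H^0(n+2j)$ in the same degrees, so the image of $A_n\la-n\ra$ inside $\bk[x,y]$ is the span of the homogeneous polynomials of degree $\ge n$ that are $\equiv n\pmod 2$; a one-line monomial argument identifies this with $\bk[x^2,xy,y^2]\cdot(x^n,\dots,y^n)=M_n$. Thus $A_n\cong M_n\la n\ra$ and $\bnabla_n\cong M_n\la n-\delta^*_n\ra$.

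The hard part is not a single step but the bookkeeping: pinning down the exact conventions (which Borel, which axis, the scalar relating $u$ and $\bar y^2$, the $\Gm$-degrees) and, in particular, verifying that the $\bk[\cN]$-module structure on $A_n=\Gamma(\tcN,\cO_\tcN(n))$ coincides with $\ind_B^G$ applied to the $\bk[\fu]$-module structure --- i.e.\ that $\ind_B^G$ of a $B$-algebra (resp.\ a module over one) computes global sections of the associated bundle over $G/B$ with the correct ring (resp.\ module) structure. A variant that avoids the blow-up altogether: Proposition~\ref{prop:sl2-pcoh} already gives the underlying graded $G$-module of $\bnabla_n$, and one can instead check that it is generated over $\bk[\cN]$ by its bottom graded piece $H^0(n)$, that each $G$-map $H^0(2)\otimes H^0(m)\to H^0(m+2)$ is unique up to a scalar, and that associativity together with torsion-freeness forces the module to be a rescaling of the submodule $M_n\la n-\delta^*_n\ra\subset\bk[x,y]$; this trades the geometry for separate verifications of the generation statement and the rigidity of the multiplication.
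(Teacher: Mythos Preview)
Your approach is sound, modulo a slip: the ring map $\bk[u]\to\bk[\bar y]$ with $u\mapsto\bar y^2$ is an \emph{inclusion}, not a surjection (the scheme map $\bA^1_{\bar y}\to\fu$ is the double cover $t\mapsto t^2$, hence injective on functions). You treat it as an inclusion in the very next sentence anyway, so nothing breaks. The convention issues you flag are real but routine; with the paper's convention that $u$ has $T$-weight $+2$, the relevant axis is the one on which the weight-$(+1)$ coordinate survives, and then $\ind_B^G$ of each graded piece is the expected $H^0(m)$.

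This is genuinely different from the paper's proof, which is structural rather than computational. The paper handles $n\in\{0,1\}$ directly (for $n=1$, matching $M_1$ to $\bnabla_1$ as graded $G$-modules via Proposition~\ref{prop:sl2-pcoh} and then checking $M_1$ is a simple perverse-coherent sheaf). For $n\ge 2$ it uses Proposition~\ref{prop:pcoh-socle} to produce a map $\bnabla_n\to\bnabla_{\smp(n)}\la n-1\ra$ that is surjective in $\PCoh^{G\times\Gm}(\cN)$; torsion-freeness of $\bnabla_n$ (Proposition~\ref{prop:pcoh-torsion}) forces this map to be injective at the level of coherent sheaves, and Proposition~\ref{prop:sl2-pcoh} then identifies the resulting submodule of $M_{\smp(n)}$ as $M_n$. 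Your blow-up argument is more self-contained --- it realizes $A_n$ directly inside $\bk[x,y]$ via $G\times^B\bA^1\cong\mathrm{Bl}_0(\bA^2)$ and never touches the perverse-coherent machinery --- whereas the paper's argument shows how that machinery (socles, torsion-freeness) already determines the answer and would adapt to settings lacking such an explicit double cover of $\cN$.
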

\begin{proof}
For $n = 0$, we have $M_0 \cong \cO_\cN \cong \bnabla_0$, and there is nothing to prove.  For $n = 1$, recall that $\bnabla_1$ is a simple perverse-coherent sheaf, and up to grading shift, it is the unique simple object that is a torsion-free coherent sheaf not isomorphic to $\cO_\cN$.  It is easy to check from Proposition~\ref{prop:sl2-pcoh} that $M_1$ is isomorphic as a $(G \times \Gm)$-representation to $\bnabla_1$ (and that it is \emph{not} isomorphic to $\bnabla_0$), so to prove that $M_1 \cong \bnabla_1$, it suffices to show that $M_1$ is a simple perverse-coherent sheaf.  This can be done by computing its local cohomology at $0$, and then using the criterion described after Theorem~\ref{thm:pcoh-loc}.

For $n \ge 2$, Proposition~\ref{prop:pcoh-socle} gives us a map
\[
\bnabla_n \to \bnabla_{\smp(n)}\la n - 1\ra
\]
that is surjective as a morphism in $\PCoh^{G \times \Gm}(\cN)$.  We claim that it is also injective as a morphism in $\Coh^{G \times \Gm}(\cN)$.  Indeed, the map is an isomorphism over $\cN_\reg$, so its kernel would have to be supported on $\cN \smallsetminus \cN_\reg$.  But $\bnabla_n$ is a torsion-free coherent sheaf (see Proposition~\ref{prop:pcoh-torsion}), so that kernel must be trivial.  In other words, as a coherent sheaf, $\bnabla_n$ can be identified with a certain submodule of $\bnabla_{\smp(n)}\la n-1\ra$.  Proposition~\ref{prop:sl2-pcoh} shows us that the desired submodule is precisely $M_n$.
\end{proof}

Let $i_0: \{0\} \hookrightarrow \cN$ be the inclusion map of the origin into the nilpotent cone.

\begin{prop}[Simple perverse-coherent sheaves]
We have
\[
\fIC_n \cong
\begin{cases}
\cO_\cN & \text{if $n = 0$,} \\
\bDelta_1 \cong \bnabla_1 \cong M_1& \text{if $n = 1$,} \\
i_{0*}L(n-2)\la 1\ra[-1] & \text{if $n \ge 2$.}
\end{cases}
\]
\end{prop}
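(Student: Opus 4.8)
The plan is to deduce everything from the exotic side via Proposition~\ref{prop:mu-exact}. For $\SL_2$ one has $w_0\lambda = -\lambda$, and $-n \in -\bXp$ whenever $n \in \Z_{\ge 0}$, so that proposition gives $\fIC_n \cong \pi_*\fE_{-n}$ for every $n \in \Z_{\ge 0}$. It therefore suffices to substitute the formulas for the simple exotic sheaves $\fE_{-n}$ established above and compute the resulting pushforwards. (An alternative, which one would use only as a cross-check, is to verify the assertions directly against the local-cohomology characterization of Theorem~\ref{thm:pcoh-loc}, as in the proof of Lemma~\ref{lem:bnabla-poly} for $M_1$.)

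For $n = 0$ one has $\fE_0 \cong \cO_\tcN$, and since $\cN \cong \bA^2/(\Z/2)$ is a normal surface with rational singularities whose minimal resolution is $\tcN \cong T^*\mathbb{P}^1$, one gets $\pi_*\cO_\tcN \cong \cO_\cN$ (with vanishing higher direct images); hence $\fIC_0 \cong \cO_\cN$. For $n = 1$ the weight $1$ is minuscule, hence minimal in $\bXp$ for both partial orders, so $\bDelta_1 \to \bnabla_1$ is an isomorphism and $\fIC_1 \cong \bDelta_1 \cong \bnabla_1$; Lemma~\ref{lem:bnabla-poly} identifies $\bnabla_1$ with $M_1\la 1 - \delta^*_1\ra$, and since $\smp(1) = 1$ equation~\eqref{eqn:delta-min} gives $\delta^*_1 = (2\rho^\vee,1) = 1$, so $\bnabla_1 \cong M_1$.

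The substantive case is $n \ge 2$, where $\fE_{-n} \cong i_{\alpha*}L(n-2)\bk_{-2}\la 1\ra$. For $\SL_2$ the subvariety $\tcN_\alpha$ is the zero section $G/B \subset \tcN$, and $\pi$ contracts it to the point $C_0 = \{0\}$; thus $\pi\circ i_\alpha = i_0\circ c$, where $c\colon G/B \to \pt$ and $i_0\colon\{0\}\hookrightarrow\cN$. Under the induction equivalence $\Coh^{G\times\Gm}(G/B) \cong \Coh^{B\times\Gm}(\pt)$ the functor $c_*$ becomes $R\ind_B^G$ (this identification is already used in the proof of Lemma~\ref{lem:sl2-psi}), so by the projection formula
\[
\fIC_n \cong \pi_*\fE_{-n} \cong i_{0*}\bigl(R\ind_B^G(L(n-2)\otimes\bk_{-2})\bigr)\la 1\ra \cong i_{0*}\bigl(L(n-2)\otimes R\ind_B^G\bk_{-2}\bigr)\la 1\ra.
\]
Finally $R\ind_B^G\bk_{-2} \cong H^1(-2)[-1] \cong \bk[-1]$, the trivial module placed in cohomological degree $1$ and grading degree $0$ (for $\SL_2$, $\dim H^1(-2) = 1$). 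Hence $\fIC_n \cong i_{0*}L(n-2)\la 1\ra[-1]$. As a sanity check, this is a skyscraper at $C_0$ with fiber the irreducible module $L(n-2)$, shifted by $\frac{1}{2}\codim C_0 = 1$, matching the description of coherent $\fIC$'s after Theorem~\ref{thm:pcoh-loc} and the parametrization of Proposition~\ref{prop:lv-zero}.

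I do not expect a genuine obstacle here: the only point needing care is the grading bookkeeping, namely that the single $\la 1\ra$ in the answer is precisely the $\la 1\ra$ already present in $\fE_{-n}$, because both $\bk_{-2}$ and $H^1(-2)$ sit in grading degree $0$ and $\pi_*$ (equivalently $R\ind_B^G$) is grading-preserving. All the real work was done in the exotic case, which is why this computation is comparatively short.
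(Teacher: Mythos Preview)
Your argument is correct and takes a genuinely different route from the paper's. The paper argues on the perverse-coherent side: it invokes the local description of Section~\ref{ss:local} to see that $\cO_\cN$, $M_1$, and the objects $i_{0*}L(m)\la 1\ra[-1]$ exhaust the simple perverse-coherent sheaves up to grading shift, and then pins down the label $n$ for $n\ge 2$ by exhibiting a nonzero map $\bDelta_n \to i_{0*}L(n-2)\la 1\ra[-1]$ (using the explicit description of $\cH^1(\bDelta_n)$ from Proposition~\ref{prop:sl2-pcoh}). You instead push forward from the exotic side via $\fIC_n \cong \pi_*\fE_{-n}$ and compute $\pi_*$ on the zero section as $i_{0*}\circ R\ind_B^G$, using the tensor identity and $R\ind_B^G\bk_{-2}\cong\bk[-1]$. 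Your approach has the advantage that simplicity and the correct grading shift come for free from the $t$-exactness of $\pi_*$ and the already-established exotic classification, with no separate appeal to the local description; the paper's approach, by contrast, is intrinsic to $\PCoh^{G\times\Gm}(\cN)$ and illustrates how the $\fIC(C,\cE)$ picture matches the quasiexceptional labeling. Both are short because the real work has been done earlier; yours simply reuses a different part of it.
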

\begin{proof}
The local description of $\PCoh^{G \times \Gm}(\cN)$ from Section~\ref{ss:local} makes it clear that the list of objects above is an exhaustive list of simple perverse-coherent sheaves up to grading shift.  To check the parametrization in the cases where $n \ge 2$, we simply note that there is a nonzero map $\bDelta_n \to i_{0*}L(n-2)\la 1\ra[-1]$.
\end{proof}

Recall that the tilting objects in $\PCoh^{G \times \Gm}(\cN)$ have been completely described in Proposition~\ref{prop:pcoh-tilt}.  We will not repeat that description here.

\begin{prop}\label{prop:sl2-ses}
For any $n \ge 2$, we have the following short exact sequences in $\PCoh^{G \times \Gm}(\cN)$:
\begin{gather*}
0 \to i_{0*} H^0(n-2)\la 1\ra[-1] \to \bnabla_n \to \bnabla_{n-2}\la 1 + \delta^*_{n-2}\ra \to 0, \\
0 \to \bDelta_{n-2}\la -1-\delta^*_{n-2}\ra \to \bDelta_n \to i_{0*}V(n-2)\la 1\ra[-1] \to 0.
\end{gather*}
\end{prop}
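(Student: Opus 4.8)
The plan is to obtain both sequences by applying $\pi_*$ to short exact sequences that are already available upstairs. Lemmas~\ref{lem:new-costd} and~\ref{lem:new-std} are stated for weights $\le -2$; replacing $n$ by $-n$ there, one gets, for $n \ge 2$, short exact sequences in $\ExCoh^{G \times \Gm}(\tcN)$
\begin{gather*}
0 \to i_{\alpha*}H^0(n-2)\bk_{-2}\la 1\ra \to \hnabla_{-n} \to \hnabla_{n-2}\la 1\ra \to 0, \\
0 \to \hDelta_{n-2}\la -1\ra \to \hDelta_{-n} \to i_{\alpha*}V(n-2)\bk_{-2}\la 1\ra \to 0
\end{gather*}
(the rightmost term in the second sequence carries a Tate twist, forced by the parity of grading degrees, that one must keep track of). By Proposition~\ref{prop:mu-exact} the functor $\pi_*$ is exact from $\ExCoh^{G \times \Gm}(\tcN)$ to $\PCoh^{G \times \Gm}(\cN)$, so applying it termwise yields short exact sequences in $\PCoh^{G \times \Gm}(\cN)$; it remains to identify the three images in each.

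For the (co)standard terms this is immediate from Proposition~\ref{prop:mu-exact}: since $\dom(-n) = n$ and $\delta^*_{-n} = \delta_n = 0$, one has $\pi_*\hnabla_{-n} \cong \bnabla_n$ and $\pi_*\hDelta_{-n} \cong \bDelta_n$, while, $n-2$ being dominant, $\pi_*(\hnabla_{n-2}\la 1\ra) \cong \bnabla_{n-2}\la 1 + \delta^*_{n-2}\ra$ and $\pi_*(\hDelta_{n-2}\la -1\ra) \cong \bDelta_{n-2}\la -1 - \delta^*_{n-2}\ra$. These are exactly the outer terms appearing in the statement.

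The one genuinely new ingredient is the computation of $\pi_*$ on the terms of the form $i_{\alpha*}(\res^G_B U \otimes \bk_{-2})$. For $G = \SL_2$ the parabolic $P_\alpha$ is all of $G$, so $\fu_\alpha = 0$ and $\tcN_\alpha = G/B$ is the zero section of $\tcN$; consequently $\pi \circ i_\alpha$ is the constant map to the origin and factors as $\tcN_\alpha \xrightarrow{\pi_\alpha} \pt \xrightarrow{i_0} \cN$, so that $\pi_* i_{\alpha*} \cong i_{0*}\pi_{\alpha*} \cong i_{0*} R\ind_B^G$ on $\Coh^{B \times \Gm}(\pt)$. For $U \in \Rep(G)$, the tensor identity gives $R\ind_B^G(\res^G_B U \otimes \bk_{-2}) \cong U \otimes R\ind_B^G\bk_{-2}$, and $R\ind_B^G\bk_{-2} = H^\bullet(-2)$ is $H^1(-2) = V(0) = \bk$ placed in cohomological degree $1$, since $H^0(-2) = 0$. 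Hence $\pi_*\bigl(i_{\alpha*}(\res^G_B U \otimes \bk_{-2})\bigr) \cong i_{0*}U[-1]$, with the Tate twist read off from the grading degree of the source. Taking $U = H^0(n-2)$ and $U = V(n-2)$ (both nonzero, as $n-2 \ge 0$) produces the skyscraper terms $i_{0*}H^0(n-2)\la 1\ra[-1]$ and $i_{0*}V(n-2)\la 1\ra[-1]$ of the statement.

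The main obstacle, such as it is, is not conceptual but bookkeeping: one must carefully propagate the shift conventions of Sections~\ref{ss:notation} and~\ref{sect:sl2} through $\pi_*$, in particular pinning down the Tate twist on the $i_{\alpha*}$-term of the $\hDelta$-sequence so that the final twists match the statement. Once that is done, exactness of $\pi_*$ together with the two elementary facts about $\tcN_\alpha$ (it is the zero section; its image under $\pi$ is the origin) finish the proof.
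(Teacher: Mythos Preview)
Your argument is correct and takes a genuinely different route from the paper. The paper works entirely on $\cN$: it invokes Theorem~\ref{thm:hom-bnabla} to produce the map $\bnabla_n \to \bnabla_{n-2}\la 1+\delta^*_{n-2}\ra$, then uses the explicit polynomial model of Lemma~\ref{lem:bnabla-poly} to identify the kernel with $i_{0*}H^0(n-2)\la 1\ra[-1]$, checks via the local description of Section~\ref{ss:local} that this is perverse-coherent, and finally obtains the $\bDelta$-sequence from the $\bnabla$-sequence by Serre--Grothendieck duality. Your proof instead descends the exotic sequences of Lemmas~\ref{lem:new-costd} and~\ref{lem:new-std} through $\pi_*$, treating both sequences in parallel and replacing the polynomial-model computation by the elementary identification $\pi_*\,i_{\alpha*}(\res^G_B U \otimes \bk_{-2}) \cong i_{0*}U[-1]$ for $U \in \Rep(G)$, which follows from the tensor identity and $R\ind_B^G\bk_{-2} \cong \bk[-1]$. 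This bypasses both Lemma~\ref{lem:bnabla-poly} and the appeal to duality, at the cost of relying on the exotic-side lemmas and on the $t$-exactness of $\pi_*$ from Proposition~\ref{prop:mu-exact}. You are also right that the quotient term in Lemma~\ref{lem:new-std} must carry a $\la 1\ra$ so that it sits in grading degree~$-1$; tracking that twist is exactly what makes the image match $i_{0*}V(n-2)\la 1\ra[-1]$.
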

\begin{proof}
We already know from Theorem~\ref{thm:hom-bnabla} that $\dim \Hom(\bnabla_n, \bnabla_{n-2}\la 1 + \delta^*_{n-2}\ra) = 1$.  Lemma~\ref{lem:bnabla-poly} lets us identify the cone of any such map (up to grading shift) with the space of homogeneous polynomials in $\bk[x,y]$ of degree $n-2$: in other words, with $H^0(n-2)$.  The local description of $\PCoh^{G \times \Gm}(\cN)$ implies that $i_{0*}H^0(n-2)\la 1\ra[-1]$ is indeed a perverse-coherent sheaf, and this gives us the first short exact sequence above.  The second is then obtained by applying the Serre--Grothendieck duality functor $\SGD$.
\end{proof}

\begin{prop}
If $\bk = \C$, then every standard or costandard object in $\PCoh^{G \times \Gm}(\tcN)$ is uniserial.
\end{prop}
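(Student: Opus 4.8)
The plan is to reduce to costandard objects via Serre--Grothendieck duality and then induct on $n$, using the first short exact sequence of Proposition~\ref{prop:sl2-ses}. Since $\SGD$ is a contravariant self-equivalence of $\PCoh^{G \times \Gm}(\cN)$ that interchanges $\bDelta_n$ and $\bnabla_n$ up to a Tate twist (this is the point of the grading normalization in Section~\ref{ss:ungraded}, and it is exactly how the two sequences in Proposition~\ref{prop:sl2-ses} are related), and since uniseriality is invariant under contravariant equivalences, it is enough to show that $\bnabla_n$ is uniserial for every $n \in \bXp$. For $n \in \{0,1\}$ we have $\bnabla_n \cong \fIC_n$, which is simple. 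So assume $n \ge 2$ and, inductively, that $\bnabla_{n-2}$ (hence also each $\bnabla_{n-2}\la k\ra$) is uniserial.

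Because $\bk = \C$, the module $H^0(n-2) = L(n-2)$ is irreducible, so the first sequence of Proposition~\ref{prop:sl2-ses} is a short exact sequence
\[
0 \to \fIC_n \to \bnabla_n \to \bnabla_{n-2}\la 1 + \delta^*_{n-2}\ra \to 0
\]
in $\PCoh^{G \times \Gm}(\cN)$ with simple subobject $\fIC_n = i_{0*}L(n-2)\la 1\ra[-1]$. A finite-length module with simple socle whose quotient by the socle is uniserial is itself uniserial; thus, granting that $\fIC_n$ is \emph{all} of $\mathrm{soc}(\bnabla_n)$, the quotient $\bnabla_n/\mathrm{soc}(\bnabla_n) \cong \bnabla_{n-2}\la 1 + \delta^*_{n-2}\ra$ is uniserial by induction and we are done. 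So the one genuine step is to prove that $\mathrm{soc}(\bnabla_n)$ is simple; equivalently, that $\uHom(L, \bnabla_n)$ vanishes for every simple $L$ not isomorphic to a Tate twist of $\fIC_n$ and is one-dimensional for $L = \fIC_n$.

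For this I would use the explicit model $\bnabla_n \cong M_n\la n - \delta^*_n\ra$ of Lemma~\ref{lem:bnabla-poly} and the classification of simple objects. If $L \cong \cO_\cN\la k\ra$ or $L \cong M_1\la k\ra$, then $L$ and $\bnabla_n$ are coherent sheaves concentrated in degree~$0$, so $\uHom(L,\bnabla_n)$ is already computed in $\Coh^{G \times \Gm}(\cN)$; a $(G \times \Gm)$-equivariant map out of $\cO_\cN$ or $M_1$ is pinned down by the image of a generator, which would have to span a copy of the trivial or of the standard $G$-module inside some $(M_n)_d \cong H^0(d)$, and no such $G$-map exists once $n \ge 2$. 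If $L$ is a skyscraper simple $i_{0*}L(k)\la\cdot\ra[-1]$, then $\uHom(L,\bnabla_n)$ becomes $\uExt^1_{\Coh^{G \times \Gm}(\cN)}(i_{0*}L(k), M_n\la\cdot\ra)$; feeding the presentation $0 \to \mathfrak{m} \to \cO_\cN \to i_{0*}\bk \to 0$ (with $\mathfrak{m}$ the ideal of the origin) into this group, together with the easily checked $G$-equivariant isomorphism $\uHom_{\bk[\cN]}(\mathfrak{m}, M_n) \cong M_{n-2}$ visible in the polynomial picture, yields
\[
\uExt^1_{\Coh^{G \times \Gm}(\cN)}(i_{0*}L(k), M_n\la\cdot\ra) \cong \uHom_G\big(L(k), M_{n-2}/M_n\big)\la\cdot\ra \cong \uHom_G\big(L(k), L(n-2)\big)\la\cdot\ra,
\]
which is nonzero exactly when $k = n-2$, and then one-dimensional. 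Since $i_{0*}L(n-2)\la 1\ra[-1] = \fIC_n$, this says precisely that $\mathrm{soc}(\bnabla_n) = \fIC_n\la m_0\ra$ for a single $m_0$ (namely $m_0 = 0$, by the displayed sequence), which closes the induction.

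The main obstacle is this last $\uExt^1$ computation. The subtle point that makes it come out right — and the reason the socle collapses to a single simple object rather than to several — is $G$-equivariance: it is precisely the absence of $G$-maps into the "wrong" isotypic components, both in the torsion-free case ($\cO_\cN$, $M_1$) and in the skyscraper case, that forces $\uHom(L,\bnabla_n)=0$ for all simple $L\not\cong\fIC_n\la m_0\ra$. One should also be a little careful to use $\uExt^1_{\bk[\cN]}(\bk,M_n)$ computed honestly via the $\mathfrak m$-presentation above (it is \emph{not} the local cohomology $H^1_{\mathfrak m}(M_n)$).
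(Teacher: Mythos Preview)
Your argument is correct and follows the same inductive scheme as the paper, which simply says the result is ``immediate from Proposition~\ref{prop:sl2-ses}.'' The reduction to costandard objects via Serre--Grothendieck duality is fine (the paper instead treats standard and costandard objects in parallel, using the two sequences of Proposition~\ref{prop:sl2-ses} directly).

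Where you do more work than necessary is at what you call the ``one genuine step,'' namely showing that $\mathrm{soc}(\bnabla_n)$ is simple. This is automatic from the axioms of a properly stratified category in Section~\ref{ss:prop-strat}: axiom~(4) says the cokernel of $\fIC_n \hookrightarrow \bnabla_n$ lies in $\sA_{<n}$, so every composition factor of $\bnabla_n$ other than a single copy of $\fIC_n$ is of the form $\fIC_m\la k\ra$ with $m <_\bX n$; and axiom~(3) says $\uHom(\fIC_m\la k\ra, \bnabla_n) = 0$ for all such $m$. Hence the socle is exactly $\fIC_n$, with no computation required. Your explicit argument via Lemma~\ref{lem:bnabla-poly} and the $\uExt^1$ calculation with the presentation $0 \to \mathfrak{m} \to \cO_\cN \to i_{0*}\bk \to 0$ is correct (and your identification $\uHom_{\bk[\cN]}(\mathfrak{m}, M_n) \cong M_{n-2}$ does hold, since both sides are the rank-one reflexive module of elements $f$ with $f \cdot M_2 \subset M_n$), but it is redundant given the general structure theory already in place.
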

\begin{proof}
This is immediate from Proposition~\ref{prop:sl2-ses}.
\end{proof}

For example, the composition series of $\bnabla_n$ for $n > 0$ looks like this:
\begin{align*}
\text{$n$ even and${}\ge 2$:} \hspace{-2em} &
\begin{array}{r|c|}
\cline{2-2}
\text{cosocle:} & \fIC_{0}\la n -1\ra \\ \cline{2-2}
& \fIC_{2}\la n-2\ra \\ \cline{2-2}
&\fIC_{4}\la n-4\ra \\ \cline{2-2}
& \vdots \\ \cline{2-2}
& \fIC_{n-4}\la 4\ra \\ \cline{2-2}
& \fIC_{n-2}\la 2\ra \\ \cline{2-2}
\text{socle:} & \fIC_n  \\ \cline{2-2}
\end{array}
&&&
\text{$n$ odd:} \hspace{-2em} &
\begin{array}{r|c|}
\cline{2-2}
\text{cosocle:} & \fIC_{1}\la n-1\ra \\ \cline{2-2}
& \fIC_{3}\la n-3\ra \\ \cline{2-2}
&\fIC_{5}\la n-5\ra \\ \cline{2-2}
& \vdots \\ \cline{2-2}
& \fIC_{n-4}\la 4\ra \\ \cline{2-2}
& \fIC_{n-2}\la 2\ra \\ \cline{2-2}
\text{socle:} & \fIC_n  \\ \cline{2-2}
\end{array}
\end{align*}

Finally, since $\PCoh^{G \times \Gm}(\cN)$ is properly stratified but not quasihereditary, it also has true standard and true costandard objects.  The following proposition describes them.  

\begin{prop}
We have $\Delta_0 \cong \nabla_0 \cong \cO_\cN$.  If $n > 0$, there are short exact sequences
\[
0 \to \bnabla_n \to \nabla_n \to \bnabla_n\la 2\ra \to 0,
\qquad
0 \to \bDelta_n\la -2\ra \to \Delta_n \to \bDelta_n \to 0.
\]
\end{prop}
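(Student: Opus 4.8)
The plan is to produce the true costandard objects $\nabla_n$ first and then obtain the true standard objects $\Delta_n$ by Serre--Grothendieck duality. Recall from Theorem~\ref{thm:pcoh-dereq} (and Minn-Thu-Aye's recipe recalled in its proof) that $\PCoh^{G \times \Gm}(\cN)$ is graded properly stratified with the $\bnabla_\lambda$, $\bDelta_\lambda$ of~\eqref{eqn:bnabla-defn} as proper (co)standard objects, and that there is a distinguished triangle $\cK'_n \to H^0(n) \otimes \cO_\cN\la -\delta^*_n\ra \to \nabla_n \to$ with $\cK'_n \in \Db\Coh^{G \times \Gm}(\cN)_{<n}$, and dually a triangle for $\Delta_n$ built from $V(n)$. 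The case $n = 0$ is immediate from this: $\delta^*_0 = 0$, $H^0(0) = V(0) = \bk$ (so $H^0(0)\otimes\cO_\cN = V(0)\otimes\cO_\cN = \cO_\cN$), and $\Db\Coh^{G \times \Gm}(\cN)_{<0} = 0$ since there is no dominant weight below $0$; hence both triangles degenerate to isomorphisms $\Delta_0 \cong \cO_\cN \cong \nabla_0$, consistent with $\bDelta_0 \cong \bnabla_0 \cong \cO_\cN$.

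For $n > 0$, I would next reduce the second short exact sequence to the first. The Serre--Grothendieck duality functor $\SGD$ (Section~\ref{ss:local}) is a contravariant exact autoequivalence of $\PCoh^{G \times \Gm}(\cN)$ with $\SGD(\cF\la m\ra) \cong (\SGD\cF)\la -m\ra$; by the grading normalization in~\eqref{eqn:bnabla-defn} it interchanges $\bnabla_n$ and $\bDelta_n$, and, since $V(n)^* \cong H^0(n)$ for $\SL_2$ (here $w_0 = -1$), it carries the Minn-Thu-Aye triangle for $\nabla_n$ to the one for $\Delta_n$, whence $\SGD\nabla_n \cong \Delta_n$. Applying $\SGD$ to $0 \to \bnabla_n \to \nabla_n \to \bnabla_n\la 2\ra \to 0$ then produces $0 \to \bDelta_n\la -2\ra \to \Delta_n \to \bDelta_n \to 0$, so it suffices to establish the first sequence.

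Since the category is properly stratified, $\nabla_n$ already carries a filtration with subquotients of the form $\bnabla_n\la m_i\ra$; the task is to show that the multiset $\{m_i\}$ equals $\{0,2\}$ and then to identify the extension. For the count I would pass to graded Euler characteristics. The classes $\{\ch A_\mu\}_{\mu \in \bXp}$ are $\Z((q))$-linearly independent and triangular, with $\ch A_\mu = \sum_{l \ge 0} q^{2l}\chi(\mu+2l)$ by Lemma~\ref{lem:hesselink}; expanding $\ch(H^0(n)\otimes\cO_\cN) = \chi(n)\sum_{k\ge0} q^{2k}\chi(2k)$ in this basis via the Clebsch--Gordan rule (legitimate since tensor products of good-filtration modules have good filtrations) gives coefficient $1+q^2$ on $\ch A_n$, with the remaining coefficients supported on $\mu < n$. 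As $\cK'_n \in \Db\Coh^{G \times \Gm}(\cN)_{<n}$ contributes to $K$-theory only through $A_\mu$ with $\mu < n$, and as $\ch\nabla_n$ must be a Laurent-polynomial multiple of $\ch\bnabla_n = q^{\delta^*_n}\ch A_n$, comparing the $\ch A_n$-coefficients forces $\ch\nabla_n = \ch\bnabla_n + \ch(\bnabla_n\la 2\ra)$. Thus the filtration of $\nabla_n$ has exactly the two subquotients $\bnabla_n$ and $\bnabla_n\la 2\ra$. Finally, $\bnabla_n$ has simple socle $\fIC_n$ (a standard property of proper costandard objects, cf.\ Proposition~\ref{prop:pcoh-socle}), which coincides with the socle of the injective envelope $\nabla_n$, so $\bnabla_n$ is the submodule of the filtration; the quotient has a $\bnabla_n$-filtration and the correct character, hence is $\bnabla_n\la 2\ra$, and the sequence is non-split because $\nabla_n$ is indecomposable. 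Alternatively one can exhibit the inclusion and its cokernel concretely from Lemma~\ref{lem:bnabla-poly} and the presentations in Propositions~\ref{prop:sl2-pcoh} and~\ref{prop:sl2-ses}.

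I expect the third paragraph to be the main obstacle: ruling out further proper-costandard subquotients, i.e.\ showing the ``stratum algebra'' attached to the orbit supporting $\fIC_n$ is two-dimensional. The character argument settles this cleanly provided one is careful that $\cK'_n$ genuinely lies in $\Db\Coh^{G \times \Gm}(\cN)_{<n}$ and therefore contributes nothing to the $\ch A_n$-coefficient; once the two-step filtration is in hand, its non-splitness is formal from the indecomposability of $\nabla_n$.
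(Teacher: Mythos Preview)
The paper states this proposition without proof, so there is nothing to compare against; I will simply assess your argument.

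Your overall plan is sound: the $n=0$ case and the Serre--Grothendieck duality reduction are correct, and the idea of reading off the $\bnabla_n$-layers of $\nabla_n$ from the class of $H^0(n)\otimes\cO_\cN$ in the Grothendieck group, then ordering them by the socle, is exactly right.

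There is, however, a sign slip in the character step that hides a real problem. You correctly find that the coefficient of $\ch A_n$ in $\ch(H^0(n)\otimes\cO_\cN)$ is $1+q^2$. Feeding this through the shift $\la -\delta^*_n\ra$ you quote from Theorem~\ref{thm:pcoh-dereq}, and using $\ch\bnabla_n = q^{\delta^*_n}\ch A_n$ together with $\ch(\cF\la m\ra) = q^{-m}\ch\cF$, one gets $\ch\nabla_n \equiv (1+q^2)\ch\bnabla_n$ modulo lower terms, i.e.\ layers $\bnabla_n$ and $\bnabla_n\la -2\ra$, \emph{not} $\bnabla_n\la 2\ra$. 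The discrepancy is not yours alone: the grading normalization in the Minn-Thu-Aye triangle recorded in the proof sketch of Theorem~\ref{thm:pcoh-dereq} is off by a sign. One sees this already at $n=1$ (where $\cK'_1=0$ since $1$ is minimal): the object $H^0(1)\otimes\cO_\cN\la -1\ra$ sits in $0 \to \bnabla_1\la -2\ra \to H^0(1)\otimes\cO_\cN\la -1\ra \to \bnabla_1 \to 0$ and therefore has socle $\fIC_1\la -2\ra$, so it is $\nabla_1\la -2\ra$, not $\nabla_1$. With the correct normalization $H^0(\lambda)\otimes\cO_\cN\la +\delta^*_\lambda\ra$ (and dually $V(\lambda)\otimes\cO_\cN\la -\delta^*_\lambda\ra$ for $\Delta_\lambda$), your argument goes through verbatim. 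As written, though, the step ``comparing the $\ch A_n$-coefficients forces $\ch\nabla_n = \ch\bnabla_n + \ch(\bnabla_n\la 2\ra)$'' does not follow from what precedes it.

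A cleaner route that avoids any normalization bookkeeping: by Exercise~2 one has $\uExt^\bullet(\bnabla_n,\bnabla_n)\cong\bk[t]$ for $n>0$, with $t$ in bidegree $(1,-2)$. Hence $\Ext^1(\bnabla_n\la m\ra,\bnabla_n)\neq 0$ only for $m=2$, so the unique nonsplit extension with $\bnabla_n$ as socle is $0\to\bnabla_n\to E\to\bnabla_n\la 2\ra\to 0$; the polynomial-ring structure then forces $\Ext^1(\bnabla_n\la m\ra,E)=0$ for all $m$, so $E$ is already the injective envelope of $\fIC_n$ in $\sA_{\le n}$, i.e.\ $E=\nabla_n$. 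Duality gives the second sequence.
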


\section*{Exercises}

\setcounter{subsection}{0}
\renewcommand{\thesubsection}{\arabic{subsection}}

\addtocontents{toc}{\protect\setcounter{tocdepth}{1}}

\subsection{}
This exercise involves the $q$-analogues discussed in Section~\ref{ss:charform} for $G = \SL_2$.  Let $n \in \Z_{\ge 0}$ and let $m \in \Z$. Show that
\[
P_n(q) = \begin{cases}
q^{n/2} & \text{if $n \in 2\Z_{\ge 0}$,} \\
0 & \text{otherwise,}
\end{cases}
\]
and
\[
M_n^m(q) = \begin{cases}
0 & \text{if $m > n$ or $n \not\equiv m \pmod 2$}, \\
q^{(n -m)/2} & \text{if $-n \le m \le n$ and $n \equiv m \pmod 2$,} \\
q^{(n-m)/2} - q^{(-n-m-2)/2} & \text{of $m \le n-2$ and $n \equiv m \pmod 2$.}
\end{cases}
\]
In particular, when $m < -n$, $M_n^m(1)$ is always zero, but $M_n^m(q)$ is not.

\subsection{}
Let $G = \SL_2$, and let $n \in \Z_{\ge 0}$. Explicitly describe the bigraded algebra $\uExt^\bullet(\bnabla_n, \bnabla_n)$.  \emph{Answer}: If $n = 0$, this algebra is just $\bk$.  If $n > 0$, this algebra is a polynomial ring with one generator living in bidegree $(1,-2)$.

\subsection{}
Let $G = \SL_2$.  Compute explicitly the $\bk[\cN]$-module structure on $\cH^1(\bDelta_n)$.

\subsection{}
(a)~Prove that for all $\lambda \in \bX$, the line bundle $\cO_\tcN(\lambda)$ lies in $\ExCoh^{G \times \Gm}(\tcN)$.  Then prove that $\fE_\lambda\la -\delta_\lambda\ra$ occurs as a composition factor of $\cO_\tcN(\lambda)$ with multiplicity~$1$, and that all other composition factors $\fE_\mu\la k\ra$ satisfy $\mu < \lambda$.

(b)~For $G = \SL_2$, compute the multiplicites of all composition factors in $\cO_\tcN(\lambda)$.

%

\subsection{}
Let $\lambda \in \bX$.  Prove that
\[
\dim \Hom(A_{w\lambda}, A_\lambda \la n\ra) =
\begin{cases}
1 & \text{if $w\lambda \prec \lambda$ and $n = 2(\delta_\lambda - \delta_{w\lambda})$,} \\
0 & \text{otherwise.}
\end{cases}
\]
Then prove that the image of any nonzero map $A_{w\lambda} \to A_{\lambda}\la 2(\delta_\lambda - \delta_{w\lambda}) \ra$ contains $\fIC_{\dom(\lambda)}\la \delta^*_{\dom(\lambda)} - 2\delta_{w\lambda} \ra$ as a composition factor.

\section*{Acknowledgements}

I am grateful to Chris Dodd, Carl Mautner, and Simon Riche for numerous helpful comments and suggestions on an earlier draft of this paper.


\end{document}